\documentclass[12pt,leqno]{amsart}
\makeatletter
\def\thm@space@setup{%
  \thm@preskip=6pt plus 2pt minus 1pt
  \thm@postskip=0pt
}
\makeatother

\usepackage[margin=1in]{geometry}
\usepackage[T1]{fontenc}
\usepackage{lmodern}
\usepackage{microtype}

\usepackage{parskip}

\usepackage{xcolor}

\usepackage{amsmath,amssymb,mathtools}

\makeatletter
\renewenvironment{proof}[1][\proofname]{%
	\par
	\pushQED{\qed}%
	\normalfont
	\topsep6\p@\@plus6\p@\relax
	\trivlist
	\item[\hskip\labelsep\itshape #1\@addpunct{.}]%
	\ignorespaces
}{%
	\par\nobreak\vspace{2pt}
	\popQED%
	\medskip
	\endtrivlist
	\@endpefalse
}
\makeatother

\allowdisplaybreaks

\usepackage{etoolbox}

\usepackage{enumitem}
\setlist[enumerate]{label=(\arabic*),leftmargin=*,itemsep=2pt}
\setlist[itemize]{leftmargin=*,itemsep=2pt}

\usepackage{array,booktabs,makecell,multirow}
\usepackage{tabularx}
\usepackage{adjustbox}

\usepackage{graphicx}
\usepackage{float}
\usepackage{comment}
\usepackage{tikz}

\usepackage[unicode]{hyperref}
\hypersetup{
	colorlinks=true,
	linkcolor=blue!50!black,
	citecolor=blue!50!black,
	urlcolor=blue!50!black
}
\usepackage[nameinlink,noabbrev]{cleveref}

\usepackage[numbers,sort&compress]{natbib}

\usepackage{diagbox}
\usepackage{rotating}
\usepackage[table]{xcolor}
\newcommand{\be}{\begin{equation}}

\newcommand{\ee}{\end{equation}}
\newtheorem{thm}{Theorem}[section]
\newtheorem{lem}{Lemma}[section]
\newtheorem{prop}{Proposition}[section]
\newtheorem{defn}{Definition}[section]
\newtheorem{rmk}{Remark}[section]
\newtheorem{cor}{Corollary}[section]
\newtheorem{ques}{Question}[section]

\newtheorem{exm}{Example}[section]
\newtheorem{alg}{Algorithm}[section]

\newcommand{\aut}{{\operatorname{Aut}}}
\newcommand{\psl}{{\operatorname{PSL}}}
\newcommand{\ord}{{\operatorname{ord}}}
\newcommand{\val}{{\operatorname{val}}}

\numberwithin{equation}{section}
\numberwithin{figure}{section}

\begin{document}

\title[Two-Generator Discrete and Faithful Subgroups of Tree Automorphisms]{Two-Generator Discrete and Faithful Subgroups of Tree Automorphisms}

\author{Yukun Du}
\address[Y. Du]{Department of Mathematics, University of Georgia, Athens, GA 30603}
\email{yukun.du@uga.edu}

\subjclass[2010]{20E08, 05C25, 20E36, 22E35}
\keywords{Tree automorphisms, Graphs of groups, Discrete subgroups, Faithful amalgams}

\begin{abstract}
We present a partial classification of two-generator discrete and faithful subgroups of the trivalent tree automorphism group, specifically for cases where the generators satisfy a restriction on a small geometric quantity. When the restrictions on the geometric quantity or tree valency are relaxed, we discuss the possible reduced quotient graphs for these subgroups and construct infinite families of graphs of groups on each. Additionally, we include a generalized Poincar\'e algorithm that determines whether a given set of tree automorphisms generates a discrete subgroup.
\end{abstract}

\date{\today}
\maketitle
\tableofcontents

\section{Introduction}\label{sec:introduction}
This paper studies discrete two-generator subgroups of automorphism groups of locally finite trees, especially when torsion is present. Using graphs of groups, we obtain a classification in a highly restricted scope; without that restriction, we construct infinite families of such subgroups with every possible underlying graph.

In the classical setting, this question arises from the study of Fuchsian groups, in particular from sufficient conditions for subgroups $\Gamma<\psl(2,\mathbb{R})$ to be discrete. When $\Gamma$ is generated by two elements, $\Gamma = \langle A,B\rangle$, algebraic criteria and classification results were obtained by Rosenberger \cite{rosenberger1986all}, and later given a geometric treatment by Gilman \cite{gilman1995two}. Gilman’s monograph describes discreteness in terms of geometric quantities such as hyperbolic axes and elliptic rotation angles. In the most complicated case, when $A$ and $B$ are hyperbolic with intersecting axes, Gilman gave a geometric algorithm based on Nielsen moves, leading either to a Schottky pair or to a pair in the \emph{intertwining cases}. Based on Rosenberger's and Gilman's work, Kirschmer and R\"uther suggested an algorithm solving the constructive membership problem for two-generator Fuchsian groups \cite{kirschmer2017constructive}.

The analogous problem for two-generator discrete subgroups in the non-archimedean setting is less understood. For hyperbolic pairs $A,B\in\psl(2,K)$ over a non-archimedean local field $K$, Conder introduced a similar Nielsen-move algorithm, producing either a Schottky pair or an elliptic-hyperbolic pair \cite{conder2020discrete}. More recent work of Conder and Schillewaert \cite{conder2022discrete} included elliptic generators as well, and obtained a partial classification analogous to the classical hyperbolic case. If $p$ is the residue characteristic of $K$, their classification applies under the condition that no element of $\Gamma$ has order $p$; in particular, it is complete when $K = \mathbb{Q}_p$.

Conder’s reduction algorithm remains meaningful in the broader setting where the generators lie in the full automorphism group $\aut(X)$ of a locally finite tree $X$. In fact, our earlier work \cite{Du_Hersonsky_Tree_Aut} showed that the Schottky case can be characterized more directly in terms of geometric quantities of the generating pair $(A,B)$, such as their translation lengths and the length of the intersection of their axes. We also computed the corresponding geometric quantities for the resulting equivalent pair, whether Schottky or elliptic-hyperbolic. These quantities appear to be closely related to the continued fraction of the ratio $\ell(A)/\ell(B)$.

By contrast, understanding discrete subgroups of $\aut(X)$ generated by elliptic-hyperbolic or two-elliptic pairs is more difficult than in the $\psl(2,K)$ case, and even more so than in the $\psl(2,\mathbb{R})$ setting:
\begin{ques}\label{ques}
    Let $A,B \in \mathrm{Aut}(X)$ be automorphisms of a locally finite tree $X$, and assume that the subgroup $\Gamma = \langle A,B \rangle$ is discrete. What isomorphism types can $\Gamma$ have?
\end{ques}
While a complete answer is far beyond the scope of this paper, we investigate how far such discrete two-generator subgroups can be described. By a theorem of Bass, every finitely generated discrete subgroup of $\aut(X)$ contains a torsion-free subgroup of finite index \cite{bass1990uniform}. Combined with Lubotzky’s foundational result that finitely generated torsion-free discrete subgroups of $\aut(X)$ are free and Schottky \cite{lubotzky1991lattices}, this implies that finitely generated discrete subgroups are virtually free. By Bass–Serre theory, such groups admit a description in terms of \emph{finite graphs of finite groups}, which provides the framework for our classification results.

Our first main result gives a classification of faithful HNN $2$-extensions. The resulting family is explicit and resembles the infinite family of $(4,2)$-amalgamated products of dihedral type \cite{djokovic1980class}. In addition, we analyzed the generating pairs of the faithful $(3,2)$-amalgamated products by Djokovi\'c--Miller \cite{djokovic1977regular}, and of faithful $(3,3)$-amalgamated products by Goldschmidt \cite{goldschmidt1980automorphisms}. Combining these results, we determined the list of two-generator discrete subgroups of $\aut(X_3)$ with \emph{small} geometric quantities and acting faithfully on their minimal invariant subtree. In particular, we considered pairs $g,h\in\aut(X_3)$ in two cases: both generators elliptic with $d(T_g,T_h)=1$, or $g$ elliptic and $h$ hyperbolic with $d(T_g,A_h)=0$ and $\ell(h)\leq 2$. We showed that particular discrete subgroups $\Gamma=\langle g,h\rangle$ arise in these settings: they either fall among finitely many amalgamated products of finite groups, or belong to a single explicit family of HNN extensions. By contrast, subgroups without the faithful condition form a more extensive family.

This classification yields further rigidity. The order of an elliptic element is restricted to $2,3,4,6,8,$ or $12$; and, except in the HNN case, one has $\ell(A_g\cap T_h)\leq 8$. Combined with our reduction result for hyperbolic generating pairs, this also gives a rigidity statement for hyperbolic $g,h\in\aut(X_3)$ under the assumptions $\gcd(\ell(g),\ell(h))\leq 2$ and $\ell(A_g\cap A_h)\geq \ell(g)+\ell(h)-\gcd(\ell(g),\ell(h))$.


Our second main result concerns the complementary regime, where the small-geometry restrictions are relaxed: either $\ell(h)>2$, or $d(T_g,A_h)\geq 1$, or the ambient tree $X$ has vertices of valency at least $4$. Even retaining the faithfulness requirement, these discrete groups $\Gamma=\langle g,h\rangle$ form genuinely infinite and much less rigid families, so a complete classification is no longer realistic. Instead, we determine the conditions under which a finite graph can occur as the reduced underlying graph of a discrete subgroup with prescribed geometric quantities. Except in the rigid trivalent amalgamated-product case, every admissible graph gives rise to infinitely many reduced graphs of groups with the same quantities. In the special case $d(T_g,A_h)=0$, this criterion can be written explicitly.

This contrast is illustrated by the appearance of higher-index amalgams such as $(4,3)$- and $(6,2)$-amalgams, which indicate the wideness of the general case and explain why the full classification lies beyond the scope of the paper.

While these results are motivated by Rosenberger and Gilman's work on two-generator Fuchsian groups, a different approach by Riley proposed a more geometric semi-decidable algorithm for Fuchsian and Kleininan groups \cite{riley1983applications}. Riley's algorithm decides if any finite subset in $\psl(2,\mathbb{R})$ or $\psl(2,\mathbb{C})$ generates a discrete subgroup. The algorithm is based on Poincar\'e’s Fundamental Polyhedron Theorem, deciding the discreteness by constructing and verifying a \emph{fundamental polytope} for the given subgroup. Both Riley's and Kirschmer-R\"uther's approaches run on a Blum–Shub–Smale (BSS) machine \cite{blum1989theory}, which stores arbitrary real numbers and computes rational functions in a single time step.

As a combinatorial analogue, fundamental domains of discrete $\aut(X)$-subgroups naturally behave like fundamental polytopes, making a generalization of Riley's algorithm feasible. In this paper we discuss such a generalized algorithm, which determines the discreteness of the group from generators in $\aut(X)$ and gives their graph-of-groups presentations. Our algorithm uses a generalized BSS machine, which better handles the tree automorphisms and their actions on vertices and edges.

The paper is organized as follows. In Section \ref{sec:prelim}, we review the Bass–Serre theoretic notions used throughout the paper. In Section \ref{sec:rigidity}, we present our first set of results concerning rigidity for two-generator discrete subgroups of $\aut(X_3)$ with small geometric quantities. In Section \ref{sec:flex}, we present our second set of results, describing the possible underlying graphs in the non-rigid cases. Explicit constructions of infinite families of groups are given in Section \ref{sec:constr}. The generalized Poincar\'e algorithm for tree automorphisms is discussed in Section \ref{sec:Poincare}. In Appendix \ref{sec:program}, we present and explain the \emph{GAP} code of a Stalling folding process used to obtain certain classification results in Section \ref{sec:rigidity}. 

\section{Preliminaries: Graphs of Groups}\label{sec:prelim}
Throughout this paper, we assume that $X$ is a locally finite tree and $\aut(X)$ is its group of automorphisms, such that the quotient $X/\aut(X)$ is a finite graph. Let $\mathcal{V}(X)$ and $\mathcal{E}(X)$ denote the sets of its vertices and edges, respectively.

A tree automorphism is either an \emph{inversion}, \emph{elliptic}, or \emph{hyperbolic}. If $g$ is an elliptic automorphism, its fixed subset $T_g \subset X$ is a subtree of $X$. If $h$ is hyperbolic, then the displacement $d(v, h \cdot v)$ is always strictly positive; it attains a minimum $\ell(h)$, called its \emph{translation length}, precisely on a subset $A_h \subset X$ called its \emph{axis}, which is a bi-infinite geodesic in $X$ \cite{serre2002trees}.

The total automorphism group $\aut(X)$ is equipped with a topology, where the pointwise stabilizers of finite induced subtrees serve as the basis open sets containing $\mathrm{Id}_{\aut(X)}$. This leads to the notion of \emph{discreteness} for $\aut(X)$-subgroups:
\begin{defn}
    A subgroup $G< \aut(X)$ is \emph{discrete} if there exists a finite tree $T\subset X$ such that the pointwise stabilizer $G_T = 1$.
\end{defn}
In particular, if $G<\aut(X)$ is discrete and finitely generated, then all stabilizer subgroups $G_v$ and $G_e$ are finite, and their orders are uniformly bounded.

Unless specified, when mentioning any subgroup $G<\aut(X)$ in this paper, we assume that $G$ is discrete and finitely generated, and is inversion-free.

To study the action of a tree automorphism group in an algebraic framework, one introduces the notion of a \emph{graph of groups}\cite{dicks2006groups}:
\begin{defn}
    A \textbf{graph of groups} $(Y,\mathcal{G})$ consists of the following data:
    \begin{itemize}
        \item A connected and symmetric directed graph $Y$ with vertex set $\mathcal{V}(Y)$ and edge set $\mathcal{E}(Y)$.
        \item A family $\mathcal{G}$ of groups, including \emph{vertex groups} $G(v)$ for $v\in\mathcal{V}(Y)$, and \emph{edge groups} $G(e)$ for $e\in\mathcal{E}(Y)$.
        \item The \emph{boundary monomorphisms}
        \[
        \iota_{e,\alpha}: G(e)\to G(\alpha_e),\ \iota_{e,\omega}: G(e)\to G(\omega_e)
        \]
        for each edge $e\in\mathcal{E}(Y)$ and the incident vertices $\alpha_e$ and $\omega_e$.
    \end{itemize}
\end{defn}
We regard a graph of groups as a combinatorial generalization of hyperbolic orbifolds. The \emph{fundamental group} of a graph of groups is defined analogously:
\begin{defn}
    Let $(Y,\mathcal{G})$ be a graph of groups, and fix a spanning tree $Y_0\subset Y$. Its \emph{fundamental group} (or \emph{amalgam}) $\pi_1(Y,\mathcal{G};Y_0)$ is defined as
    \[
    \pi_1(Y,\mathcal{G};Y_0) = \left(\left(\ast_{v\in\mathcal{V}(Y)}G(v)\right)\ast F_{\mathcal{E}(Y)\setminus\mathcal{E}(Y_0)}\right)/\sim,
    \]
    where $F_{\mathcal{E}(Y)\setminus\mathcal{E}(Y_0)}$ is the free group generated by symbols $t_e$ corresponding to (oriented) edges $e\in \mathcal{E}(Y)\setminus\mathcal{E}(Y_0)$, subject to the relations:
    \[
    \iota_{e,\omega}(g) = t_e\iota_{e,\alpha}(g)t_e^{-1}.
    \] 
    One shows that this definition is independent of the choice of the spanning tree $Y_0$, up to a group isomorphism. Thus, the fundamental group is also denoted by $\pi_1(Y,\mathcal{G})$.
\end{defn}
Parallel to the manifold scenario, one considers the \emph{universal covering} (or \emph{Bass-Serre tree}) and \emph{fundamental group} of a graph of groups.
\begin{defn}
    Let $(Y,\mathcal{G})$ be a graph of groups, and denote its fundamental group by $\pi_1(Y,\mathcal{G})$. Define a graph $\widetilde{X} = \widetilde{X}(Y,\mathcal{G})$, called the \emph{Bass-Serre covering tree}, whose vertices and edges are cosets:
    \[
    \mathcal{V}(\widetilde{X}) = \{gG(v)\,|\, g\in \pi_1(Y,\mathcal{G}), v\in \mathcal{V}(Y)\},
    \]
    and
    \[
    \mathcal{E}(\widetilde{X}) =\{gG(e)\,|\, g\in \pi_1(Y,\mathcal{G}), e\in \mathcal{E}(Y)\}.
    \]
    Moreover, the incidence is determined as follows:
    \begin{itemize}
        \item If $e\in \mathcal{E}(Y_0)$,
        \[
        \alpha_{gG(e)} = gG(\alpha_e),\ \omega_{gG(e)} = gG(\omega_e).
        \]
        \item If $e\notin \mathcal{E}(Y_0)$,
        \[
        \alpha_{gG(e)} = gG(\alpha_e),\ \omega_{gG(e)} = (gt_e)G(\omega_e).
        \]
    \end{itemize}
    The resulting graph $\widetilde{X}$ is a tree, and is independent of the choice of the spanning tree up to isomorphism. The group $\pi_1(Y,\mathcal{G})$ acts on $\widetilde{X}$ by left multiplication on cosets.
\end{defn}
\begin{defn}\label{def:fundamental_domain}
    Let $X$ be a tree, and $\Gamma<\aut(X)$ a discrete subgroup. We say that a subset $\mathcal{Y}$ of vertices and edges, together with a family $\mathcal{G}$ of groups, is a \emph{fundamental domain} for $\Gamma$ if the following conditions hold:
    \begin{itemize}
        \item The set $\mathcal{Y}$ is $\Gamma$-transversal, meaning $\mathcal{Y}$ meets each $\Gamma$-orbit of vertex and edge exactly once.
        \item The maximal subtree $Y_0\subset \mathcal{Y}$ contains every vertex in $\mathcal{V}(\mathcal{Y})$ and the origin of every edge in $\mathcal{E}(\mathcal{Y})$.
        \item The vertex groups $G(v)$ and edge groups $G(e)$ are the stabilizers $\Gamma_v$ and $\Gamma_e$ for $v\in \mathcal{V}(\mathcal{Y})$ and $e\in \mathcal{E}(\mathcal{Y})$.
    \end{itemize}
\end{defn}
\begin{prop}
    Let $X$ be a tree, $\Gamma<\aut(X)$ be a discrete subgroup, and $(\mathcal{Y},\mathcal{G})$ be a fundamental domain of $\Gamma$. By identifying the unpaired termini of $\mathcal{Y}$-edges with the $\mathcal{Y}$-vertices of the same $\Gamma$-orbit, we recognize $\mathcal{Y}$ as the quotient graph $Y = X/\Gamma$, and $(\mathcal{Y},\mathcal{G})$ as a graph of groups $(Y,\mathcal{G})$. Moreover:
    \begin{itemize}
        \item The fundamental group $\pi_1(Y,\mathcal{G})$ is isomorphic to $\Gamma$.
        \item The natural inclusion $j: \mathcal{Y}\hookrightarrow X$ extends to an isomorphism from the Bass-Serre covering tree $\widetilde{X}(Y,\mathcal{G})$ to $X$.
    \end{itemize}
\end{prop}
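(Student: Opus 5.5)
This is the structure theorem of Bass--Serre theory, and I would prove it directly from the fundamental domain $(\mathcal{Y},\mathcal{G})$. The plan has three stages: first build the graph of groups $(Y,\mathcal{G})$ with explicit boundary maps, then define a homomorphism $\phi:\pi_1(Y,\mathcal{G};Y_0)\to\Gamma$, and finally construct a $\phi$-equivariant map $\Phi:\widetilde X(Y,\mathcal{G})\to X$. Showing $\Phi$ is an isomorphism will give both bullet points at once.

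\textbf{Setup.} Since $\mathcal{Y}$ is $\Gamma$-transversal, its vertices and edges are in bijection with those of $X/\Gamma$.
\begin{itemize}
\item For an edge $e\in\mathcal{E}(Y_0)$, both endpoints lie in $\mathcal{Y}$. Take $\iota_{e,\alpha}$ and $\iota_{e,\omega}$ to be the inclusions $\Gamma_e\le\Gamma_{\alpha_e}$ and $\Gamma_e\le\Gamma_{\omega_e}$.
\item For an edge $e\notin\mathcal{E}(Y_0)$, the origin $\alpha_e$ lies in $\mathcal{Y}$, but the terminus $\omega_e$ is only $\Gamma$-equivalent to some $w\in\mathcal{V}(\mathcal{Y})$. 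Choose $\gamma_e\in\Gamma$ with $\omega_e=\gamma_e w$. Then $\Gamma_e\le\Gamma_{\omega_e}=\gamma_e\Gamma_w\gamma_e^{-1}$, so set $\iota_{e,\omega}(g)=\gamma_e^{-1}g\gamma_e\in\Gamma_w$.
\end{itemize}
Define $\phi$ to be the inclusion on each vertex group and to send $t_e\mapsto\gamma_e^{-1}$ (or its inverse, depending on the orientation convention). The defining relation $\iota_{e,\omega}(g)=t_e\iota_{e,\alpha}(g)t_e^{-1}$ is then respected by construction, so $\phi$ is a well-defined homomorphism.

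\textbf{The map $\Phi$.} Set $\Phi(gG(v))=\phi(g)v$ and $\Phi(gG(e))=\phi(g)e$. This is well defined because $\phi(G(v))=\Gamma_v$ fixes $v$. Incidence is preserved: for an edge $e\notin\mathcal{E}(Y_0)$, the terminus $gt_eG(w)$ is sent to $\phi(g)\gamma_e w=\phi(g)\omega_e$, which is exactly why the choice of $\phi(t_e)$ matters. By construction $\Phi$ is $\phi$-equivariant.

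\textbf{Surjectivity.} First, $\phi$ is onto. The image $H=\phi(\pi_1)$ satisfies $H\cdot\mathcal{Y}=X$ by the connectivity argument below. Given any $\gamma\in\Gamma$, pick $v\in\mathcal{Y}$. Then $\gamma v=hv'$ for some $h\in H$ and $v'\in\mathcal{Y}$. Transversality forces $v'=v$, so $h^{-1}\gamma\in\Gamma_v\subset H$, and hence $\gamma\in H$.

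For the claim $H\cdot\mathcal{Y}=X$: $H\mathcal{Y}$ is a nonempty subgraph of $X$, and every edge at a vertex $hv$ lies in it. Indeed, such an edge is $\Gamma$-equivalent to some edge of $\mathcal{Y}$, and one adjusts by an element of $h\Gamma_v h^{-1}$ or by $\gamma_e^{\pm1}$. Since $X$ is connected, $H\mathcal{Y}=X$. The same argument shows $\Phi$ is surjective.

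\textbf{Local injectivity (main obstacle).} I would show that $\Phi$ is injective on the star of each vertex. At the vertex $G(v)$, the edges are the cosets $gG(e)$ with $g\in G(v)$ and $e$ incident to $v$ in $Y$. The edges of $X$ at $v$ fall into $\Gamma_v$-orbits, and these orbits correspond bijectively to the edges of $Y$ at $v$, counted with orientation and including both ends of loops. This uses transversality together with the choice of $\gamma_e$. Within one orbit, the stabilizer is the image of the edge group under the boundary map, so the orbit has size $[G(v):\iota(G(e))]$, and these indices match on both sides.

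The delicate part is exactly this bookkeeping for non-tree edges whose two ends meet the same vertex orbit. It is easy to double count, so I would handle these edges separately: for such an edge, $e$ and $\gamma_e^{-1}\bar e$ both lie at $w$, and they lie in distinct $\Gamma_w$-orbits, because $\mathcal{Y}$ contains $e$ but not $\bar e$.

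\textbf{Conclusion.} A surjective, locally injective simplicial map from the tree $\widetilde X$ onto the tree $X$ is an isomorphism: a locally injective map does not fold paths, so it sends geodesics to geodesics and is therefore globally injective.

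Injectivity of $\phi$ follows. If $\phi(g)=1$, then $\Phi(gG(v))=v=\Phi(G(v))$, so $g\in G(v)$. Since $\phi$ restricted to $G(v)$ is the inclusion, $g=1$.

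Thus $\phi:\pi_1(Y,\mathcal{G})\cong\Gamma$, and $\Phi$ extends the inclusion $j$, since $\Phi(1\cdot G(v))=v$ and $\Phi(1\cdot G(e))=e$.
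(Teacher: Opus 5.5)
The paper gives no proof of this proposition. It quotes it as the structure theorem of Bass--Serre theory \cite{serre2002trees,dicks2006groups}, and your outline is essentially the standard proof found there: build $\phi$ and a $\phi$-equivariant $\Phi\colon\widetilde X\to X$, show $\Phi$ is surjective and bijective on stars, conclude $\Phi$ is an isomorphism, and read off injectivity of $\phi$.

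Most of it is fine, but the step you yourself call delicate is justified by the wrong reason. That $\mathcal{Y}$ contains $e$ but not $\bar e$ does not stop $e$ and $\gamma_e^{-1}\bar e$ from being $\Gamma_w$-equivalent. If $\delta e=\gamma_e^{-1}\bar e$ with $\delta\in\Gamma_w$, then $\gamma_e\delta$ inverts $e$, and transversality says nothing against that. Concretely, let $D_\infty$ act on the line with vertex set $\mathbb{Z}$ via $n\mapsto -n$ and $n\mapsto 1-n$, and take $\mathcal{Y}=\{0,[0,1]\}$ and $\gamma_e\colon n\mapsto 1-n$. Then $\gamma_e^{-1}\bar e=e$ even though $\bar e\notin\mathcal{Y}$, and the resulting loop of groups has fundamental group $C_2*\mathbb{Z}\not\cong D_\infty$. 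The missing ingredient is the paper's standing hypothesis that $\Gamma$ acts without inversions. Then $\Gamma$ preserves an orientation of $X$, $e$ and $\gamma_e^{-1}\bar e$ carry opposite orientations, and so they lie in different $\Gamma_w$-orbits. Your proposal never invokes this hypothesis, and the proposition is false without it.

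Second, commit to one orientation convention. You check the relation $\iota_{e,\omega}(g)=t_e\iota_{e,\alpha}(g)t_e^{-1}$ with $\phi(t_e)=\gamma_e^{-1}$, but your incidence check $\Phi(gt_eG(w))=\phi(g)\gamma_e w$ uses $\phi(t_e)=\gamma_e$. The two conventions displayed in the paper are themselves incompatible: with that relation, the terminus $gt_eG(\omega_e)$ is not well defined on cosets in general. If you use $\iota_{e,\omega}(g)=t_e^{-1}\iota_{e,\alpha}(g)t_e$ together with the terminus $gt_eG(\omega_e)$, then $\phi(t_e)=\gamma_e$ passes both checks and nothing else in your argument changes.

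Finally, you actually prove that $\Phi$ is bijective on stars, not just injective. So you can conclude that $\Phi$ is a covering of the tree $X$ by the connected graph $\widetilde X$, hence an isomorphism. This recovers, rather than presupposes, the fact that $\widetilde X$ is a tree.
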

The following examples from \cite{serre2002trees} are key constructions of graph of groups in our paper.
\begin{exm}[Amalgamated Products]
    Let $Y$ be a line segment, with vertices $v^+$, $v^-$ and edge $e$. Define a graph of groups $(Y,\mathcal{G})$ such that $G_1 = \mathcal{G}(v^+)$, $G_2 = \mathcal{G}(v^-)$, and $H = \mathcal{G}(e)$, where $H<G_1$ and $\iota_{e,v^-} = \varphi: H\hookrightarrow G_2$ is an injective homomorphism. Then the fundamental group,
    \[
    \pi_1(Y,\mathcal{G})\cong G_1*_{H} G_2\coloneqq (G_1*G_2)/(h\sim \varphi(h),\ h\in H)
    \]
    is an \emph{amalgamated free product} of $G_1$ and $G_2$ over $H$.

    If both $\lvert G_1: H\rvert = n$ and $\lvert G_2:H\rvert = m$ are finite, then the Bass-Serre covering tree of $(Y,\mathcal{G})$ is an $(n,m)$-biregular tree.
\end{exm}
\begin{exm}[HNN Extensions]
    Let $Y$ be a loop, with vertex $v$ and edge $e$. Define a graph of groups $(Y,\mathcal{G})$ such that $G = \mathcal{G}(v)$ and $H = \mathcal{G}(e)$. Both endpoints of $e$ are equal to $v$, so we can let $H<G$ and $\iota_{e,\alpha}$ be identity. Let $\iota_{e,\omega} = \varphi: H\hookrightarrow G$, then the fundamental group,
    \[
    \pi_1(Y,\mathcal{G})\cong G*_\varphi\coloneqq(G*\langle x\rangle)/(\varphi(h)\sim xhx^{-1},\ h\in H),
    \]
    is called an \emph{HNN extension} of $G$. Here, $x$ is a symbol not in $G$ of infinite order, called the \emph{stable letter}.
    
    If $\lvert G: H\rvert = n$ is finite, then the Bass-Serre covering tree of $(Y,\mathcal{G})$ is an $2n$-regular tree. The stable letter $x$ acts on the covering tree as a hyperbolic automorphism of translation length $\ell(x) = 1$.
\end{exm}

If the quotient $Y = X/\Gamma$ by a subgroup $\Gamma < \aut(X)$ is infinite, it is more convenient to consider its action on a smaller subtree:
\begin{defn}[\cite{dicks2006groups}]
    Let $\Gamma < \aut(X)$ be a subgroup containing a hyperbolic element. The union of all axes $A_g$ of hyperbolic elements $g \in \Gamma$ is a subtree of $X$, called the minimal $\Gamma$-subtree of $X$.
    
    The minimal $\Gamma$-subtree $T$ is the unique minimal subtree preserved by $\Gamma$. Moreover, if $\Gamma$ is finitely generated, the quotient $T/\Gamma$ is a finite graph.
\end{defn}
We call $Y_0 = T/\Gamma$ the \emph{minimal quotient graph} for $\Gamma$, and the fundamental domain $(Y_0, \Gamma)$ (per Definition \ref{def:fundamental_domain}) the \emph{minimal quotient graph of groups} for $\Gamma$.

Intuitively, the minimal $\Gamma$-subtree is obtained by shrinking all subtrees of $X$ on which $\Gamma$ acts trivially relative to their root vertices. The boundary monomorphisms within such subtrees are bijective and subject to reduction without changing the fundamental group. Bijective boundary monomorphisms may persist in the minimal quotient graph of groups of $\Gamma$, suggesting that further reductions are possible:
\begin{defn}
    Let $e$ be a non-loop edge in a graph of groups $(Y, \mathcal{G})$ and let $v_1, v_2$ be the incident vertices. If the edge group $\mathcal{G}(e)$ is isomorphic to the vertex group $\mathcal{G}(v_1)$, we can reduce the graph of groups as follows:
    \begin{itemize}
        \item Shrink the edge $e$ and merge vertices $v_1$ and $v_2$ into a single vertex $v$, with $\mathcal{G}(v) = \mathcal{G}(v_2)$.
        \item The edges incident to $v$ are those that were incident to either $v_1$ or $v_2$ before the reduction (excluding $e$ itself).
        \item For edges incident to $v_1$, the new boundary monomorphism is $\iota_{e', v} = \iota_{e, v} \circ \iota_{e, v_1}^{-1} \circ \iota_{e', v_1}$. For edges incident to $v_2$, the new boundary monomorphism is $\iota_{e', v} = \iota_{e', v_2}$.
    \end{itemize}
    The fundamental group $\pi_1(Y, \mathcal{G})$ remains unchanged after such a reduction. A graph of groups $(Y, \mathcal{G})$ is called \emph{reduced} if it admits no further reductions - that is, if each boundary monomorphism is proper.
\end{defn}
For $\Gamma < \aut(X)$ with a minimal quotient graph of groups $(Y_0, \Gamma)$, any reduction $(Y, \mathcal{G})$ of $(Y_0, \Gamma)$ that is itself reduced is called a \emph{reduced graph of groups} associated with $\Gamma$. The underlying graph $Y$ is called a \emph{reduced graph} associated with $\Gamma$. When the minimal fundamental domain $(Y, \mathcal{G})$ of a subgroup $\Gamma < \aut(X)$ is itself reduced, we simply say that the quotient graph $Y = T/\Gamma$ is a \emph{reduced quotient} of $\Gamma$ when it is unambiguous.

\begin{exm}[Amalgamated products acting non-faithfully on the minimal subtree]\label{exm:non_faithful}
    Let $K$ be a group with $\lvert K\rvert = 2^k$, $k>0$. Let $G$ and $H$ be extensions of $K$ with order $2^{k+1}$. Every $2$-group admits a descending chains of subgroups:
    \[
    G = G_{k+1}\triangleright G_k\triangleright \dots \triangleright G_1\triangleright G_0 = \{1\},\quad H = H_{k+1}\triangleright H_k\triangleright \dots \triangleright H_1\triangleright H_0 = \{1\},\quad \lvert G_i\rvert = \lvert H_i\rvert = 2^i.
    \]
    Consider an amalgamated product:
    \[
    \Gamma = G_0 *_{G_0} G_1*_{G_1}\dots *_{G_k}G *_{G} G*_K H*_{H} H*_{H_k}\dots *_{H_1} H_1 *_{H_0} H_0.
    \]
    Then $\Gamma$ is the fundamental group of a path of groups. From the edge-vertex indices, its Bass-Serre universal covering tree is contained in a trivalent tree, and the existence of trivial edge groups $G_0$ and $H_0$ implies that the action on $X_3$ is faithful. Therefore, $\Gamma$ is a discrete subgroup of $\aut(X_3)$.

    On the other hand, the edge reduction of the aforementioned path of groups yields a path of unit length, corresponding to the fact that $\Gamma\cong G*_K H$. Consequently, the minimal $\Gamma$-subtree is a bi-infinite path, and $K$ is the kernel of the action of $\Gamma$ restricted to this minimal subtree.
\end{exm}
Following a similar argument, one sees that the HNN extension,
\[
\Gamma = G_0 *_{G_0} G_1 *_{G_1} \dots *_{G_k}G *_{G} G*_\varphi,\quad \lvert G\rvert = 2^{k+1},\quad \varphi\in \operatorname{Out}(G),
\]
is isomorphic to $G\rtimes_\varphi \mathbb{Z}$ and is also a non-faithful discrete subgroup of $\aut(X_3)$.

\section{Rigidity of Two-generated Discrete Subgroups in \texorpdfstring{$\aut(X_3)$}{Aut(X3)}}\label{sec:rigidity}
Example \ref{exm:non_faithful} shows that discrete subgroups of tree automorphisms admit far more freedom without a faithful action on the minimal subtree. On the other hand, after taking the quotient by the action kernel, it results in a faithful subgroup, maintaining the key group structure while being subject to more restrictions. In particular, when the underlying tree is trivalent and the geometric quantities for the generators are sufficiently small, a complete classification of two-generator faithful discrete subgroups is possible. This rigidity phenomenon is reflected in the main results below.
\begin{thm}\label{thm:classification}
    Let $g\in\aut(X_3)$ be an elliptic automorphism of the trivalent tree and $h\in \aut(X_3)$ be a hyperbolic automorphism with translation length $\ell(h) = 2$, such that $g$ fixes a vertex of $A_h$. Suppose $\Gamma = \langle g,h\rangle$ is discrete and acts faithfully on the $\Gamma$-minimal subtree of $X_3$. Then, $\Gamma$ must be one of the following:
    \begin{itemize}
        \item The infinite dihedral group $D_\infty = C_2*C_2$,
        \item One of the seven free amalgamated products of Djokovi\'c-Miller type (including the free product $C_3*C_2$),
        \item One of the fifteen free amalgamated products of Goldschmidt type, except for $\mathrm{G}_1^1$ (but including the free product $C_3*C_3$), or
        \item An HNN extension $G*_\varphi$, where $G$ admits the following presentation for any $n\geq 1$ and $0\leq k\leq \lfloor\frac{n}{3}\rfloor$:
        \[
        G = \langle t_1,\dots, t_n\rangle,\ t_i^2 = 1 \text{ for }i=1,\dots, n,\ [t_i,t_{i+j}] = 1\text{ for } 0<j<n-k,
        \]
        and
        \[
        [t_i,t_{i+j}] = \prod_{l=k}^{j-k}t_{i+l}^{\epsilon(j,l)},\ \epsilon(j,l)\in\{0,1\},\text{ for }n-k\leq j\leq n-1.
        \]
        Moreover, the HNN extension is over the isomorphism $\varphi: H_1\to H_2$ for subgroups
        \[
        H_1 = \langle t_1,\dots, t_{n-1}\rangle,\ H_2 = \langle t_2,\dots, t_{n}\rangle,
        \]
        with $\varphi(t_i) = t_{i+1}$. This includes the direct product $C_2\times \mathbb{Z}$ when $n = 1$.
    \end{itemize}
    In particular, the order of $g$ is in $\{2,3,4,6,8,12\}$. The possible isomorphism types for each $\ord(g)$ are given below:
    \begin{center}
        \begin{tabular}{c|p{12cm}}
            $\ord(g)$ & $\Gamma = \langle g,h\rangle$ \\
            \hline
            $2$ & $D_\infty$, $\mathrm{DjM}_1$, $\mathrm{DjM}_2^1$, $\mathrm{DjM}_2^2$, $\mathrm{DjM}_3$, $\mathrm{DjM}_4^1$, $\mathrm{DjM}_4^2$, $\mathrm{DjM}_5$, $\mathrm{G}_1^2$, $\mathrm{G}_1^3$, $\mathrm{G}_2$, $\mathrm{G}_2^1$, $\mathrm{G}_2^2$, $\mathrm{G}_2^3$, $\mathrm{G}_2^4$, $\mathrm{G}_3$, $\mathrm{G}_3^1$, $\mathrm{G}_4$, $\mathrm{G}_4^1$, $\mathrm{G}_5$, $\mathrm{G}_5^1$, and HNN extensions \\
            \hline
            $3$ & $\mathrm{DjM}_1$, $\mathrm{DjM}_2^2$, $\mathrm{DjM}_4^1$, $\mathrm{DjM}_4^2$, $\mathrm{G}_1$, $\mathrm{G}_1^2$, $\mathrm{G}_2$, $\mathrm{G}_2^1$, $\mathrm{G}_2^2$, $\mathrm{G}_2^3$ \\
            \hline
            $4$ & $\mathrm{DjM}_2^2$, $\mathrm{DjM}_3$, $\mathrm{DjM}_4^1$, $\mathrm{DjM}_4^2$, $\mathrm{DjM}_5$, $\mathrm{G}_2^1$, $\mathrm{G}_2^2$, $\mathrm{G}_2^4$, $\mathrm{G}_3$, $\mathrm{G}_3^1$, $\mathrm{G}_4$, $\mathrm{G}_4^1$, $\mathrm{G}_5$, $\mathrm{G}_5^1$, and HNN extensions (with $k>0$) \\
            \hline
            $6$ & $\mathrm{DjM}_3$, $\mathrm{DjM}_5$, $\mathrm{G}_1^2$, $\mathrm{G}_1^3$, $\mathrm{G}_2$, $\mathrm{G}_2^2$, $\mathrm{G}_2^3$, $\mathrm{G}_2^4$, $\mathrm{G}_3^1$, $\mathrm{G}_4$, $\mathrm{G}_4^1$, $\mathrm{G}_5$, $\mathrm{G}_5^1$ \\
            \hline
            $8$ & $\mathrm{DjM}_4^1$, $\mathrm{DjM}_4^2$, $\mathrm{DjM}_5$, $\mathrm{G}_4$, $\mathrm{G}_4^1$, $\mathrm{G}_5$, $\mathrm{G}_5^1$ \\
            \hline
            $12$ & $\mathrm{G}_2^1$, $\mathrm{G}_2^4$, $\mathrm{G}_4$, $\mathrm{G}_4^1$, $\mathrm{G}_5^1$ \\
        \end{tabular}
    \end{center}
\end{thm}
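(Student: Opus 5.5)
The plan is to determine the reduced quotient graph of groups of $\Gamma$ on its minimal subtree $T$. Then I would identify the resulting faithful amalgams using the classical classifications of Djokovi\'c--Miller and Goldschmidt, together with a direct analysis of the HNN case.

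\textbf{Step 1: the shape of the quotient.} Let $v\in A_h$ be a vertex fixed by $g$. Since $\Gamma$ is generated by $g$ and $h$, and $\ell(h)=2$, the segment $[v,h v]$ of length $2$ (with midpoint $w$) together with its translates covers $T$. Hence the minimal quotient graph $T/\Gamma$ is a quotient of this segment. There are three possibilities:
\begin{itemize}
\item both edges lie in the same orbit, $v\sim w$, giving a loop and hence an HNN extension;
\item $v$ and $w$ lie in different orbits, but $v$ and $hv$ are identified and the two edges are distinct orbits, giving a segment and hence an amalgam $\Gamma_v*_{\Gamma_e}\Gamma_w$;
\item after reduction (bijective boundary maps), a single edge or trivial vertex remains, giving $D_\infty$ or the free products $C_3*C_2$ and $C_3*C_3$.
\end{itemize}
Since $T\subset X_3$, the indices $|\Gamma_u:\Gamma_e|$ are at most $3$. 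The reduced, nondegenerate segment cases are therefore the $(3,2)$- and $(3,3)$-amalgams. The $(2,2)$ reduced segment forces $D_\infty$ when the action is faithful, since both vertex groups would equal the edge group times $C_2$ and the kernel would be nontrivial unless the edge group is trivial. The loop case forces a $2$-regular tree of valency $2\cdot|G:H|$, so $|G:H|=1$ is impossible for a reduced graph. I would therefore check that the loop arises with $T$ locally of valency $2$ or $3$ at each vertex, yielding index-$1$ type HNN extensions $G*_\varphi$ with $H_1,H_2$ of index $2$ or $1$. Pinning down this bookkeeping is part of the work.

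\textbf{Step 2: the amalgams.} For a faithful locally $s$-arc-transitive $(3,2)$- or $(3,3)$-amalgam, faithfulness plus the Djokovi\'c--Miller and Goldschmidt theorems give exactly seven and fifteen isomorphism types, respectively. I would then check which are generated by an elliptic $g$ in a vertex group together with a hyperbolic $h$ of translation length $2$ whose axis passes through $\mathrm{Fix}(g)$. In the $(3,3)$ case, $h$ must be a product of elements of the two vertex groups, both of which map one edge to the other. The excluded type $\mathrm{G}_1^1$ should fail because no such pair generates it.

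\textbf{Step 3: the HNN family.} In the HNN case with $\varphi(t_i)=t_{i+1}$, the edge groups are nested pointwise stabilizers along $A_h$. The vertex group $G$ is a $2$-group, since vertex stabilizers of the trivalent subtree have index-$2$ chains. Each $t_i$ should be the involution fixing a prescribed ball along the axis. Faithfulness then forces $G$ to be generated by the $h$-conjugates of a single involution, and the commutator relations are constrained by how far apart the fixed half-trees are. This yields the presentation with parameters $n$ and $k\le\lfloor n/3\rfloor$. This step, and showing that every listed $\epsilon(j,l)$ pattern is realized and no others, is the main obstacle. I would attack it by a Stallings-folding style computation, as in the appendix, to bound $k$.

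\textbf{Step 4: the table.} Finally, the list of possible orders of $g$ in $\{2,3,4,6,8,12\}$ comes from enumerating element orders in the finite vertex groups of each listed amalgam. I would compute these by GAP, checking for each type which element orders can pair with a suitable $h$.
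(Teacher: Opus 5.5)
Your outline has the same architecture as the paper's proof: the $\Gamma$-translates of $A_h$ give a quotient of the $2$-circuit $A_h/\langle h\rangle$. That leads either to a faithful amalgam with indices at most $3$, settled by Djokovi\'c--Miller and Goldschmidt, or to a faithful $2$-HNN extension. However, there is a genuine gap exactly where the theorem is not a citation, namely Step 3.

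The HNN family is infinite in $n$, so no Stallings-folding or GAP computation can establish, for all $n$, any of the following: the presentation, the bound $k\le\lfloor n/3\rfloor$, or the fact that every element of $G$ has order at most $4$. You need the last of these both for $\ord(g)\in\{2,3,4,6,8,12\}$ and for the HNN rows of the table. You also only need necessity, not that every $\epsilon$-pattern is realized. The missing argument runs as follows.

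\begin{enumerate}
\item Set $K_0=G$ and $K_i=\varphi(H_1\cap K_{i-1})$. This chain decreases with indices at most $2$. If $K_{i+1}=K_i\ne 1$, it would be a nontrivial $\varphi$-invariant subgroup, contradicting faithfulness. Hence the chain drops by exactly $2$ at each step down to $1$. This gives $|G|=2^n$, $K_{n-1}=\langle t_n\rangle$, and generators $t_i=\varphi^{i-n}(t_n)$ with $\varphi(t_i)=t_{i+1}$.
\item The subgroup $\langle t_{i+1},\dots,t_{j-1}\rangle$ is normal of index $2$ in both $\langle t_i,\dots,t_{j-1}\rangle$ and $\langle t_{i+1},\dots,t_j\rangle$. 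Hence $[t_i,t_j]\in\langle t_{i+1},\dots,t_{j-1}\rangle$, and ascending words give a unique normal form.
\item An induction on $n$ is then needed. The group $H_1=\langle t_1,\dots,t_{n-1}\rangle$ with the restricted $\varphi$ is again a faithful $2$-HNN datum. The new commutators are controlled through the automorphism of $H_1$ given by conjugation by $t_n$, and symmetrically conjugation by $t_1$ on $H_2$. The normal form rules out commutator supports reaching below $t_{i+k}$ or above $t_{i+j-k}$.
\end{enumerate}

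This yields $a^2\in\langle t_{k+1},\dots,t_{n-k}\rangle\cong(C_2)^{n-2k}$ for all $a\in G$, so $\ord(g)\in\{2,4\}$ in the HNN case. Order $4$ is impossible for $k=0$, since then $G\cong(C_2)^n$. For $k>0$ it is realized by an explicit generator built from a nontrivial commutator $[t_1,t_{1+n-k}]$.

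Step 1 also needs repair, because as written your cases are swapped.
\begin{itemize}
\item If the two edges of $[v,hv]$ lie in one $\Gamma$-orbit, the quotient is a single edge; that is the amalgam case.
\item $v\sim w$ cannot occur, because $g$ and $h$ (with $\ell(h)=2$) preserve the bipartition of $X_3$.
\item It is the case of two edge orbits, the $2$-circuit, that yields the HNN extension.
\end{itemize}
The bookkeeping you defer is short. At each vertex of the $2$-circuit the two indices are $(1,1)$ or $(1,2)$. All ones would make the common group normal in $\Gamma$. Balancing orders around the circuit then forces one edge to have index $1$ at both ends and the other to have index $2$ at both ends. Contracting the former gives precisely a faithful $2$-HNN extension.

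On the amalgam side, ``$\mathrm{G}_1^1$ should fail'' and the negative entries of the table need an actual finite test of generation for the infinite group $G_1*_HG_2$. The paper uses an iterated saturation of subgroups of $G_1$ and $G_2$ through $H$, which stabilizes because the vertex groups are finite. It adds two structural arguments:
\begin{itemize}
\item $\mathrm{G}_1^1$ surjects onto $(C_3\times C_3)\rtimes C_2$, which is not $2$-generated.
\item In the remaining cases with $\ord(g)\in\{3,6\}$, $g$ fixes a single vertex of $A_h$ and permutes its neighbours. So $gh$ or $gh^{-1}$ is elliptic with fixed set adjacent to $T_g$, which reduces the question to the two-elliptic search.
\end{itemize}
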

\begin{rmk}
    Djokovi\'c \cite{djokovic1980class} studied faithful $(4,2)$-amalgams; besides a finite number of amalgams of $A_4$- and $S_4$-type, he showed that the remainder is an infinite family of $D_8$-type amalgams. We note that the vertex stabilizers of the HNN extensions in our classification are described in a manner very similar to the edge stabilizers for those $D_8$-type $(4,2)$-amalgams.
\end{rmk}
We also have similar classification results for translation length $\ell(h) = 1$ (allowing edge inversions) and for two-elliptic generating pairs.
\begin{cor}\label{cor:translation_1}
    Let $g\in\aut(X_3)$ be an elliptic automorphism, and let $h\in \aut(X_3)$ be a hyperbolic automorphism with translation length $\ell(h) = 1$, such that $g$ fixes a vertex of $A_h$. Suppose $\Gamma = \langle g,h\rangle$ is discrete (allowing edge inversions) and faithful; then $\Gamma$ must be $D_\infty$ or one of the seven free amalgamated products of Djokovi\'c-Miller type. In particular, the order of $g$ is in $\{2,3,4,6\}$. The possible isomorphism types for each $\ord(g)$ are given below:
    \begin{center}
        \begin{tabular}{c|p{12cm}}
            $\ord(g)$ & $\Gamma = \langle g,h\rangle$ \\
            \hline
            $2$ & $D_\infty$, $\mathrm{DjM}_2^1$, $\mathrm{DjM}_2^2$, $\mathrm{DjM}_3$, $\mathrm{DjM}_4^1$, $\mathrm{DjM}_4^2$, $\mathrm{DjM}_5$\\
            \hline
            $3$ & $\mathrm{DjM}_1$, $\mathrm{DjM}_2^2$, $\mathrm{DjM}_4^1$, $\mathrm{DjM}_4^2$\\
            \hline
            $4$ & $\mathrm{DjM}_4^1$, $\mathrm{DjM}_4^2$, $\mathrm{DjM}_5$\\
            \hline
            $6$ & $\mathrm{DjM}_3$, $\mathrm{DjM}_5$ \\
        \end{tabular}
    \end{center}
\end{cor}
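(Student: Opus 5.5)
The plan is to reduce Corollary~\ref{cor:translation_1} to Theorem~\ref{thm:classification} by passing to $h^2$, and then to use the translation-length-one structure to exclude everything except the Djokovi\'c--Miller amalgams and $D_\infty$.

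First, since $\ell(h)=1$ and $g$ fixes a vertex $v\in A_h$, the element $h^2$ is hyperbolic with $\ell(h^2)=2$ and $A_{h^2}=A_h$. So the pair $(g,h^2)$ satisfies the hypotheses of Theorem~\ref{thm:classification}, except possibly for discreteness and faithfulness of $\Gamma'=\langle g,h^2\rangle$. Discreteness passes to subgroups. For faithfulness I would compare minimal subtrees. The subgroup $\Gamma'':=\langle g, hgh^{-1}, h^2\rangle$ has index at most $2$ in $\Gamma$, since it is normalized by $h$. A finite-index subgroup has the same minimal subtree $T$, and a kernel of $\Gamma''$ on $T$ is normal in $\Gamma$ restricted to the finite-index setting; it is trivial because $\Gamma$ acts faithfully. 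This suggests working with $\Gamma''$ rather than $\Gamma'$. However, $\Gamma''$ is a three-generator group, so it is cleaner to argue directly.

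The direct argument uses the quotient graph. Since $\ell(h)=1$ and $g$ fixes $v\in A_h$, the vertices $v$ and $hv$ are adjacent, and $h$ maps the edge $e=[v,hv]$ to $[hv,h^2v]$. The minimal subtree $T$ is contained in $\Gamma\cdot(\text{star of }v)$, so all vertices of $T$ lie in one $\Gamma$-orbit. The quotient $T/\Gamma$ is then a single vertex carrying loops, possibly with inverted edges giving half-edges. Using valency $3$, I would argue as follows.
\begin{itemize}
  \item If $\Gamma$ is inversion-free, then $T/\Gamma$ is a loop. This yields an HNN extension $\Gamma_v*_{\varphi}$ with $[\Gamma_v:\Gamma_e]$ equal to $1$ or $\tfrac32$, since $2n\le 3$ in the HNN example. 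Only $n=1$ is possible, which means $\Gamma_v=\Gamma_e$.
  \item In that case $T$ is a line, and faithfulness forces $\Gamma\le D_\infty$. The orientation-preserving requirement then gives $\Gamma\cong\mathbb Z$, which contradicts the presence of the nontrivial elliptic $g$. If instead $g$ acts as a reflection at $v$ on $T$, then $\Gamma\cong D_\infty$.
  \item Otherwise some element inverts an edge. Subdividing $T$ barycentrically gives a $(3,2)$-biregular tree, and $\Gamma$ acts on it without inversions with quotient a segment. Hence $\Gamma\cong\Gamma_v*_{\Gamma_e}\Gamma_m$, where $m$ is the edge midpoint, $[\Gamma_v:\Gamma_e]=3$ and $[\Gamma_m:\Gamma_e]=2$.
  \item Such a faithful $(3,2)$-amalgam is, by Djokovi\'c--Miller \cite{djokovic1977regular}, one of the seven types $\mathrm{DjM}_1,\dots,\mathrm{DjM}_5$.
\end{itemize}
The vertex stabilizers here are $C_3$, $S_3$, $C_3\times S_3$-type, $S_4$ and $S_4\times C_2$. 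Since $g\in\Gamma_v$ and elements of $\Gamma_m\setminus\Gamma_e$ invert edges, the order of $g$ divides an element order of these groups. This yields $\ord(g)\in\{2,3,4,6\}$.

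The table is the main obstacle. For each $\mathrm{DjM}$ type, I would need to determine which element orders $\ord(g)$ admit $h$ with $\ell(h)=1$ and $\langle g,h\rangle=\Gamma$. In that setting $h$ is a product of an inversion with an element of $\Gamma_v$. I would handle this by the same finite computation (Stallings folding in GAP, Appendix~\ref{sec:program}) used for Theorem~\ref{thm:classification}. The procedure is to enumerate pairs (element of order $k$ in $\Gamma_v$, translation-length-one element) up to conjugacy, and to check generation by folding the corresponding graph to the quotient segment. Alternatively, the table follows by extracting from the $\ord(g)$ table of Theorem~\ref{thm:classification} the rows restricted to $\mathrm{DjM}$ types and $D_\infty$, after checking that a generating pair with $\ell(h)=1$ exists. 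I expect this verification, particularly excluding $\mathrm{DjM}_1$ for $\ord(g)=2$ (the group $C_3*C_2$ needs $g$ of order $3$), to be the delicate step.
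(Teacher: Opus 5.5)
Your outline follows the paper's argument. An inversion is forced, and barycentric subdivision turns $T/\Gamma$ into a half-edge with $[\Gamma_m:\Gamma_e]=2$. So $\Gamma$ is $D_\infty$ or a faithful $(3,2)$-amalgam of Djokovi\'c--Miller type, and $g$ is conjugate into the vertex group $G_1=\Gamma_v$ of an original vertex, never into $\Gamma_m\setminus\Gamma_e$. The $h^2$ detour is unnecessary, since on the subdivided tree $h$ itself has translation length $2$.

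Several details are wrong, although the final list survives:
\begin{itemize}
\item In the inversion-free case, the two boundary images of $\Gamma_e$ have the same order, so $2[\Gamma_v:\Gamma_e]\le 3$ forces $\Gamma_v=\Gamma_e$. Then $\Gamma_v$ fixes the line $T$ pointwise, faithfulness gives $g=1$, and the case is vacuous.
\item Your subcase in which $g$ reflects $T$ at $v$ cannot occur in the inversion-free case, because $hg$ would swap $v$ and $hv$.
\item $D_\infty$ actually arises in the inversion case with $[\Gamma_v:\Gamma_e]=2$. Your third bullet skips this by asserting that the subdivision is $(3,2)$-biregular. In that case $T$ is a line, $\Gamma_e$ fixes it, hence $\Gamma_e=1$ and $\Gamma\cong C_2*C_2$.
\item The $\mathrm{DjM}_3$ vertex group is $D_{12}\cong S_3\times C_2$, not of $C_3\times S_3$ type. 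The element orders coincide, so $\{2,3,4,6\}$ is unaffected.
\end{itemize}

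For the table, the missing link is the following correspondence. The subdivided tree is a $(3,2)$-biregular tree on which $\Gamma$ acts without inversions, $h$ has translation length $2$ there, and $g$ fixes a valency-$3$ vertex of $A_h$. Conversely, un-subdividing an $\ell(h)=2$ pair with $g$ conjugate into $G_1$ gives an $\ell(h)=1$ pair. So the table is the Djokovi\'c--Miller and $D_\infty$ part of the table in Theorem \ref{thm:classification}, restricted to witnesses $g\in G_1$.

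This restriction deletes three kinds of entries:
\begin{itemize}
\item order $2$ for $\mathrm{DjM}_1$, which is immediate since $G_1=C_3$, so it is not the delicate step;
\item order $4$ for $\mathrm{DjM}_2^2$ and $\mathrm{DjM}_3$, where $G_1=D_6$ and $D_{12}$ respectively;
\item order $8$ altogether.
\end{itemize}
The remaining entries have witnesses in $G_1$, e.g.\ $(p,xy)$ with $p\in H<G_1$ for $\mathrm{DjM}_2^1$. The substantive exclusions are order $3$ for $\mathrm{DjM}_2^1$, $\mathrm{DjM}_3$ and $\mathrm{DjM}_5$. These are inherited from Proposition \ref{prop:ell_hyp_3} through this correspondence, so the existing computations suffice and no new search is needed.
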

\begin{cor}\label{cor:two_ellip}
    Let $g,h\in\aut(X_3)$ be elliptic automorphisms with $d(T_g,T_h) = 1$. Suppose $\Gamma = \langle g,h\rangle$ is discrete and faithful; then $\Gamma$ must be one of the following:
    \begin{itemize}
        \item The infinite dihedral group,
        \item One of the seven free amalgamated products of Djokovi\'c-Miller type, except for $\mathrm{DjM}_2^1$, or
        \item One of the fifteen free amalgamated products of Goldschmidt type, except for $\mathrm{G}_1^1$.
    \end{itemize}
    In particular, the order of any elliptic generator here is in $\{2,3,4,6,8,12\}$. Possible isomorphism types for each pair of orders are provided in the appendix.
\end{cor}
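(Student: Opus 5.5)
Corollary \ref{cor:two_ellip} follows by reducing the two-elliptic configuration to a single-edge graph of groups and then reading the answer off the existing classifications of faithful $(3,2)$-, $(3,3)$- and $(2,2)$-amalgams. Since $d(T_g,T_h)=1$, there is a unique edge $e=[u,v]$ with $u\in T_g$, $v\in T_h$, and $u\notin T_h$, $v\notin T_g$. The plan is to show that $(\Gamma_u,\Gamma_v,\Gamma_e)$ yields $\Gamma\cong \Gamma_u *_{\Gamma_e}\Gamma_v$, with the segment $e$ as a fundamental domain for the action of $\Gamma$ on the minimal subtree. The tool is the standard ping-pong/Bass--Serre criterion. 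Let $T'$ be the $\Gamma$-orbit of $e$ together with the orbits of $u$ and $v$. This is a $\Gamma$-invariant subtree, because $\langle g\rangle$ and $\langle h\rangle$ permute the edges at $u$ and at $v$ respectively. Because $g\notin\Gamma_e$ and $h\notin\Gamma_e$, the indices satisfy $[\Gamma_u:\Gamma_e]\ge 2$ and $[\Gamma_v:\Gamma_e]\ge 2$. In the trivalent tree these indices are therefore in $\{2,3\}$, so $T'$ is a $(p,q)$-biregular tree with $p,q\in\{2,3\}$. Moreover $T'$ has no proper invariant subtree, so it is the minimal subtree. Faithfulness on $T'$ then makes the amalgam faithful.

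The argument splits by the indices $(p,q)$. For $(2,2)$ the tree $T'$ is a line, and faithfulness forces $\Gamma_e=1$ and $\Gamma_u\cong\Gamma_v\cong C_2$, which gives $D_\infty$. For $(3,2)$ I will invoke the Djokovi\'c--Miller classification \cite{djokovic1977regular}, which yields seven faithful amalgams. For $(3,3)$ I will invoke Goldschmidt's theorem \cite{goldschmidt1980automorphisms}, which yields fifteen. In each case $\Gamma$ is generated by $g$ and $h$, and by construction $g\in\Gamma_u$ and $h\in\Gamma_v$. The amalgam is therefore generated by one element of each vertex group, neither of which lies in the edge group.

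The real content is deciding which amalgams in the lists admit such a generating pair, namely elements $x\in G_1\setminus H$ and $y\in G_2\setminus H$ with $\langle x,y\rangle=G_1*_HG_2$. I would first use a necessary condition: $\langle x,y\rangle$ must surject onto both vertex groups modulo the appropriate normal closures. Equivalently, its Stallings folding, starting from the graph with two loops labelled $x$ and $y$ attached at the endpoints of $e$, must fold down to the one-edge graph of groups with full vertex groups. This is exactly the folding computation of Appendix \ref{sec:program}. I would run it for each of the $7+15$ amalgams and for every pair of conjugacy-class representatives $(x,y)$.

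I expect the main obstacle to be certifying the two exclusions, $\mathrm{DjM}_2^1$ and $\mathrm{G}_1^1$. For these one needs a conceptual reason, rather than only the search output, why no pair generates. I would look for a quotient to an abelian or $2$-group in which the images of all admissible $x$ and $y$ generate a proper subgroup. The order list $\{2,3,4,6,8,12\}$ then follows by reading off the orders of the elements $x\in G_1\setminus H$ in the surviving amalgams, which come from the vertex groups of these amalgams.
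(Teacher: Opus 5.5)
Your reduction to a single-edge amalgam $\Gamma_u*_{\Gamma_e}\Gamma_v$ and the split into the $(2,2)$, $(3,2)$ and $(3,3)$ cases match the paper's proof, and your candidate list is right. The gap is in the step you yourself call the crux: the exclusion of $\mathrm{DjM}_2^1$ and $\mathrm{G}_1^1$. Abelian or $2$-group quotients cannot certify either exclusion.

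For $\mathrm{DjM}_2^1=\langle x,p,y\mid x^3,p^2,y^2,(xp)^2,(yp)^2\rangle$, the relations force $x\mapsto 1$ in any abelian or $2$-group quotient. The largest such quotient is therefore $\langle\bar p,\bar y\rangle\cong V_4$. But the admissible pair $(xp,y)$, with $xp\in D_6\setminus\langle p\rangle$ and $y\in V_4\setminus\langle p\rangle$, maps to $(\bar p,\bar y)$, which generates all of it.

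For $\mathrm{G}_1^1=\langle c,x,y\mid c^2,x^3,y^3,(cx)^2,(cy)^2\rangle$, the largest abelian or $2$-group quotient is $\langle\bar c\rangle\cong C_2$. The admissible pair $(cx,cy)$ maps onto its generator. So the obstruction you propose to look for does not exist. The quotient the paper uses for $\mathrm{G}_1^1$ is the generalized dihedral group $(C_3\times C_3)\rtimes C_2$ of order $18$, which is neither abelian nor a $2$-group.

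The missing idea is Proposition~\ref{prop:prime_order}, a ping-pong fact. If two elliptic automorphisms have prime orders $p,q$ and disjoint fixed trees, they generate $C_p*C_q$, and the stabilizers of the relevant vertices are just $\langle g\rangle$ and $\langle h\rangle$. In both excluded amalgams every element of $G_1\setminus H$ and of $G_2\setminus H$ has prime order, since $D_6$ and $V_4$ contain no elements of order greater than $3$. Hence any admissible $g\in\Gamma_u$, $h\in\Gamma_v$ with $d(T_g,T_h)=1$ would force $\Gamma_u=\langle g\rangle$ to be cyclic of prime order. This contradicts $\Gamma_u\cong D_6$ (or $V_4$).

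This is exactly how the paper disposes of $\mathrm{DjM}_2^1$. The same argument handles $\mathrm{G}_1^1$ in this two-elliptic setting; the paper's rank-$3$ argument for $\mathrm{G}_1^1$ is stronger, since it also rules out elliptic-hyperbolic pairs. The same proposition is what restricts the order pairs to $\max\{\ord(g),\ord(h)\}\geq 4$ outside the free products.

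Falling back on the appendix's saturation procedure would also be a valid, if computational, certificate. Two cautions apply there. First, that procedure is an exact criterion for generation, not ``equivalent'' to the weaker necessary condition of surjecting onto vertex groups modulo normal closures. Second, you may cut down the search only by simultaneous conjugation of $(x,y)$ by $H$. Independently conjugating $x$ within $G_1$ and $y$ within $G_2$ need not preserve whether $\langle x,y\rangle$ is all of $\Gamma$.
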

Applying Theorem 3.3 in \cite{Du_Hersonsky_Tree_Aut}, we also obtain a classification result for certain two-hyperbolic discrete subgroups of $\aut(X_3)$:
\begin{cor}
    Let $g,h\in\aut(X_3)$ be hyperbolic automorphisms, and suppose $\Gamma = \langle g,h\rangle$ is discrete and faithful. Let the translation lengths be $\ell(g)$, $\ell(h)$, and let $l = \ell(A_g\cap A_h)<\infty$.
    \begin{itemize}
        \item If $\gcd(\ell(g),\ell(h)) = 2$ and $l\geq \ell(g)+\ell(h) - 2$, then $\Gamma$ is isomorphic to one of the amalgamated products or HNN extensions in Theorem \ref{thm:classification}.
        \item If $\gcd(\ell(g),\ell(h)) = 1$ and $l\geq \ell(g)+\ell(h) - 1$, then $\Gamma$ is isomorphic to one of the amalgamated products in Corollary \ref{cor:translation_1}.
    \end{itemize}
    In addition, by testing all elliptic-hyperbolic generating pairs, we further conclude:
    \begin{itemize}
        \item If $\gcd(\ell(g),\ell(h)) = 2$, then either $\Gamma$ is isomorphic to one of the HNN extensions, or $l\leq \ell(g)+\ell(h) + 6$. Equality holds only for amalgamated products of types $\mathrm{G}_5^1$ and $\mathrm{DjM}_5$.
        \item If $\gcd(\ell(g),\ell(h)) = 1$, then $l\leq \ell(g)+\ell(h) + 3$. Equality holds only for the amalgamated product of type $\mathrm{DjM}_5$ that contains an inversion.
    \end{itemize}
\end{cor}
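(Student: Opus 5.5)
The plan is to reduce both hyperbolic pairs to elliptic–hyperbolic pairs using Theorem 3.3 of \cite{Du_Hersonsky_Tree_Aut}. That theorem runs the Nielsen-move (Euclidean-algorithm) reduction on $(g,h)$ and, when the axes overlap enough, outputs an equivalent generating pair $(g',h')$ with $\langle g',h'\rangle=\Gamma$. In that pair $g'$ is elliptic and $h'$ is hyperbolic with $\ell(h')=\gcd(\ell(g),\ell(h))$. The theorem also records the geometric quantities of the reduced pair: $d(T_{g'},A_{h'})$ and $\ell(A_{h'}\cap T_{g'})$ are explicit functions of $\ell(g)$, $\ell(h)$ and $l$.

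For the first two bullets, I would show that the threshold $l\ge \ell(g)+\ell(h)-\gcd(\ell(g),\ell(h))$ is exactly the condition that the reduction ends elliptic–hyperbolic rather than Schottky. It should also give $d(T_{g'},A_{h'})=0$, i.e.\ $g'$ fixes a vertex of $A_{h'}$. The source should be that the last step of the Euclidean algorithm is a commutator-type element fixing the overlap beyond $\ell(g)+\ell(h)-\gcd$. Since Nielsen moves preserve $\Gamma$, discreteness and faithfulness on the minimal subtree carry over to $(g',h')$. If $\gcd=2$, Theorem~\ref{thm:classification} then gives the list. If $\gcd=1$, Corollary~\ref{cor:translation_1} gives it; inversions may occur in $\Gamma$ here, which is why that corollary allows them.

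For the second pair of bullets, I would invert the dictionary: $l-(\ell(g)+\ell(h)-\gcd)$ equals, up to an explicit additive constant, the length $\ell(A_{h'}\cap T_{g'})$ of the fixed segment of $g'$ along $A_{h'}$. So bounding $l$ reduces to bounding that fixed length over every elliptic–hyperbolic pair in Theorem~\ref{thm:classification} and Corollary~\ref{cor:translation_1}. For the HNN family it is unbounded as $n$ grows, which is why those groups are excluded. For each of the finitely many amalgams, the rigidity statement $\ell(A_g\cap T_h)\le 8$ from the introduction bounds it. An exhaustive check of the generating pairs, via the Stallings-folding program of Appendix~\ref{sec:program}, records the maximum and which types attain it: $\mathrm{G}_5^1$ and $\mathrm{DjM}_5$ for $\gcd=2$, and $\mathrm{DjM}_5$ with an inversion for $\gcd=1$. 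Translating back through the additive constant gives the bounds $+6$ and $+3$.

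The main obstacle is bookkeeping the exact constants in Theorem 3.3 of \cite{Du_Hersonsky_Tree_Aut}. One needs precise formulas for $d(T_{g'},A_{h'})$ and for the fixed length, in both the $\gcd=1$ and $\gcd=2$ cases, so that the thresholds $-\gcd$ and the offsets $+6$ and $+3$ come out exactly. A second difficulty is confirming that every reduced pair $(g',h')$ realizes a pair covered by the enumeration. I would first verify the formulas on the simplest case, $\ell(g)=\gcd$, where the reduction is a single step.
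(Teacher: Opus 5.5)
Your proposal follows the paper's route. You reduce via Theorem 3.3 of \cite{Du_Hersonsky_Tree_Aut} to an elliptic--hyperbolic pair with $\ell(h')=\gcd(\ell(g),\ell(h))$ and $d(T_{g'},A_{h'})=0$, apply Theorem~\ref{thm:classification} or Corollary~\ref{cor:translation_1}, and read the bounds off the appendix table of maximal fixed lengths. On the constants you flag, two points settle the bookkeeping. First, the additive constant is zero: the subtractive Euclidean algorithm consumes exactly $\ell(g)+\ell(h)-\gcd$ of overlap, so $\ell(T_{g'}\cap A_{h'})=l-(\ell(g)+\ell(h)-\gcd)$. Second, the $+3$ (rather than $+7$) comes from the $\mathrm{DjM}_5$ maximum of $8$ being measured in the $(3,2)$-biregular subdivision; in $X_3$, where the degree-$2$ vertices become midpoints of inverted edges, it is $4$.
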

The proof of Theorem \ref{thm:classification} follows from a combination of the following: in Subsection \ref{subsec:HNN}, we classify all possible HNN extensions generated by the pairs in the theorem. In Subsection \ref{subsec:3_3_amalgam}, we refer to the theorems by Djokovi\'c and Miller \cite{djokovic1977regular} and by Goldschmidt \cite{goldschmidt1980automorphisms}, which provide finite lists of possible $(3,2)$- and $(3,3)$-amalgamated products. Using a computer program (see the Appendix) and several manual verifications, we determined whether these amalgamated products are generated by elliptic-hyperbolic or two-elliptic pairs for each elliptic order.
\subsection{Rigidity in HNN extensions}\label{subsec:HNN}
In this subsection, we classify the two-generator discrete HNN extensions from Theorem \ref{thm:classification}, which form a single infinite family. In particular, the order of every elliptic automorphism in such HNN extensions is at most $4$.

Suppose that $n\geq 1$ and $0\leq k\leq \lfloor\frac{n}{3}\rfloor$. Let parameters $\epsilon(j,l)\in \{0,1\}$ for $n-k\leq j\leq n-1$, $k\leq l\leq j-k$, and $\sum_{l}\epsilon(n-k,l)>0$. Define a group with presentation:
\[
G_{n,k,\vec{\epsilon}} = \langle t_1,\dots, t_n\mid t_i^2, [t_i,t_{i+j}],\ 0<j<n-k,\ [t_i,t_{i+j}]t_{i+k}^{\epsilon(j,k)}\dots t_{i+j-k}^{\epsilon(j,j-k)},\ n-k\leq j\leq n-1\rangle.
\]
In particular, $G_{n,0,\varnothing}\cong (C_2)^n$.

The group $G$ has a Heisenberg-like structure: $G = \langle G_p,G_z,G_q\rangle$, where $G_p = \langle t_1,\dots, t_k\rangle\cong (C_2)^k$, $G_z = \langle t_{k+1},\dots, t_{n-k}\rangle\cong (C_2)^{n-2k}$, and $G_q = \langle t_{n-k+1},\dots, t_n\rangle\cong (C_2)^k$. Furthermore, $[G_p,G_z] = [G_z,G_q] = \{1\}$, and $[G_p,G_q] \leq G_z$. Notably, the group is equivalent to the group $N$ in \cite{djokovic1980class} with the change of variables $k = n-m$ and $j$ as their $j-i$.

Define subgroups $H_i = \langle t_i,\dots, t_{i+n-2}\rangle$ for $i \in \{1,2\}$, and an isomorphism $\varphi:H_1\to H_2$ given by $\varphi(t_i) = t_{i+1}$ for $i=1,\dots, n-1$. This defines an HNN extension $\Gamma = \Gamma_{n,k,\vec{\epsilon}}\coloneqq G_{n,k,\vec{\epsilon}}*_\varphi$. Since $H_1$ is core-free in $\Gamma$, the group $\Gamma$ acts faithfully on its Bass-Serre tree. Additionally, $\Gamma$ is generated by $g = t_1$ and the HNN stable letter $h$ with translation length $\ell(h) = 1$.

The fundamental graph of groups of $\Gamma$ is a loop, corresponding to a $4$-valent covering tree. If we replace the vertex of this loop with an edge having two distinct endpoints, we obtain a trivalent covering tree; in this case, the translation length becomes $\ell(h) = 2$.
\begin{exm}
    Let $n\geq 3$, $k=1$, $\epsilon(n-1,1) = 1$, and $\epsilon(n-1,l) = 0$ for $l>1$. Then $G\cong D_8\times (C_2)^{n-3}$ and $H_1,H_2\cong (C_2)^{n-1}$. The group $\Gamma$ is generated by the stable letter $h$ and $g=t_1t_{n-1}$, satisfying $\ord(g) = 2$ and $\ell(T_g\cap A_h) = 1$ (or $3$ in the trivalent case). This yields the minimal values of $\ord(g)$, $\ell(h)$, and $\ell(T_g\cap A_h)$ such that there exist discrete subgroups $\langle g,h\rangle < \aut(X_4)$ generated by an elliptic-hyperbolic pair with arbitrarily large vertex stabilizer orders.
\end{exm}
The vertex stabilizer $G$ can be more complex than a simple direct product of copies of $D_8$ and $C_2$:
\begin{exm}
    Let $n=7$, $k=2$, $\epsilon(5,2) = \epsilon(6,4) = 1$, and all other $\epsilon(j,l) = 0$. The resulting vertex group $G$, identified as \texttt{SmallGroup(128,1135)} in \textit{GAP}, is not a direct product of $D_8$'s and $C_2$'s. Its rank is $4$, while any such direct product of order $2^7 = 128$ must have rank at least $5$.
\end{exm}
Our rigidity theorem states that this construction exhausts all faithful $2$-HNN extensions of finite groups.
\begin{thm}\label{thm:HNN_rigidity}
    Suppose $\Gamma = G*_{\varphi}$ is a faithful $2$-HNN extension of a finite group $G$—that is, $H_1,H_2 < G$ with $\lvert G:H_1\rvert = \lvert G:H_2\rvert = 2$, $\varphi: H_1 \to H_2$ is an isomorphism, and no non-trivial subgroup of $H_1 \cap H_2$ is preserved by $\varphi$. Then $\Gamma$ is isomorphic to an HNN extension constructed as above. In particular, every element in $G$ has order at most $4$.
\end{thm}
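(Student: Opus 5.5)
Throughout, write $H_0=H_1\cap H_2$ and let $x$ denote the stable letter of $\Gamma=G*_\varphi$.

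\textbf{Step 1: the descending chain.} Define a chain of subgroups by
\[
K_1 = H_1,\qquad K_{i+1} = K_i \cap \varphi(K_i).
\]
Each $K_i$ is the stabilizer in $G$ of a segment of length $i$ along the axis of $x$ in the $4$-regular Bass--Serre tree.

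The key local fact is that every index $\lvert K_i : K_{i+1}\rvert$ is at most $2$. This holds because all vertex--edge indices are $2$. The argument is by induction, using the second isomorphism theorem: $K_i \cap \varphi(K_i)$ has index at most $2$ in $K_i$ as soon as $K_{i-1}\cap\varphi(K_{i-1})$ has index at most $2$ in $K_{i-1}$.

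By faithfulness, $\varphi$ preserves no nontrivial subgroup of $H_0$, so the chain must reach $1$. Let $n$ be minimal with $K_n=1$. Then $\lvert G\rvert = 2^n$.

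Once $K_{i+1}=K_i$ at some step, $K_i$ would be $\varphi$-invariant. Hence the chain is strictly decreasing with every index exactly $2$ until it terminates. In particular, $G$ is a $2$-group of order $2^n$.

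\textbf{Step 2: choosing generators.} Choose $t_1\in G\setminus H_2$. More carefully, pick elements so that $t_1, t_2=\varphi(t_1), \dots$ realize the filtration. Concretely, first choose $s\in K_{n-1}$ nontrivial (an element of order $2$). Then set $t_n=s$ and pull back via $\varphi^{-1}$, which yields elements $t_i$ with $\varphi(t_i)=t_{i+1}$.

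I expect, by analogy with Djokovi\'c's treatment of $(4,2)$-amalgams \cite{djokovic1980class}, that $G=\langle t_1,\dots,t_n\rangle$. Here $H_1=\langle t_1,\dots,t_{n-1}\rangle$ and $H_2=\langle t_2,\dots,t_n\rangle$, and the $K_i$ are generated by the appropriate consecutive blocks.

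\textbf{Step 3: the relations.} The relations are then forced by $\varphi$-equivariance. Any relation among $t_i,\dots,t_{i+j}$ with $j\le n-2$ lives inside $H_1$ or $H_2$, so it shifts under $\varphi$. One then shows the following:

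\begin{itemize}
\item $t_i^2=1$. The element $t_i^2$ lies deeper in the filtration, and shifting it contradicts minimality/faithfulness.
\item Commutators $[t_i,t_{i+j}]$ lie in the subgroup generated by the strictly intermediate generators $t_{i+1},\dots,t_{i+j-1}$.
\item These commutators vanish for small $j$.
\end{itemize}

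The parameter $k$ is the number of generators at each end that fail to commute with the opposite end. It arises as follows. Let $Z$ be the largest segment $\langle t_{k+1},\dots,t_{n-k}\rangle$ that is central-like. The Heisenberg-like shape $[G_p,G_q]\le G_z$ together with the requirement that $G_z$ be elementary abelian forces $2k\le n-k$, that is, $k\le\lfloor n/3\rfloor$. The exponents $\epsilon(j,l)$ are then just the coordinates of $[t_1,t_{1+j}]$ in $G_z$, transported by $\varphi$.

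\textbf{Step 4: order bound.} Since $G$ has nilpotency class at most $2$ with $[G,G]\le G_z$ elementary abelian, and $G$ is generated by involutions, we get $g^2\in[G,G]$. Hence $g^4=1$ for every $g\in G$.

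\textbf{Main obstacle.} The hard part is Step 3: proving that commutators of distant generators land only in the middle block $G_z$ and that they commute with everything, i.e.\ establishing that the class is at most $2$. I would attempt this by induction on $n$. Quotienting by $K_{n-1}$, a $\varphi$-related normal subgroup, does not directly give an HNN extension. So instead I would apply the induction hypothesis to $H_1$ with the partial map $\varphi|_{K_2}:K_2\to\varphi(K_2)$, which is again a faithful $2$-HNN datum of smaller order. Then I would analyze how the extra generator $t_n$ interacts with the rest, mirroring Djokovi\'c's argument for $D_8$-type amalgams.
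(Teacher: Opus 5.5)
Your outline follows the paper's route: a segment-stabilizer filtration with all indices $2$, involutions $t_i$ with $\varphi(t_i)=t_{i+1}$, induction on $n$ through $H_1$, and $g^2\in\langle t_{k+1},\dots,t_{n-k}\rangle$ for the order bound. Your Step 1 is fine. But the proposal stops exactly where the content of the theorem lies: Step 3 is asserted rather than proved, and Step 4 depends on it.

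First, the inclusion $[t_i,t_j]\in\langle t_{i+1},\dots,t_{j-1}\rangle$ needs an argument. Write $K_{a,b}=\langle t_{a+1},\dots,t_b\rangle$; it is a $\varphi$-shift of a term of the filtration, so it has order $2^{b-a}$. The paper observes that $K_{i,j-1}$ has index $2$ in both $K_{i-1,j-1}$ and $K_{i,j}$. Hence it is normal in $K_{i-1,j}$, and the quotient has order $4$ and is generated by the images of $t_i,t_j$, which therefore commute. You also need $t_i\in K_{i-1}\setminus K_i$ for the one-sided chain below. That fact gives $G=\langle t_1,\dots,t_n\rangle$, which you only \emph{expect}, and with it the unique ascending normal form on which all the exponents $\epsilon(j,l)$ depend.

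Second, and decisively, nothing in the proposal shows that $[t_i,t_{i+j}]=1$ for $j<n-k$, while for $j\ge n-k$ the commutator uses only $t_{i+k},\dots,t_{i+j-k}$. Your derivation of $k\le\lfloor n/3\rfloor$ presupposes this shape. Elementary-abelianness of $G_z$ does not bound $k$ by itself. The actual reason is that the nontrivial word $[t_1,t_{1+n-k}]$ must lie in $\langle t_{1+k},\dots,t_{1+n-2k}\rangle$, which is trivial unless $3k\le n$. Without this, the class-$2$ structure used in Step 4 is also unproved.

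The paper's inductive step is the bulk of the proof, and your sketch does not anticipate it. By induction, $H_1=\langle t_1,\dots,t_{n-1}\rangle$ has the required form with some parameter $k_0$. Since $H_1\triangleleft G$ has index $2$, $\psi=\mathrm{ad}_{t_n}$ is an automorphism of $H_1$ with $\psi^2=\mathrm{id}$, and it fixes $t_j$ for $j\ge k_0+2$. One then applies $\psi$ to relations of $H_1$, e.g.\ $\bigl(\psi(t_1)t_{l_0+1}\bigr)^2=\psi\bigl((t_1t_{l_0+1})^2\bigr)$. A case analysis, plus an auxiliary lemma that $t_{n-k_0}$ does not occur in $\psi(t_{k_0+1})$, forces every commutator word to lose one more letter at each end ($k_\pm\ge k_0+1$). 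Finally, $k_\pm\ge k_0+1$ and $\psi^2=\mathrm{id}$ show that $[t_1,t_n]$ involves only $t_{k_0+2},\dots,t_{n-k_0-1}$.

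Two smaller slips also need fixing.
\begin{itemize}
\item With your chain $K_1=H_1$, $K_{i+1}=K_i\cap\varphi(K_i)$, the generator $s$ of $K_{n-1}$ is what should be $t_{n-1}$, not $t_n$. Pulling back yields only $n-1$ elements, all in $H_1$, and the last pull-back does not exist. For example, for $G=V_4=\langle a,b\rangle$ with $\varphi(a)=b$, you get $s=a\notin H_2$. Set $t_n:=\varphi(s)$, or use the paper's one-sided chain $K_0=G$, $K_i=\varphi(H_1\cap K_{i-1})$.
\item The map $\varphi|_{K_2}:K_2\to\varphi(K_2)$ with $K_2=H_1\cap H_2$ is not a datum on $H_1$, since $\varphi(H_1\cap H_2)=\langle t_3,\dots,t_n\rangle\not\le H_1$. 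The datum on $H_1$ is $\langle t_1,\dots,t_{n-2}\rangle\to\langle t_2,\dots,t_{n-1}\rangle$; yours is a datum on $H_2$.
\end{itemize}
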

We need a few lemmas to proceed:
\begin{lem}\label{lem:pres}
    The group $G$ satisfying the requirements in Theorem \ref{thm:HNN_rigidity} is a $2$-group. In addition, if $\lvert G\rvert = 2^n$, then $G$ is generated by $n$ involutions $t_1,\dots, t_n$, such that the HNN isomorphism satisfies $\varphi(t_i) = t_{i+1}$.
\end{lem}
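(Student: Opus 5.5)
The plan is to build a descending chain of subgroups of $G$ from the domains of the iterates of $\varphi$, use faithfulness to force it down to the trivial group in steps of index $2$, and then read off the generators from the bottom of the chain.

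\textbf{The chain.} Set $L_0 = G$ and $L_1 = H_1$. Recursively, let $L_{m+1} = \{x \in H_1 : \varphi(x) \in L_m\} = \varphi^{-1}(L_m \cap H_2)$. So $L_m$ is exactly the set of $x\in G$ for which $x,\varphi(x),\dots,\varphi^{m-1}(x)$ are all defined and lie in $H_1$. The chain is decreasing: $L_1\subseteq L_0$ trivially, and $L_m\subseteq L_{m-1}$ implies $L_{m+1}\subseteq L_m$ directly from the recursion.

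\textbf{Index at most $2$.} Since $|G:H_2|=2$, we have $|L_m : L_m\cap H_2|\le 2$. Because $\varphi^{-1}:H_2\to H_1$ is an isomorphism, $|L_{m+1}| = |L_m\cap H_2|$. Hence $|L_m:L_{m+1}|\in\{1,2\}$.

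\textbf{Faithfulness forces termination at $1$.} Since $G$ is finite, the chain stabilises, say $L_{m+1}=L_m$ with $m\ge 1$. Then $\varphi(L_m) = L_m\cap H_2 \subseteq L_m$, and comparing orders gives $\varphi(L_m)=L_m$. Moreover $L_m\subseteq H_1$ and $L_m=\varphi(L_m)\subseteq H_2$. So $L_m$ is a $\varphi$-invariant subgroup of $H_1\cap H_2$, and the faithfulness hypothesis gives $L_m=1$. Every step before stabilisation has index exactly $2$, so $G=L_0>L_1>\dots>L_n=1$ with all indices equal to $2$. Thus $G$ is a $2$-group of order $2^n$. (If $n=0$ then $H_1=G$, contradicting index $2$, so $n\ge1$.)

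\textbf{The generators.} Since $|L_{n-1}|=2$, take $t_1$ to be its nontrivial element, which is an involution. Because $t_1\in L_{n-1}$, the elements $t_{j+1}=\varphi^j(t_1)$ are defined for $0\le j\le n-1$. Each is an involution, and $\varphi(t_i)=t_{i+1}$ holds by construction.

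\textbf{Locating each $t_{j+1}$ in the chain.} From the definition, $\varphi^j$ maps $L_{m}$ into $L_{m-j}$ for $m\ge j$, so $t_{j+1}\in L_{n-1-j}$. Conversely, if $t_1\in L_j$ and $\varphi^j(t_1)\in L_{n-j}$, then unwinding the definition gives $t_1\in L_n=1$, which is false. Hence $t_{j+1}\in L_{n-1-j}\setminus L_{n-j}$.

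\textbf{Generation.} Induct on $j$ to show $L_{n-j}=\langle t_1,\dots,t_j\rangle$. The base case is $L_n=1$. For the inductive step, $L_{n-j}$ has index $2$ in $L_{n-1-j}$ and $t_{j+1}$ lies in $L_{n-1-j}$ but not in $L_{n-j}$, so $L_{n-1-j}=\langle L_{n-j},t_{j+1}\rangle=\langle t_1,\dots,t_{j+1}\rangle$. Taking $j=n-1$ gives $G=L_0=\langle t_1,\dots,t_n\rangle$.

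\textbf{Main obstacle.} The delicate point is the faithfulness step: checking that the stabilised term is genuinely a subgroup of $H_1\cap H_2$ preserved by $\varphi$, so that the hypothesis in Theorem \ref{thm:HNN_rigidity} applies. A secondary point to verify carefully is the unwinding step showing $t_{j+1}\notin L_{n-j}$.
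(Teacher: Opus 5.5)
Your proof is correct and follows essentially the paper's argument. You use the chain of domains $L_m$ of $\varphi^m$, while the paper uses the chain of images $K_i=\varphi(H_1\cap K_{i-1})$; since $K_m=\varphi^m(L_m)$, both give the same generators $t_1,\dots,t_n$. The one difference is that you get $\lvert G\rvert=2^n$ directly from the index-$2$ chain, whereas the paper first deduces the $2$-group property from $G$ being an edge stabilizer in $\aut(X_3)$; your version is more self-contained, and your check that the stable term lies in $H_1\cap H_2$ with $\varphi(L_m)=L_m$ spells out a step the paper only asserts.
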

\begin{proof}
    Suppose a group $G$ together with subgroups $H_1,H_2$ of index $2$ and an isomorphism $\varphi: H_1\to H_2$ defines a faithful HNN extension $\Gamma\coloneqq G*_\varphi$. Then $\Gamma$ is the fundamental group of the $2$-circuit with vertex groups and an edge group $G$, and the other edge group isomorphic to $H_1$, and $\varphi$ corresponds to the HNN stable letter. Primitivity requires that $G$ be a finite $\aut(X_3)$-edge stabilizer subgroup, so $G$ is a $2$-group.

    Inductively define
    \[
    K_0 = G,\quad K_i = \varphi(H_1\cap K_{i-1}).
    \]
    Then $K_{i+1}\leq K_i$, and
    \[
    \lvert G:K_{i+1}\rvert\leq \lvert G:H_1\rvert\cdot \lvert G:K_i\rvert = 2\lvert G:K_i\rvert.
    \]
    If $K_{i+1} = K_i\neq \{1\}$, then $\varphi$ preserves a non-trivial subgroup of $G$, a contradiction to primitivity. Therefore $K_{i+1}<K_i$ with index $2$; assume that $\lvert G\rvert = 2^n$, then
    \[
    \{1\} = K_n<K_{n-1}<\dots<K_1=H_2<K_0=G.
    \]
    Let $t_n$ be the generator of $K_{n-1}$ of order $2$. For $i=1,\dots, n-1$, define $t_i = \varphi^{i-n}(t_{n})$; it follows that
    \[
    t_i\in K_{i-1},\ t_i\notin K_{i},\ i=1,\dots,n,
    \]
    implying $\lvert\langle t_1,\dots, t_n\rangle\rvert\geq 2^n = \lvert G\rvert$. Since $t_1,\dots, t_n\in G$, we conclude that $G$ admits the generating set $(t_1,\dots, t_n)$ with $\varphi(t_i) = t_{i+1}$.
\end{proof}
From the argument above, we see that
\[
K_i = \langle t_{i+1},\dots,t_{n}\rangle,
\]
is a subgroup of $G$ of order $2^{n-i}$. For $0\leq i< j\leq n$, denote
\[
K_{i,j} = \langle t_{i+1},\dots, t_j\rangle,
\]
then $K_{i,j} = \varphi^{j-n}(K_{i-j+n})$ is a subgroup of order $2^{j-i}$.
\begin{lem}\label{lem:comm}
    Each element in $G$ is uniquely expressed as a product,
    \[
    g = t_{i_1}\dots t_{i_k},\ 1\leq i_1<\dots < i_k\leq n.
    \]
    Moreover, the group $G$ admits commutator relations,
    \[
    [t_i,t_{i+l}] = t_{i+1}^{\epsilon(l,1)}t_{i+2}^{\epsilon(l,2)}\dots t_{i+l-1}^{\epsilon(l,l-1)},
    \]
    for $\epsilon(l,1)$,\dots, $\epsilon(l,l-1)\in \{0,1\}$.
\end{lem}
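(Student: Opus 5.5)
We prove the two assertions separately. Both rest on the subgroup chain $K_{i,j}=\langle t_{i+1},\dots,t_j\rangle$ with $\lvert K_{i,j}\rvert = 2^{j-i}$, established right after Lemma \ref{lem:pres}, and on the shift $\varphi$, which carries $K_{i,j}$ to $K_{i+1,j+1}$ whenever $j<n$.

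\textbf{Normal form.} Argue by induction on $m=j-i$ that every element of $K_{i,j}$ is uniquely a product $t_{i_1}\cdots t_{i_r}$ with $i<i_1<\dots<i_r\leq j$.

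For $m=1$ there is nothing to prove. For the step, note that $K_{i+1,j}$ has index $2$ in $K_{i,j}$, by the order count, and that $t_{i+1}\notin K_{i+1,j}$. Hence $K_{i,j}=K_{i+1,j}\sqcup t_{i+1}K_{i+1,j}$, and the induction hypothesis on $K_{i+1,j}$ gives existence of the normal form.

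Uniqueness follows by counting: there are $2^{m}$ increasing words and $2^{m}$ elements. Taking $i=0$, $j=n$ gives the first statement for $G$.

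\textbf{Commutator relations.} For $[t_i,t_{i+l}]$ the plan is:

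1. Since $K_{i-1,i+l-1}$ has index $2$ in $K_{i-1,i+l}$, the latter normalizes it. So conjugating by $t_{i+l}$ preserves $K_{i-1,i+l-1}$.
2. Symmetrically, $K_{i,i+l}$ has index $2$ in $K_{i-1,i+l}$, so it is normalized by $t_i$.
3. Therefore $[t_i,t_{i+l}] = t_i\,(t_{i+l}t_it_{i+l})$ lies in $K_{i-1,i+l-1}$. Also $[t_i,t_{i+l}] = (t_it_{i+l}t_i)\,t_{i+l}$ lies in $K_{i,i+l}$.
4. The key point is that $K_{i-1,i+l-1}\cap K_{i,i+l}=K_{i,i+l-1}$. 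This follows from uniqueness of the normal form inside $K_{i-1,i+l}$: an element of the first subgroup has no $t_{i+l}$ in its normal form, and an element of the second has no $t_i$. So the commutator lies in $\langle t_{i+1},\dots,t_{i+l-1}\rangle$ and has the normal form $t_{i+1}^{\epsilon_1}\cdots t_{i+l-1}^{\epsilon_{l-1}}$.

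The exponents a priori depend on $i$. To remove this dependence, apply $\varphi$ (shift by one), which is legitimate whenever all the indices involved stay within $1,\dots,n$; in particular $t_i,t_{i+l}\in H_1$ requires $i+l\leq n-1$. Each shift sends the relation for $(i,l)$ to the relation for $(i+1,l)$, since $\varphi$ maps normal forms to normal forms. So $\epsilon(l,\cdot)$ depends only on $l$.

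\textbf{Main obstacle.} The delicate step is making sure $\varphi$ is applied only inside its domain $H_1=K_{0,n-1}$. The pair $(t_i,t_{i+l})$ lies in $H_1$ exactly when $i+l\le n-1$. For fixed $l$, the relations with $i$ from $1$ to $n-l$ are chained by successive shifts, and each shift only needs the earlier pair to lie in $H_1$. That holds for every step except the last one, which lands in the endpoint $i+l=n$. So the chain is valid and the exponents are uniform in $i$.
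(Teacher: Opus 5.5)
Your proof is correct. It differs from the paper's mainly in the order of the two parts and in how the normal form is obtained.

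The paper first proves $[t_i,t_j]\in K_{i,j-1}$ directly. It observes that $K_{i,j-1}$ is normal of index $2$ in both $K_{i-1,j-1}$ and $K_{i,j}$, hence normal in $K_{i-1,j}=K_{i-1,j-1}K_{i,j}$. The quotient $K_{i-1,j}/K_{i,j-1}$ then has order $4$ and is abelian, which gives the containment. Only afterwards does it obtain the ascending normal form, by reordering words using these commutator relations, and it gets uniqueness from the count $2^n$.

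You reverse this. You get existence of the normal form with no commutator information, from the coset decompositions $K_{i,j}=K_{i+1,j}\sqcup t_{i+1}K_{i+1,j}$. You then use uniqueness of normal forms to identify $K_{i-1,i+l-1}\cap K_{i,i+l}$ with $K_{i,i+l-1}$, and place the commutator in that intersection of two normal index-$2$ subgroups.

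What each route buys:
\begin{itemize}
\item The paper's route to the commutator containment is slightly shorter, since it needs no intersection computation.
\item The paper's reordering step leaves implicit why the collection process terminates, because each swap introduces new intermediate letters. Your coset induction avoids this issue entirely.
\item You also make explicit, via the shift $\varphi$ applied only to pairs $(t_i,t_{i+l})$ lying in $H_1=K_{0,n-1}$, why the exponents $\epsilon(l,\cdot)$ do not depend on $i$. The paper leaves this to its closing sentence.
\end{itemize}
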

\begin{proof}
    First, we show that for any $i<j$, the commutator
    \[
    [t_i,t_j]\in K_{i,j-1}.
    \]
    Indeed, $K_{i,j-1}$ is a normal subgroup of index $2$ in both of the following:
    \[
    K_{i,j-1}\vartriangleleft K_{i-1,j-1},\text{ and }K_{i,j-1}\vartriangleleft K_{i,j}.
    \]
    Since $K_{i-1,j-1}K_{i,j} = K_{i-1,j}$, $K_{i,j-1}$ is also normal in $K_{i-1,j}$. Consider the quotient groups,
    \[
    K_{i-1,j-1}/K_{i,j-1}, \quad K_{i,j}/K_{i,j-1}<K_{i-1,j}/K_{i,j-1}.
    \]
    The first two are isomorphic to $C_2$ and generated by $[t_i]$ and $[t_j]$; these elements generate the latter group, which is of order $4$. Therefore $[t_i]$ and $[t_j]$ commute in $K_{i-1,j}/K_{i,j-1}$, so $[t_i,t_j]\in K_{i,j-1}$.

    The fact above implies that $[t_i,t_j]$ is a product of certain letters from $t_{i+1}$ through $t_{j-1}$. Since each $t_i$ is an involution, this allows each element $g\in G$ to be expressed as a product of $t_i$'s with ascending indices. Since there are $2^n$ such words and $\lvert G\rvert = 2^n$, the expressions are unique. Such expressions allow the relators to take the form described in the lemma.
\end{proof}
\begin{proof}[Proof of Theorem \ref{thm:HNN_rigidity}]
    We will prove by induction that the relators of such a group satisfy the presentation in Theorem \ref{thm:HNN_rigidity}. The base case is clear: for orders $2^1$ and $2^2$, $G = C_2$ and $G = V_4$ are the only eligible groups.
    
    For convenience, we assume that the theorem holds for groups of order $2^{n-1}$, and we prove the case $\lvert G\rvert = 2^n$ under this assumption. By Lemma \ref{lem:pres}, the subgroups $H_1 = \langle t_1,\dots, t_{n-1}\rangle$ and $H_2 = \langle t_2,\dots, t_n\rangle < G$ satisfy the assumption in the theorem. Thus, we can assume that there exists $0\leq k_0\leq \frac{n-1}{3}$ such that $t_i$ and $t_{i+j}$ commute for any $j\leq n-2-k_0$, and for $n-1-k_0\leq l\leq n-2$,
    \[
    [t_i,t_{i+l}] = t_{i+k_0}^{\epsilon(l,k_0)}\dots t_{i+l-k_0}^{\epsilon(l,l-k_0)},
    \]
    where $\sum_j\epsilon(n-1-k_0,j)>0$. Denote
    \[
    k_+ = \max_l(\min_{\epsilon(l,k)=1}k),\ k_- = \max_l(l - \max_{\epsilon(l,k) = 1} k),
    \]
    the induction hypothesis implies that $k_+,k_-\geq k_0$, while the case $\lvert G\rvert = 2^n$ further requires that $k_+,k_-\geq k_0+1$.
    
    Suppose that $k_+ = k_0$; then there exists some $l_0\geq n-1-k_0$ such that
    \[
    (t_1t_{l_0+1})^2 = t_{k_1'}\dots t_{k_j'},
    \]
    with $k_0+1=k_1'\leq\dots\leq k_j'\leq l-k_0+1$.
    
    Now, since $H_1$ is a normal subgroup of $G$, we have an automorphism:
    \[
    \psi:H_1\to H_1,\ t\mapsto t_ntt_n.
    \]
    The induction hypothesis implies that $\psi(t_j) = t_j$ for $j\geq k_0+2$, and
    \[
    \psi(t_j) = t_j t_{j+k_0}^{\epsilon(n-j,k_0)}\dots t_{n-k_0}^{\epsilon(n-j,n-j-k_0)},
    \]
    for $j=2,\dots, k_0+1$. In particular, the fact that $\sum_l\epsilon(n-1-k_0,l)>0$ in the definition of $k_0$ implies that $[t_{k_0+1},t_n]$ is non-trivial, or
    \[
    \psi(t_{k_0+1}) = t_{k_0+1}t_{k_1}\dots t_{k_i} \neq t_{k_0+1},
    \]
    with $2k_0+1\leq k_1<\dots< k_i\leq n-k_0$ and $i\geq 1$.
    
    Combine the facts above:
    \[
    \begin{split}
        & (\psi(t_1)t_{l_0+1})^2 = (\psi(t_1)\psi(t_{l_0+1}))^2 = \psi((t_1t_{l_0+1})^2) \\
        = & \psi(t_{k_0+1} t_{k_2'}\dots t_{k_j'}) = \psi(t_{k_0+1})t_{k_2'}\dots t_{k_j'} \\
        \neq & t_{k_0+1}t_{k_2'}\dots t_{k_j'} = (t_1t_{l_0+1})^2.
    \end{split}
    \]
    While the induction assumption does not allow $\psi(t_1)$ to fit into the pattern, Lemma \ref{lem:comm} guarantees that $t_1\psi(t_1)$ can be expressed as a word in the letters $t_2,\dots,t_{n-1}$ in ascending order. If $t_1\psi(t_1)$ is a word involving only the letters $t_{k_0+1},\dots,t_{n-1}$, then the commutativity of $t_{l_0+1}$ with $t_1\psi(t_1)$ implies that $(\psi(t_1)t_{l_0+1})^2 = (t_1t_{l_0+1})^2$, a contradiction. Therefore, if we let
    \[
        m = \min_{\epsilon(n-1,k)=1}k,
    \]
    then $m\leq k_0-1$, that is, $m+n-k_0\leq n-1$. The commutator $[t_1,t_{m+n-k_0}]$ can be expressed as a word in the letters $t_{k_0+1},\dots,t_{n-k_0-1}$.

    If this word expression of $[t_1,t_{m+n-k_0}]$ does not contain $t_{k_0+1}$, then it is fixed by the automorphism $\psi$. The element $t_{m+n-k_0}$ is also in this fixed-point subgroup:
    \[
        [t_1,t_{m+n-k_0}] = \psi([t_1,t_{m+n-k_0}]) = [\psi(t_1),\psi(t_{m+n-k_0})] = [\psi(t_1),t_{m+n-k_0}].
    \]
    By the assumption, $\psi(t_1) = t_1t_{m+1}\prod_{j\geq m+1}t_{j+1}^{\epsilon(n-1, j)}$, and $t_{m+n-k_0}$ commutes with every such $t_{j+1}$ as $j+1\geq m+2$. As a result,
    \[
        [t_{m+1},t_{m+n-k_0}] = 1,
    \]
    contradicting the fact that $\sum_l\epsilon(n-1-k_0,l)>0$.
    
    For the case that $t_{k_0+1}$ is contained in the commutator $[t_1,t_{m+n-k_0}]$, we need a further lemma:
    \begin{lem}\label{lem:not_contain}
        In the setting above, we have $\epsilon(n-k_0-1, n-2k_0-1) = 0$. That is, $t_{n-k_0}$ does not appear as a letter in the word expression of $\psi(t_{k_0+1})$. 
    \end{lem}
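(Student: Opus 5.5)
The plan is to argue by contradiction, exploiting the left–right symmetry of the setup. Suppose $\epsilon(n-k_0-1,n-2k_0-1)=1$. In ascending normal form (Lemma \ref{lem:comm}), this says that the last letter of $[t_{k_0+1},t_n]$ is $t_{n-k_0}$. Indeed, by the induction hypothesis applied to $H_2=\langle t_2,\dots,t_n\rangle$, this commutator is a word in the letters $t_{2k_0+1},\dots,t_{n-k_0}$, and $t_{n-k_0}$ is the largest letter allowed. Applying $\varphi^{-1}$ turns this into the statement that $[t_{k_0},t_{n-1}]$, computed inside $H_1$, ends in $t_{n-k_0-1}$. We therefore have simultaneous control of the highest letter in a commutator relation of $H_1$ and of $H_2$.

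The first tool is the automorphism $\psi$ of $H_1$ (conjugation by $t_n$), already used above. The second is its mirror image $\psi'\colon H_2\to H_2$, $t\mapsto t_1tt_1$, which exists because $H_2$ has index $2$ and is therefore normal. The induction hypothesis describes $\psi$ on $t_2,\dots,t_{n-1}$ and $\psi'$ on $t_2,\dots,t_{n-1}$ as well (via $\varphi$). Only the images $\psi(t_1)$ and $\psi'(t_n)$ are unknown. I would apply $\psi'$ to the relation
\[
[t_{k_0+1},t_n]=t_{2k_0+1}^{\epsilon(n-k_0-1,k_0)}\cdots t_{n-k_0}.
\]
On the left, $t_1$ commutes with $t_{k_0+1}$, since their index distance is $k_0<n-k_0-1$. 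So the left side becomes $[t_{k_0+1},\psi'(t_n)]$, where $\psi'(t_n)=t_n\,[t_n,t_1]$ and $[t_n,t_1]$ is a word in $t_{m+1},\dots$ determined by $\epsilon(n-1,\cdot)$. On the right, every letter except possibly those close to $t_{n-k_0}$ commutes with $t_1$, because their distance from index $1$ is below $n-k_0-1$. Thus $\psi'$ acts only through $[t_1,t_{n-k_0}]$, and this commutator is governed by the case $m+n-k_0$ considered just before the lemma.

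Comparing the two sides in the unique ascending normal form should then yield an identity relating $[t_{k_0+1},[t_n,t_1]]$ to $[t_1,t_{n-k_0}]$. Here I expect the top-letter bookkeeping to close the argument. The left side, after expanding $[t_n,t_1]$ via Lemma \ref{lem:comm}, only involves letters of index at least $m+1+k_0$. The right side contains the letter $t_{k_0+1}$ coming from $[t_1,t_{m+n-k_0}]$, which is the case we are in. Matching the lowest letters, $k_0+1$ versus something at least $m+k_0+1>k_0+1$, gives the contradiction. If this mismatch does not occur directly, I would repeat the computation with $\psi$ applied to the $\varphi^{-1}$-shifted relation $[t_{k_0},t_{n-1}]$. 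This symmetric version trades a condition on the lowest letter for one on the highest letter, and I would combine the two.

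The main obstacle is the middle step: computing $\psi'$ on the right-hand word precisely enough. Conjugating a product by $t_1$ produces nested commutators, and I would need to verify that all correction terms lie strictly inside the index window, so that they do not disturb the extreme letters being compared. For this I would rely on the Heisenberg-type structure $[G_p,G_z]=[G_z,G_q]=1$ supplied by the induction hypothesis for $H_1$ and $H_2$. This structure guarantees that the correction terms are central in the relevant subgroup and only affect middle letters.
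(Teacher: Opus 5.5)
\textbf{Gap in the closing step.} Your mirror strategy, conjugating the relation $W=[t_{k_0+1},t_n]$ by $t_1$, is viable, but the step you use to finish does not work. The letters of $W$ other than $t_{n-k_0}$ have indices in $[2k_0+1,n-k_0-1]$, so they commute with $t_1$. Hence the right-hand side becomes $W\,[t_{n-k_0},t_1]$. The extra factor is therefore $[t_1,t_{n-k_0}]$, the commutator at distance exactly $n-1-k_0$. It is not $[t_1,t_{m+n-k_0}]$: since $m\ge 1$, that is a different commutator. The case assumption that $[t_1,t_{m+n-k_0}]$ contains $t_{k_0+1}$ tells you nothing about the letters of $[t_1,t_{n-k_0}]$. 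So your closing comparison (lowest letter $k_0+1$ versus $\ge m+k_0+1$) has no basis. Your fallback of redoing the computation with $\psi$ on a shifted relation is not yet an argument either.

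\textbf{The missing observation.} The left-hand side has no correction term at all.
\begin{itemize}
\item Since $t_1$ commutes with $t_{k_0+1}$, you get $t_1Wt_1=[t_{k_0+1},t_1t_nt_1]=[t_{k_0+1},t_nc]$, where $c=[t_n,t_1]$.
\item By Lemma \ref{lem:comm}, $c\in\langle t_2,\dots,t_{n-1}\rangle\subset H_1$.
\item The letter $t_{k_0+1}$ lies in $\langle t_{k_0+1},\dots,t_{n-1-k_0}\rangle$. By the induction hypothesis this subgroup is central in $H_1$, because every index in $[1,n-1]$ is within $n-2-k_0$ of these indices.
\item Hence $[t_{k_0+1},t_nc]=[t_{k_0+1},t_n]=W$. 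The term $[t_{k_0+1},[t_n,t_1]]$ you planned to expand is simply trivial, and there are no nested commutators to track.
\end{itemize}
Comparing with the right-hand side gives $[t_1,t_{n-k_0}]=1$. This contradicts $\sum_j\epsilon(n-1-k_0,j)>0$, and it uses nothing about $m$ or the case $k_+=k_0$.

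\textbf{Relation to the paper.} Repaired this way, your argument is the mirror image of the paper's. The paper instead conjugates the relation $[t_1,t_{k_0+1}]=1$ by $t_n$. It uses the centrality in $H_1$ of the non-top letters of $\psi(t_{k_0+1})$ to get $[\psi(t_1),t_{n-k_0}]=1$. It then discards the letters $t_2,\dots,t_{n-1}$ of $\psi(t_1)$, all of which commute with $t_{n-k_0}$, and reaches the same contradiction $[t_1,t_{n-k_0}]=1$.
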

    \begin{proof}
        Since $t_1$ commutes with $t_{k_0+1}$, we have
        \[
            [\psi(t_1),\psi(t_{k_0+1})] = \psi([t_1,t_{k_0+1}]) = 1.
        \]
        If $t_{n-k_0}$ appears in the word expression of $\psi(t_{k_0+1})$, then any other factor lies between $t_{k_0+1}$ and $t_{n-k_0-1}$, hence lies in the center of $H_1 = \langle t_1,\dots, t_{n-1}\rangle$ and therefore commutes with $\psi(t_1)$. Therefore,
        \[
        [\psi(t_1),t_{n-k_0}] = 1.
        \]
        However, every factor in $\psi(t_1)$ except for $t_1$ itself commutes with $t_{n-k_0}$, implying $[t_1,t_{n-k_0}] = 1$, which contradicts the definition of $k_0$.
    \end{proof}
    Returning to the proof of Theorem \ref{thm:HNN_rigidity}. Since $t_{k_0+1}$ plays a role in $[t_1,t_{m+n-k_0}]$, a slightly different relation holds:
    \[
        [\psi(t_1),t_{m+n-k_0}] = \psi([t_1,t_{m+n-k_0}]) = t_{k_0+1}\psi(t_{k_0+1})[t_1,t_{m+n-k_0}].
    \]
    Similar to the previous case, the fact that $t_{m+n-k_0}$ commutes with every factor in $\psi(t_1)$ except for $t_1$ and $t_{m+1}$ implies
    \[
        [t_{m+1},t_{m+n-k_0}] = t_1t_{k_0+1}\psi(t_{k_0+1})t_1.
    \]
    However, Lemma \ref{lem:not_contain} implies that the index of each letter in the word expression of $\psi(t_{k_0+1})$ is strictly less than $n-k_0$, so $\psi(t_{k_0+1})$ commutes with $t_1$. Hence,
    \[
    [t_{m+1},t_{m+n-k_0}] = t_{k_0+1}\psi(t_{k_0+1}) = [t_{k_0+1},t_n],
    \]
    or
    \[
    \prod_{j=k_0}^{n-2k_0-1}t_{m+1+j}^{\epsilon(n-k_0-1,j)} = \prod_{j=k_0}^{n-2k_0-1}t_{k_0+1+j}^{\epsilon(n-k_0-1,j)}
    \]
    which contradicts the definition of $k_0$, the fact that $m<k_0$, and the unique ascending-letter expression of elements in $G$. This shows that $k_+\geq k_0+1$. A similar argument for the $t_1$-conjugation automorphism on $H_2$ implies that $k_-\geq k_0+1$.
    
    It remains to show that the commutator $[t_1,t_n] = (t_1t_n)^2$ also fits the pattern of the other commutators; that is, it is a word in ascending letters among $t_{k_0+2}$ through $t_{n-k_0-1}$. As mentioned earlier, a rough characterization by Lemma \ref{lem:comm} implies that $[t_1,t_n]$ is a word in letters from $t_2$ to $t_{n-1}$.

    First, suppose the word expression of $[t_1,t_n]$ contains letters from $t_2$ to $t_{k_0}$. In the definition of $m = \min_{\epsilon(n-1,k) = 1} k$ above, this implies $m\leq k_0 - 1$. The fact that $k_+\geq k_0+1$ implies that the word expression of $[t_1,t_{m+n-k_0}]$ does not contain the letter $t_{k_0+1}$, leading to the same contradiction that $[t_{m+1},t_{m+n-k_0}] = 1$. Therefore, $[t_1,t_n]$ does not contain letters from $t_2$ to $t_{k_0}$, and similarly, it does not contain letters from $t_{n-k_0+1}$ to $t_{n-1}$.
    
    Now suppose $[t_1,t_n]$ contains the letter $t_{k_0+1}$, that is,
    \[
        \psi(t_1) = t_1t_{k_0+1}w,
    \]
    for some word $w$ in letters with indices $\geq k_0+2$ and $<n$. Since conjugation by $t_n$, namely $\psi$, has order $2$, we have
    \[
        t_1 = \psi^2(t_1) = \psi(t_1)\psi(t_{k_0+1})\psi(w) = t_1t_{k_0+1}wt_{k_0+1}w'w,
    \]
    where $w' = t_{k_0+1}\psi(t_{k_0+1})$ is non-identity by the definition of $k_0$. Now, the fact that the word $w$ has indices between $k_0+2$ and $n-1$ implies that $w^2 = 1$ and $[w,t_{k_0+1}] = 1$, so
    \[
        1 = t_{k_0+1}wt_{k_0+1}w'w = ww'w\implies w' = w^2 = 1,
    \]
    a contradiction. Similarly, $t_{n-k_0}$ is not in the word expression of $[t_1,t_n]$, proving our claim.
    
    It is now straightforward that the square of every element in $G$ is contained in the Abelian subgroup $\langle t_{k+1},\dots, t_{n-k}\rangle$, and the fourth power equals the identity.
\end{proof}
Any faithful $2$-HNN extension $\Gamma_{n,k,\vec{\epsilon}}$ is two-generated: denote the stable letter by $x$. Then $\Gamma = \langle t_1, x\rangle$, where $x$ is hyperbolic and $t_1$ is elliptic of order $2$.

The group $\Gamma_{n,0,\varnothing}$ does not contain an elliptic element of order $4$, so it falls only into the case $\ord(g)=2$ in Theorem \ref{thm:classification}. By contrast, for any $k>0$ (and $\vec{\epsilon}\neq \vec{0}$), $\Gamma_{n,k,\vec{\epsilon}}$ admits an elliptic-hyperbolic generating pair with elliptic order $4$.
\begin{prop}
    Let $\Gamma = \Gamma_{n,k,\vec{\epsilon}} = G_{n,k,\vec{\epsilon}}*_\varphi$ as defined earlier, where $k>0$ and $\epsilon(n-k,l)=1$ for some $k\leq l\leq n-2k$. Then $\Gamma$ is generated by the stable letter $x$ and an elliptic element $g\in G$, with $\ord(g) = 4$.
\end{prop}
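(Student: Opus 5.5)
We take $g = t_1 t_{n-k}$, or more generally $g = t_1 t_j$ for an index $j$ with $[t_1,t_j]\neq 1$. In $G_{n,k,\vec\epsilon}$ we have $g^2 = (t_1t_j)^2 = [t_1,t_j]$, since the $t_i$ are involutions. By the proof of Theorem~\ref{thm:HNN_rigidity}, every square lies in the abelian subgroup $G_z=\langle t_{k+1},\dots,t_{n-k}\rangle$, so $g^4=1$. The hypothesis that $\epsilon(n-k,l)=1$ for some $l$ gives $[t_1,t_{n-k+1}]=\prod_l t_{1+l}^{\epsilon(n-k,l)}\neq 1$, using the unique ascending-word expression of Lemma~\ref{lem:comm}. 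The relevant index is therefore $j = n-k+1$, and $g=t_1t_{n-k+1}$ has order exactly $4$.

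The main task is generation: we need $\langle g, x\rangle \ni t_1$, because then $\langle t_1,x\rangle=\Gamma$. Conjugation by $x$ shifts indices via $\varphi$ wherever defined, so $x^{-(n-k)}$ sends $t_{n-k+1}$ toward $t_1$. The shift fails, however, to carry $t_1$ out of $H_1$ repeatedly. We therefore work with the elements $g_i := x^{i} g x^{-i}$ for the values of $i$ at which they remain in $G$. Since $g\in H_1\cap\cdots$ only for limited shifts, we use instead the following observation: $x^{k-n}\,t_{n-k+1}\,x^{n-k}=t_1$ provided each intermediate element lies in $H_2$. Indeed, $t_{n-k+1},t_{n-k},\dots,t_2\in H_2$, and $\varphi^{-1}$ maps them down.

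The plan is then to write $t_1$ as a word in $g$ and its $x$-conjugates. Set $u := x^{-(n-k)} g\, x^{n-k}$, if it is defined in $G$. This requires $g$ to be pulled back $n-k$ times through $H_2$, which fails because $t_1\notin H_2$. So we cannot conjugate $g$ directly. Instead, we consider $g' := x g x^{-1} = t_2 t_{n-k+2}$ (valid when $n-k+1\le n-1$, i.e.\ $k\ge 2$; the case $k=1$ is handled similarly with $g'$ replaced by an appropriate product). Products such as $g g'$ and the commutators $[g,g']$ generate elements of $G$ of the form $t_1\cdot(\text{word in higher letters})$ and words in $K_1=H_2$ alone. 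Once we have an element $w\in\langle g,x\rangle\cap H_2$ that equals $t_{n-k+1}$ times letters, we can push it down by conjugating with $x^{-1}$, since it lies in $H_2$.

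Concretely, the descending filtration $K_i$ suggests an inductive clean-up. We show that $\langle g,x\rangle\cap G$ contains $t_{n-k+1}\cdot c$ with $c\in G_z$ central-like; then, by the $\varphi$-shift, it contains $t_1 c'$. Combined with $g = t_1 t_{n-k+1}$, this lets us peel off letters from the top using $x$-conjugates that lie inside $H_1\cap H_2$. Since $\langle g,x\rangle\cap G$ is $\varphi$-invariant in the sense that whenever an element lies in $H_1$ its $\varphi$-image is also in the subgroup, and $\varphi$ preserves no nontrivial subgroup, a cleaner route is available. Let $S=\langle g,x\rangle\cap G$. Then $S$ is a subgroup with $\varphi(S\cap H_1)\subseteq S$ and $\varphi^{-1}(S\cap H_2)\subseteq S$, and $S\not\subseteq H_1\cap H_2$ because $g\notin H_2$. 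Running the argument of Lemma~\ref{lem:pres} on $S$, the chain $S\cap K_i$ shrinks by index at most $2$ per step. We then show $|S|=2^n$ by exhibiting $n$ elements with distinct leading indices: $g$ has leading letter $t_1$, and its $\varphi$-images have leading letters $t_2, t_3,\dots$ until they exit $H_1$. The main obstacle is precisely this step, namely producing elements of $S$ with every leading index $1,\dots,n$. We would first try the elements $\varphi^{i}(g^2)$ and $\varphi^{-i}$ applied to $g^{-1}\varphi^{n-k}(\cdot)$ pieces, checking that the leading and trailing letters cover all indices. This gives $S=G$, hence $t_1\in\langle g,x\rangle$.
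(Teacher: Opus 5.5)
\textbf{The gap: your element $g=t_1t_{n-k+1}$ does not generate $\Gamma$ in general.} The step you flag as the main obstacle is therefore not just unproved; it is false for your $g$. Here is a counterexample.

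Take $n=4$, $k=1$ and $\epsilon(3,1)=\epsilon(3,2)=1$. Then $[t_1,t_4]=t_2t_3$, and all other pairs $t_i,t_j$ commute. This group is in the family, and it satisfies the hypothesis of the statement.

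Define $\pi\colon\Gamma\to\mathbb{Z}/2$ by $t_i\mapsto 1$ and $x\mapsto 0$. It is a well-defined homomorphism:
\begin{itemize}
\item every relator of $G$ has even total exponent in the $t_i$, including $[t_1,t_4]\,t_2t_3$;
\item the relations $xt_ix^{-1}=t_{i+1}$ preserve parity.
\end{itemize}
Since $\pi(t_1t_4)=\pi(x)=0$, we get $\langle t_1t_4,x\rangle\le\ker\pi\neq\Gamma$. Equivalently, $\langle g,x\rangle\cap G$ lies in the index-$2$ subgroup $S$ of even-length elements. This $S$ satisfies $\varphi(S\cap H_1)=S\cap H_2$. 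So the $\varphi$-compatibility of $S$, together with the chain argument of Lemma \ref{lem:pres}, cannot force $|S|=2^n$. The same obstruction rules out every element of the form $t_1t_j$ you considered.

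\textbf{The missing idea: correct $g$ by the extra letters of the commutator.} Write
\[
[t_1,t_{1+n-k}]=t_{1+k_1}\cdots t_{1+k_m},\qquad k\le k_1<\dots<k_m\le n-2k,
\]
and put
\[
u=t_{1+k_2-k_1}\cdots t_{1+k_m-k_1},\qquad g=t_1u\,t_{1+n-k}.
\]
\begin{itemize}
\item The letters of $u$ commute with each other, with $t_1$ and with $t_{1+n-k}$, because all index gaps are $<n-k$. Hence $g^2=(t_1t_{1+n-k})^2=t_{1+k_1}\cdots t_{1+k_m}$.
\item This $g^2$ is a nontrivial involution, so $\ord(g)=4$.
\item Every index in $g^2$ is at least $1+k_1\ge 2$, so you may conjugate by $x^{-1}$ $k_1$ times, staying in $H_2$. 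This gives $x^{-k_1}g^2x^{k_1}=t_1u$.
\item Then $t_{1+n-k}=(t_1u)\,g\in\langle g,x\rangle$, and $t_1=x^{-(n-k)}t_{1+n-k}x^{n-k}$. Together with $x$, this generates $\Gamma$.
\end{itemize}
This is the paper's argument. When $m=1$ it reduces to your element $g=t_1t_{1+n-k}$, and the proof is simply $x^{-k_1}g^2x^{k_1}=t_1$. Your proposal does not establish even this case.
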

\begin{proof}
    Using the same notation as in Theorem \ref{thm:HNN_rigidity}, suppose that
    \[
    [t_1,t_{1+n-k}] = t_{1+k_1}\dots t_{1+k_m},
    \]
    where $m\geq 1$ and $k\leq k_1<\dots <k_m\leq n-2k$. Since $[t_i,t_{i+j}] = 1$ for any $j< n-k$, it follows that
    \[
    (t_1t_{1+k_2-k_1}\dots t_{1+k_m-k_1}t_{1+n-k})^2 = (t_1t_{1+n-k})^2 = t_{1+k_1}\dots t_{1+k_m}.
    \]
    On the other hand,
    \[
    \varphi^{-k_1}(t_{1+k_1}\dots t_{1+k_m}) = t_1t_{1+k_2-k_1}\dots t_{1+k_m-k_1},
    \]
    so we obtain that the stable letter $x$, together with $g = t_1t_{1+k_2-k_1}\dots t_{1+k_m-k_1}t_{1+n-k}$, generates $\Gamma$. The equations above imply that $\ord(g) = 4$.
\end{proof}

\subsection{Two-generator \texorpdfstring{$(3,2)$}{(3,2)}- and \texorpdfstring{$(3,3)$}{(3,3)}-amalgamated products}\label{subsec:3_3_amalgam}
According to Djokovi\'c-Miller \cite{djokovic1977regular} and Goldschmidt \cite{goldschmidt1980automorphisms}, there are only finitely many amalgamated products that satisfy the requirements needed for Theorem \ref{thm:classification}:
\begin{thm}\label{thm:3_3_amalgam}
    Let $\Gamma = G_1*_H G_2$ be an amalgamated product of finite groups satisfying the following:
    \begin{itemize}
        \item The indices $\lvert G_1: H\rvert$ and $\lvert G_2: H\rvert$ are $2$ or $3$.
        \item No non-trivial normal subgroup of $\Gamma$ is contained in $H$: $\bigcap_{\gamma\in\Gamma} \gamma^{-1}H\gamma = \{1\}$.
    \end{itemize}
    Then, the pair $(G_1,G_2)$ is one of the following:
    \begin{itemize}
        \item If $\lvert G_1: H\rvert = \lvert G_2: H\rvert = 2$, then $(G_1,G_2) = (C_2,C_2)$.
        \item If $\lvert G_1: H\rvert = 3$ and $\lvert G_2: H\rvert = 2$, then $(G_1,G_2)$ is one of the seven pairs of Djokovi\'c-Miller type \cite{djokovic1977regular}.
        \item If $\lvert G_1: H\rvert = \lvert G_2: H\rvert = 3$, then $(G_1,G_2)$ is one of the fifteen pairs of Goldschmidt type \cite{goldschmidt1980automorphisms}.
    \end{itemize}
\end{thm}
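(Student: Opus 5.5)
The approach is to turn the amalgam into a vertex-transitive-on-edges action on a biregular tree and then cite the classical classifications; the genuinely new content is only the $(2,2)$ case.

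First I pass from the amalgam to its Bass--Serre tree. Let $\widetilde{X}$ be the Bass--Serre covering tree of the segment of groups $G_1 \leftarrow H \rightarrow G_2$. By the example on amalgamated products, $\widetilde{X}$ is $(n,m)$-biregular with $n=\lvert G_1:H\rvert$ and $m=\lvert G_2:H\rvert$, each in $\{2,3\}$. The stabilizer of the edge $1\cdot H$ is $H$, and the kernel of the action of $\Gamma$ on $\widetilde{X}$ is $\bigcap_{\gamma}\gamma^{-1}H\gamma$. The second hypothesis therefore says exactly that $\Gamma$ acts faithfully on $\widetilde{X}$. Since $\Gamma$ acts with a single edge orbit and two vertex orbits, this is a locally finite, edge-transitive, faithful action, and all vertex stabilizers are finite.

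Next I dispose of the case $n=m=2$. Here $\widetilde{X}$ is a bi-infinite line, and $\Gamma$ acts on it faithfully, so $\Gamma$ embeds in $\aut(\text{line})\cong D_\infty$. The vertex stabilizers $G_1$ and $G_2$ fix a vertex of the line, so each embeds in the stabilizer of a vertex in $D_\infty$, which is $C_2$. The edge stabilizer $H$ fixes an edge and hence both endpoints, so $H$ fixes the whole line; faithfulness then forces $H=1$. Since $\lvert G_i:H\rvert=2$, this gives $G_1\cong G_2\cong C_2$ and $\Gamma=C_2*C_2$.

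Finally I handle the mixed and trivalent cases by citation. When $(n,m)=(3,2)$, subdividing is reversed: the degree-$2$ vertices of $\widetilde{X}$ are edge midpoints of the trivalent tree $X_3$. So $\Gamma$ is a discrete group acting faithfully and arc-transitively (possibly with inversions) on $X_3$, with vertex stabilizer $G_1$ and arc stabilizer $H$. By the theorem of Djokovi\'c--Miller \cite{djokovic1977regular}, whose argument is a Tutte-type analysis showing $\lvert H\rvert\le 16$, the pair $(G_1,G_2)$ is one of the seven listed types. When $(n,m)=(3,3)$, $\Gamma$ acts faithfully and edge-transitively, without inversions, on $X_3$ with two vertex orbits, and $(G_1,H,G_2)$ is a faithful amalgam of index $(3,3)$. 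Goldschmidt's theorem \cite{goldschmidt1980automorphisms} then gives exactly the fifteen types.

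The main obstacle is bookkeeping rather than mathematics. One must check that the hypotheses of the cited theorems match ours, namely that faithfulness on the tree is equivalent to $H$ containing no nontrivial normal subgroup of $\Gamma$, and that finiteness of $G_1$ and $G_2$ is assumed. One must also translate the $(3,2)$ biregular setting into arc-transitive cubic actions, which includes allowing inversions, i.e.\ $G_2$ swapping the two ends of an edge.
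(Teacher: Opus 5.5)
Your proposal is correct and follows essentially the same route as the paper, which gives no separate argument and simply obtains the statement from the Djokovi\'c--Miller and Goldschmidt classifications (faithful edge-transitive action on the Bass--Serre tree), with the $(2,2)$ case being immediate. Two small remarks: in the $(2,2)$ case it is quicker to note that $H$, having index $2$ in both factors, is normal in $\Gamma$, so the core-freeness hypothesis gives $H=1$ at once; and in the $(3,2)$ case the induced action on $X_3$ always contains inversions (any element of $G_2\setminus H$ reverses an edge), not merely possibly.
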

Throughout this subsection, we realize the amalgamated products using the presentations given in \cite{conder2025edge} (Sections 2.1 and 2.2). In the Djokovi\'c-Miller cases, the letters $h$ and $a$ in the group presentations are replaced with $x$ and $y$ to avoid ambiguity with the usage of $h$ in this paper and to remain consistent with the presentations in the Goldschmidt cases.

Among these types, $\mathrm{G}_1^1$ is distinguished by the fact that it is not two-generated.
\begin{prop}\label{prop:g1_1}
    The rank of $D_6*_{C_2}D_6$ of type $\mathrm{G}_1^1$ is three; thus, it is neither elliptic-hyperbolic generated nor two-elliptic generated.
\end{prop}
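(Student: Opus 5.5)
The plan is to exhibit a quotient of $\Gamma = D_6 *_{C_2} D_6$ that visibly needs three generators. The natural choice is the mod-$2$ abelianization. Write $D_6 \cong S_3 \cong \langle a,b \mid a^3, b^2, (ab)^2\rangle$. Then $D_6^{\mathrm{ab}} \cong C_2$, generated by the image of any involution. In the Goldschmidt type $\mathrm{G}_1^1$, the amalgamated $C_2$ embeds in each factor as a reflection subgroup; since $|G_i:H|=3$, it is a Sylow $2$-subgroup of $S_3$.

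To make this uniform across the two possible readings of the data, I would first fix the presentation from \cite{conder2025edge}. I expect it to be of the form
\[
\langle a_1,b_1,a_2,b_2 \mid a_i^3,\ b_i^2,\ (a_ib_i)^2,\ b_1=b_2 \rangle
\]
or a variant with $H \cong C_2\times C_2$. I need to check which. If $\mathrm{G}_1^1$ is $S_3\times C_2 *_{C_2\times C_2} S_3\times C_2$, i.e. $D_{12}*_{V_4}D_{12}$, the abelianization count changes. So the first step is to read the exact vertex and edge groups. The statement names it $D_6 *_{C_2} D_6$, which with the paper's index convention $|G_i:H|=3$ means $|D_6|=6$. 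So I proceed with $S_3 *_{C_2} S_3$.

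For $S_3 *_{C_2} S_3$, a naive abelianization gives only $\Gamma^{\mathrm{ab}} \cong C_2$, since both $b_i$ are identified. That yields no rank obstruction. So I must instead use faithfulness and the amalgam structure more cleverly.

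Here is the key alternative. Faithful $(3,3)$-amalgams are exactly Goldschmidt's list, in which $\mathrm{G}_1^1$ has vertex groups $C_3\rtimes C_2^2$-like, i.e. $D_{12}\cong S_3\times C_2$, and edge group $C_2\times C_2$. If so, $\Gamma^{\mathrm{ab}}$ is computable. Each $D_{12}$ has abelianization $C_2^2$, generated by the images of $H=V_4$, because $H$ surjects onto $D_{12}/[D_{12},D_{12}]=D_{12}/C_3$. This gives $\Gamma^{\mathrm{ab}}$ a quotient of $C_2^2$, again too small.

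So I would instead use a different finite quotient. Map $\Gamma$ onto a group whose rank is visibly $3$, such as $C_3\times C_3\times C_2$ or $(C_3\times C_3)\rtimes C_2$. Kill the identification appropriately and send the two $C_3$'s to independent factors. Concretely, I would take the homomorphism $\Gamma \to (C_3 \times C_3)\rtimes \langle \tau\rangle$ ($\tau$ inverting both), sending $a_1\mapsto(1,0)$, $a_2\mapsto (0,1)$, and the common involutions to $\tau$. Its image is the generalized dihedral group of $C_3^2$. That has rank $3$: a $2$-generated generalized dihedral group $\mathrm{Dih}(A)$ needs $A$ cyclic, because two elements generate at most a dihedral group over a cyclic rotation part. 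This is the central step. In the $D_{12}$ version I would likewise map to $\mathrm{Dih}(C_3^2)\times C_2$ or similar. The main obstacle is verifying the homomorphism respects the exact amalgamation maps of $\mathrm{G}_1^1$.

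The rank upper bound $3$ is immediate, since $\Gamma$ is generated by $a_1, a_2$, and the edge group's generator(s), possibly needing one more for $V_4$. Equivalently, I can invoke the Appendix GAP computation. Finally, any elliptic–hyperbolic or two-elliptic pair generates a $2$-generated subgroup, so it cannot equal $\Gamma$.
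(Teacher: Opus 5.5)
Your argument is correct and is essentially the paper's: both map $\Gamma=\langle c,x,y\mid c^2,x^3,y^3,(cx)^2,(cy)^2\rangle$ onto the generalized dihedral group $(C_3\times C_3)\rtimes C_2$, observe that any two elements of it generate a subgroup meeting $C_3\times C_3$ in a cyclic group, and get the upper bound $3$ from the obvious generators. You should drop the stray suggestion of $C_3\times C_3\times C_2$, since it is $C_3\times C_6$, has rank $2$, and is not even a quotient of $\Gamma$ because $\Gamma^{\mathrm{ab}}\cong C_2$; the $D_{12}*_{V_4}D_{12}$ digression is also unnecessary, as $\mathrm{G}_1^1$ really is $S_3*_{C_2}S_3$.
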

\begin{proof}
    The amalgamated product
    \[
    D_6*_{C_2}D_6 = \langle c,x,y\mid c^2, x^3,y^3, (cx)^2, (cy)^2\rangle
    \]
    is clearly generated by $c$, $x$, and $y$. On the other hand, it admits a surjective homomorphism
    \[
    D_6*_{C_2}D_6\twoheadrightarrow (C_3\times C_3)\rtimes C_2 = \langle x,y,c\mid x^3,y^3,[x,y],c^2,(cx)^2,(cy)^2\rangle,
    \]
    where $x$ and $y$ map to the generators of the two $C_3$ factors, and $c$ maps to the generator of $C_2$. The resulting group is the generalized dihedral group of $C_3\times C_3$, and it is not two-generated. Indeed, the elements in $(C_3\times C_3)\rtimes C_2 - (C_3\times C_3)$ are involutions. For a pair of elements $g,h\in C_3\times C_3$, the subgroup $\langle g,h\rangle$ lies in $C_3\times C_3$. For $g\in C_3\times C_3$ and $h$ an involution, the subgroup $\langle g,h\rangle\cong D_6 = C_3\rtimes C_2$. For $g,h$ two different involutions, we have $gh\in C_3\times C_3$, so again $\langle g,h\rangle\cong D_6$.
    
    Consequently, two generators are insufficient to generate $(C_3\times C_3)\rtimes C_2$. Since the amalgamated product $D_6*_{C_2}D_6$ admits a surjective homomorphism onto $(C_3\times C_3)\rtimes C_2$, it also requires at least $3$ generators.
\end{proof}
Another restriction on two-elliptic generating pairs comes from their orders. Viewing each amalgamated product as a discrete subgroup acting on its Bass-Serre tree (either $X_3$ or $X_{2,3}$), if a group admits an elliptic generating pair $(g,h)$ with $d(T_g,T_h) = 1$, then edge-transitivity implies that, up to conjugation and swapping $g$ and $h$, we have $g\in G_1$ and $h\in G_2$. On the one hand, after adopting the convention of swapping the two factor groups in the five amalgams of $\mathrm{G}_2$-type, the orders of elements in those $G_1$'s and $G_2$'s imply that $\ord(g)\in\{2,3,4,6,8\}$ and $\ord(h)\in\{2,3,4,6,8,12\}$. On the other hand, we have the following result:
\begin{prop}[\cite{conder2022discrete}, Theorem C, case (3)]\label{prop:prime_order}
    Let $g,h$ be elliptic tree automorphisms of prime orders $p$ and $q$ such that the fixed trees $T_g$ and $T_h$ are disjoint. Then $\Gamma = \langle g,h\rangle$ is isomorphic to the free product $C_p*C_q$. The vertex stabilizers in $T_g$ and $T_h$ are $\langle g\rangle\cong C_p$ and $\langle h\rangle\cong C_q$, respectively.
\end{prop}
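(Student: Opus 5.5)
The plan is the standard ping-pong argument combined with Bass--Serre theory, carried out on the bridge between the two fixed trees. Since $T_g\cap T_h=\varnothing$, let $[u,w]$ be the unique shortest geodesic with $u\in T_g$ and $w\in T_h$, and let $e_g$, $e_h$ be its end edges at $u$ and $w$. (If $d(T_g,T_h)=1$, then $e_g=e_h$ is the single edge $[u,w]$.)

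\textbf{Step 1: the half-trees.} Let $X_u$ be the union of the components of $X\setminus\{u\}$ not containing $e_g$, together with $u$. Define $X_w$ symmetrically at $w$. Since $g$ fixes $u$ but not $e_g$, and $p$ is prime, the edges $g^i e_g$ for $0\le i<p$ are pairwise distinct. Indeed, if $g^i$ fixed $e_g$ with $0<i<p$, then some power of $g^i$ equals $g$, so $g$ would fix $e_g$, and its neighbour on the geodesic would lie in $T_g$, contradicting minimality. Hence $g^i$ maps $w$, and everything on the $e_g$-side of $u$, into the branch at $u$ through $g^ie_g\neq e_g$. That branch lies in $X_u$ minus $u$. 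The same holds for $h^j$ with the roles of $u$ and $w$ exchanged.

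\textbf{Step 2: ping-pong.} By Step 1, every nontrivial power of $g$ sends the complement of $X_u$ into $X_u\setminus\{u\}$, and every nontrivial power of $h$ sends the complement of $X_w$ into $X_w\setminus\{w\}$. Take a reduced alternating word $\gamma=g^{a_1}h^{b_1}\cdots$ of length at least $2$, or of length $1$ if it is a nontrivial power. Acting on the midpoint of $[u,w]$, or on $u$ or $w$ as appropriate, shows that $\gamma$ moves a point off the bridge, so $\gamma\neq 1$. This is the standard ping-pong lemma for the free product of the two finite cyclic groups $\langle g\rangle\cong C_p$ and $\langle h\rangle\cong C_q$, and it gives $\Gamma\cong C_p*C_q$.

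\textbf{Step 3: stabilizers.} I would identify $X$ near the bridge with the Bass--Serre tree of the segment of groups $C_p$ --- $1$ --- $\cdots$ --- $1$ --- $C_q$, whose vertex groups are $\langle g\rangle$ at $u$ and $\langle h\rangle$ at $w$ and whose edge groups are trivial. Step 2 shows that the $\Gamma$-translates of $[u,w]$ are glued exactly as in that Bass--Serre tree, so its natural equivariant map onto the $\Gamma$-orbit of $[u,w]$ is injective. Consequently $\Gamma_u=\langle g\rangle$ and $\Gamma_w=\langle h\rangle$. Every vertex of $T_g$ is fixed by $g$ and lies in the orbit structure through $u$, so for $v\in T_g$ we get $\Gamma_v\supseteq\langle g\rangle$.

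\textbf{Main obstacle.} The hardest point is the reverse inclusion $\Gamma_v\subseteq\langle g\rangle$ for vertices $v\in T_g$ other than $u$. I would handle it with the normal form. Suppose a reduced word $\gamma$ fixes $v\in T_g$, and write $\gamma=g^{a}\gamma'$ where $\gamma'$ is nontrivial and begins with a power of $h$. Then $\gamma' v$ lies in $X_w\setminus\{w\}$, far from $T_g$, so $\gamma v\neq v$. It follows that $\Gamma_v=\langle g\rangle$, and symmetrically the vertex stabilizers along $T_h$ are $\langle h\rangle$.
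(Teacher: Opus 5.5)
The paper gives no argument of its own for this statement. It imports it from Conder--Schillewaert \cite{conder2022discrete} (Theorem C, case (3)) and uses it as a black box when pruning order pairs in the two-elliptic analysis. You instead prove it directly by ping-pong on the bridge $[u,w]$, and your argument is correct.

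Your route gives a self-contained proof that uses nothing but the action on the tree, so it applies as written to elliptic elements of $\aut(X_3)$, where the paper actually needs it. It also shows exactly where primality enters. That is Step 1: every nontrivial power of $g$ generates $\langle g\rangle$, so none of them can fix $e_g$. For composite order a proper power may fix $e_g$, and the conclusion can fail. Finally, you test on an interior point of the bridge, or on $u$ or $w$ as appropriate. This avoids the usual ping-pong hypothesis that one factor has order at least $3$, so the case $p=q=2$ is covered.

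A few places should be tightened.
\begin{enumerate}
\item In your last paragraph you need $v\notin X_w$ for $v\in T_g$. This holds because $T_g$ is connected, contains $u$ and misses $w$. So $T_g$ lies in the component of $X\setminus\{w\}$ through $e_h$, which is disjoint from $X_w$ by definition.
\item The word $\gamma'$ may end with a power of $g$. Since that power fixes $v$, discard it. Ping-pong on the remaining word, which begins and ends with powers of $h$, gives $\gamma'v\in X_w\setminus\{w\}$. Hence $\gamma'v\neq v=g^{-a}v$, i.e.\ $\gamma v\neq v$.
\item The Bass--Serre identification in Step 3 is asserted rather than proved, but you do not need it. The argument of your last paragraph applies equally to $v=u$, and symmetrically to $T_h$. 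It uses only that every element of $\Gamma$ is an alternating product of nontrivial powers of $g$ and $h$, not uniqueness of normal forms.
\end{enumerate}
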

When $\Gamma$ is not the free product $C_3*C_2$ of type $\mathrm{DjM}_1$ or $C_3*C_3$ of type $\mathrm{G}_1$, the proposition above implies that either $\ord(g)\geq 4$ or $\ord(h)\geq 4$. This further narrows the candidate order pairs $(\ord(g),\ord(h))$ for the generators. In particular, the group $D_6*_{C_2}V_4$ of type $\mathrm{DjM}_2^1$ is not two-elliptic generated, as neither $D_6$ nor $V_4$ contains elements of order greater than $3$.

It is worth considering, for each of the remaining eighteen amalgams $\Gamma = G_1*_HG_2$ and for each pair of orders $(n,m)$—where $n\in \{2,3,4,6,8\}$, $m\in \{2,3,4,6,8,12\}$, and $\max\{n,m\}\geq 4$—whether there exist $g\in G_1$ and $h\in G_2$ such that $(\ord(g),\ord(h)) = (n,m)$ and $\langle g,h\rangle = \Gamma$. In the appendix, we provide a computer program that tests whether any candidate order pair of generators is attained.

As described in another part of the appendix, elliptic-hyperbolic generating pairs with $\ell(h) = 2$ and $d(T_g,A_h) = 1$ can also be converted into a finite search. Nevertheless, for our elliptic order problem, many of the possible values of $\ord(g)$ in elliptic-hyperbolic generating pairs are already covered by the two-elliptic case: if $(g,h)$ is a two-elliptic generating pair with $d(T_g,T_h) = 1$, then $(g,gh)$ is an elliptic-hyperbolic generating pair with translation length $\ell(gh) = 2$. It turns out that this determines the answer to the elliptic-hyperbolic generating-pair existence problem for almost every value of $\ord(g)$ and for every amalgamated product, except for:
\begin{enumerate}
    \item $\ord(g) = 2$ and $\Gamma$ of types $\mathrm{DjM}_2^1$, $\mathrm{G}_2$, and $\mathrm{G}_3$,
    \item $\ord(g) = 3$ and $\Gamma$ of types $\mathrm{DjM}_2^1$, $\mathrm{DjM}_3$, $\mathrm{DjM}_5$, $\mathrm{G}_1^3$, and $\mathrm{G}_2^4$, and
    \item $\ord(g) = 6$ and $\Gamma$ of type $\mathrm{G}_2^1$.
\end{enumerate}
A manual case study would be more effective for treating these remaining cases. Computation suggests that generating pairs in case (1) are still possible:
\begin{prop}\label{prop:ell_hyp}
    There exist elliptic-hyperbolic pairs $g,h\in\aut(X_3)$ with $\ord(g) = 2$, $\ell(h) = 2$, and $d(T_g, A_h) = 0$ that generate the amalgamated products of types $\mathrm{DjM}_2^1$, $\mathrm{G}_2$, and $\mathrm{G}_3$.
\end{prop}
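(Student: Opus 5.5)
This is an existence statement, so the plan is to exhibit, for each of the three amalgams $\Gamma=G_1*_HG_2$ of types $\mathrm{DjM}_2^1$, $\mathrm{G}_2$, $\mathrm{G}_3$, an explicit pair $(g,h)$ and then verify it. We work in the action of $\Gamma$ on its Bass--Serre tree. For $\mathrm{DjM}_2^1$ this tree is the $(3,2)$-biregular tree; subdividing, it sits inside $X_3$ after forgetting the valency-$2$ vertices. For the Goldschmidt types it is $X_3$ itself.

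The geometric conditions translate into group-theoretic ones as follows. Let $v_1,v_2$ be the adjacent vertices with stabilizers $G_1,G_2$, joined by the edge $e$ with stabilizer $H$. A hyperbolic element of translation length $2$ (in the $X_3$-metric) whose axis passes through $v_1$ is given by $h=ab$ with $a\in G_1\setminus H$ and $b\in G_2\setminus H$. In the $(3,2)$ case the valency-$2$ vertices are suppressed, so we instead take $a,b\in G_1\setminus H$ lying in distinct nontrivial $H$-cosets composed with the $G_2$-flip, chosen so that the displacement is $2$.

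For $g$ we take an involution fixing a vertex of $A_h$. The natural choice is $g\in H$ or $g\in G_1$, since then $g$ fixes $v_1\in A_h$ and $d(T_g,A_h)=0$ holds automatically. Involutions in $H$ are attractive because $H$ is a $2$-group in all three cases ($V_4$, $C_2$, or a small $2$-group). With these choices, the conditions $\ord(g)=2$, $\ell(h)=2$ and $d(T_g,A_h)=0$ reduce to coset bookkeeping that is checked directly from the presentations in \cite{conder2025edge}.

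The main step, and the main obstacle, is proving $\langle g,h\rangle=\Gamma$. The search space is finite: up to conjugation by $G_1$, there are only finitely many triples $(g,a,b)$. We would run this search with the Stallings folding program from the Appendix and report one successful triple for each type. For a manual certificate, we would express the standard generators $x,y$ (and $c$ where present) of the presentation as explicit words in $g$ and $h$. The expected mechanism is that $g$ and $hgh^{-1}$ generate a large part of $G_1$ or of $H$. The remaining generators then come from $h$ multiplied by elements already obtained, which recovers $a$ and then $b$.

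Finally, we record that faithfulness and discreteness are inherited from $\Gamma$, so nothing further is needed there. We would also remark why these cases fall outside the two-elliptic trick $(g,gh)$: $\Gamma$ has no two-elliptic generating pair with $\ord(g)=2$ at distance $1$. For $\mathrm{DjM}_2^1$ this was already shown above, and for $\mathrm{G}_2$ and $\mathrm{G}_3$ it follows from the order-pair table in the appendix. This is why a separate, genuinely elliptic-hyperbolic construction is required.
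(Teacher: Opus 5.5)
There is a genuine gap: the proposal never establishes existence. You describe a finite search and say you \emph{would} report a successful triple. But the whole content of this proposition is that certificate, and nothing guarantees the search succeeds. For $\ord(g)=3$ the analogous search provably fails in several types (Proposition~\ref{prop:ell_hyp_3}). What is needed, and what the paper gives, is an explicit pair together with words recovering the standard generators:
\begin{itemize}
\item For $\mathrm{DjM}_2^1$, take $(g,h)=(p,xy)$. Then $x=g\,h\,g\,h^{-1}$, using $[p,y]=1$ and $pxp=x^{-1}$, and $y=x^{-1}h$.
\item For $\mathrm{G}_2$, take $(g,h)=(d,xy)$. Then $c=h^{-1}ghg$, $x=c\,h\,g\,h^{-1}$ and $y=x^{-1}h$.
\item For $\mathrm{G}_3$, take $(g,h)=(c,yx)$. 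Then $d=g\,h^{-1}gh$, so $H=D_8\le\langle g,h\rangle$. Next, $h^{-1}d^2h$ lies in $G_1\setminus H$, and since $D_8$ is maximal in $S_4$ this gives $G_1$, hence $x$, and then $y=hx^{-1}$.
\end{itemize}
In each case $g\in H$ fixes the edge $[v_1,v_2]$, and $h$ is a product of one element of $G_1\setminus H$ and one of $G_2\setminus H$. So $[v_1,v_2]\subset A_h$, $\ell(h)=2$ and $d(T_g,A_h)=0$ are immediate. Your heuristic that $g$ and $hgh^{-1}$ do most of the work is the right one, but it has to actually be carried out.

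Second, your set-up for $\mathrm{DjM}_2^1$ is wrong, and in the model you chose no such pair exists. Suppose the valency-$2$ vertices are suppressed. Then elements of $G_2\setminus H$ act as inversions, and any elliptic $g$ is conjugate into $G_1$. A hyperbolic $h$ with $\ell(h)=2$ is conjugate to a cyclically reduced word $a_1b_1a_2b_2$ with exactly two letters $b_i\in G_2\setminus H$. Now take the epimorphism $\Gamma\to G_2/H\cong C_2$ that is trivial on $G_1$; it is well defined because it is trivial on $H$ from both sides. It kills both $g$ and $h$, so $\langle g,h\rangle$ lies in an index-$2$ subgroup and your search would return nothing. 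That suppressed model is really the inversion setting of Corollary~\ref{cor:translation_1}, where $\ell(h)=1$. The paper instead keeps the $(3,2)$-biregular Bass--Serre tree as a subtree of $X_3$, giving each valency-$2$ vertex a third branch off the minimal subtree. Then $h=xy$ with $x\in G_1\setminus H$, $y\in G_2\setminus H$ has $\ell(h)=2$, exactly as in the Goldschmidt cases.
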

\begin{proof}
    \textbf{Case $\mathrm{DjM}_2^1$.} Use the standard presentation,
    \[
    D_6*_{C_2}V_4 = \langle x,p,y\mid x^3,p^2,y^2,(xp)^2,(yp)^2\rangle,
    \]
    Then $(p,xy)$ is an eligible generating pair:
    \begin{center}
        \begin{tabular}{c||c|c}
            Element & $x$ & $y$ \\
             \hline
            Recovery & $p(xy)p(xy)^{-1}$ & $x^{-1}(xy)$
        \end{tabular}
    \end{center}
    
    \textbf{Case $\mathrm{G}_2$.} Use the standard presentation,
    \[
    D_{12}*_{V_4}A_4 = \langle c, d, x, y \mid c^2, d^2, [c, d], x^3, y^3,(cx)^2, [d, x],(cy)^3, dy^{-1}cy \rangle.
    \]
    Then $(g,h) = (d,xy)$ is an eligible generating pair:
    \begin{center}
        \begin{tabular}{c||c|c|c}
            Element & $c$ & $x$ & $y$ \\
             \hline
            Recovery & $(xy)^{-1}d(xy)d$ & $c(xy)d(xy)^{-1}$ & $x^{-1}(xy)$
        \end{tabular}
    \end{center}
    \textbf{Case $\mathrm{G}_3$.} Use the standard presentation,
    \[
    S_4*_{D_8}S_4 = \langle c, d, x, y \mid c^2, d^4, (cd)^2, x^3,(dx^{-1})^2, cxdx, y^3,(dy^{-1})^2, cdydy \rangle.
    \]
    Then $(g,h) = (c,yx)$ is an eligible generating pair:
    \begin{center}
        \begin{tabular}{c||c|c|c}
            Element & $d$ & $x$ & $y$ \\
             \hline
            Recovery & $c(yx)^{-1}c(yx)$ & $d(xy)d^2(xy)^{-1}$ & $x^{-1}(xy)$
        \end{tabular}
    \end{center}
\end{proof}
By contrast, the conditions in cases (2) and (3) cannot be satisfied.
\begin{prop}\label{prop:ell_hyp_3}
    There are no elliptic-hyperbolic pairs $g,h\in\aut(X_3)$ with $\ord(g) = 3$ (or $\ord(g) = 6$), $\ell(h) = 2$, and $d(T_g, A_h)=0$ that generate the amalgamated products of types $\mathrm{DjM}_2^1$, $\mathrm{DjM}_3$, $\mathrm{DjM}_5$, $\mathrm{G}_1^3$, and $\mathrm{G}_2^4$ (or type $\mathrm{G}_2^1$, respectively).
\end{prop}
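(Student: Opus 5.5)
The plan is to obstruct generation by a quotient argument, in the spirit of Proposition~\ref{prop:g1_1}. For each listed amalgam $\Gamma=G_1*_HG_2$, I would build a homomorphism $\pi:\Gamma\to Q$ onto a finite group (or an abelian group) such that $\langle\pi(g),\pi(h)\rangle\neq Q$ for every admissible pair $(g,h)$. The admissible pairs are exactly those allowed by the geometry.

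The first step is to describe the admissible pairs. Suppose $d(T_g,A_h)=0$ and $\ell(h)=2$. Then $g$ fixes a vertex $v$ on $A_h$. Since $\ell(h)=2$ and the tree is trivalent (or $(2,3)$-biregular), the segment $[v,hv]$ has length two. By edge-transitivity, after conjugation we may take its middle vertex and endpoints in the standard fundamental domain. This gives $h=ab$ with $a,b$ lying in the two vertex stabilizers adjacent along the axis, and $g$ in the stabilizer of $v$. Since $v$ is a vertex of the axis, $g$ lies in a conjugate of $G_1$ or $G_2$. As in the two-elliptic reduction, $(g,h)$ is Nielsen-equivalent to a pair with $g\in G_i$, $\ord(g)=3$ (resp.\ $6$), and $h=xy$ with $x\in G_1\setminus H$, $y\in G_2\setminus H$. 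Because the pair is only taken up to conjugation by the stabilizer of $v$, we also have freedom to replace $h$ by $g^jh$ or $hg^j$. The outcome is a finite list of candidate pairs.

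The second step is the obstruction itself, and the natural choice of $Q$ is the abelianization $\Gamma^{ab}$, or the abelianization of a small-index subgroup. For $\mathrm{DjM}_2^1=D_6*_{C_2}V_4$ we have $\Gamma^{ab}=C_2\times C_2$. An element of order $3$ maps trivially, so $\langle g,h\rangle$ surjects onto at most the cyclic group $\langle\pi(h)\rangle$ and cannot generate. The same idea should work whenever the image of every order-$3$ element in $\Gamma^{ab}$ is trivial and $\Gamma^{ab}$ is not cyclic. I would compute $\Gamma^{ab}$ for $\mathrm{DjM}_3$, $\mathrm{DjM}_5$, $\mathrm{G}_1^3$ and $\mathrm{G}_2^4$ from the presentations of \cite{conder2025edge}. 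If $\Gamma^{ab}$ is cyclic for some of these, I would instead use the map to $\Gamma/\langle\langle g\rangle\rangle$. Here $\langle\langle g\rangle\rangle$ is the normal closure of the unique conjugacy class of order-$3$ subgroups in $G_1$. The generation condition then forces the quotient to be cyclic, generated by the image of $h=xy$. One checks that this quotient is a free product, or an amalgam of the images $\bar G_1*_{\bar H}\bar G_2$, which is not cyclic, for example $C_2*C_2$ or a dihedral group.

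For $\mathrm{G}_2^1$ with $\ord(g)=6$, the argument is the same: $g^2$ has order $3$ and $g^3$ has order $2$. I would quotient by the normal closure of the order-$3$ elements. This leaves a $2$-group quotient in which $\bar g$ is an involution inside $\bar G_1$. Generation by $\bar g$ and $\bar x\bar y$ then reduces to a check in a small group, such as a dihedral group of order $8$ or $16$, where an involution of the specified conjugacy class together with $\bar x\bar y$ generates only a proper subgroup.

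The main obstacle will be this quotient-by-normal-closure computation. One must verify, case by case, that killing the order-$3$ elements leaves a group that the images cannot generate, and this has to be uniform over the freedom $h\mapsto g^jhg^{k}$. If a clean quotient fails for one type, I would fall back on the finite search described in the appendix. Because $h=xy$ with $x,y$ ranging over the finite sets $G_1\setminus H$ and $G_2\setminus H$, the question is decided by Stallings folding of the subgroup $\langle g,xy\rangle$ in the amalgam.
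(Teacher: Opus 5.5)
Your approach differs from the paper's, and as written it fails at $\mathrm{G}_2^1$. The paper uses the geometry directly. An element $g$ of order $3$ (or $6$) fixes exactly one vertex $v$ of $A_h$ and permutes its three neighbours cyclically. So one of $gh$, $gh^{-1}$ fixes a neighbour of $v$, and $(g,gh^{\pm1})$ is a two-elliptic generating pair with $d(T_g,T_{gh^{\pm1}})=1$ and the same $\ord(g)$. Those cases are already excluded by the appendix table. Your Step 1 mentions Nielsen moves but never notices that $gh^{\pm1}$ is elliptic, and your quotient argument does not use the geometric hypotheses at all.

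\textbf{The gap at $\mathrm{G}_2^1$.} From the element orders in the appendix table (plus faithfulness), $\mathrm{G}_2^1=S_4*_{D_8}D_{24}$, where the Sylow subgroup of $D_{24}=\langle r,s\rangle$ is $D_8=\langle r^3,s\rangle$. Every order-$6$ elliptic is conjugate to $g=r^{\pm2}$.
\begin{itemize}
\item $g^3=r^6$ is the centre of $D_8$, which is a double transposition in $S_4$, so $g^3\in V_4\trianglelefteq S_4$.
\item Killing all elements of order $3$ kills $A_4\supset V_4$ and also $r^4$. Hence it kills $g$ itself.
\item The resulting quotient is just $C_2$, and $\Gamma^{ab}\cong C_2$ as well. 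Every $2$-group quotient of $\Gamma$ factors through this $C_2$, and $\bar h$ can generate it.
\end{itemize}
So the claimed reduction to a dihedral group of order $8$ or $16$, with $\bar g$ an involution, does not occur.

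The fix is to kill less. Since $\langle g^2\rangle\cap D_8=1$, we get $\Gamma/\langle\langle g^2\rangle\rangle\cong S_4*_{D_8}D_8\cong S_4$. In this quotient $\bar g=\overline{g^3}\in V_4$. Because $S_4/V_4\cong S_3$ is not cyclic, $\langle\bar g,\bar h\rangle\neq S_4$ for every $h$.

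\textbf{The order-$3$ cases.} You only verified $\mathrm{DjM}_2^1$; the other four are stated as intentions. They do go through. For $\mathrm{DjM}_3$, $\mathrm{G}_1^3$ and $\mathrm{G}_2^4$, the edge group's abelianization maps isomorphically onto that of one vertex group. Therefore $\Gamma^{ab}$ is the abelianization of the other vertex group:
\begin{itemize}
\item $\mathrm{DjM}_3$: $D_8^{ab}$.
\item $\mathrm{G}_1^3$: $D_{12}^{ab}$.
\item $\mathrm{G}_2^4$ (vertex groups $C_2\times S_4$ and $S_3\times D_8$, as for the Gray graph): $(C_2\times S_4)^{ab}$.
\end{itemize}
Each of these is $C_2\times C_2$. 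For $\mathrm{DjM}_5$, $\Gamma^{ab}$ is a $2$-group that surjects onto $\operatorname{Aut}(S_6)^{ab}\cong C_2\times C_2$, because Tutte's $8$-cage is $5$-arc-regular. In all of these cases an order-$3$ element dies in $\Gamma^{ab}$, while $\Gamma^{ab}$ is not cyclic.

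\textbf{Comparison.} Once completed, with the $\mathrm{G}_2^1$ correction, your route is computer-free. It also proves more than the statement: these groups have no generating pair containing an element of order $3$ (respectively $6$), regardless of the geometry. Your Step 1 then becomes unnecessary. The paper's proof is a two-line reduction, but it depends entirely on the GAP search.

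Your stated fallback is also not the appendix's elliptic-hyperbolic search as written. That search assumes the elliptic generator lies in $H$, and an order-$3$ element never does, since $H$ is a $2$-group here.
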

\begin{proof}
    Suppose such a pair $(g,h)$ exists. Since $\ord(g) = 3$ (or $6$, in the $\mathrm{G}_2^1$ case), $g$ fixes only a single vertex on $A_h$ and permutes the three adjacent vertices. Consequently, either $gh$ or $gh^{-1}$ fixes a vertex adjacent to $T_g$. Thus, $(g,gh)$ (or $(g,gh^{-1})$) is a two-elliptic generating pair of the amalgamated product, with $d(T_g,T_{gh}) = 1$ and $\ord(g) = 3$. However, these two-elliptic scenarios were excluded earlier by our program.
\end{proof}
This completes the discussion of the existence of elliptic-hyperbolic generating pairs of every elliptic order for every faithful amalgamated product.
\subsection{Proof of Theorem \ref{thm:classification}}
We first prove the classification theorem:
\begin{proof}[Proof of Theorem \ref{thm:classification}]
    The group $\Gamma$ acts on the subforest $\bigcup_{\gamma\in\Gamma}\gamma.A_h\subset X_3$. Since $d(T_g,A_h) = 0$, it is indeed a subtree. Its quotient graph by $\Gamma$ is the circuit $A_h/\langle h\rangle$ of length $\ell(h) = 2$, up to further edge identifications. Therefore, it is either a circuit of length $2$, or an edge with two vertices.

    If the quotient graph is a loop of length $2$, then the indices of the edge groups contained in each vertex group must be either $(1,1)$ or $(1,2)$: their sum is the vertex valency of the universal covering tree, which is $\leq 3$. If both indices are equal to $1$, then the fundamental group action would not be faithful. Therefore, the indices must be $1$ and $2$, respectively. After reducing the edge with index $1$ and merging the two vertices, the result is a loop with an edge group of index $2$. Consequently, the fundamental group is a faithful $2$-HNN extension $G*_\varphi$. By Theorem \ref{thm:HNN_rigidity}, it is one of the HNN extensions in the family, and the order of each nontrivial element in $G$ is either $2$ or $4$.
    
    If the quotient graph is an edge with two vertices, then $\Gamma\cong G_1*_H G_2$, the amalgamated product of the vertex groups $G_1$ and $G_2$ over the edge group $H$. Furthermore, these groups satisfy:
    \begin{itemize}
        \item The indices $\lvert G_1: H\rvert, \lvert G_2: H\rvert\leq 3$, as the underlying tree is trivalent.
        \item The edge group acts faithfully, so it does not contain any subgroup normal in $\Gamma$.
    \end{itemize}
    By Theorem \ref{thm:3_3_amalgam}, the amalgam of the vertex groups $G_1*_H G_2$ is either the infinite dihedral group $C_2*C_2 = D_\infty$, or one of those outlined in \cite{djokovic1977regular} and \cite{goldschmidt1980automorphisms}. The amalgamated product of type $\mathrm{G}_1^1$ is ruled out by Proposition \ref{prop:g1_1}. The two-elliptic generating pairs provided in the appendix give elliptic-hyperbolic generating pairs of most elliptic orders. Then Proposition \ref{prop:ell_hyp} fills in a few of the gaps, while Proposition \ref{prop:ell_hyp_3} implies that the remaining cases are not possible.
\end{proof}
Next, we consider the case $\ell(h) = 1$:
\begin{proof}[Proof of Corollary \ref{cor:translation_1}]
    If $\ell(h) = 1$ instead, then the group $\Gamma$ contains edge inversions. After an edge barycentric division, the quotient graph is a ``half-edge'', and the index $\lvert G_2: H\rvert = 2$ for the barycenter stabilizer $G_2$. This narrows the possibilities to either the infinite dihedral group, or the amalgamated products of the Djokovi\'c-Miller types, where the elliptic generator must come from the group $G_1$ with $\lvert G_1: H\rvert = 3$. The possible groups follow from this.
\end{proof}
Finally, we turn to the two-elliptic case.
\begin{proof}[Proof of Corollary \ref{cor:two_ellip}]
    If both $g$ and $h$ are elliptic automorphisms with $d(T_g,T_h) = 1$, then the group $\Gamma = \langle g,h\rangle$ acts on the subtree $\bigcup_{\gamma\in\Gamma}\gamma.[u,v]$, where $u,v$ are the vertices fixed by $g$ and $h$, with $d(u,v) = 1$. Its quotient graph by $\Gamma$ is an edge with two vertices, representing the equivalence classes of $u$ and $v$. Therefore, $\Gamma$ is one of the faithful $(2,2)$-, $(3,2)$-, or $(3,3)$-amalgamated products. Besides the free product cases, the possible orders $\ord(g)$ and $\ord(h)$ cannot both be prime numbers, as indicated by Proposition \ref{prop:prime_order}. For the other cases, whether such generators exist is discussed in Appendix \ref{sec:program}.
\end{proof}
\subsection{The Lower Bound of Vertex Group Orders}
As an aside in this section, we note that the family of HNN extensions constructed in Theorem \ref{thm:HNN_rigidity} has the ``smallest'' vertex group orders among elliptic-hyperbolic-generated discrete subgroups of tree automorphisms, as the estimate below shows.
\begin{prop}\label{thm:ellip_lower}
    Let $X$ be a tree, $g\in \aut(X)$ be an elliptic automorphism, and $h\in \aut(X)$ be a hyperbolic automorphism such that:
    \[
    \ell(A_h\cap T_g)\geq \ell(h),\ \ell(A_h\cap T_g)<\infty.
    \]
    Then the largest vertex stabilizer of $\Gamma = \langle g,h\rangle$ satisfies the lower bound:
    \[
        \max_{v\in\mathcal{V}(X)} \lvert \Gamma_v\rvert \geq 2^{1+\left\lfloor\frac{\ell(A_h\cap T_g)}{\ell(h)}\right\rfloor},
    \]
    where $\Gamma_v$ denotes the stabilizer of the vertex $v$.
\end{prop}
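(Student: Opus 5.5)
The plan is to find one vertex $v$ fixed by $m+1$ conjugates of $g$ under powers of $h$, where $m=\left\lfloor \ell(A_h\cap T_g)/\ell(h)\right\rfloor\geq 1$. I will then show that these conjugates generate a subgroup of $\Gamma_v$ whose order doubles at each step. If some stabilizer is infinite there is nothing to prove, so all the subgroups below may be assumed finite.

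\textbf{Setup.} Write $\ell=\ell(h)$ and $L=\ell(A_h\cap T_g)$. Parametrize the axis $A_h$ by integer positions so that $h$ shifts positions by $+\ell$. Because $T_g$ is a subtree, $S=A_h\cap T_g$ is a geodesic segment; place it at positions $[0,L]$. Note that $g\neq 1$, since $S$ is finite, so $\ord(g)\geq 2$. Also, the axis vertices at positions $-1$ and $L+1$ are not fixed by $g$. Otherwise $T_g$, being a subtree containing an endpoint of $S$, would contain a longer segment of $A_h$, contradicting the definition of $S$.

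\textbf{Conjugates and the common vertex.} For $0\leq i\leq m$ set $g_i=h^i g h^{-i}$. The fixed tree of $g_i$ is $h^iT_g$, so $A_h\cap T_{g_i}=h^iS$, which is the segment $[i\ell,\,i\ell+L]$. In particular $g_i$ fixes the axis vertex at position $i\ell+L+1$ if and only if $g$ fixes the one at $L+1$, which it does not. Since $i\ell\leq m\ell\leq L$ for every $i\leq m$, all these segments contain position $m\ell$. Let $v$ be that vertex; then each $g_i$ lies in $\Gamma_v$.

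\textbf{Descending chain.} Define $K_j=\langle g_m,g_{m-1},\dots,g_{m-j}\rangle\leq\Gamma_v$ for $0\leq j\leq m$. The key claim is that $g_{m-j}\notin K_{j-1}$ for $1\leq j\leq m$. Consider the axis vertex $r$ at position $(m-j)\ell+L+1$. For every $i\geq m-j+1$ we have
\[
i\ell\leq m\ell\leq L<r\leq (m-j+1)\ell+L\leq i\ell+L,
\]
using $\ell\geq 1$. Hence each such $g_i$ fixes $r$, and so every element of $K_{j-1}$ fixes $r$. On the other hand, $r$ sits exactly one step past the end of $h^{m-j}S$, so $g_{m-j}$ does not fix $r$. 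This proves the claim. It follows that $\lvert K_j:K_{j-1}\rvert\geq 2$, and therefore
\[
\lvert\Gamma_v\rvert\geq\lvert K_m\rvert\geq 2^m\lvert K_0\rvert=2^m\ord(g)\geq 2^{m+1},
\]
which is the desired bound.

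The main obstacle is choosing the order in which to adjoin the conjugates. If one adds them as $g_0,g_1,\dots$, there is no obvious vertex that separates the new generator from the previous ones. Adjoining them from $g_m$ downward is what lets the single vertex just beyond the right end of $h^{m-j}S$ serve as the witness. Beyond that, the only points needing care are the maximality of $S$ (using that $T_g$ is a subtree) and the inequality $r\leq i\ell+L$, which is where $\ell\geq 1$ is used.
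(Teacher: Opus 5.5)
Your proof is correct and is essentially the paper's argument: both build a strictly increasing chain of subgroups inside a single vertex stabilizer from the conjugates $h^igh^{-i}$, with each strict inclusion witnessed by an axis vertex lying just outside a translate of $A_h\cap T_g$. The only difference is a mirror image---the paper takes $K_i$ to be the pointwise stabilizer in $\Gamma$ of $[h^i.v,w]$ (with $v,w$ the endpoints of $A_h\cap T_g$), so nesting is automatic, and adjoins $h^igh^{-i}$ in increasing order with the vertex just before $h^i.v$ as the witness, so your closing remark that the increasing order has no separating vertex is not accurate.
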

\begin{proof}
    Let $v$ and $w$ be the endpoints of $A_h\cap T_g$, and let $h$ translate from $v$ to $w$. Let
    \[
    n = \left\lfloor\frac{\ell(A_h\cap T_g)}{\ell(h)}\right\rfloor.
    \]
    Then, for $i=0,1,\dots,n$, $h^i.v$ lies on $[v,w]$. Consider the conjugate elements
    \[
    t_i\coloneqq h^igh^{-i}.
    \]
    Then each $t_i$ is elliptic and fixes the segment $[h^i.v,h^i.w]$ vertex-wise. Define $K_i$ to be the subgroup fixing the segment $[h^i.v,w]$ pointwise:
    \[
    K_i = \{\gamma\in\Gamma\mid \gamma.u = u,\forall u\in\mathcal{V}([h^i.v,w])\}.
    \]
    Then these groups yield a nested sequence of subgroups in $\Gamma$:
    \[
    \{1\}\leq K_0\leq K_1\leq\dots\leq K_n.
    \]
    We observe that for $i=0,\dots,n$, $t_i$ fixes the segment $[h^i.v,w]$, so $t_i\in K_i$. On the other hand, for $i=1,\dots,n$, $t_i$ moves off an edge in $[h^{i-1}.v,h^i.v]$, so $t_i\notin K_{i-1}$. These inclusion relations show that each inclusion $\{1\}<K_0$ and $K_i<K_{i+1}$ is proper. Thus, $\lvert K_i\rvert\geq 2\cdot\lvert K_{i-1}\rvert$, and by induction,
    \[
    \lvert K_n\rvert \geq 2^{n+1}.
    \]

    Furthermore, each group $K_i$, in particular $K_n$, is a subgroup of the vertex stabilizer $\Gamma_w$:
    \[
    \lvert\Gamma_w\rvert \geq \lvert K_n\rvert \geq 2^{n+1}.
    \]
    This completes the proof.
\end{proof}
We now identify the equality case in Proposition \ref{thm:ellip_lower}, which is precisely the HNN family in Theorem \ref{thm:HNN_rigidity}.
\begin{prop}
    If equality in Proposition \ref{thm:ellip_lower} holds for $\Gamma$, then $\Gamma$ is isomorphic to one of the HNN extensions described in Theorem \ref{thm:HNN_rigidity}.
\end{prop}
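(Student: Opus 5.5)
The plan is to analyse the chain of subgroups from the proof of Proposition \ref{thm:ellip_lower} in the equality case and show that it forces the structure of Lemma \ref{lem:pres}. Assume that $\max_v\lvert\Gamma_v\rvert = 2^{n+1}$ with $n=\lfloor \ell(A_h\cap T_g)/\ell(h)\rfloor$. Then every inclusion in
\[
\{1\}<K_0<K_1<\dots<K_n\leq\Gamma_w
\]
has index exactly $2$, and $K_n=\Gamma_w$. Since $w$ realises the maximum, and the same argument applies to every $h$-translate, all stabilizers of vertices on $A_h$ are $2$-groups of the maximal order $2^{n+1}$. Consequently every vertex stabilizer meeting the chain is generated by the conjugates $t_i=h^igh^{-i}$, which play the role of the involution generators $t_1,\dots,t_n$ in Lemma \ref{lem:pres}.

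The next step is to identify the quotient graph. I would take the minimal subtree $\bigcup_{\gamma}\gamma.A_h$, which is a tree because $T_g$ meets $A_h$. Its quotient is $A_h/\langle h\rangle$, a circuit of length $\ell(h)$, possibly with further identifications. Equality means that along each fundamental segment $[h^i.v,h^{i+1}.v]$ the stabilizer drops by exactly a factor of $2$ and then is restored by conjugation by $h$. I would then perform the edge reductions of Section \ref{sec:prelim}, contracting every edge whose group equals an endpoint group. The aim is to show that after reduction exactly one edge per period has index $2$, and all other edges have index $1$. This would give a loop with vertex group $G=\Gamma_w$ and edge group $H_1=K_{n-1}$ (up to the shift by $h$), with stable letter $h$. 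Hence $\Gamma\cong G*_\varphi$, where $\varphi$ is conjugation by $h$ from $H_1$ onto $H_2$, and both subgroups have index $2$.

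Finally, I need faithfulness, meaning that no nontrivial subgroup of $H_1\cap H_2$ is $\varphi$-invariant. If $N$ were such a subgroup, it would be normalized by $h$ and by $G$, so it would fix the whole minimal subtree. But $K_0=\langle g\rangle$-level elements fix only a finite segment, because $\ell(A_h\cap T_g)<\infty$. I would use this to show that the descending sequence $K_i'=\varphi(H_1\cap K_{i-1}')$ reaches $\{1\}$ after $n+1$ steps, which is exactly the counting in Lemma \ref{lem:pres}. An invariant subgroup would stall this sequence and make the chain too long. Once faithfulness is established, Theorem \ref{thm:HNN_rigidity} applies directly and gives the conclusion.

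I expect the main obstacle to be the reduction step. The hypothesis only says $\ell(A_h\cap T_g)\geq\ell(h)$, so extra quotient identifications or edges of larger index off the axis could a priori occur. I would first try to show that any additional index larger than $2$, or any extra orbit of vertices, produces an element that extends the chain $K_i$ by one more step. That would force some stabilizer to have order greater than $2^{n+1}$, contradicting equality.
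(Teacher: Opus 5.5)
Your overall architecture matches the paper's: show $\Gamma\cong K_n*_\varphi$ with $\varphi=\mathrm{ad}_h\colon K_{n-1}\to hK_{n-1}h^{-1}$, check faithfulness, and invoke Theorem~\ref{thm:HNN_rigidity}. There is a genuine gap, however: the decisive step, that the quotient reduces to a single loop, is only stated as an aim, and the mechanism you propose for it cannot work.

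Write $\ell(h)=m$ and $\ell(A_h\cap T_g)=nm+k$ with $0\le k<m$. The vertices that must be controlled are the interior vertices $u$ of $[h^{-1}.w,h^n.v]$. Each $\Gamma_u$ contains $K_{n-1}$, the pointwise stabilizer of $[h^{n-1}.v,w]\supseteq[h^{-1}.w,h^n.v]$, which has order $2^n$. So $\Gamma_u$ is either $K_{n-1}$ or an index-$2$ overgroup $\langle K_{n-1},s\rangle$.

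The second option has order exactly $2^{n+1}$, which is fully consistent with equality. No argument that ``extends the chain by one more step'' to force a stabilizer of order $>2^{n+1}$ can exclude it. Concretely, take an equality example with $m-k\ge 2$ and replace the group of such a vertex by $K_{n-1}\times C_2$ in the circuit of groups. The resulting group contains $g,h$ with the same $\ell(h)$, the same $\ell(A_h\cap T_g)$ and the same maximal stabilizer order $2^{n+1}$. Yet its reduced quotient is a $2$-circuit, not a loop. It differs from your situation only in not being generated by $g$ and $h$.

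So the hypothesis $\Gamma=\langle g,h\rangle$ must enter exactly here, and your proposal never uses it. The paper's argument: $g\in K_n$ and $h$ is the stable letter, so every word in $g,h$ lies in the subgroup where these vertices carry $K_{n-1}$. By HNN normal form, an $s\notin K_{n-1}$ cannot be such a word.

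An equivalent, cleaner route uses the natural map $K_n*_\varphi\to\Gamma$.
\begin{itemize}
\item It is onto because $g=t_0\in K_n$ and the stable letter maps to $h$.
\item It is injective because the equivariant map of its Bass--Serre tree into $X$ does not fold. This map sends the vertex to $[h^n.v,w]$ and the edge to $[h^{-1}.w,h^n.v]$.
\item It does not fold because the $K_n$-stabilizers of the axis edges just before $h^n.v$ and just after $w$ are exactly $K_{n-1}$ and $hK_{n-1}h^{-1}$: $t_n$, resp.\ $g$, moves them.
\end{itemize}

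Two smaller points.
\begin{itemize}
\item Your claim that all stabilizers of vertices of $A_h$ have the maximal order $2^{n+1}$ is false. Only the vertices in $\langle h\rangle\cdot[h^n.v,w]$ do. The interior vertices above end up with stabilizer $K_{n-1}$ of order $2^n$, which is exactly what has to be proved.
\item In the faithfulness step, a $\varphi$-invariant $N\le H_1\cap H_2$ need not be normalized by $G$. The correct argument: $N\le K_{n-1}$ fixes $[h^{n-1}.v,w]$, a segment of length $m+k\ge m$, and $N=h^jNh^{-j}$ for all $j$. Hence $N$ fixes $A_h$ pointwise, so $N\le K_0=\langle g\rangle$. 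Since $g$ does not fix $A_h$, $N=1$. The paper leaves faithfulness implicit, so this is a worthwhile addition once repaired.
\end{itemize}
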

\begin{proof}
    Suppose $\ell(h) = m$ and $\ell(A_h\cap T_g) = nm+k$, with $0\leq k<m$. For every vertex $u$ in the path $[h^n.v,w]$ of length $k$, the stabilizer
    \[
    \Gamma_u\geq K_n = \langle t_0,\dots, t_n\rangle,
    \]
    where $t_i = h^igh^{-1}$ as above. If equality holds, then $\lvert \Gamma_u\rvert = 2^{n+1}$, so $\Gamma_u = K_n$.
    
    The stabilizer $\Gamma_{h^{-1}.w}$ is $\langle t_{-1},\dots, t_{n-1}\rangle$. The vertex $h^{-1}.w$ is incident with an edge $e$ in $[h^{-1}.w,h^n.v]$, and the stabilizer $\Gamma_e = K_{n-1} = \langle t_{0},\dots, t_{n-1}\rangle$. Conjugation by the HNN stable letter $h$ gives the injection
    \[
    \varphi: \Gamma_e\to \Gamma_w,\ \varphi(t_i) = ht_ih^{-1} = t_{i+1}, \ i=0,\dots, n-1.
    \]
    For every vertex $u$ in $[h^{-1}.w,h^n.v]$, excluding the two endpoints, the vertex stabilizer
    \[
    \Gamma_u\geq K_{n-1} = \langle t_0,\dots, t_{n-1}\rangle,
    \]
    and is either equal to $K_{n-1}$ or an extension of index $2$. Suppose $\lvert \Gamma_u: K_{n-1}\rvert = 2$, $s\in \Gamma_u - K_{n-1}$, and $\Gamma_u = \langle K_{n-1},s\rangle$. Since $s$ is not involved in the HNN injection $\varphi$, the normal form for HNN extensions implies that any word of the form
    \[
    s^{-1}gh^{i_1}gh^{i_2}\dots h^{i_l}g
    \]
    is reduced for $i_1,\dots, i_l\neq 0$. This implies that $s$ cannot be expressed as a word in $g$ and $h$, contradicting our assumption that $s\in \Gamma = \langle g,h\rangle$.

    Therefore, $\Gamma_u = K_{n-1}$ for those vertices, and the quotient graph of the action of $\Gamma$ reduces to a loop of vertex group $K_n$ and edge group $K_{n-1}$, subject to the boundary injection $\varphi(t_i) = ht_ih^{-1} = t_{i+1}$. Its fundamental group is exactly the HNN extension in Theorem \ref{thm:HNN_rigidity}.
\end{proof}
\section{General Two-generated Discrete Subgroups}\label{sec:flex}
General two-generator discrete subgroups of $\aut(X)$ can have complicated vertex stabilizers, even when requiring faithful action on the minimal subtree:
\begin{itemize}
    \item If $\ell(h) = 4$, $d(T_g,A_h) = 0$, and $X$ is trivalent, then $\Gamma = \langle g,h\rangle$ could be an amalgam of three finite groups:
    \[
    \Gamma = G_0 *_{H_1} G_1*_{H_2} G_2,
    \]
    where $\{\lvert G_1:H_1\rvert, \lvert G_2:H_2\rvert\} = \{1,2\}$, and $\lvert G_0:H_1\rvert,\lvert G_2:H_2\rvert\leq 3$.
    \item If $\ell(h) = 3$ and $d(T_g,A_h) = 0$, or $\ell(h) = 1$ and $d(T_g, A_h) = 1$, when inversions are allowed, we obtain amalgams of the same form as in the previous case.
    \item If $\ell(h) = 2$, $d(T_g,A_h) = 0$, and $X$ is four-regular or $(3,4)$-biregular, then $\langle g,h\rangle$ includes a family of two-generator faithful $(4,3)$-amalgamated products, and this further contains a family of three-group amalgams $G_0 *_{H_1} G_1*_{H_2} G_2$ equivalent to the above.
    \end{itemize}
This family of amalgamated products is infinite and appears to be impractical to fully classify. In contrast, once the geometric quantities $\ell(h)$ and $d(T_g,A_h)$ are fixed for an elliptic $g\in\aut(X)$ and a hyperbolic $h\in\aut(X)$, the minimal quotient graph for $\Gamma = \langle g,h\rangle$ becomes restricted, as we show in the following subsection.
\subsection{Restrictions for the Minimal Quotient Graph}
The restriction is exhibited in the following main result:
\begin{thm}\label{prop:ellip_gog}
    Let $X$ be a tree, $g,h\in\aut(X)$, and let $\Gamma = \langle g,h\rangle$ be a discrete and faithful subgroup. Suppose $g$ is elliptic with fixed tree $T_g$, and $h$ is hyperbolic with translation length $l = \ell(h)$, axis $A_h$, and distance $d = d(T_g,A_h)$. Then the quotient graph $Y$ of the minimal $\Gamma$-subtree by $\Gamma$ has cyclomatic number $\mathrm{b}_1(Y)\leq 1$; that is, $Y$ is either a tree or unicyclic.

    In addition, $Y$ must be obtained by identifying edges of the $(l,d)$-tadpole graph $Y'$ (a circuit of length $l$ with an attached path of length $d$). Furthermore, if $Y$ is unicyclic, then the circuit in $Y$ must be homotopic to the image of the circuit in the tadpole graph $Y'$ under the edge identifications.
\end{thm}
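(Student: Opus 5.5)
The plan is to build an explicit $\Gamma$-invariant subtree that comes from the geometry of $g$ and $h$, and then read off its quotient. Let $p\in T_g$ and $q\in A_h$ realize $d=d(T_g,A_h)$; when $d=0$ take $p=q$. Put $P=[p,q]$, a path of length $d$, and let $F$ be a fundamental segment of $A_h$ of length $l$ starting at $q$, so that $A_h=\bigcup_k h^k F$. Set $D=P\cup F$ and $S=\bigcup_{\gamma\in\Gamma}\gamma D$. The first step is to show that $S$ is connected, hence a subtree. The translates $h^kD$ all meet $A_h$, and $A_h$ is connected, so $\langle h\rangle\cdot D$ is connected. Since $g$ fixes $p$, the set $g\cdot(\langle h\rangle D)$ meets $\langle h\rangle D$ at $p$. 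By induction on word length in $g^{\pm1},h^{\pm1}$, for every $\gamma$ the translate $\gamma D$ is joined to $D$ through a chain of translates that pairwise intersect. So $S$ is a $\Gamma$-invariant subtree. It contains $A_h$ and therefore contains the minimal subtree $T$.

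The second step is to describe $S/\Gamma$. Every vertex and edge of $S$ lies in the $\Gamma$-orbit of some cell of $D$, so the map $D\to S/\Gamma$ is surjective on cells. In $D$ itself, $F$ closes up into a circuit of length $l$ under $h$, because $h$ identifies the endpoints $q$ and $h q$. Hence $S/\Gamma$ is a quotient of the $(l,d)$-tadpole $Y'$, obtained by identifying further vertices and edges that lie in the same $\Gamma$-orbit. To control $b_1$, I will count free generators. Bass--Serre theory gives $\Gamma\cong\pi_1(S/\Gamma,\mathcal G)$, which surjects onto the ordinary fundamental group $\pi_1(S/\Gamma)$, a free group of rank $b_1(S/\Gamma)$. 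Under this surjection, $g$ maps to the trivial class because it fixes $p$, and $h$ maps to the class of the image of the circuit of $Y'$. A free group generated by a single element has rank at most $1$, so $b_1(S/\Gamma)\le 1$. If the rank equals $1$, the circuit of $S/\Gamma$ is homotopic to the image of the tadpole circuit, up to inversion.

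The last step is to pass from $S$ to $T$. Since $T\subseteq S$ is $\Gamma$-invariant, $T/\Gamma$ is a subgraph of $S/\Gamma$. Equivalently, $S/\Gamma$ is $T/\Gamma$ with trees attached, namely the images of the parts of $S$ off $T$, which are hanging trees with respect to the minimal subtree. Removing these trees preserves both $b_1$ and the homotopy class of the cycle. Moreover, $T/\Gamma$ is still an image of the tadpole: $A_h\subseteq T$, and the tail $P$ either lies in $T$ or collapses onto the hanging part. To handle this, I will apply the same argument with $P$ replaced by $P\cap T$, which is an initial segment of $P$ starting at $q$ of length $d'\le d$. Then $T/\Gamma$ is a quotient of an $(l,d')$-tadpole, and this is a quotient of $Y'$ obtained by folding the excess tail onto existing edges. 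This is the step where the definition of "obtained by identifying edges" needs care.

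I expect the main obstacle to be this last reconciliation: verifying that the minimal subtree is exactly $\Gamma\cdot(A_h\cup\text{initial part of }P)$, and making precise that discarding or folding the tail still counts as an edge identification of $Y'$. My first attempt will be to use faithfulness together with the description of $T$ as the union of the axes of all hyperbolic elements. The axis of $gh$, or of $g h g^{-1}h$, passes along $\gamma P$ when $d>0$, and I expect this to force $P\subseteq T$ in the generic case. That would remove the need for the folding argument altogether.
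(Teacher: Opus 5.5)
Your proposal is correct and follows the paper's proof. The paper likewise uses the forgetful surjection $\pi_1(Y,\mathcal{G})\to\pi_1(Y)$, which kills the elliptic $g$, to get $\mathrm{b}_1(Y)\le 1$; it takes the $\Gamma$-translates of $A_h\cup P$ to realize the quotient as an image of the $(l,d)$-tadpole, and it identifies the image of $h$ with the tadpole circuit. The step you flag, that this invariant subtree is the minimal one, is taken for granted in the paper, and your fix works whenever $d\ge 1$, not just generically. Since $g$ moves the first edge of $P$, the axes $A_h$ and $gA_h$ are disjoint with bridge $[q,p]\cup[p,gq]$ of length $2d$, so the axis of $h\cdot ghg^{-1}$ (or of $gh$) contains $P$. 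Hence $S=T$, and the folding fallback is unnecessary.
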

\begin{proof}
    For the first claim, we consider the fundamental group homomorphism
    \[
    \varphi: \Gamma = \pi_1(Y,\mathcal{G})\to \pi_1(Y) = F_{\mathrm{b}_1(Y)},
    \]
    obtained by forgetting the finite vertex groups of the quotient graph of groups $(Y,\mathcal{G})$. 
   
    On the one hand, this forgetful homomorphism is surjective. On the other hand, $\Gamma = \langle g,h\rangle$, with $g$ elliptic. Thus $\varphi(g) = 1$, and $\varphi(\Gamma)$ is either trivial or an infinite cyclic group $\langle \varphi(h)\rangle$. In particular, the rank of the free group $\pi_1(Y)$ is at most one, so $\mathrm{b}_1(Y)\leq 1$.

    For the assertion about the quotient graph, let $P$ be the path joining $T_g$ and $A_h$. Then the group $\Gamma$ acts on the subtree
    \[
    X_0 = \bigcup_{\gamma\in\Gamma}(\gamma.A_h\cup \gamma.P).
    \]
    Its quotient graph, $X_0/\Gamma$, is obtained by identifying edges in the vertex sum of $A_h/\langle h\rangle$, a circuit of length $\ell(h)$, with $P$, a path of length $d(T_g,A_h)$. Thus it is obtained from the tadpole graph $Y'$ by edge identifications.
    
    In this construction, $h$ corresponds to the circuit of length $l$ in $Y'$, and after the further edge identifications and forgetting of the vertex groups, $\varphi(h)\in\pi_1(Y)$ corresponds to the image of that circuit in $Y'$. Therefore, if $Y$ is unicyclic, then its circuit must be the image of the circuit in the tadpole graph $Y'$ under the edge identifications.
\end{proof}
Theorem \ref{prop:ellip_gog} implies that $Y$ is indeed obtained from \emph{folding} the edges of the $(l,d)$-tadpole graph, i.e., repetitively identifying adjacent edges, as well as their endpoints that are not incident with the other edge (if any). The theorem further gives an upper bound on the size of the quotient graph:
\begin{cor}\label{cor:ellip_gog}
    If $\mathrm{b}_1(Y) = 0$, then $\lvert\mathcal{E}(Y)\rvert\leq d(T_g,A_h) + \frac{1}{2}\ell(h)$.
    
    If $\mathrm{b}_1(Y) = 1$, then the girth of $Y$ is at most $\ell(h)$, and
    \[
    \lvert\mathcal{E}(Y)\rvert\leq d(T_g,A_h) +\frac{1}{2}(\ell(h)+\mathrm{girth}(Y)).
    \]
    When $d(T_g,A_h) = 0$, this bound is sufficient.
\end{cor}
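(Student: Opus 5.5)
We prove Corollary \ref{cor:ellip_gog} by counting the edges of the tadpole graph $Y'$ that survive the folding described after Theorem \ref{prop:ellip_gog}. The tadpole $Y'$ has $l+d$ edges, where $l=\ell(h)$ and $d=d(T_g,A_h)$. By Theorem \ref{prop:ellip_gog}, $Y$ is the image of $Y'$ under a surjective graph morphism $\pi: Y'\to Y$ obtained by repeated folds. The tail of length $d$ contributes at most $d$ edges to $Y$. The real content is therefore to bound the number of distinct edges in the image of the circuit $C\subset Y'$ of length $l$.

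The key observation concerns the image of $C$. It is a closed walk $\pi(C)$ of length $l$ in $Y$, and every edge of $Y$ lying in $\pi(C)$ is traversed at least once. We split into the two cases of the theorem.

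\emph{Case $\mathrm{b}_1(Y)=0$.} Here $Y$ is a tree. A closed walk in a tree traverses every edge it uses an even number of times, once in each direction. Hence $\pi(C)$ covers at most $l/2$ distinct edges, and
\[
\lvert\mathcal{E}(Y)\rvert\leq d+\tfrac{1}{2}\ell(h).
\]

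\emph{Case $\mathrm{b}_1(Y)=1$.} Here $Y$ is unicyclic with unique circuit $Z$, and $\lvert\mathcal{E}(Z)\rvert=\mathrm{girth}(Y)$. By Theorem \ref{prop:ellip_gog}, $\pi(C)$ is freely homotopic to $Z$, traversed once, so it is not nullhomotopic. Deleting any edge of $Z$ leaves a tree, so the walk $\pi(C)$ must cross every edge of $Z$ an odd number of times, in particular at least once. Thus $\mathrm{girth}(Y)\leq l$. Every edge of $\pi(C)$ lying off $Z$ sits in a tree hanging off the cycle, so the walk must cross it an even number of times, hence at least twice. Writing $N$ for the number of such off-cycle edges used by $\pi(C)$, we get
\[
\mathrm{girth}(Y)+2N\leq l,
\]
and so
\[
\lvert\mathcal{E}(Y)\rvert\leq d+\mathrm{girth}(Y)+\tfrac{1}{2}\bigl(l-\mathrm{girth}(Y)\bigr)=d+\tfrac{1}{2}\bigl(l+\mathrm{girth}(Y)\bigr).
\]
The parity claims are the main point to verify. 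They follow by projecting the walk to the tree obtained by collapsing $Z$ to a point, and to the quotient $\mathbb{Z}$-homology of $Z$.

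\emph{Sufficiency when $d=0$.} Suppose $Y$ is a tree, or a unicyclic graph, admitting a closed walk of length $l$ that covers every edge, crosses tree edges evenly, and goes around $Z$ once. Then $Y$ arises from $C$ by folding, and the edge counts above are the only constraints. We realize such a $Y$ by choosing a walk $w$ covering $Y$ of length exactly $l$. Length can be padded by back-and-forth steps, which needs $l-\mathrm{girth}(Y)$ even, or $l$ even in the tree case. The walk $w$ exhibits $Y$ as a folding of the $l$-circuit. This last step asserts only that the combinatorial bound is attained by some folding; realizing it by an actual discrete faithful group is deferred to Section \ref{sec:constr}. We expect that realization, and the parity subtleties, to be the main obstacle.
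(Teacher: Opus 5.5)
Your proof of the two inequalities, and of $\mathrm{girth}(Y)\le\ell(h)$, is correct and is essentially the paper's argument. The paper simply asserts that when $Y$ is a tree every edge coming from the circuit is the image of at least two circuit edges. It likewise asserts that when $Y$ is unicyclic the surviving circuit has length at most $\ell(h)$ and every off-circuit edge is covered at least twice. Your parity argument (collapsing $Z$ to a point, and signed crossing counts in $H_1$) supplies the justification for ``at least twice'' that the paper leaves implicit.

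For the final sentence the paper gives no argument at all, so your sketch goes beyond it. Two points should be made explicit.

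First, the step ``the walk $w$ exhibits $Y$ as a folding of the $l$-circuit'' needs Stallings' factorization. The morphism $C_l\to Y$ defined by $w$ factors as a sequence of folds followed by an immersion $\bar Y\to Y$, and this immersion is surjective. It is also $\pi_1$-surjective: trivially when $Y$ is a tree, and because $w$ winds around $Z$ exactly once when $Y$ is unicyclic. An immersion that is $\pi_1$-surjective lifts to an embedding into the covering of $Y$ corresponding to its image, which here is $Y$ itself. So the immersion is injective, hence an isomorphism. This is precisely where ``exactly once'' is used: a walk winding twice, such as $C_{2k}\to C_k$, is already an immersion and involves no folding.

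Second, the parity conditions you need for padding are not artifacts of your construction. These are $\ell(h)$ even when $Y$ is a tree, and $\ell(h)\equiv\mathrm{girth}(Y)\pmod 2$ when $Y$ is unicyclic. Both are forced by the same crossing-count parity you used for the upper bounds. So the paper's ``this bound is sufficient'' is only literally true once the parity condition is attached. For example, a triangle satisfies $3\le\frac12(4+3)$, yet it is not a folding of a $4$-circuit. You should state this as part of the characterization rather than leave it as an anticipated obstacle.
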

\begin{proof}
    If $\mathrm{b}_1(Y) = 0$, then the circuit collapses to a tree, so each edge coming from the circuit arises from identifying at least two edges. If $\mathrm{b}_1(Y) = 1$, then the length of the remaining circuit is at most $\ell(h)$, and each edge not lying on the circuit also arises from identifying at least two edges. This gives the required upper bound on the size of the graph.
\end{proof}
On the other hand, except for the case where $Y$ is a single edge (as discussed in Section \ref{sec:rigidity}), $Y$ arises as a reduced quotient of an infinite family of elliptic-hyperbolic generated discrete subgroups $\Gamma = \langle g,h\rangle <\aut(X_{2\Delta(Y)})$ with arbitrarily large vertex stabilizers.
\begin{thm}\label{thm:ellip_gog}
    Let $n\geq 2$, and let $Y$ be a graph obtained from the $(l,d)$-tadpole graph according to Theorem \ref{prop:ellip_gog} with maximal valency $\Delta(Y)\leq n$. Then there exist infinitely many elliptic-hyperbolic generated discrete and faithful subgroups $\Gamma = \langle g,h\rangle$ in $\aut(X_{2n})$, with $\ell(h) = l$, distance $d(T_g,A_h) = d$, and arbitrarily large vertex stabilizers, such that $Y$ is the reduced quotient of $\Gamma$.
\end{thm}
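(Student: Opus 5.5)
Given a graph $Y$ obtained by folding the $(l,d)$-tadpole graph $Y'$, I will build a graph of groups on $Y$ directly and then take $\Gamma$ to be its fundamental group acting on the Bass--Serre tree. Write the folding as a surjective graph morphism $f:Y'\to Y$ sending the circuit to a closed path (null-homotopic when $\mathrm{b}_1(Y)=0$, homotopic to the cycle of $Y$ otherwise). Label the vertices of $Y'$ as $u_0$ (the tail tip, or the circuit base point when $d=0$), then along the tail to the attaching vertex $w_0$, then around the circuit $w_0,w_1,\dots,w_{l-1},w_l=w_0$.

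\textbf{Choice of groups.} Mimicking the HNN family of Theorem \ref{thm:HNN_rigidity}, I take all vertex and edge groups to be elementary abelian $2$-groups (or Heisenberg-type groups $G_{n,k,\vec\epsilon}$ when extra size is wanted), with one free parameter $N$ that controls the order of the stabilizers. For each edge $e$ of $Y$ I choose the group $G(e)$ together with proper inclusions into its endpoint groups, of index at most $n$ at each vertex counted with multiplicity over the incident edges. Then every vertex has valency at most $2n$ in the Bass--Serre tree, and that tree embeds in $X_{2n}$; any leftover valency is padded with trivial-stabilizer branches, which do not change the minimal subtree.

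Properness of every inclusion makes the graph of groups reduced, so $Y$ is its reduced quotient. For faithfulness, I choose the inclusions along the image of the circuit so that the stable letter shifts a flag of subgroups, exactly as $\varphi(t_i)=t_{i+1}$ does in Lemma \ref{lem:pres}. Then no nontrivial subgroup survives the iterated intersections, and the kernel of the action is trivial. In the tree case, $\mathrm{b}_1(Y)=0$, there is no stable letter, and I instead get core-freeness from two vertex groups whose common edge subgroups have trivial intersection, as in generic amalgams.

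\textbf{Realizing the generators.} Take $g$ to be an element of the vertex group at $f(u_0)$ that is not contained in any subgroup of $G(f(e))$ for the tail edge $e$ at $u_0$. Take $h$ to be the element of $\pi_1(Y,\mathcal{G})$ read along the closed path $f(\text{circuit})$ based at $f(w_0)$, using one chosen coset letter per edge traversed and stable letters where the path crosses non-tree edges.

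I then have to check two things. First, the word for $h$ is cyclically reduced in the Bass--Serre normal form, so $\ell(h)=l$ and $A_h$ passes through the lift of $w_0$. This needs consecutive letters at a folded vertex to lie in different cosets of the relevant edge groups, which I can arrange as long as the index is at least $2$. Second, $d(T_g,A_h)=d$, which holds because $g$ moves the first edge of the lifted tail path.

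\textbf{Main obstacle.} The hardest step is showing $\langle g,h\rangle=\Gamma$, i.e.\ that all vertex groups are generated. I will choose the groups inductively along the tail and around the circuit so that each vertex group is generated by conjugates $h^igh^{-i}$ and their iterated conjugates, as in the flag $K_i$ of Proposition \ref{thm:ellip_lower}. Concretely, each edge group should be the span of the images of earlier conjugates, and each vertex group should add exactly one new conjugate.

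I would first try the simplest version: all groups elementary abelian, with $G(v)$ spanned by the consecutive conjugates that fix $v$. Infinitely many distinct groups then come from varying $N$, which changes the stabilizer orders $2^{N+\cdots}$. Since these groups are pairwise non-isomorphic, this gives infinitely many examples with arbitrarily large vertex stabilizers.
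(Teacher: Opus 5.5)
\textbf{Gap: the generation step fails.} With all vertex and edge groups $2$-groups (elementary abelian or Heisenberg-type), $\Gamma$ is in general not $2$-generated at all, so no choice of $(g,h)$ can work. Use the Mayer--Vietoris sequence of the graph of groups:
\[
\bigoplus_{e} H_1(G_e;\mathbb{F}_2)\to\bigoplus_{v} H_1(G_v;\mathbb{F}_2)\to H_1(\Gamma;\mathbb{F}_2)\to H_1(Y;\mathbb{F}_2)\to 0 .
\]
Let $v$ be a leaf of $Y$ with edge $e$. The proper subgroup $G_e$ of the $2$-group $G_v$ lies in a maximal subgroup, which contains the Frattini subgroup $\Phi(G_v)$. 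So the image of $G_e$ in $H_1(G_v;\mathbb{F}_2)=G_v/\Phi(G_v)$ is proper. Take a nonzero functional vanishing on that image and extend it by zero to the other summands; it kills the image of every edge group.

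Hence $\dim H_1(\Gamma;\mathbb{F}_2)\geq \#\{\text{leaves of }Y\}+\mathrm{b}_1(Y)$. This is at least $3$ for a tree with three leaves (e.g.\ the star graphs of Proposition \ref{thm:star}) or a unicyclic graph with two leaves, and then $\Gamma$ has no two-element generating set.

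Your ``simplest version'' with elementary abelian groups fails even earlier. The maps on $H_1$ are then injective, and pairing each vertex with an adjacent edge gives $\dim H_1(\Gamma;\mathbb{F}_2)\geq\lvert\mathcal{V}(Y)\rvert$ for a tree, and $\geq L+1$ for an $L$-circuit with all inclusions proper. So already paths of length $2$ and $2$-circuits are excluded. The heuristic ``each vertex group adds one conjugate $h^igh^{-i}$'' only works on the single loop, where the stable letter shifts a flag. On a tree there is no stable letter, and $T_g\cap A_h$ is generally far too short for the flag of Proposition \ref{thm:ellip_lower}.

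\textbf{Missing idea: odd torsion at the leaves.} The paper makes every leaf inclusion of index $3$, which fits in $X_{2n}$ since $2n\geq 4$. Leaf groups are $H\rtimes\langle x\rangle$ with $x$ of order $3$ cyclically permuting triples of involution generators, or $\langle z_i\rangle\rtimes H$ with $\langle t_{3i-1},z_i\rangle\cong D_6$.

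The hyperbolic generator is a product of these leaf elliptics (e.g.\ $h=yz_1\cdots z_{n-2}x$ in the star case). Its translation length $\ell(h)$ and the distance $d(T_g,A_h)$ are read off from the bridge-reduced configuration via Lemma \ref{lem:geom_quant}. The elliptic generator $g=yt_1$ has order $6$, so $g^3=t_1$ and $g^4=y$. All the involutions are then recovered by conjugating with the order-$3$ elements, and this is exactly what collapses the excess $\mathbb{F}_2$-homology.

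On a circuit, the non-abelian relations $(t_it_{i+mN})^2=t_{i+m+1}$ of Proposition \ref{prop:circuit} are needed, so that the shift by $m$ together with commutators reaches every $t_i$. Faithfulness is checked with Lemma \ref{lem:faithful}, not by a generic trivial-intersection argument.
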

We illustrate Theorem \ref{thm:ellip_gog} with explicit constructions of graphs of groups. Since the general construction is fairly extensive, we postpone the proof to the next section, where constructions over paths, tadpole graphs, and star graphs are presented to support the main idea.
\subsection{Criteria and Inequalities for Reduced Graphs with \texorpdfstring{$d(T_g,A_h) = 0$}{d(Tg,Ah)=0}}
While Theorem \ref{thm:ellip_gog} is elegant in showing the richness of discrete subgroups of tree automorphisms through their quotient graphs, the valency of the tree on which the subgroup acts is constrained. When the ambient group is fixed to be $\aut(X_n)$ for certain $n$, discrete subgroups with a reduced quotient of larger valency must arise from the reduction procedure.

Let us first introduce the shorthand: if the associated group $\mathcal{G}(e)$ of an edge $e\in\mathcal{E}(Y)$ is a proper subgroup of the incident vertex group $\mathcal{G}(v)$, we double the half-edge closer to the vertex $v$. 
\begin{figure}[H]
    \centering
    \includegraphics[width=0.75\linewidth]{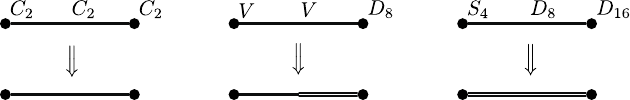}
    \caption{The shorthand used to specify when the edge groups are properly contained in the vertex groups. Any edge in a graph of groups falls into one of the three cases illustrated above.}
\end{figure}
The reduction of a graph of groups can then be understood through this doubled graph. Namely, if either half of an edge $e$ is not doubled, we may shrink the edge and merge the incident vertices $v_1$ and $v_2$. It is clear that the reduction of the other edges incident with $v_1$ and $v_2$ follows the rules below.
\begin{figure}[H]
    \centering
    \includegraphics[width=0.55\linewidth]{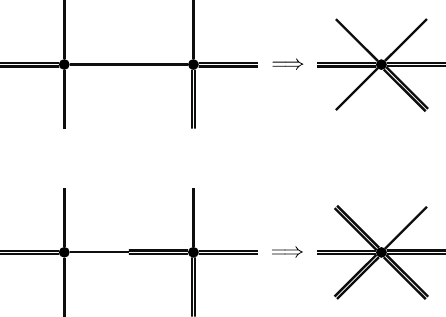}
    \caption{If the edge $e$ is single, then after reduction we keep the style of the other edges incident with $v_1$ and $v_2$. If the half of $e$ near $v_2$ is doubled, then after reduction we double the half-edges coming from edges incident with $v_1$, regardless of their styles before reduction.}
\end{figure}
To obtain graphs of groups subject to reduction, we require a strengthened version of Theorem \ref{thm:ellip_gog}:
\begin{prop}\label{prop:non_reduced}
    Let $n\geq 2$, and let $Y$ be a graph obtained from the $(l,d)$-tadpole graph. For each edge $e\in \mathcal{E}(Y)$ incident with a vertex $v\in\mathcal{V}(Y)$, assign a number $n_{e,v}\in \{1,2,3\}$, such that:
    \begin{itemize}
        \item If $\val(v) = 1$, then $n_{e,v} = 3$.
        \item If $\val(v)\geq 2$, then $n_{e,v} = 1$ or $2$.
        \item If $Y$ is a path or a circuit, then at least one $n_{e,v} > 1$.
        \item If $Y$ is unicyclic and contains circuit $C$, let $v^+(e)$ and $v^-(e)$ be the left and right endpoints of $e\in\mathcal{E}(C)$. Then
        \[
        \prod_{e\in\mathcal{C}}n_{e,v^+(e)} = \prod_{e\in\mathcal{C}}n_{e,v^-(e)}.
        \]
        \item If two edges incident with a vertex $v$ fold into the same edge $e$, then $n_{e,v}>1$.
    \end{itemize}
    Let $n = \max_{v\in\mathcal{V}(Y)}\sum_{e}n_{e,v}$. Then there exist infinitely many elliptic-hyperbolic generated discrete and faithful subgroups $\Gamma = \langle g,h\rangle$ in $\aut(X_n)$, with $\ell(h) = l$ and $d(T_g,A_h) = d$, such that $Y$ is the minimal quotient graph of groups for $\Gamma$, and $\lvert \Gamma_v:\Gamma_e\rvert = n_{e,v}$ for each edge $e$ incident with vertex $v$.
\end{prop}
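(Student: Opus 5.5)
The plan is to build, for the prescribed folded tadpole graph $Y$ and index data $n_{e,v}$, an explicit graph of finite groups $(Y,\mathcal{G})$ with $\lvert\mathcal{G}(v):\mathcal{G}(e)\rvert = n_{e,v}$. Its fundamental group $\Gamma$ should be generated by an elliptic $g$ and a hyperbolic $h$ realising the given $l$ and $d$. The construction should come with a free parameter, such as the rank of an elementary abelian $2$-part, so that we obtain infinitely many non-isomorphic examples. Once $(Y,\mathcal{G})$ is built, the Bass--Serre tree has valency $\sum_e n_{e,v}$ at the lift of $v$. This tree embeds in $X_n$ after attaching extra branches, on which $\Gamma$ can be made to act through the vertex stabilizers by trivial-edge-group decorations as in Example \ref{exm:non_faithful}. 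Discreteness then follows because all stabilizers are finite with trivial edge stabilizers far out.

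For the groups themselves, I would model them on the HNN family of Theorem \ref{thm:HNN_rigidity} and on the proof of Proposition \ref{thm:ellip_lower}. Take vertex groups to be subgroups of a large finite $2$-group (or $2$-groups times $C_3$ where $n_{e,v}=3$ is needed at leaves) generated by involutions $t_1,\dots,t_N$. Each vertex and edge group is a contiguous window $\langle t_{a},\dots,t_b\rangle$, as in the groups $K_{i,j}$. The index condition $n_{e,v}\in\{1,2\}$ is then arranged by choosing window lengths that differ by $0$ or $1$ along each half-edge. Leaves, with $n_{e,v}=3$, get an extra factor of order $3$ (say an $S_3$ or $A_4$ extension), which makes the leaf vertex genuinely non-reducible.

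Along the circuit, the product condition $\prod n_{e,v^+(e)}=\prod n_{e,v^-(e)}$ is exactly what is needed for the window sizes to return consistently after one turn. This lets the stable letter $h$ act by the shift $t_i\mapsto t_{i+s}$, as with $\varphi$ in Theorem \ref{thm:HNN_rigidity}. The folding condition (two folded edges at $v$ force $n_{e,v}>1$) guarantees that the two preimage edges in the tadpole correspond to distinct cosets. The path/circuit condition prevents total reduction to a non-faithful cyclic-type group.

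Then $g$ is taken as a suitable element of the tail-end vertex group, e.g. $t_1$ times the order-$3$ element at the leaf. Generation of $\Gamma$ by $g$ and $h$ is verified by recovering each $t_i$ as $h^{k}gh^{-k}$ products, mimicking the proof of Proposition \ref{thm:ellip_lower}. Faithfulness comes from the windows shrinking to the trivial group under iterated intersections $K_{i+1}=\varphi(H_1\cap K_i)$ as in Lemma \ref{lem:pres}. The geometric quantities $\ell(h)=l$ and $d(T_g,A_h)=d$ are read off from the tadpole before folding.

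The main obstacle will be the generation and faithfulness check when the tadpole has been folded nontrivially. There, edges identified by folding force relations between distinct windows, and one must show that no nontrivial normal subgroup survives in all edge groups while $\langle g,h\rangle$ is still everything. I would first handle the unfolded tadpole and pure path cases directly. I would then treat each folding step as an amalgamation that enlarges a vertex group by an index-$2$ involution, and check inductively that both properties persist.
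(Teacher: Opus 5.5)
Your proposal has a genuine gap. The mechanism it relies on consists of contiguous windows $\langle t_a,\dots,t_b\rangle$ in a $2$-group, a stable letter acting by a shift, generators recovered as conjugates $h^kgh^{-k}$, and faithfulness from $\bigcap_i\varphi^i(H)=\{1\}$. All of this exists only while $Y$ still contains its circuit. The proposition covers every folding of the $(l,d)$-tadpole, and a large part of it concerns graphs in which the $l$-circuit folds onto a tree (paths, stars, general trees).

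There $\pi_1(Y,\mathcal{G})$ is an iterated amalgam with no stable letter. So there is no shift, no chain $K_{i+1}=\varphi(H_1\cap K_i)$, and not even a definition of $h$. ``Reading $\ell(h)$ and $d(T_g,A_h)$ off the tadpole'' is not an argument until $h$ is written down and its axis is shown to project onto the folded circuit without backtracking.

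Your fallback is to treat each fold as an amalgamation adjoining an index-$2$ involution and check inductively that generation and faithfulness persist. This is not a usable reduction. Folding the Bass--Serre tree equivariantly keeps the group but replaces vertex and edge groups by the subgroups they generate together, so the indices, and even finiteness, are no longer yours to prescribe. Building a new graph of groups on the folded graph instead produces a different group, and generation by $g,h$, the value of $\ell(h)$, and core-freeness do not transfer to it without proof.

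The paper does not induct on folds; it builds each case directly on the folded $Y$, attaching the branches $[v_i,w_i]$ one at a time.
\begin{itemize}
\item Every leaf group is an extension of its edge group by an element of order $3$ ($x$, $y$, or $z_i$) whose conjugation moves the involution generators around.
\item $h$ is a product of elliptic elements from these groups, e.g.\ $z_1\cdots z_{n-2}\,x\,z_{j-1}^{-1}\cdots z_1^{-1}$. An extra factor $s_i$ or $o_i\in G_w\setminus H_e$ is inserted when the junction $w$ lies off the subtree spanned by the leaves coming from the circuit; this is where condition (5) is really used.
\item $\ell(h)$ and $d(T_g,A_h)$ are computed from the bridge-reduced configuration of fixed trees via Lemma \ref{lem:geom_quant}.
\item Generation is checked by explicit recovery tables using conjugation by the order-$3$ elements.
\item Faithfulness is core-freeness via Lemma \ref{lem:faithful}: the covectors cutting out intersections such as $H\cap zHz^{-1}\cap z^{-1}Hz$ are pushed around by the transposed $x$- and $y$-actions until they span $V_H^*$.
\end{itemize}
These are exactly the ingredients that replace the shift when no circuit survives, and none of them appears in your plan.

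For a bare circuit your picture is essentially Proposition \ref{prop:circuit}, but even there the generation step is incomplete. Suppose the circuit carries $m\geq 2$ index-$2$ half-edges in each direction (the product condition forces the two counts to agree). Then one turn of $h$ shifts indices by $m$, so the conjugates $h^kgh^{-k}$ of $g=t_1$ lying in the base vertex group are only the $t_{1+km}$. With commuting generators, $\langle g,h\rangle$ never reaches the other residue classes mod $m$.

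The paper needs a specific relation, $(t_it_{i+mN})^2=t_{i+m+1}$. It is arranged so that no right-hand side occurs on the left of another commutator relation; otherwise the windows can collapse and the indices $n_{e,v}$ come out wrong. The paper then recovers $t_{i+1}=\varphi\bigl((t_i\varphi^{-N}(t_i))^2\bigr)$.

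When the unicyclic graph also has leaves, the paper drops these commutators and uses the leaf $C_3$-actions for generation instead (Corollary \ref{thm:tadpole}). Your plan leaves three things unexplained:
\begin{itemize}
\item how an order-$3$ element would normalize a nonabelian window;
\item how $t_1$ and the order-$3$ element $c$ are recovered from $g=t_1c$ (the paper takes them commuting, so $g^3=t_1$ and $g^4=c$);
\item how the order-$3$ elements at the remaining leaves are recovered from $g$ and $h$.
\end{itemize}
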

We note that these valency requirements are natural: the index condition at leaf vertices ensures that the action on the minimal subtree is faithful; the condition for paths avoids the rigid scenario discussed in Section \ref{sec:rigidity}; the condition for unicyclic graphs ensures that there are no contradictions in the vertex group orders on the circuit. For the last requirement, the assumption implies the existence of an automorphism in $\Gamma$ that preserves $e$ and maps one incident edge to another, resulting in a proper inclusion.

The following proposition characterizes the possible reduced graphs for discrete subgroups $\langle g,h\rangle$ purely from graph theory, and follows naturally from Theorems \ref{prop:ellip_gog} and \ref{thm:ellip_gog}, and Proposition \ref{prop:non_reduced}.
\begin{prop}\label{char:ellip_gog}
    Let $X_n$ be the $n$-valent tree, and let $l\geq 1$ and $d\geq 0$. There exists a two-generator discrete and faithful subgroup $\Gamma = \langle g,h\rangle$ with geometric quantities $\ell(h) = l$ and $d(T_g,A_h) = d$ and reduced graph $Y$, if and only if $Y$ is obtained from the following construction:
    \begin{itemize}
        \item Iteratively folding adjacent edges of an $(l,d)$-tadpole graph, such that the resulting graph $Y_0$ has valencies $\val(v)\leq n$ for each vertex.
        \item Assigning numbers $n_{e,v}$ following Proposition \ref{prop:non_reduced}, using the short-hand notions for double edges, and reduce the graph $Y_0$ with double edges following the rules above. Let this reduction result be the desired $Y$.
    \end{itemize}
    In addition, unless $n = 3$ and $Y$ is a path of unit length, there exists an infinite family of such two-generator discrete subgroups.
\end{prop}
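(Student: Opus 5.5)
The plan is to prove the two directions separately and then address the infinitude clause.

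\emph{Necessity.} Start from a discrete faithful $\Gamma=\langle g,h\rangle<\aut(X_n)$ with $\ell(h)=l$ and $d(T_g,A_h)=d$.
\begin{enumerate}
\item By Theorem \ref{prop:ellip_gog}, the minimal quotient graph $Y_0=T/\Gamma$ comes from the $(l,d)$-tadpole graph by edge identifications. Any such identification can be realized as a sequence of foldings of adjacent edges, in the Stallings sense: trace the images of $A_h/\langle h\rangle$ and of $P$ under the quotient map and fold wherever two adjacent edges have the same image.
\item Since $T\subset X_n$, every vertex of $Y_0$ has valency at most $n$. More precisely, $\sum_e |\Gamma_v:\Gamma_e|\le n$ at each vertex.
\item Define $n_{e,v}=|\Gamma_v:\Gamma_e|$ and check the conditions of Proposition \ref{prop:non_reduced}.
\begin{itemize}
\item At a leaf, faithfulness and minimality force the index to be greater than $1$ (a leaf with index $1$ would be cut off by minimality). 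Here I would first need to reconcile the prescribed value $3$ with a general bound $\ge 2$; I would reread the value $3$ as ``$>1$'', or else absorb it into the valency count.
\item On the circuit, the product condition follows by comparing $|\Gamma_v|$ around the loop, using that $h$ maps stabilizers isomorphically.
\item For folded edges, there is an element of $\Gamma_v$ carrying one preimage edge to the other, so $\Gamma_e$ is a proper subgroup of $\Gamma_v$.
\item The path or circuit condition holds, since otherwise every index is $1$ and $\Gamma$ would act with a nontrivial kernel on the line.
\end{itemize}
\item The reduced graph $Y$ is then obtained from $Y_0$ by contracting the undoubled half-edges, which is exactly the doubled-edge reduction rule.
\end{enumerate}

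\emph{Sufficiency.} Given $Y_0$ and an admissible assignment $n_{e,v}$, apply Proposition \ref{prop:non_reduced}. It produces infinitely many faithful discrete $\Gamma=\langle g,h\rangle$ in $\aut(X_{n'})$ with the prescribed indices, where $n'=\max_v\sum_e n_{e,v}$.
\begin{itemize}
\item The condition $\val(v)\le n$ is not quite the same as $\max_v\sum_e n_{e,v}\le n$. I would either take the constraint in the statement to be the index sum, or note that a tree of smaller valency embeds in $X_n$. This embedding works only if extending the action to $X_n$ keeps $\Gamma$ discrete and faithful. Both hold: extend each stabilizer's action trivially, or by a free action on the added branches, so that no finite subtree has a nontrivial pointwise stabilizer, and discreteness is preserved.
\item Reducing the resulting minimal graph of groups gives $Y$, because the reduction is determined purely by which indices equal $1$. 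This matches the doubled-edge rules.
\end{itemize}

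\emph{Infinitude.} This follows from the ``infinitely many'' clause of Proposition \ref{prop:non_reduced} and Theorem \ref{thm:ellip_gog}, except when the construction is forced to be rigid. When $n=3$ and $Y$ is a single edge, Section \ref{sec:rigidity} (Theorem \ref{thm:3_3_amalgam}) gives only finitely many groups. The HNN case, a unit loop, is still infinite by Theorem \ref{thm:HNN_rigidity}.

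\emph{Main obstacle.} I expect the hardest step to be the necessity of the leaf and path conditions, together with the claim that every identification pattern is a sequence of adjacent foldings compatible with the index data. Specifically, I would need to show that contraction by reduction never creates a reduced graph that does not arise from some admissible pair $(Y_0,n_{e,v})$. I would try an induction on the number of edges of $Y_0$, contracting one index-$1$ half-edge at a time.
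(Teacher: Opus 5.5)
Your outline follows the paper's proof. Sufficiency applies Proposition~\ref{prop:non_reduced} to $Y_0$ and then reduces. Necessity uses Theorem~\ref{prop:ellip_gog} plus the check that the true indices $|\Gamma_v:\Gamma_e|$ satisfy requirements (3)--(5). Your embedding of the tree into $X_n$ when $\max_v\sum_e n_{e,v}<n$ is a correct detail the paper leaves implicit.

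The gap is in necessity: the true indices are in general \emph{not} an admissible assignment.
\begin{itemize}
\item At a leaf the index can be $2$ (already for $D_\infty$) or exceed $3$, which violates requirement (1).
\item Nothing prevents an index $\geq 3$ at a vertex of valency $\geq 2$ once $n\geq 4$. This violates requirement (2), which your checklist never addresses.
\end{itemize}
Rereading ``$n_{e,v}=3$'' as ``$n_{e,v}>1$'' changes the statement rather than proving it. Your alternative of absorbing it into the valency count is the right instinct, but it must be made precise and applied at every vertex.

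The missing idea, which is the paper's closing step, is that the doubled-edge reduction only sees \emph{which} half-edges have index $>1$. So replace the true indices by a normalized assignment: $3$ at each leaf and $\min(|\Gamma_v:\Gamma_e|,2)$ elsewhere.
\begin{itemize}
\item It has the same doubling pattern, hence the same reductions and the same $Y$.
\item Requirements (3) and (5) depend only on that pattern, so they still hold.
\item Its vertex sums are bounded by the true ones away from leaves and equal $3\le n$ at leaves, so the bound by $n$ persists.
\end{itemize}

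There are two further points, which the paper's short argument also passes over.

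First, truncation can destroy requirement (4). On a $3$-circuit with $(n_{e,v^+(e)},n_{e,v^-(e)})=(4,1),(1,2),(1,2)$, the product identity holds ($4=4$). However, no $\{1,2\}$-valued assignment with this doubling pattern satisfies it. If such a pattern occurs, you must exhibit a different admissible assignment, on some folding of the same tadpole, with the same reduced graph. Here the reduced graph is a loop, which $(2,2),(1,1),(1,1)$ also produces.

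Second, Proposition~\ref{prop:non_reduced} cannot supply infinitely many isomorphism types when $Y_0$ is forced to be a single edge, e.g.\ $(l,d)=(2,0)$ with $Y$ a unit path. There both leaf indices would be $3$, and Theorem~\ref{thm:3_3_amalgam} allows only finitely many faithful $(3,3)$-amalgams. So for $n\geq 4$ the infinite family must come from elsewhere, e.g.\ larger-index amalgams such as those mentioned at the start of Section~\ref{sec:flex}.

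Conversely, your remark that $n=3$ with $Y$ a unit path gives only finitely many groups is not needed, and it is false in general. Proposition~\ref{thm:path}, applied to a path of length $2$ with middle indices $(1,2)$, gives infinitely many such groups in $\aut(X_3)$ with $\ell(h)=4$ and $d(T_g,A_h)=0$.
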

\begin{proof}
    If $Y_0$ is obtained by folding the edges of a $(l,d)$-tadpole graph, then for each admissible $n_{e,v}$-assignment, Proposition \ref{prop:non_reduced} guarantees two-generator subgroups $\Gamma = \langle g,h\rangle< \aut(X_n)$ whose minimal quotient graph is $Y_0$. Following the reduction rules, the graph of groups $(Y_0,\Gamma)$ can be reduced to certain $(Y,\mathcal{G})$ with the desired underlying graph $Y$.

    On the other hand, if $\Gamma = \langle g,h\rangle< \aut(X_n)$ is a two-generator discrete subgroup with geometric quantities $l = \ell(h)$ and $d = d(T_g,A_h)$, then by Theorem \ref{prop:ellip_gog}, the minimal quotient graph $Y_0$ is the result of folding certain adjacent edges of the $(l,d)$-tadpole graph. The corresponding vertex-edge indices $n_{e,v} = \lvert \Gamma_v:\Gamma_e\rvert$ for $(Y_0,\Gamma)$ satisfies the requirements (3) to (5) in Proposition \ref{prop:non_reduced} as we have explained. Even when $n_{e,v}$ are larger, the relation $n\geq \max_v\sum_e n_{e,v}$ implies the same holds for the $n_{e,v}$-assignment satisfying criteria (1) and (2) in Proposition \ref{prop:non_reduced}. Therefore, the reduced graph of groups $(Y,\mathcal{G})$ follows the description in the proposition.
\end{proof}
The criterion (5) in Proposition \ref{prop:non_reduced} may occur when the vertex is not a leaf of $Y$, making a simpler characterization for Proposition \ref{char:ellip_gog} less feasible. Nevertheless, this is not the case under the restriction $d(T_g,A_h) = 0$, and we are able to classify the possible reduced graphs explicitly with inequalities in this scenario.
\begin{thm}
    Let $g,h\in \aut(X_n)$, where $g$ is elliptic, $h$ is hyperbolic, $d(A_h,T_g) = 0$, and $\Gamma = \langle g,h\rangle$ is a discrete and faithful subgroup. Denote $l = \ell(h)$. Then the reduced graph $Y$ of $\Gamma$ can be any of the following:
    \begin{itemize}
        \item If $l$ is even, the reduced graph $Y$ can be any tree satisfying
        \[
        \sum_{v\in\mathcal{V}(Y)}2\left\lceil\frac{\val(v)-1}{n-2}+\frac{1}{2}\right\rceil \leq l+2.
        \]
        Except for the case where $l=2$ and $n=3$, the order of the vertex groups associated with $Y$ can be arbitrarily large.
        \item Whether $l$ is even or odd, the reduced graph $Y$ can be any unicyclic graph satisfying
        \[
        \sum_{v\in\mathcal{V}(Y)}2\left\lceil\frac{\val(v)-1}{n-2}+\frac{1}{2}\right\rceil - \sum_{v\in\mathcal{V}(C)}\left\lceil\frac{\val(v)-1}{n-2}+\frac{1}{2}\right\rceil \leq l.
        \]
        where $C$ denotes the unique circuit contained in $Y$. The associated vertex group orders can be arbitrarily large.
    \end{itemize}
\end{thm}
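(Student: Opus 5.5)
The proof specializes Proposition \ref{char:ellip_gog} to $d=0$. We turn its folding-plus-reduction description into a count of how many edges of the $l$-circuit are "consumed" at each vertex of the reduced graph $Y$. With $d=0$ the tadpole graph $Y'$ is just a circuit of length $l$. The minimal quotient $Y_0$ is obtained by folding adjacent edges of this circuit. By Theorem \ref{prop:ellip_gog}, $Y_0$ is either a tree, namely the circuit folded completely, which forces $l$ even since every edge is covered an even number of times by a closed walk of a tree, or unicyclic with its circuit the image of the original one. In both cases $Y_0$ is traced by a closed walk of length $l$. If $Y_0$ is a tree, this is a closed walk covering every edge exactly twice, so $2|\mathcal{E}(Y_0)| = l$, or $\leq l$ if a longer walk retraces edges. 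If $Y_0$ is unicyclic, circuit edges are traversed once and tree edges twice. With $d=0$ there are no pendant-path leaves except those created by folding, so criterion (5) of Proposition \ref{prop:non_reduced} applies only at fold points. The fold points are exactly the leaves and the vertices where two walk-edges are identified.

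The key step is to pass from $Y_0$ to $Y$. Each vertex $v$ of $Y$ arises by contracting a subtree $S_v\subset Y_0$ of edges having a non-doubled half. Every contracted edge has index $1$ on some side, and every surviving incident edge has index $\geq 2$ at its $Y$-endpoint. In $\aut(X_n)$ each vertex of $S_v$ has $\sum_e n_{e,v}\leq n$. I would prove by a counting lemma that a contracted subtree supporting $\val_Y(v)$ outgoing doubled half-edges needs at least $m_v:=\lceil\frac{\val(v)-1}{n-2}+\frac12\rceil$ vertices, counted so that the circuit-walk passes through it $2m_v$ times. The term $\frac{\val(v)-1}{n-2}$ reflects that each extra vertex of $S_v$ contributes at most $n-2$ new doubled outlets, one valency slot being spent on the contracting edge and the indices being at least $2$. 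Each vertex of $S_v$ is in fact interior in the walk and is visited along a pair of consecutive walk-edges, so the walk spends $2m_v$ edge-visits on $S_v$. Summing over $v$, each edge of $Y$ is traversed twice and is shared by two vertices. For a tree this yields $\sum_v 2m_v \le l+2$, the $+2$ coming from $|\mathcal{V}|=|\mathcal{E}|+1$. For a unicyclic graph the circuit vertices are passed through only once on the circuit side, which gives the subtracted term $\sum_{v\in C}m_v$ and the bound $l$ with $|\mathcal{V}|=|\mathcal{E}|$.

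For sufficiency, given $Y$ satisfying the inequality, I would reverse the process. Blow up each vertex $v$ into a caterpillar/star subtree $S_v$ of $m_v$ vertices with index pattern $(1,\dots)$ internally and index-$2$ outlets, distributing the $\val(v)$ outlets with at most $n-2$ per blown-up vertex. Then pad with extra folded pendant pairs, or lengthen a tree edge by a backtrack, to reach exactly $l$, using the parity of $l$ in the tree case. The resulting $Y_0$ is a folding of the $l$-circuit with an admissible assignment $n_{e,v}$. Condition (4) of Proposition \ref{prop:non_reduced} can be arranged by balancing the index-$2$ choices around $C$, and condition (3) holds automatically except in the path-of-unit-length case. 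Proposition \ref{prop:non_reduced} then produces infinitely many $\Gamma$ with arbitrarily large vertex groups, while the excluded case $l=2$, $n=3$ is the rigid Theorem \ref{thm:classification}.

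I expect the main obstacle to be the sharp counting lemma, namely the exact ceiling $\lceil\frac{\val(v)-1}{n-2}+\frac12\rceil$ and its matching construction. The $+\frac12$ must be checked against the two cases of whether $S_v$ contains a leaf of the walk, which costs an index $n_{e,v}=3$, or only pass-through vertices. I would first verify small cases, $\val(v)\le n-1$ giving $m_v=1$, and then induct on $\val(v)$ by adding one blown-up vertex at a time.
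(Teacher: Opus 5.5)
Your overall architecture is the paper's: $Y_0$ is a folding of the $l$-circuit, and each vertex $v$ of $Y$ is the image of a contracted subtree $S_v\subset Y_0$. One proves $|S_v|\ge m_v$ and sums, using $|\mathcal{V}(Y_0)|=|\mathcal{E}(Y_0)|+1\le \tfrac l2+1$ for trees and $2|\mathcal{V}(Y_0)|-|\mathcal{V}(C_0)|=2|\mathcal{E}(Y_0)|-\mathrm{girth}(Y_0)\le l$ in the unicyclic case.

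\textbf{The gap.} The one step with real content, $|S_v|\ge m_v$, is left open, and the plan you give for it proves the wrong bound. Your base case is false. We have $m_v=1$ if and only if $\frac{\val(v)-1}{n-2}\le\frac12$, i.e.\ $\val(v)\le n/2$, not $\val(v)\le n-1$; for $n=6$ and $\val(v)=5$ one gets $m_v=2$. Indeed, a single vertex of $Y_0$ that survives as a reduced vertex has every incident edge of index $\ge 2$, so $2\val\le n$. Starting from ``one vertex carries $n-1$ outlets'' and adding $n-2$ per extra vertex only yields $|S_v|\ge\frac{\val(v)-1}{n-2}$. That is strictly weaker than the theorem, and your sufficiency direction would then over-claim. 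The $+\frac12$ also does not come from leaves of the walk and the index $3$ there, since a reduced vertex need not contain any leaf.

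\textbf{The missing idea} is an asymmetry inside $S_v$.
\begin{itemize}
\item All vertex groups of $S_v$ embed into the surviving group $\mathcal{G}(r)$. An outlet $e'$ at $u\in S_v$ ends with index $[\mathcal{G}(r):\mathcal{G}(u)]\,n_{e',u}$.
\item Let $T=\{u\in S_v : |\mathcal{G}(u)|=|\mathcal{G}(r)|\}$ and $t=|T|\ge 1$. For $u\in T$, every outlet and every edge into $S_v\setminus T$ must already have index $\ge 2$ at $u$, costing two slots. Edges inside $T$ are single, so $\val(u)\le \frac{n+\deg_T(u)}{2}$.
\item For $u\notin T$, every outlet is doubled for free, so $u$ may use all $n$ slots with index-$1$ edges.
\item Summing gives
\[
\val_Y(v)=\sum_{u\in S_v}\val(u)-2(|S_v|-1)\le\bigl(|S_v|-\tfrac t2\bigr)(n-2)+1 .
\]
With $t\ge1$ this is exactly $|S_v|\ge\frac{\val(v)-1}{n-2}+\frac12$.
\end{itemize}
The paper reaches the same bound without locating $r$, through the weight $\mathrm{w}$. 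It is $\le n-2$ on every vertex of $Y_0$, equals $\val(v)+\lceil n/2\rceil-2$ on a robust vertex, and is subadditive under each single contraction. This absorbs arbitrary contraction orders and the multiplication of indices automatically.

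\textbf{Sufficiency.} The same asymmetry is needed in your construction. Only the root of each blown-up $S_v$ should have doubled outlets in $Y_0$. The other blown-up vertices should carry index-$1$ outlets and hang from an edge doubled only on the root side. A vertex whose outlets all have index $2$ in $Y_0$ carries only about $\frac{n-1}{2}$ of them, not $n-2$.
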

\begin{proof}
    Let $k = \lceil \frac{n}{2}\rceil\geq 2$. For a vertex $v\in\mathcal{V}(Y,\mathcal{G})$, consider the number $\mathrm{s}(v)$ of edges $e$ incident with $v$ for which $\mathcal{G}(e)< \mathcal{G}(v)$:
    \begin{itemize}
        \item If $\mathrm{s}(v)<\val(v)-k$, we say that the vertex $v$ is \emph{deficient}, and assign the weight $\mathrm{w}(v) = \val(v)-2$ to the vertex.
        \item If $\mathrm{s}(v)\geq \val(v)-k$, we say that the vertex $v$ is \emph{adequate}, and assign the weight $\mathrm{w}(v) = \mathrm{s}(v)+k-2$ to the vertex. In particular, if $\mathrm{s}(v) = \val(v)$, we say that the vertex is \emph{robust}.
    \end{itemize}
    We have two observations. First, we have $\mathrm{w}(v)\geq \val(v) - 2$ in both cases. Second, $\mathrm{s}(v) = \val(v)-k$ can be viewed as a boundary case: while the vertex is adequate, the equation $\mathrm{w}(v) = \val(v)-2$ still holds.
    
    When the graph of groups $(Y,\mathcal{G})$ is reduced to $(Y',\mathcal{G}')$ by shrinking an edge $e$, the vertices $v_1$ and $v_2$ of valencies $n_1$ and $n_2$ incident with $e$ merge into a new vertex $v$ of valency $n = n_1+n_2-2$. Let the corresponding proper boundary monomorphism numbers be $s_1$, $s_2$, and $s$. If the edge $e$ is single, we have $s_1\leq n_1-1$, $s_2\leq n_2 - 1$, and $s = s_1+s_2$. If the half of $e$ near $v_2$ is doubled, then $s_2\geq 1$ and $s =n_1+s_2-2$.
    \begin{lem}
        In all cases, we have $\mathrm{w}(v_1) + \mathrm{w}(v_2)\geq \mathrm{w}(v)$.
    \end{lem}
    \begin{proof}
        \textbf{Case (1): $e$ is single.} If one of the vertices, say $v_2$, is deficient, then
        \[
        s = s_1+s_2\leq n_1-1+n_2-k-1 = n-k,
        \]
        implying that $v$ is either deficient or lies on the boundary case:
        \[
        \mathrm{w}(v_1) + \mathrm{w}(v_2) \geq (n_1-2)+(n_2-2) = \mathrm{w}(v).
        \]
        
        If both $v_1$ and $v_2$ are adequate, then either the same argument holds if $v$ is deficient, or
        \[
        \mathrm{w}(v_1) + \mathrm{w}(v_2) = s_1+s_2+2k-4\geq s + k-2 = \mathrm{w}(v)
        \]
        if $v$ is adequate.

        \textbf{Case (2): $e$ is halfway-doubled.} Suppose the half of $e$ near $v_2$ is doubled. If $v_2$ is deficient, then
        \[
        s = n_1 + s_2 - 2< n_1+n_2 - 2 - k = n-k,
        \]
        implying that $v$ is deficient:
        \[
        \mathrm{w}(v_1) + \mathrm{w}(v_2) \geq (n_1-2)+(n_2-2) = \mathrm{w}(v).
        \]
        If $v_2$ is adequate, then either the same argument holds if $v$ is deficient, or
        \[
        \mathrm{w}(v_1) + \mathrm{w}(v_2) \geq (n_1-2) + (s_2 + k-2) = (n_1+s_2 - 2) + k-2 = s + k-2 = \mathrm{w}(v),
        \]
        if $v$ is adequate.
    \end{proof}
    Now suppose that a discrete subgroup $\Gamma$ acts on the $n$-valent tree, and that $(Y_0,\mathcal{G}_0)$ is the quotient graph of groups of the $\Gamma$-minimal subtree by $\Gamma$, while $(Y,\mathcal{G})$ is a reduced graph of groups obtained from $(Y_0,\mathcal{G}_0)$. Since the covering tree of $(Y_0,\mathcal{G}_0)$ has valency at most $n$, it follows that $\val(v)+\mathrm{s}(v)\leq n$. It is not difficult to see that $\mathrm{w}(v)\leq n-2$ for each $v\in \mathcal{V}(Y_0)$, with equality holding if and only if $(\val(v),\mathrm{s}(v)) = (n,0)$ or $(k,n-k)$.

    On the other hand, if a vertex $v\in\mathcal{V}(Y)$ is robust, then we have $\mathrm{w}(v) = \val(v)+k-2$. Since the weight sum of the graph of groups is non-increasing during the reduction process, $v$ must be merged from at least
    \[
    \left\lceil\frac{\val(v)+k-2}{n-2}\right\rceil = \left\lceil\frac{\val(v)-1}{n-2}+\frac{1}{2}\right\rceil
    \]
    vertices. As a result,
    \[
    \sum_{v\in\mathcal{V}(Y)}2\left\lceil\frac{\val(v)-1}{n-2}+\frac{1}{2}\right\rceil\leq \lvert \mathcal{V}(Y_0)\rvert.
    \]
    If $Y_0$ is a tree, then Corollary \ref{cor:ellip_gog} implies that
    \[
    \lvert \mathcal{V}(Y_0)\rvert = 1+\lvert \mathcal{E}(Y_0)\rvert\leq 1+\frac{l}{2},
    \]
    which gives the desired inequality.

    If $Y_0$ is unicyclic, we sum the vertex-wise inequalities differently, by counting the vertices not in the circuit $C_0$ twice:
    \[
    \sum_{v\in\mathcal{V}(Y)}2\left\lceil\frac{\val(v)-1}{n-2}+\frac{1}{2}\right\rceil - \sum_{v\in\mathcal{V}(C)}\left\lceil\frac{\val(v)-1}{n-2}+\frac{1}{2}\right\rceil \leq 2\lvert \mathcal{V}(Y_0)\rvert - \lvert \mathcal{V}(C_0')\rvert.
    \]
    Here, $C_0'$ is the set of vertices in $Y_0$ that ultimately merged into $C$. Since $C_0'$ contains the circuit $C_0$, we have
    \[
    2\lvert \mathcal{V}(Y_0)\rvert - \lvert \mathcal{V}(C_0')\rvert\leq 2\lvert \mathcal{V}(Y_0)\rvert - \lvert \mathcal{V}(C_0)\rvert = 2\lvert \mathcal{E}(Y_0)\rvert -\mathrm{girth}(Y_0)\leq l,
    \]
    which gives the desired inequality.

    We show that each tree or unicyclic graph $Y$ satisfying the inequality can be attained. Indeed, if $n\geq 4$, we replace a vertex $v\in\mathcal{V}(Y)$ of valency $k+(i-1)(n-2)< \val(v)\leq k+i(n-2)$ with a path consisting of a vertex $v_0\in \mathcal{V}(Y_0)$ with $(\val(v_0),\mathrm{s}(v_0)) = (k,n-k)$ and $i$ vertices $v_1,\dots, v_i$ with $\val(v_j)\leq n$ and $\mathrm{s}(v_j) = 0$. If $n = 3$, we replace $v$ with a path consisting of two vertices of type $(2,1)$ and $\val(v)-2$ vertices of type $(3,0)$. If $v$ is on the circuit in $Y$, then let all the vertices $v_0,\dots, v_i$ lie on the circuit in $Y_0$. As shown in the diagram below, our assembly of the vertices is also compatible with the boundary monomorphism indices on the circuit.
    \begin{figure}[H]
        \centering
        \includegraphics[width=0.65\linewidth]{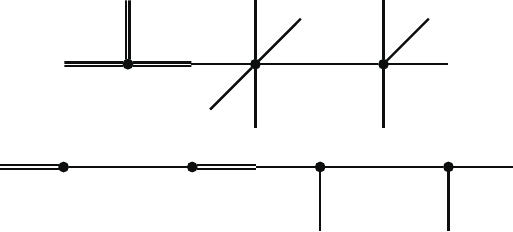}
        \caption{Upper diagram: $n = 6$ and $\val(v) = 10$. Bottom diagram: $n=3$ and $\val(v) = 4$.}
    \end{figure}
    
    The resulting graph $Y_0$ satisfies the inequality requirement, and by Theorem \ref{thm:ellip_gog}, we are able to construct an elliptic-hyperbolic generated subgroup $\Gamma = \langle g,h\rangle <\aut(X_n)$ such that $l = \ell(h)$, $d(T_g,A_h) = 0$, and its minimal quotient graph of groups is $(Y_0,\mathcal{G}_0)$, with the indices for the boundary monomorphisms agreeing with the selected $\mathrm{s}(v)$. After all, $(Y,\mathcal{G})$ is a reduced graph of groups associated with $\Gamma$, as claimed.
\end{proof}

\section{Constructions for Theorem \ref{thm:ellip_gog}}\label{sec:constr}
Before proceeding to the construction, it is worth mentioning that the edge groups are often constructed as elementary abelian $2$-groups:
\[
H = \langle t_1,\dots,t_k\mid t_i^2 = (t_it_j)^2 = 1\rangle\cong (C_2)^k.
\]
We identify $H$ with the vector space $V_H = \mathbb{F}_2^k$, which has dual space $V_H^* = \text{Hom}(V_H, \mathbb{F}_2)$. For any $f \in V_H^*$, we denote its kernel by $\ker(f) = \{ h \in H \mid f(h) = 0 \}$.

The following lemma provides conditions ensuring that $H$ is core-free in a larger group. In our constructions, this is usually sufficient to demonstrate the faithfulness of the $\Gamma$-action on its Bass-Serre tree, thereby ensuring that $\Gamma$ is a subgroup of $\aut(X)$.
\begin{lem}\label{lem:faithful}
    Suppose $H < \Gamma$ for some ambient group $\Gamma$. Assume the following:
    \begin{itemize}
        \item There exist elements $r_1,\dots, r_l\in \Gamma$, such that $(\bigcap r_i H r_i^{-1}) \cap H = \ker(t_1^*,\dots, t_n^*)$ for some non-zero $t_1^*,\dots, t_n^* \in V_H^*$.
        \item There exist elements $x_1, \dots, x_j\in \Gamma$, and for each $i=1,\dots,j$, there exists $\varphi_i \in \mathrm{GL}(k, 2)$ such that $x_i t x_i^{-1} = \varphi_i(t)$ for all $t \in H \cap x_i^{-1} H x_i$.
    \end{itemize}
    Let $\mathcal{W} \subset \mathrm{GL}(k, 2)$ be the set of automorphisms induced by words in the generators $\{x_i\}$. If the set of pulled-back functionals $\{ W^T t_i^* \mid W \in \mathcal{W},\ i=1,\dots,n \}$ spans $V_H^*$, then $H$ is \emph{core-free} in $\Gamma$, i.e.,
    \[
    \bigcap_{\gamma\in\Gamma}\gamma H\gamma^{-1} = \{1\}.
    \]
\end{lem}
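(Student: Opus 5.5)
The plan is to argue by contradiction. Let $N = \bigcap_{\gamma\in\Gamma}\gamma H\gamma^{-1}$ be the core of $H$. Because $N$ is normal in $\Gamma$ and contained in $H$, it is enough to show that the $\mathbb{F}_2$-subspace $N\subset V_H$ is annihilated by a spanning set of $V_H^*$. That forces $N=\{0\}$, so $N=\{1\}$.

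The first step uses the elements $r_i$. Since $N$ is normal, $N = r_iNr_i^{-1}\subseteq r_iHr_i^{-1}$ for every $i$. Together with $N\subseteq H$ this gives
\[
N\subseteq \Big(\bigcap_i r_iHr_i^{-1}\Big)\cap H = \ker(t_1^*,\dots,t_n^*).
\]
So every functional $t_i^*$ vanishes on $N$.

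The second step transports this vanishing along the maps $\varphi_i$. Take $t\in N$. By normality, $x_i^{-1}tx_i\in N\subseteq H$, so $t\in H\cap x_iHx_i^{-1}$. Also $x_itx_i^{-1}\in N\subseteq H$, so $t\in H\cap x_i^{-1}Hx_i$. On the latter set conjugation by $x_i$ agrees with $\varphi_i$, so $\varphi_i(t) = x_itx_i^{-1}\in N$. Hence $\varphi_i(N)\subseteq N$, and since $N$ is finite and $\varphi_i$ is injective, $\varphi_i(N)=N$.

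One point needs care here: the inverse words. The set $\mathcal{W}$ consists of words in the $x_i$, and I will read this as words in the $x_i$ and their inverses. Each $\varphi_i$ restricted to $N$ is a bijection of a finite set, so $\varphi_i^{-1}$ also preserves $N$ and agrees there with conjugation by $x_i^{-1}$. Therefore $W(N)=N$ for every $W\in\mathcal{W}$, whether or not inverses are included. I also need to check that composing these maps is legitimate on $N$. Each partial map $\varphi_i$ is only defined on $H\cap x_i^{-1}Hx_i$, but $N$ lies in that domain and is mapped into itself, so iterating keeps us inside the domain at every stage. I expect this domain bookkeeping to be the only mildly delicate step. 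Read as linear maps on all of $V_H$, the $\varphi_i$ may differ from conjugation outside $N$, but that does not matter because we only evaluate on $N$.

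The final step puts these together. For $t\in N$ and $W\in\mathcal{W}$ we have $W(t)\in N$, so
\[
(W^Tt_i^*)(t) = t_i^*(W t) = 0.
\]
Thus every functional $W^Tt_i^*$ vanishes on $N$. By hypothesis these functionals span $V_H^*$, so $N$ lies in the common kernel of all of $V_H^*$, which is $\{0\}$. Hence $N=\{1\}$, and $H$ is core-free in $\Gamma$.
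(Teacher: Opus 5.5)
Your proof is correct and is essentially the paper's argument. The paper fixes a word $w$ and checks that $w^{-1}tw=W(t)$ lands in $\ker(t_1^*,\dots,t_n^*)$ for every $t$ in the intersection of the conjugates $w_kHw_k^{-1}$ (over partial products) and $(wr_i)H(wr_i)^{-1}$, which contains the core; you do the same bookkeeping more cleanly, using normality of the core and its invariance under each $\varphi_i^{\pm1}$.
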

\begin{proof}
    Let $K = \bigcap_{\gamma \in \Gamma} \gamma H \gamma^{-1}$ be the core of $H$ in $\Gamma$. Let $W \in \mathcal{W}$ be the linear transformation corresponding to the conjugation action of $w^{-1}$, where
    \[
    w = x_{i_m}^\pm\dots x_{i_1}^\pm.
    \]
    Denote by $w_i$ the partial product $x_{i_k}^\pm\dots x_{i_1}^\pm$. For any $t$ in
    \[
    \left(\bigcap_{i=1}^l(wr_i)H(wr_i)^{-1}\right)\cap\left(\bigcap_{k=0}^m w_kHw_k^{-1}\right),
    \]
    each intermediate conjugate $w_k^{-1}tw_k$ lies in $H$, so the successive conjugation relations $\mathrm{ad}_{x_{i_{k+1}}} = \varphi_{i_{k+1}}$ applies. Hence $w^{-1}tw = W(t)$ as an element of $V_H$. Moreover,
    \[
    w^{-1}tw \in H\cap\left(\bigcap_{i=1}^l r_iHr_i^{-1}\right),
    \]
    so $W(t)\in \ker(t_1^*,\dots,t_n^*)$, meaning that
    \[
    \langle t_i^*, W(t) \rangle = 0 \iff \langle W^T t_i^*, t \rangle = 0,\ i=1,\dots,n.
    \]
    Thus, $t \in \ker(W^T t_1^*,\dots,W^T t_n^*)$ for every $W \in \mathcal{W}$. Since the collection $\{ W^T t_i^* \mid W \in \mathcal{W},\ i=1,\dots,n \}$ spans $V_H^*$, an element $t$ lies in the intersection of these kernels if and only if $t = 1$ (the identity in $H$). Specifically,
    \[
    K \subseteq \bigcap_{W \in \mathcal{W},1\leq i\leq n} \ker(W^T t_i^*) = \{ 1 \}.
    \]
    We conclude that the core of $H$ is trivial, and thus $H$ is core-free in $\Gamma$.
\end{proof}
Another lemma is required to recover a generating pair from a graph of groups with the desired geometric quantities $\ell(h)$ and $d(T_g,A_h)$:
\begin{lem}\label{lem:geom_quant}
    We say a collection of subtrees $\{T_i\}$ is \emph{bridge-reduced} if (i) each $T_i$ contains exactly one vertex $v_i$ minimizing the distance to $\bigcup_{j\neq i} T_j$, and (ii) each $v_i$ is a leaf of the subtree induced by them.
    
    (1) If $h_i\in \aut(X)$, $i=1,\dots, m$ are elliptic automorphisms, and $\{T_{h_1},\dots, T_{h_m}\}$ is bridge-reduced, then their product
    \[
    h = h_1\dots h_m
    \]
    is hyperbolic, and
    \[
    \ell(h) = \sum_{i=1}^m d(T_{h_i},T_{h_{i+1}}),\ h_{m+1} = h_1.
    \]

    (2) In addition to (1), if $T^-$, $T^+$ are subtrees of $X$ such that $\{T^-, T^+, T_{h_1},\dots, T_{h_m}\}$ is bridge-reduced, then
    \[
    d(T^-, h.T^+) = d(T^-,T_{h_1}) + \sum_{i=1}^{m-1} d(T_{h_i},T_{h_{i+1}}) + d(T_{h_m}, T^+).
    \]

    (3) In addition to (2), if $T^- = v^-$, $T^+ = v^+$ are vertices on the axis of hyperbolic automorphism $h_0\in\aut(X)$, with $d(v^-,v^+) = \ell(h_0) + 1$, then
    \[
    h' = h_0h_1\dots h_m
    \]
    is hyperbolic with translation length
    \[
    \ell(h') = d(v^-,h.v^+) -1.
    \]
\end{lem}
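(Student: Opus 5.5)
The proof rests on the standard ping-pong characterization of hyperbolicity: an automorphism $h$ is hyperbolic with translation length $\ell(h)=L>0$ as soon as there is a vertex $x$ with $d(x,h.x)=L$ and $d(x,h^2.x)=2L$, or more generally a path $[x,h.x]$ whose first and last edges are not folded back onto each other by $h$. Then the concatenation $\bigcup_k h^k.[x,h.x]$ is a geodesic, hence it is the axis. I would establish this criterion first, by the usual Serre argument: if the path $[x,h.x]$ and its image $[h.x,h^2.x]$ meet only in $h.x$, then the union is geodesic and invariant.

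For (1), let $p_i\in T_{h_i}$ and $q_i\in T_{h_{i+1}}$ be the closest-point projections, indices mod $m$. Bridge-reducedness says that these are single vertices and that the bridges $[p_i,q_i]$ attach to the subtree spanned by them at leaves. I would build the candidate axis segment
\[
\sigma=[q_m,p_m]\cdot h_m\text{-images}\cdots,
\]
concretely by tracking a base vertex $x=q_m\in T_{h_1}$, wait, reindex so that $x\in T_{h_m}$ is the point nearest $T_{h_1}$. Applying $h_m$ fixes $x$. Applying $h_{m-1}$ moves $x$ across, and so on. Following each factor, the image $h_{i}\cdots h_m.x$ is obtained by reflecting the bridge $[T_{h_{i}},T_{h_{i+1}}]$ through the fixed tree. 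The key local fact is that an elliptic $h_i$ fixing $p_i$ but not the first edge of the bridge leaving $T_{h_i}$ sends that bridge to a different branch at its projection. Since the projection point is unique and is a leaf of the spanning subtree, the concatenated path has no backtracking at each junction.

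Summing, $d(x,h.x)=\sum_i d(T_{h_i},T_{h_{i+1}})$, with the cyclic term coming from closing up, and the no-backtracking check at $h.x$ gives hyperbolicity with $\ell(h)$ equal to this sum. The delicate point, and the main obstacle, is justifying that $h_i$ does not fix the first bridge edge. This must come from the bridge being minimal: if $h_i$ fixed it, the edge would lie in $T_{h_i}$, contradicting that $p_i$ is the unique nearest vertex. I would also need the non-backtracking to persist across the whole composition rather than only pairwise, which is where condition (ii) (leaves) enters. It prevents two bridges from sharing an initial edge at the same fixed tree.

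Part (2) is the same path-concatenation argument run from $T^-$ to $h.T^+$. The geodesic from $T^-$ goes to $T_{h_1}$, then successively across the bridges (images under partial products), and finally to $h_1\cdots h_m.T^+$, where $T^+$ is attached to $T_{h_m}$. Bridge-reducedness of the enlarged family again rules out backtracking, so the lengths add.

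For (3), I apply (2) with $T^\pm=v^\pm$ and consider $h'=h_0h$. The vertex $v^+$ lies on $A_{h_0}$ with $h_0.v^-$ adjacent along the axis, at distance $1$ from $v^+$. So the path $[v^-,h.v^+]$, pushed by $h_0$, concatenates with $[h_0.v^-, h_0h.v^+]$ sharing exactly one edge. Applying the criterion to the base point $h_0.v^-$ then gives translation length $d(v^-,h.v^+)-1$.
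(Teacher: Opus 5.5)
Your route is the standard argument behind the Bestvina exercise that the paper cites: concatenate the bridges and their images under the partial products $h_1\cdots h_{i-1}$, check that the path does not backtrack at the junctions, then apply the axis criterion. The gap is at the one step where the hypotheses do real work, and there you have the role of condition (ii) reversed.

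At the junction $h_1\cdots h_{i-1}.v_i$ the path arrives along $h_1\cdots h_{i-1}.f^-$ and leaves along $h_1\cdots h_{i-1}h_i.f^+$. Here $f^-$ and $f^+$ are the initial edges at $T_{h_i}$ of the bridges towards $T_{h_{i-1}}$ and $T_{h_{i+1}}$.

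\begin{itemize}
\item The fact you call delicate, $h_i.f^+\neq f^+$, is immediate from the definition of a bridge.
\item What is actually needed is $h_i.f^+\neq f^-$. This can fail when $f^+\neq f^-$, since an elliptic fixing $v_i$ may swap the two edges.
\item Condition (ii) is exactly what forces $f^+=f^-$. At a leaf of the spanning subtree the two bridges \emph{share} their initial edge; (ii) does not prevent this, it guarantees it.
\item For the junctions to exist and the length formula to hold, both bridges must also leave $T_{h_i}$ from the same vertex. Otherwise a segment inside $T_{h_i}$ gets inserted and changes the length.
\item Your remark that bridge-reducedness makes the projections ``single vertices'' does not give this. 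The projection of one subtree onto a disjoint one is always a single vertex.
\end{itemize}

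The missing argument is short; assume, as in all applications, that the fixed trees are pairwise disjoint. Let $S$ be the subtree spanned by $v_1,\dots,v_m$, and let $f_i=[v_i,u_i]$ be the unique edge of $S$ at the leaf $v_i$.

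\begin{enumerate}
\item Let $z\in T_{h_k}$ be the point of $\bigcup_{j\neq i}T_{h_j}$ nearest to $v_i$. Then $[v_i,z]$ begins with $f_i$. Otherwise $z$ and $v_k$ lie in different branches at $v_i$, so $v_i\in[z,v_k]\subset T_{h_k}$, which contradicts disjointness.
\item Hence $u_i$ is strictly closer than $v_i$ to $\bigcup_{j\neq i}T_{h_j}$. By (i), $u_i\notin T_{h_i}$, so $h_i.f_i\neq f_i$.
\item By convexity, $[v_i,v_{i+1}]$ meets $T_{h_i}$ only in $v_i$ and $T_{h_{i+1}}$ only in $v_{i+1}$. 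So it is the bridge, $d(T_{h_i},T_{h_{i+1}})=d(v_i,v_{i+1})$, and both bridges at $T_{h_i}$ leave along $f_i$.
\item With base point $x=v_m$, the edges at the $i$-th junction are $h_1\cdots h_{i-1}.f_i$ and $h_1\cdots h_i.f_i$. The closing check at $h.v_m$ compares $h_1\cdots h_{m-1}.f_m$ with $h.f_m$. All these pairs are distinct because $h_i.f_i\neq f_i$.
\end{enumerate}

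The same leaf property of $v^\pm$ in the enlarged family is what makes (2) work. In (3), it is what forces $[v^-,h.v^+]$ to start with the axis edge at $v^-$ and end with $h.[h_0.v^-,v^+]$, where $h_0$ translates from $v^-$ towards $v^+$. That is the source of the one-edge overlap you invoke.
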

The proof is a straightforward adaptation of \cite{bestvina2002r}, Exercise 2.6.

\subsection{Two-generator Subgroups with Elementary Quotient Graphs}
We will first describe the construction of two-generator subgroups of $\aut(X)$ with \emph{elementary} quotient graphs, meaning a path, a circuit, or a star graph. The framework for these quotient graphs carries the necessary idea for the general construction of two-generator $\aut(X)$-subgroups.

\subsubsection{Paths of Length \texorpdfstring{$\geq 2$}{>=2}}
By Theorem \ref{thm:3_3_amalgam}, there are only finitely many faithful amalgamated products of indices $\leq 3$, so finitely many $\aut(X)$-subgroups satisfying Theorem \ref{thm:ellip_gog} whose quotient graph is a path of length $1$. For this reason, the infinite family we will construct will apply to paths of length at least $2$.

\begin{prop}\label{thm:path}
    For any $n\geq 2$, each $j=1,\dots, n$, and every assignment of indices $n_{e,v}$ on a path $Y$ of length $n$ satisfying the requirements of Proposition \ref{prop:non_reduced}, there exists an infinite family of two-generator subgroups $\Gamma = \langle g,h\rangle< \aut(X_4)$ containing members with arbitrarily large vertex stabilizers such that:
    \begin{itemize}
        \item The generator $g$ is elliptic of order $\ord(g) \in \{3, 6\}$.
        \item The generator $h$ is hyperbolic, with $\ell(h) = 2j$ and $d(T_g,A_h) = n-j$.
        \item The minimal quotient graph for $\Gamma$ is the path $Y$, and the associated graph of groups satisfies $[\Gamma_v : \Gamma_e] = n_{e,v}$ for each edge $e$ incident with a vertex $v$. In particular, this includes the case when $Y$ is a reduced quotient.
    \end{itemize}
\end{prop}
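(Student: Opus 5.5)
We construct the graph of groups on the path $Y$ explicitly and then read off the generators from the Bass--Serre tree. Label the vertices $v_0,v_1,\dots,v_n$ and the edges $e_i=[v_{i-1},v_i]$.

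\textbf{Choice of groups.} Every edge group is an elementary abelian $2$-group $H_i=\mathcal{G}(e_i)\cong (C_2)^{k_i}$. Every vertex group is built from its adjacent edge groups:
\begin{itemize}
\item At an interior vertex $v_i$ with indices $n_{e_i,v_i},n_{e_{i+1},v_i}\in\{1,2\}$, we take $\mathcal{G}(v_i)$ either equal to the larger incident edge group or an index-$2$ overgroup of both. Such an overgroup can be realized as $(C_2)^{m}$, or as a semidirect product by an involution swapping two coordinates, so that the two embedded copies differ.
\item At the two leaves $v_0,v_n$ the index is $3$. Here we use a vertex group of the form $H\times C_3$ or $H\rtimes C_3$, or $(C_2)^{m}\times S_3$. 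This is where the elliptic generator of order $3$ or $6$ comes from.
\end{itemize}
The free parameter $m$ (the rank of the edge groups) can be taken arbitrarily large. This produces the infinite family with unbounded vertex stabilizers. All valencies are then at most $4$: interior vertices have valency $\le 1+2+1$ once doubled, and leaves have valency $3$. So the Bass--Serre tree embeds in $X_4$, after padding with trivial branches if necessary.

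\textbf{Generators.} Take $g$ to be an element of order $3$ or $6$ in the leaf group $\mathcal{G}(v_0)$, so that $T_g$ lies over $v_0$. The choice of $j$ then decides where the loop of $h$ sits. We take $h=h_1h_2$, a product of two elliptic elements: one of order $3$ in $\mathcal{G}(v_n)$, and one in the vertex group of $v_{n-j}$ (or conjugates thereof). We choose them so that $\{T_g,T_{h_1},T_{h_2}\}$ is bridge-reduced. Lemma \ref{lem:geom_quant}(1) then gives $\ell(h)=2d(T_{h_1},T_{h_2})=2j$, with $A_h$ passing through the lift of $v_{n-j}$. Part (2) of the same lemma gives $d(T_g,A_h)=n-j$.

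\textbf{Verification.} Three things must be checked.
\begin{enumerate}
\item \emph{Generation.} We show $\langle g,h\rangle$ is the whole fundamental group. The plan is to recover each edge-group generator $t$ by iterated conjugation. Conjugates $h^igh^{-i}$ and their commutators sweep the coordinates of $H_i$, much as the $t_i=h^igh^{-i}$ do in Proposition \ref{thm:ellip_lower} and in the HNN family. For this we design the $C_3$- and involution-actions so that conjugation by the order-$3$ elements shifts coordinates cyclically, as a companion-matrix-like map on $\mathbb{F}_2^m$.
\item \emph{Faithfulness.} We apply Lemma \ref{lem:faithful}. The intersection of $H$ with its conjugates across a leaf kills a nonzero functional, and the coordinate-shifting maps $\varphi_i$ pull this functional back to a spanning set of $V_H^*$. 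Hence $H$ is core-free, and $\Gamma$ acts faithfully and discretely on $X_4$.
\item \emph{Minimal quotient.} The quotient of the minimal subtree is exactly $Y$ with the prescribed indices. This follows because the leaves have index $3>1$, so the leaf edges lie on axes of hyperbolic elements and are not pruned.
\end{enumerate}

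\textbf{Main obstacle.} The hardest part is the simultaneous design of the vertex groups so that both generation by only $g$ and $h$ and core-freeness hold for every admissible index pattern. Generation is the delicate one, since most elements of a large $2$-group need not lie in the subgroup generated by two specific elements. I would first try a uniform model: every edge group is $\mathbb{F}_2^m$, every interior index-$2$ overgroup is obtained by adjoining a single involution acting by a fixed transvection, and the leaf $C_3$ acts through a fixed order-$3$ matrix. I would then reduce generation to the statement that these matrices generate a group acting irreducibly on $\mathbb{F}_2^m$. This can be arranged for infinitely many $m$, for example $m\equiv 0 \pmod 2$.
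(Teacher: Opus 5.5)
Your architecture matches the paper's. You use a path of groups with elementary abelian $2$-group edge groups and order-$3$ elements acting at the two leaves. You take $g$ of order $3$ or $6$ at one leaf and $h$ a product of two elliptics whose fixed trees are $j$ apart, so Lemma~\ref{lem:geom_quant} gives $\ell(h)=2j$. Faithfulness goes through core-freeness via Lemma~\ref{lem:faithful}. (For $h_2\in\mathcal{G}(v_{n-j})$ to move the edge toward $v_n$ you need index $2$ there. Requirement (5) of Proposition~\ref{prop:non_reduced} provides this, after reflecting the path if necessary.)

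The gap is that you never fix the groups and actions. The step with the real content, $\langle g,h\rangle=\Gamma$, is replaced by a plan that would not work.
\begin{itemize}
\item The mechanism of Proposition~\ref{thm:ellip_lower} is unavailable. There $T_g$ contains a segment of $A_h$ of length at least $\ell(h)$. Here $g$ fixes a single leaf-type vertex of the Bass--Serre tree, so the $h^igh^{-i}$ fix pairwise distinct points and give no chain of edge stabilizers.
\item Reducing generation to irreducibility of the group generated by your $C_3$-matrices and transvections on $\mathbb{F}_2^m$ is circular. It presupposes that the order-$3$ leaf elements, the interior involutions, and some nontrivial edge-group element already lie in $\langle g,h\rangle$. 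You only have $g$ and the product $h=h_1h_2$, and splitting $h$ into its factors is exactly what must be arranged.
\item It also treats a partial action as a single module. Each leaf element normalizes only its own edge group, and these edge groups differ along the path.
\end{itemize}

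The missing idea is to choose $g,h$ so that the pieces split off by themselves.
\begin{itemize}
\item For $j<n$ the paper takes $g=yt_1$ with $t_1$ centralized by $y$, so $g^3=t_1$ and $g^4=y$.
\item It takes $h=xs_{m_j}$ with $s_{m_j}$ in the abelian interior vertex groups, so conjugation by $h$ agrees with conjugation by $x$ on the $t_i$.
\item For $j=n$ it takes $(g,h)=(x,xyt_1)$, with $(g^{-1}h)^3=t_1$ and $(g^{-1}h)^4=y$.
\item The leaf actions are interleaved $3$-cycles: $t_{2i-1}\mapsto t_{2i}\mapsto r_i$ under $x$, $t_{2i}\mapsto t_{2i+1}\mapsto s_{N-i}$ under $y$, and $t_{2i-1}\mapsto t_{2i}\mapsto t_{2i-1}t_{2i}$ on the remaining coordinates.
\item Alternately conjugating $t_1$ by $h$ and $y$ then reaches every $t_i$, then the $r_i$ and $s_i$, and finally $x=hs_{m_j}$.
\end{itemize}
This depends on keeping the interior vertex groups abelian and realizing index $2$ by dropping the $r_i$ on one side and the $s_i$ on the other. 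With your nonabelian transvection extensions, $h$-conjugation no longer reduces to $x$-conjugation, and generation would have to be argued afresh.
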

\textbf{The construction.} Let the vertices of $Y$ be $v_0,v_1,\dots, v_n$, and the edges be $e_0,e_1,\dots, e_{n-1}$. We aim to construct a family of vertex groups $G_i = \mathcal{G}(v_i)$ and edge groups $H_i = \mathcal{G}(e_i)$, such that the fundamental group of the graph of groups $(Y,\mathcal{G})$,
\[
    \Gamma = \pi_1(Y,\mathcal{G}) = G_0 *_{H_0} G_1 *_{H_1}\dots *_{H_{n-1}} G_n
\]
satisfies the requirements in the proposition.

Let us first define the vertex groups $G_k$ for $k=1$,\dots, $n-1$: Let
\[
    m_k = \#\{1\leq i\leq k\mid n_{e_{i-1},v_i} = 2\},\ m_k' = \#\{1\leq i\leq k-1\mid n_{e_{i},v_i} = 2\},
\]
so
\[
    m_{k+1}-m_k,\ m'_{k+1}-m'_k\in \{0,1\},
\]
depending on the values $n_{e_{k},v_{k+1}}$ and $n_{e_{k},v_{k}}$. For $N$ large enough, let
\[
    G_k = \langle t_1,\dots, t_{2N-1}, r_{m'_k+1},\dots, r_{m'_{n}}, s_1,\dots, s_{m_k}\rangle,
\]
which is Abelian and generated by involutions. For $k=0,\dots, n-1$, let
\[
    H_k = \langle t_1,\dots, t_{2N-1}, r_{m'_{k+1}+1},\dots, r_{m'_{n}}, s_1,\dots, s_{m_k}\rangle,
\]
so $H_k<G_k$ and $H_k<G_{k+1}$, with $\lvert G_k:H_k\rvert = n_{e_k,v_k}$ and $\lvert G_{k+1}:H_k\rvert = n_{e_k,v_{k+1}}$. In particular,
\[
    H_0 = \langle t_1,\dots, t_{2N-1}, r_{1},\dots, r_{m'_{n}}\rangle,
\]
and
\[
    H_{n-1} = \langle t_1,\dots, t_{2N-1}, s_1,\dots, s_{m_{n-1}}\rangle.
\]
Define $G_0 = H_0\rtimes\langle x\rangle$ and $G_n = H_{n-1}\rtimes \langle y\rangle$, with $\ord(x) = \ord(y) = 3$, and the conjugation relations on the generators are given:
    \begin{center}
        \begin{tabular}{c||c|c|cc}
            $t$ & $t_{2i-1}$ & $t_{2i}$ & $r_i$ & ($i=1,\dots, m'_{n}$) \\
            \hline
            $t^x$ & $t_{2i}$ & $r_i$ & $t_{2i-1}$\\
            \hline
            $t$ & $t_{2N-1}$ & $t_{2i-1}$ & $t_{2i}$ &($i=m'_{n}+1,\dots, N-1$) \\
            \hline
            $t^x$ & $t_{2N-1}$ & $t_{2i}$ & $t_{2i-1}t_{2i}$
        \end{tabular}
    \end{center}
    \begin{center}
        \begin{tabular}{c||c|c|cc}
            $t$ & $t_1$ & $t_{2i}$ & $t_{2i+1}$ & ($i=1
            ,\dots, N-m_{n-1}-1$) \\
            \hline
            $t^y$ & $t_1$ & $t_{2i+1}$ & $t_{2i}t_{2i+1}$ \\
            \hline
            $t$ & $t_{2i}$ & $t_{2i+1}$ & $s_{N-i}$ & ($i=N-m_{n-1},\dots, N-1$) \\
            \hline
            $t^y$ & $t_{2i+1}$ & $s_{N-i}$ & $t_{2i}$
        \end{tabular}
    \end{center}
From the construction, one sees that $\Gamma$ satisfies the indices requirement in Proposition \ref{prop:non_reduced}; the covering tree has valency at most $4$, and $\Gamma$ acts on $X_4$. We will show that $\Gamma$ is two-generator with the desired geometric quantities, and its action on its covering tree is faithful.
\begin{proof}
    \textbf{Generating Pairs.} For $j=n$, let $(g,h) = (x,xyt_1)$. Since both $x$ and $yt_1$ are elliptic, and the graph of groups suggests
    \[
    d(T_x,T_{yt_1}) = n,
    \]
    by Lemma \ref{lem:geom_quant}, one has that $h$ is hyperbolic with $\ell(h) = 2n$. In addition, $g$ is elliptic of order $3$, and $d(T_g,A_h) = 0$. The original generators $y$, $t_i$, $r_i$ and $s_i$ are recovered as follows:
    \begin{center}
        \begin{tabular}{c||c|c|c|c|c|c}
            Generator & $t_1$ & $y$ & $t_{2i}$ & $t_{2i+1}$ & $r_i$ & $s_i$ \\
            \hline
            Recovery & $(g^{-1}h)^3$ & $(g^{-1}h)^4$ & $xt_{2i-1}x^{-1}$ & $yt_{2i}y^{-1}$ & $xt_{2i}x^{-1}$ & $yt_{2(N-i)+1}y^{-1}$
        \end{tabular}
    \end{center}

    For $j<n$, let $(g,h) = (yt_1,xs_{m_j})$. Similarly one has
    \[
    d(T_{xs_{m_j}},T_{yt_1}) = j,
    \]
    and $h$ is hyperbolic with $\ell(h) = 2j$ by Lemma \ref{lem:geom_quant}; $g$ is elliptic of order $6$, and $d(T_g,A_h) = n-j$. The original generators $x$, $y$, $t_i$, $r_i$ and $s_i$ are recovered as follows:
    \begin{center}
        \begin{tabular}{c||c|c|c|c|c|c|c}
            Generator & $t_1$ & $y$ & $t_{2i}$ & $t_{2i+1}$ & $s_i$ & $x$ & $r_i$ \\
            \hline
            Recovery & $g^3$ & $g^4$ & $ht_{2i-1}h^{-1}$ & $yt_{2i}y^{-1}$ & $yt_{2(N-i)-1}y^{-1}$ & $hs_{m_j}$ & $xt_{2i}x^{-1}$
        \end{tabular}
    \end{center}

    \textbf{Action Faithfulness.} It suffices to show that $H_0$ is core-free in $\Gamma$:
    \[
    \bigcap_{\gamma\in\Gamma}\gamma H_0\gamma^{-1} = \{1\}.
    \]
    Indeed, from the conjugation relations:
    \[
    H_0\cap yH_0y^{-1}\cap y^{-1}H_0y = \ker(r_1^*,\dots,r_{m'_{n}}^*,t_{2(N-m_{n-1})}^*,\dots,t_{2N-1}^*).
    \]
    As long as $\max(m'_n,m_{n-1})>0$, this kernel is non-trivial. Alternately applying the pulled-back functionals by $x$- and $y$-conjugations to the covectors defining this kernel gives all generators of $V_{H_0}^*$, namely
    \[
    V_{H_0}^* = \mathrm{span}(t_1^*,\dots,t_{2N-1}^*,r_1^*,\dots,r_{m'_{n}}^*).
    \]
    By Lemma \ref{lem:faithful}, $H_0$ is core-free, and the $\Gamma$-action on the Bass-Serre tree is faithful.
\end{proof}
\subsubsection{Circuits}
The infinite family of faithful $2$-HNN extensions in Theorem \ref{thm:HNN_rigidity} has loop quotient graphs. Modifying the construction yields a family of two-generator subgroups whose quotient graphs are circuits of any length.
\begin{prop}\label{prop:circuit}
    For any $n\geq 1$ and every assignment of indices $n_{e,v}$ on a circuit $Y$ of length $n$ satisfying the requirements of Proposition \ref{prop:non_reduced}, there exists an infinite family of two-generator subgroups $\Gamma = \langle g,h\rangle< \aut(X_4)$ containing members with arbitrarily large vertex stabilizers such that:
    \begin{itemize}
        \item The generator $g$ is elliptic of order $\ord(g)= 2$.
        \item The generator $h$ is hyperbolic, with $\ell(h) = n$ and $d(T_g,A_h) = 0$.
        \item The minimal quotient graph for $\Gamma$ is the circuit $Y$, and the associated graph of groups satisfies $[\Gamma_v : \Gamma_e] = n_{e,v}$ for each edge $e$ incident with a vertex $v$.
    \end{itemize}
\end{prop}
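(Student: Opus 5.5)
Following the pattern of Proposition \ref{thm:path} and of the HNN family in Theorem \ref{thm:HNN_rigidity}, I will build $\Gamma$ as the fundamental group of a circuit of groups with abelian $2$-groups as vertex and edge groups. Label the vertices of $Y$ as $v_0,\dots,v_{n-1}$ and the edges as $e_i=[v_i,v_{i+1}]$, indices mod $n$. The spanning tree is the path $e_0,\dots,e_{n-2}$, and the stable letter $h$ corresponds to $e_{n-1}$.

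\textbf{Choice of groups.} For $N$ large, let every vertex and edge group be a subspace of one ambient $\mathbb{F}_2$-space spanned by involutions $t_1,\dots,t_M$. Along the path, take $G_i$ and $H_i$ to be coordinate subspaces $\langle t_{a_i+1},\dots,t_{b_i}\rangle$. Each step drops a generator at the left when $n_{e_i,v_i}=2$ and adds a new generator at the right when $n_{e_i,v_{i+1}}=2$, exactly as the $r$'s and $s$'s were handled in the path case. The circuit condition $\prod n_{e,v^+(e)}=\prod n_{e,v^-(e)}$ says that the total number of dropped generators equals the total number of added ones. Hence $H_{n-1}\le G_{n-1}$ and its image in $G_0$ are coordinate windows of the same dimension, shifted by some $s\ge 1$; condition (3) of Proposition \ref{prop:non_reduced} guarantees $s\ge1$. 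I then define the boundary map on $e_{n-1}$ by the shift $\varphi(t_i)=t_{i+s}$. This is the HNN construction $G_{n,0,\varnothing}*_\varphi$, stretched over $n$ edges. Each vertex has valency at most $4$ in the covering tree because each $n_{e,v}\le2$ and there are two incident edges, so $\Gamma$ acts on $X_4$.

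\textbf{Generation and geometry.} Take $g=t_j$ for a single involution lying in $G_0\cap H_0\cap H_{n-1}$; a window element survives around the whole circuit. Take $h$ to be the stable letter, possibly multiplied by a vertex element as in the path case. Then $h$ translates along a lift of the circuit, so $\ell(h)=n$, which can be checked with Lemma \ref{lem:geom_quant}(3). Since $g$ fixes the base vertex on $A_h$, we get $d(T_g,A_h)=0$. Every $t_i$ equals $h^{k}gh^{-k}$ for a suitable $k$, as in Lemma \ref{lem:pres}; the shift may need $s=1$ in an appropriate sense, or a generating choice of window. Hence $\langle g,h\rangle$ contains all vertex groups together with the stable letter, which gives $\Gamma$. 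Since $N$ is arbitrary, the vertex stabilizers can be made arbitrarily large.

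\textbf{Faithfulness.} Apply Lemma \ref{lem:faithful} to $H=H_0$. Intersecting $H_0$ with its conjugates by $h^{\pm1}$ kills the coordinate functionals at the two ends of the window. Pulling these back repeatedly along the shift produces every $t_i^*$, so the covectors span $V_{H_0}^*$ and $H_0$ is core-free.

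\textbf{Main obstacle.} I expect the hardest step to be generation when $s>1$. In that case the conjugates $h^kgh^{-k}$ only produce an arithmetic progression of coordinates. The first fix I would try is to take $g$ to be a product of $s$ consecutive involutions. If that fails, I would multiply $h$ by an extra vertex element, for example $h'=h\,t_{b}$, which preserves the translation length, so that the induced map on the window becomes a companion-type matrix with a cyclic vector. I would then check that this modified map still satisfies the spanning hypothesis of Lemma \ref{lem:faithful}.
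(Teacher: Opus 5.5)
\textbf{The gap: generation when $m\ge 2$.} The step you flagged as the main obstacle is a genuine gap, and it cannot be repaired while the vertex groups stay elementary abelian. Write $m$ for the number of proper inclusions met going once around the circuit (your $s$). The case $m\ge 2$ does occur: for example, $n=2$ with all four indices equal to $2$ satisfies Proposition~\ref{prop:non_reduced}.

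Suppose all vertex and edge groups are elementary abelian $2$-groups. Then $\Gamma^{ab}\otimes\mathbb{F}_2$ is the cokernel of $\bigoplus_e H_e\to\bigoplus_v G_v$, $h\mapsto\iota_\alpha(h)-\iota_\omega(h)$, plus one extra $\mathbb{F}_2$ coming from the loop. Moreover $\sum_v\dim G_v-\sum_e\dim H_e=\sum_i\log_2 n_{e_i,v_i}=m$. So $\Gamma$ needs at least $m+1$ generators, whatever gluing map, $g$ or $h$ you choose.

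For your coordinate shift this is visible directly: $t_i\mapsto e_{i \bmod m}$, $h\mapsto 1$ defines a surjection $\Gamma\to(C_2)^m\times\mathbb{Z}$. Hence none of your fixes can work:
\begin{itemize}
\item $g=t_1\cdots t_m$ only produces block sums.
\item Replacing $h$ by $ht_b$, with $t_b$ in an abelian vertex group, changes neither $\Gamma$ nor the induced conjugation on the window.
\item A companion-type gluing map is excluded by the same dimension count.
\end{itemize}

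\textbf{What the paper does instead.} The missing idea is to make the vertex groups non-abelian, so that commutators connect the residue classes mod $m$. The paper sets $G_k=\langle t_{m'_k+1},\dots,t_{mN+m_k+1}\rangle$ with $t_i^2=1$, $(t_it_{i+mN})^2=t_{i+m+1}$, and all other pairs commuting. These are Heisenberg-type groups like those in Theorem~\ref{thm:HNN_rigidity}, and the relations are invariant under the shift $\varphi(t_i)=xt_ix^{-1}=t_{i-m}$.

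Two choices keep this consistent. Labelling so that $v_1$ carries the smallest vertex group gives $m_k\ge m'_k$, so every $G_k$ contains such a commutator. The bound $m_k-m'_k\le m$ ensures the central letters $t_{i+m+1}$ never enter another commutator, so nothing collapses.

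Then $(g,h)=(t_1,x)$ generates, via $t_{i+m}=\varphi^{-1}(t_i)$ and $t_{i+1}=\varphi\bigl((t_i\varphi^{-N}(t_i))^2\bigr)$. The second formula advances the index by $1$ rather than by $m$, which is exactly what your abelian construction lacks.

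Faithfulness follows directly from $\bigcap_i x^iH_1x^{-i}=\{1\}$. Lemma~\ref{lem:faithful} is not needed, and would not apply verbatim since these $H_k$ need not be abelian. Your geometric checks ($\ell(h)=n$, $d(T_g,A_h)=0$, valency at most $4$) carry over. As written, however, your argument only proves the case $m=1$.
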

\textbf{The construction.} Let the vertices of $Y$ be $v_1$, $v_2$,\dots, $v_n$, and the edges be $e_1$, $e_2$,\dots, $e_n$. We aim to construct a family of vertex groups $G_i = \mathcal{G}(v_i)$ and edge groups $H_i = \mathcal{G}(e_i)$, such that the fundamental group $\Gamma$ of the graph of groups $(Y,\mathcal{G})$ satisfies the requirements in the proposition:
\begin{center}
    \begin{tikzpicture}[scale=2.5]
        \fill[] (0,0) circle(1pt) node[below]{$G_1$};
        \draw[thick] (0,0) -- node[below]{$H_1$} (1,0);
        \fill[] (1,0) circle(1pt) node[below]{$G_2$};
        \draw[thick,dashed] (1,0) -- (3,0);
        \fill[] (3,0) circle(1pt) node[below]{$G_{n-1}$};
        \draw[thick] (3,0) -- node[below]{$H_{n-1}$} (4,0);
        \fill[] (4,0) circle(1pt) node[below]{$G_n$};
        \draw[thick] (4,0) to [bend right=10] node[above]{$H_n$} (0,0);
    \end{tikzpicture}
\end{center}
Let us define the vertex and edge groups: without loss of generality, let $v_1$ be the vertex with the smallest vertex group order, as implied by the numbers $n_{e,v}$. Let
\[
m_k = \#\{1\leq i\leq k-1\mid n_{e_i,v_{i+1}} = 2\},\ m_k' = \#\{1\leq i\leq k-1\mid n_{e_i,v_i} = 2\},
\]
so
\[
m_{k+1} - m_k,\ m'_{k+1}-m'_k\in\{0,1\},\ m_{n+1} = m'_{n+1}\coloneqq m.
\]
For $N\geq 3$, we define
\[
G_k = \langle t_{m'_k+1},\dots, t_{mN+m_k+1}\mid t_i^2=1,\ (t_it_{i+mN})^2 = t_{i+m+1},\ (t_it_{i+j})^2 = 1,\forall j\neq mN\rangle,
\]
and
\[
H_k = \langle t_{m'_{k+1}+1},\dots, t_{mN+m_k+1}\rangle,
\]
with the same relators. Note that the choice of $v_1$ implies that $m_k\geq m'_k$, so every $G_k$ contains non-trivial commutator relations. On the other hand, $m_k - m'_k\leq m$, so each $t_{i+m+1}$ on the right-hand side of commutator relation does not appear in the left-hand side of another commutator $(t_jt_{j+mN})^2$. This guarantees the groups $G_k$ do not collapse.

Note that the subgroup relations $H_k<G_k$ and $H_k<G_{k+1}$ hold, with $\lvert G_k:H_k\rvert = n_{e_k,v_k}$ and $\lvert G_{k+1}:H_k\rvert = n_{e_k,v_{k+1}}$. In particular,
\[
G_1 = \langle t_1,\dots, t_{mN+1}\rangle,
\]
and
\[
H_n = \langle t_{m+1},\dots, t_{m(N+1)+\epsilon}\rangle,
\]
where $\epsilon\in \{0,1\}$ depending on whether $n_{e_n,v_1} = 1$ or $2$. Define the stable letter $x$ and the boundary injection $\varphi: H_n\to G_1$ by
\[
    \varphi(t_{i}) = xt_ix^{-1} = t_{i-m}.
\]
From the construction, one sees that $\Gamma$ satisfies the indices requirement in Proposition \ref{prop:non_reduced}, and $\Gamma$ acts on $X_4$. We will show that $\Gamma$ is two-generator with the desired geometric quantities, and its action is faithful.
\begin{proof}
    \textbf{Generating Pairs.} Let $(g,h) = (t_1,x)$; then $g$ is elliptic of order $2$, $h$ is hyperbolic with $\ell(h) = n$, and $d(T_g,A_h) = 0$. The original generators $t_i$ are recovered as follows:
    \begin{center}
        \begin{tabular}{c||c|c}
            Generator & $t_{i+1}$ & $t_{i+m}$ \\
            \hline
            Recovery & $\varphi((t_i\varphi^{-N}(t_i))^2)$ & $\varphi^{-1}(t_i)$
        \end{tabular}
    \end{center}

    \textbf{Action Faithfulness.} It is clear from the definition of $\varphi$ that
    \[
    \bigcap_{i\in\mathbb{Z}}\varphi^i(H_1) = \bigcap_{i\in\mathbb{Z}} x^i H_1 x^{-1} = \{1\}.
    \]
    Therefore, $H_1$ is core-free, and the $\Gamma$-action on the Bass-Serre tree is faithful.
\end{proof}
When $Y$ is a unicyclic graph with leaves, we are able to construct two-generator subgroups with minimal quotient $Y$ and abelian vertex stabilizers on the circuit. As the construction below demonstrates, this simplifies the general construction.
\begin{cor}\label{thm:tadpole}
    For any $n\geq 1$, $j\in\{0,1\}$, and every assignment of indices $n_{e,v}$ on an $(n,1)$-tadpole graph (a leaf vertex attached to an $n$-circuit) satisfying the requirements of Proposition \ref{prop:non_reduced}, there exists an infinite family of two-generator subgroups $\Gamma = \langle g,h\rangle< \aut(X_6)$ containing members with arbitrarily large vertex stabilizers such that:
    \begin{itemize}
        \item The generator $g$ is elliptic of order $\ord(g)= 6$.
        \item The generator $h$ is hyperbolic, with $\ell(h) = n+2j$ and $d(T_g,A_h) = 1-j$.
        \item The minimal quotient graph for $\Gamma$ is the tadpole graph $Y$, and the associated graph of groups satisfies $[\Gamma_v : \Gamma_e] = n_{e,v}$ for each edge $e$ incident with a vertex $v$.
    \end{itemize}
\end{cor}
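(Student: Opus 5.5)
The plan is to build the graph of groups by gluing a leaf vertex group onto the circuit construction of Proposition \ref{prop:circuit}, keeping the circuit stabilizers abelian as the remark before the corollary suggests. Label the circuit vertices $v_1,\dots,v_n$ with edges $e_1,\dots,e_n$, and let $v_0$ be the leaf joined to $v_1$ by an edge $e_0$. Since $\val(v_0)=1$, Proposition \ref{prop:non_reduced} forces $n_{e_0,v_0}=3$.

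\textbf{The construction.}
\begin{itemize}
\item On the circuit, take elementary abelian vertex groups $G_k$ and edge groups $H_k$, built as in Proposition \ref{thm:path} from involutions $t_1,\dots,t_{2N-1}$ together with auxiliary letters $r_i,s_i$. The auxiliary letters realize the prescribed indices $n_{e,v}\in\{1,2\}$.
\item Close the circuit with a stable letter $x$ acting as a shift on the $t$-letters, as in Proposition \ref{prop:circuit}. The unicyclic product condition in Proposition \ref{prop:non_reduced} guarantees that the shift is an isomorphism between subgroups of equal order.
\item Set $H_0=G_1$ (or an index-$\le 2$ subgroup, according to $n_{e_0,v_1}$) and $G_0=H_0\rtimes\langle y\rangle$ with $\ord(y)=3$. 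Here $y$ permutes the $t$-letters in 3-cycles and has one twisted block $t_{2i}\mapsto t_{2i+1}\mapsto t_{2i}t_{2i+1}$, exactly as in the tables for $G_n$ in Proposition \ref{thm:path}.
\item The valency of the covering tree is then at most $3+3=6$ at $v_1$ (the leaf edge plus the circuit edges with indices up to $2$), so $\Gamma$ acts on $X_6$.
\end{itemize}

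\textbf{Generators.} For $j=0$, take $g=yt_1$, which has order $6$ because $y$ fixes $t_1$ and $t_1$ is central in $G_0$. Take $h=xt'$ for a suitable involution $t'$, chosen so that $h$ is hyperbolic with axis projecting onto the circuit. Then $\ell(h)=n$, and $d(T_g,A_h)=1$ because $g$ fixes only the lift of $v_0$ and its $H_0$-neighbourhood; this part is read off from the graph of groups. For $j=1$, replace $h$ by $h=x\,y^{\pm1}$ (or $xs$). Lemma \ref{lem:geom_quant}(3) then gives a translation length of $n$ plus the two traversals of $e_0$, so $\ell(h)=n+2$ and $A_h$ meets $T_g$. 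In each case I would write a recovery table: $t_1=g^3$, $y=g^4$, $x$ from $h$ and $g$, the remaining $t$'s by iterated conjugation by $y$ and $x$ as in Propositions \ref{thm:path} and \ref{prop:circuit}, and the auxiliary $r_i,s_i$ via conjugations by $y$.

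\textbf{Faithfulness.} Apply Lemma \ref{lem:faithful} to $H_0$. Intersecting $H_0$ with its $y^{\pm1}$-conjugates gives the kernel of some coordinate functionals. Pulling those functionals back under the $y$-action (a 3-cycle plus the twisted block) and under the shift $x$ should span $V_{H_0}^*$.

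\textbf{Main obstacle.} I expect the hardest step to be reconciling the index pattern on the circuit with the abelian shift construction. A second difficulty is confirming that the specific $h$ has the claimed translation length and distance in both cases $j=0,1$, which requires checking bridge-reducedness of the relevant fixed trees. I would first try the simplest index pattern (all circuit indices $1$ except one equal to $2$) and verify $\ell(h)$ by tracing a lift of the tadpole through the Bass–Serre tree. Only then would I adapt the $r_i,s_i$ bookkeeping of Proposition \ref{thm:path} to the general case.
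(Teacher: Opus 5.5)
Your overall architecture is the paper's: abelian circuit groups closed by a shift-type stable letter $x$, a leaf group $G_0=H_0\rtimes\langle y\rangle$, $g=yt_1$, and $h=x$ for $j=0$. For $j=1$ the paper uses $h=xyt_1=xg$; your $xy^{\pm1}$ also works, since $y=g^4$. However, the generation step has a genuine gap.

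\textbf{Generation.} Once the circuit groups are abelian, the recovery of Proposition~\ref{prop:circuit}, $t_{i+1}=\varphi((t_i\varphi^{-N}(t_i))^2)$, is unavailable, because it uses the commutators $(t_it_{i+mN})^2=t_{i+m+1}$. Proposition~\ref{thm:path} does not transfer either, since it relied on two independent order-$3$ elements at the two leaves. Here every letter must come from $t_1=g^3$ by alternating conjugation by $y$ and by the shift $t_i\mapsto t_{i\pm m}$. So the $y$-action has to be tuned to $m$, and your description does not do this. For instance, suppose $y$ acts by the blocks $t_{2i}\mapsto t_{2i+1}\mapsto t_{2i}t_{2i+1}$ of the $G_n$-table and $m=4$. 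Then the span of $\{t_{4k},t_{4k+1}\}$ is invariant under both $y$ and $x$, so your iterated conjugations never reach $t_2$. The missing idea is the paper's parity-offset design. It places twisted blocks on $(t_{2i},t_{2i+1})$ inside $[2,m]$ and on $(t_{m+2i-1},t_{m+2i})$ inside $[m+1,2m]$. Then the chain $t_1\to t_{m+1}\to t_{m+2}\to t_2\to t_3\to t_{m+3}\to\cdots$, i.e.\ $t_{2i}=xyx^{-1}t_{2i-1}xy^{-1}x^{-1}$ and $t_{2i+1}=yt_{2i}y^{-1}$, produces $t_1,\dots,t_{2m}$, and $x$-conjugation gives the rest. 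If $n_{e_0,v_1}=2$, the index-$2$ subgroup $H_0$ must also be chosen $y$-invariant, e.g.\ by omitting a letter $t_i$ with $i>2m$.

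\textbf{Faithfulness.} This step fails as written. Since $H_0\vartriangleleft G_0$, we have $yH_0y^{-1}=H_0$, so intersecting $H_0$ with its $y^{\pm1}$-conjugates gives no nonzero functionals for Lemma~\ref{lem:faithful}. The kernel has to come from the shift, and then the shift alone suffices: the paper simply notes $\bigcap_{i\in\mathbb{Z}}x^iH_1x^{-i}=\{1\}$, because $x$ pushes every letter out of the finite window.

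\textbf{Circuit construction.} The obstacle you flag (indices versus the shift) disappears if you drop the $r_i,s_i$ of Proposition~\ref{thm:path} entirely. Take the sliding windows of Proposition~\ref{prop:circuit}: $G_k=\langle t_{m'_k+1},\dots,t_{mN+m_k+1}\rangle$ and $H_k=\langle t_{m'_{k+1}+1},\dots,t_{mN+m_k+1}\rangle$. Delete the commutator relations and keep $\varphi(t_i)=t_{i-m}$. The window then realizes the indices and gives the boundary monomorphism $H_n\to G_1$ at the same time. By contrast, a shift on $t_1,\dots,t_{2N-1}$ alone does not say where the $s$-letters of $H_n$ go. Making it an honest shift forces the auxiliary letters into a single sequence, which is exactly the sliding window.

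\textbf{Minor point.} The involution in $h=xt'$ is unnecessary: $x$ itself is hyperbolic with $\ell(x)=n$, and its axis projects onto the circuit.
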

\textbf{The construction.} Let the vertices of $Y$ be $v_0$, $v_1$,\dots, $v_n$ and the edges be $e_0$, $e_1$,\dots, $e_n$, corresponding to vertex groups $G_i$ and edge groups $H_i$. Here, $v_1$ to $v_n$ form a loop, and $v_0$ is a leaf attached to the loop at $v_1$:
\begin{center}
    \begin{tikzpicture}[scale=2.5]
        \fill[] (-1,0) circle(1pt) node[below]{$G_0$};
        \draw[thick] (-1,0) -- node[below]{$H_0$} (0,0);
        \fill[] (0,0) circle(1pt) node[below]{$G_1$};
        \draw[thick] (0,0) -- node[below]{$H_1$} (1,0);
        \fill[] (1,0) circle(1pt) node[below]{$G_2$};
        \draw[thick,dashed] (1,0) -- (3,0);
        \fill[] (3,0) circle(1pt) node[below]{$G_{n-1}$};
        \draw[thick] (3,0) -- node[below]{$H_{n-1}$} (4,0);
        \fill[] (4,0) circle(1pt) node[below]{$G_n$};
        \draw[thick] (4,0) to [bend right=10] node[above]{$H_n$} (0,0);
    \end{tikzpicture}
\end{center}
Again, let 
\[
m_k = \#\{1\leq i\leq k-1\mid n_{e_i,v_{i+1}} = 2\},\ m_k' = \#\{1\leq i\leq k-1\mid n_{e_i,v_i} = 2\},
\]
so $m_{n+1} = m_{n+1}' \coloneqq m$. For $i=1,\dots, n$, define the vertex and edge groups $G_i$ and $H_i$ with the same generators as in Proposition \ref{prop:circuit}, but without the non-trivial commutators:
\[
G_k = \langle t_{m'_k+1},\dots, t_{mN+m_k+1}\mid t_i^2=1, (t_it_{j})^2 = 1\rangle,
\]
and
\[
H_k = \langle t_{m'_{k+1}+1},\dots, t_{mN+m_k+1}\mid t_i^2=1, (t_it_{j})^2 = 1\rangle,
\]
where $N$ is any sufficiently large number; define the boundary injection $\varphi: H_n\to G_1$, $\varphi(t_{i}) = xt_ix^{-1} = t_{i-m}$ as before. In particular, the group for the junction vertex $v_1$ (where the tail meets the circuit) is
\[
G_1 = \langle t_1,\dots, t_{mN+1}\rangle.
\]
Depending on the number $n_{e_0, v_1}$, let $H_0 = G_1$, or an index-$2$ subgroup that does not contain any generator $t_i$ with $i>2m$. Define $G_0 = H_0\rtimes \langle y\rangle$ with $\ord(y) = 3$, and the non-trivial conjugation relations are given as follows:
\begin{center}
    \begin{tabular}{c||c|cc}
        $t$ & $t_{2i}$ & $t_{2i+1}$ & ($1\leq i\leq (m-1)/2$) \\
        \hline
        $t^y$ & $t_{2i+1}$ & $t_{2i}t_{2i+1}$\\
        \hline
        $t$ & $t_{m+2i-1}$ & $t_{m+2i}$ & ($1\leq i\leq m/2$) \\
        \hline
        $t^y$ & $t_{m+2i}$ & $t_{m+2i-1}t_{m+2i}$\\
    \end{tabular}
\end{center}
We will show that $\Gamma$ is two-generator with the desired geometric quantities, and its action is faithful.
\begin{proof}
    \textbf{Generating Pairs.} For $j=0$, let $(g,h) = (yt_1,x)$; then $g$ is elliptic of order $6$, $h$ is hyperbolic with $\ell(h) = n$ and $d(T_g,A_h) = 1$. The original generators $y$ and $t_i$ are recovered as follows:
    \begin{center}
        \begin{tabular}{c||c|c|c|c|c|c}
            Generator & $y$ & $t_1$ & $t_{2i}$ & $t_{2i+1}$ & $t_{i+m}$ \\
            \hline
            Recovery & $g^4$ & $g^3$  & $xyx^{-1}t_{2i-1}xy^{-1}x^{-1}$ & $yt_{2i}y^{-1}$ & $x^{-1}t_ix$
        \end{tabular}
    \end{center}

    For $j=1$, let $(g,h) = (yt_1,xyt_1)$; then $g$ is elliptic of order $6$, and by Lemma \ref{lem:geom_quant}, $h$ is hyperbolic with $\ell(h) = n+2$ and $d(T_g,A_h) = 0$. Note that the two generating pairs are equivalent.

    \textbf{Action Faithfulness.} As before, the subgroup $H_1$ is core-free in $\Gamma$, since
    \[
    \bigcap_{i\in\mathbb{Z}}\varphi^i(H_1) = \bigcap_{i\in\mathbb{Z}} x^i H_1 x^{-1} = \{1\}.
    \]
    
\end{proof}
\subsubsection{Star Graphs}
To illustrate the construction of two-generator subgroups whose minimal quotient graph has at least three leaves, we discuss the simplest case, that is, when it is a star graph.
\begin{prop}\label{thm:star}
    For any $n\geq 3$, each $j  = 1, \dots, n$, and every assignment of indices $n_{e,v}$ on a star graph $Y$ of $n$ edges satisfying the requirements of Proposition \ref{prop:non_reduced}, there exists an infinite family of two-generator subgroups $\Gamma = \langle g,h\rangle< \aut(X_{2n})$ containing members with arbitrarily large vertex stabilizers such that:
    \begin{itemize}
        \item The generator $g$ is elliptic of order $\ord(g)= 6$.
        \item The generator $h$ is hyperbolic, with $\ell(h) = 2j$, and $d(T_g,A_h) = 2(n-j)-1$ when $j<n$, or $d(T_g,A_h) = 0$ when $j=n$.
        \item The minimal quotient graph for $\Gamma$ is the star graph $Y$, and the associated graph of groups satisfies $[\Gamma_v : \Gamma_e] = n_{e,v}$ for each edge $e$ incident with a vertex $v$.
    \end{itemize}
\end{prop}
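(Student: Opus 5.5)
Following the pattern of Proposition \ref{thm:path}, I would write down an explicit graph of groups on the star and then verify three things: the indices, the generating pair with its geometric quantities, and faithfulness.

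\textbf{The graph of groups.} Let the star $Y$ have center $v_0$, leaves $v_1,\dots,v_n$, and edges $e_i=[v_0,v_i]$. The center group $G_0$ will be an elementary abelian $2$-group generated by involutions
\[
t_1,\dots,t_{2N-1},\qquad r^{(i)}_1,\dots,r^{(i)}_{a_i} .
\]
Here the extra letters $r^{(i)}$ record the edges with $n_{e_i,v_0}=2$. The edge group $H_i$ is obtained from $G_0$ by deleting the letter for $e_i$ when $n_{e_i,v_0}=2$. Each leaf group is a semidirect product $G_i=H_i\rtimes\langle y_i\rangle$ with $\ord(y_i)=3$, so that $\lvert G_i:H_i\rvert=3$, as required by Proposition \ref{prop:non_reduced}. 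The action of $y_i$ on $H_i$ is given by a table of the same shape as the $x$- and $y$-tables in the path construction: $y_i$ permutes cyclically a block of generators, or acts on a pair by $(t,t')\mapsto(t',tt')$. The blocks are staggered with $i$, so that $y_1,\dots,y_n$ together shift a distinguished generator through the whole list $t_1,\dots,t_{2N-1}$ and through the $r$'s. Since $\sum_i n_{e_i,v_0}\le 2n$, the covering tree embeds in $X_{2n}$.

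\textbf{Generators and geometric quantities.} For $j=n$, take $g=y_1$ and $h=y_1y_2\cdots y_n t_1$, possibly after absorbing an involution so that the last factor has order $6$ and $g$ has order $6$. The fixed trees $T_{y_i}$ lie at the lifts of the leaves. In the Bass--Serre tree, consecutive leaf lifts are at distance $2$ through a center lift. So $\{T_{y_1},\dots,T_{y_n}\}$ is bridge-reduced, and Lemma \ref{lem:geom_quant}(1) gives $\ell(h)=2n$ with $d(T_g,A_h)=0$.

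For $j<n$, take $g=y_nt_1$ (order $6$) and $h=y_1\cdots y_j$, modified by an involution if needed. Lemma \ref{lem:geom_quant} then gives $\ell(h)=2j$. To obtain $d(T_g,A_h)=2(n-j)-1$, I would not use $g$ itself but a conjugate of it. The conjugating element is a product of the remaining $y_{j+1},\dots,y_{n-1}$, whose fixed trees form a bridge-reduced chain of length $2(n-j-1)$; one more edge reaches the axis, which gives $2(n-j)-1$. Lemma \ref{lem:geom_quant}(2) computes this distance.

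Recovery tables then show $\langle g,h\rangle=\Gamma$. Powers of $g$ give the leaf element $y$ and $t_1$. Conjugating $t_1$ by $h$, by $g$ and by the recovered $y_i$ produces the remaining $t$'s. From these, each $y_i$ is peeled off $h$ in turn.

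\textbf{Faithfulness.} I would show that $H_1$ is core-free using Lemma \ref{lem:faithful}. Intersecting $H_1$ with its $y_i^{\pm1}$-conjugates cuts $H_1$ down to the kernel of the coordinate functionals of the $r$-letters and the top $t$-letters. This kernel is proper provided some $n_{e,v}>1$, which is the path/star condition in Proposition \ref{prop:non_reduced}. The pulled-back functionals under words in the $y_i$ then sweep through all of $V_{H_1}^*$. Taking $N$ arbitrarily large gives the infinite family with unbounded vertex stabilizers.

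\textbf{Main obstacle.} The hardest step will be designing the conjugation tables for the $y_i$ so that two constraints hold at once. First, the actions must be compatible with the prescribed edge subgroups, so that each $y_i$ normalizes $H_i$ while the indices $n_{e_i,v_0}$ stay as assigned. Second, the combined actions must act transitively enough for both the recovery of all generators and the spanning argument in Lemma \ref{lem:faithful}. The fallback is to stagger the blocks cyclically in $i$, exactly as $x$ and $y$ interleave even and odd indices in Proposition \ref{thm:path}, and to check generation by induction along the star.
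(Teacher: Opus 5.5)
There is a genuine gap. It lies in your choice of leaf groups, not in the unfinished conjugation tables.

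You take every leaf group to be $G_i=H_i\rtimes\langle y_i\rangle$, with $H_i\trianglelefteq G_i$ and $G_i/H_i\cong C_3$. Consider the trivial map on the center group $G_0$, together with the quotient maps $G_i\to G_i/H_i\cong\mathbb{Z}/3$ sent into the $i$-th coordinate. These agree on every edge group. They therefore assemble into a surjection $\Gamma=\pi_1(Y,\mathcal{G})\twoheadrightarrow(\mathbb{Z}/3)^n$. (Equivalently, $\Gamma\twoheadrightarrow C_3^{*n}$, which has rank $n$.)

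The image of any two elements of $\Gamma$ spans a subspace of dimension at most $2$. So for $n\ge 3$ no pair $(g,h)$ can generate $\Gamma$, whatever tables, absorbed involutions or conjugations you use. Concretely, $y_1$ and $y_1y_2\cdots y_nt_1$ map to $e_1$ and $(1,\dots,1)$. The step ``peel off each $y_i$ from $h$ in turn'' therefore cannot be carried out, and no refinement of the tables can fix this.

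The paper avoids the obstruction by giving only two leaves such a free $C_3$-quotient:
\begin{itemize}
\item $G_0=H_0\rtimes\langle x\rangle$ and $G_{n-1}=H_{n-1}\rtimes\langle y\rangle$, where $x$ and $y$ cyclically permute staggered triples of generators of $G_\star\cong(C_2)^{3N-1}$.
\item Each middle leaf gets $G_j=\langle z_j\rangle\rtimes H_j$. Here $z_j$ has order $3$, is normal, is inverted by the single involution $t_{3j+1}$, and commutes with the rest of $H_j$.
\end{itemize}
So each middle $G_j$ is normally generated by $H_j$, and the abelian obstruction collapses to the $2$-generated quotient $C_3*C_3$.

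The same $D_6$-type structure drives the recovery. With $g=yt_1$ and $h=yz_1\cdots z_{n-2}x$, set $x_i=z_i\cdots z_{n-2}x$. Once $x_i$ is known, the identity $z_i=t_{3i+1}\,x_it_{3i}x_i^{-1}$ extracts $z_i$, and $x_{i+1}=z_i^{-1}x_i$ follows. This lets the paper peel the $z_i$ off one at a time; the case $j<n$ uses the conjugated $h$ in the same way.

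To repair your argument, keep a permuting $C_3$ at only two leaves. Replace every other leaf group by one that is normally generated by its edge group, and build the recovery around an identity of this kind.
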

\textbf{The construction.} Let the vertices of $Y$ be $v_\star$, $v_0$,\dots, $v_{n-1}$, and the edges be $e_0$,\dots, $e_{n-1}$. Here $v_\star$ is the center vertex, and $e_i$ is incident with $v_\star$ and $v_i$. We aim to construct a family of vertex groups $G_\star$ and $G_i$, and edge groups $H_i$, $i=0,\dots, n-1$, such that the fundamental group $\Gamma$ of the graph of groups $(Y,\mathcal{G})$ satisfies the requirements in the proposition.

For sufficiently large $N$, define
\[
G_\star = \langle t_0,t_1,\dots, t_{3N-2}\rangle\cong (C_2)^{3N-1}.
\]
For $i=0,\dots, n-1$, define the edge subgroups: if $n_{e_i,v_\star} = 1$, set $H_i = G_\star$. Otherwise,
\[
H_0 = \langle t_i\mid i\neq 3N-2\rangle,\ H_{n-1} = \langle t_i\mid i\neq 0\rangle,
\]
and
\[
H_j = \langle t_i\mid i\neq 3j+2\rangle,\ j=1,\dots, n-2.
\]
Define $G_0 = H_0\rtimes \langle x\rangle$ and $G_{n-1} = H_{n-1}\rtimes \langle y\rangle$, with $\ord(x) = \ord(y) = 3$, and the conjugation relations on the generators as follows (trivial relations omitted):
\begin{center}
    \begin{tabular}{c||c|c|cc}
        $t$ & $t_{3i}$ & $t_{3i+1}$ & $t_{3i+2}$ & ($i=0,\dots,N-1$) \\
        \hline
        $t^x$ & $t_{3i+1}$ & $t_{3i+2}$ & $t_{3i}$ &
    \end{tabular}
\end{center}
\begin{center}
    \begin{tabular}{c||c|c|cc}
        $t$ & $t_{3i-1}$ & $t_{3i}$ & $t_{3i+1}$ & ($i=1,\dots,N$) \\
        \hline
        $t^y$ & $t_{3i}$ & $t_{3i+1}$ & $t_{3i-1}$
    \end{tabular}
\end{center}
For $j=1,\dots, n-2$, define $G_j = \langle z_j\rangle\rtimes H_j$, $\ord(z_j) = 3$, with the only non-trivial conjugation relation being
\[
t_{3j+1}z_jt_{3j+1} = z_j^2.
\]
The group inclusions $H_i<G_i$ and $H_i<G_\star$ define an $n$-star graph of groups, and the indices requirement in Proposition \ref{prop:non_reduced} is satisfied. We will show that its fundamental group $\Gamma$ is two-generator with the desired geometric quantities, and its action is faithful.
\begin{proof}
    \textbf{Generating Pairs.} For $j=n$, let $(g,h) = (yt_1,yz_1\dots z_{n-2}x)$. By Lemma \ref{lem:geom_quant}:
    \[
    \ell(h) = d(v_0,v_1) + \dots + d(v_{n-2},v_{n-1}) + d(v_{n-1}, v_0) = 2n,
    \]
    and $d(T_g,A_h) = 0$. Denote $x_i\coloneqq z_i\dots z_{n-2}x$; then the original generators are recovered as follows:
    \begin{center}
        \begin{tabular}{c||c|c|c|c|c|c|c|c}
            Generator & $y$ & $t_1$ & $x_1$ & $t_{3i-1}$ & $t_{3i}$ & $t_{3i+1}$ & $z_i$ & $x_{i+1}$ \\
            \hline
            Recovery & $g^4$ & $g^3$ & $y^{-1}h$ & $x_it_{3i-2}x_i^{-1}$ & $yt_{3i-1}y^{-1}$ & $y^{-1}t_{3i-1}y$ & $t_{3i+1}x_it_{3i}x_i^{-1}$ & $z_i^{-1}x_i$
        \end{tabular}
    \end{center}
    Specifically, when $i=n-2$, we obtain $x_{n-1} = x$. The other generators $t_j$ are recovered from $x$- and $y$-conjugations.

    For $j<n$, let $(g,h) = (yt_1, z_1\dots z_{n-2}xz_{n-1-j}^{-1}\dots z_1^{-1})$. Lemma \ref{lem:geom_quant} implies that $\ell(h) = 2j$, and
    \[
        \begin{split}
            & d(T_g,A_h) = d((z_1\dots z_{n-1-j}).T_{yt_1},A_{z_{n-j\dots z_{n-2}x}}) \\
            & = d(v_{n-1},v_1) + d(v_1,v_2) + \dots + d(v_{n-2-j}, v_{n-1-j}) + d(v_{n-1-j}, v_\star) = 2(n-j) - 1.
        \end{split}
    \]
    For $i\leq n-1-j$, let $x_i\coloneqq z_i\dots z_{n-2}xz_{n-1-j}^{-1}\dots z_i^{-1}$. Since $z_i$ commutes with $t_{3i}$, a similar relation $z_i = t_{3i+1}x_i t_{3i}x_i^{-1}$ still holds. Therefore, the original generators are recovered as follows:
    \begin{center}
        \begin{tabular}{c||c|c|c|c|c|c|c|c}
            Generator & $y$ & $t_1$ & $x_1$ & $t_{3i-1}$ & $t_{3i}$ & $t_{3i+1}$ & $z_i$ & $x_{i+1}$ \\
            \hline
            Recovery & $g^4$ & $g^3$ & $h$ & $x_it_{3i-2}x_i^{-1}$ & $yt_{3i-1}y^{-1}$ & $y^{-1}t_{3i-1}y$ & $t_{3i+1}x_it_{3i}x_i^{-1}$ & $z_i^{-1}x_iz_i$
        \end{tabular}
    \end{center}
    For $i>n-1-j$, the recovery of $t_{3i}$, $t_{3i-1}$, $t_{3i+1}$, $z_i$ and $x_{i+1}$ are the same as in the case $j=n$. 

    \textbf{Action Faithfulness.} To show that $H_0$ is core-free in $\Gamma$, we begin with the intersection:
    \[
    H_0\cap \left(\bigcap_i z_iH_0z_i^{-1}\right) = \ker(t_{4}^*,t_{7}^*,\dots, t_{3n-5}^*).
    \]
    The $\varphi_x^T$ and $\varphi_y^T$ actions on those $t_{3i+1}^*\in V_{H_0}^*$ result in the other generators of this dual space. By Lemma \ref{lem:faithful}, $H_0$ is core-free, and the $\Gamma$-action on the Bass-Serre tree is faithful.
\end{proof}
A modification of the construction allows the quotient graph to be a star graph with a loop:
\begin{cor}\label{thm:star_loop}
    For any $n\geq 3$, each $j  = 1, \dots, n$, and every assignment of indices $n_{e,v}$ on a star graph $Y$ of $n-1$ edges and an additional loop at the center, satisfying the requirements of Proposition \ref{prop:non_reduced}, there exists an infinite family of two-generator subgroups $\Gamma = \langle g,h\rangle< \aut(X_{2n+2})$ containing members with arbitrarily large vertex stabilizers such that:
    \begin{itemize}
        \item The generator $g$ is elliptic of order $\ord(g)= 6$.
        \item The generator $h$ is hyperbolic, with $\ell(h) = 2j-1$, and $d(T_g,A_h) = 2(n-j)-1$ when $j<n$, or $d(T_g,A_h) = 0$ when $j=n$.
        \item The minimal quotient graph for $\Gamma$ is the looped star graph $Y$, and the associated graph of groups satisfies $[\Gamma_v : \Gamma_e] = n_{e,v}$ for each edge $e$ incident with a vertex $v$.
    \end{itemize}
\end{cor}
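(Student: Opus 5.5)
We modify the star construction of Proposition \ref{thm:star} by replacing one leaf with a loop at the center. Label the center $v_\star$, the leaves $v_0,\dots,v_{n-2}$ with edges $e_0,\dots,e_{n-2}$, and the loop $e_{n-1}$ based at $v_\star$. Take $G_\star=\langle t_0,\dots,t_{3N-2}\rangle\cong(C_2)^{3N-1}$ as before. The edge groups $H_0,\dots,H_{n-2}$ and the leaf groups $G_0=H_0\rtimes\langle x\rangle$ and $G_j=\langle z_j\rangle\rtimes H_j$ keep the same conjugation tables.

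The leaf $v_{n-1}$ with $G_{n-1}=H_{n-1}\rtimes\langle y\rangle$ is removed, and the role of $y$ moves into the center. We keep the $3$-cycles of $y$ but let $y$ act on $G_\star$ itself. To do this we enlarge the center group to $G_\star\rtimes\langle y\rangle$, or alternatively keep $G_\star$ abelian and put the rotation into the leaf $v_0$. The loop $e_{n-1}$ is given an edge group $H_{n-1}<G_\star$ of index $n_{e_{n-1},v_\star^\pm}\in\{1,2\}$ and a stable letter $s$. The HNN injection $\varphi(t_i)=st_is^{-1}=t_{i+1}$ is a shift on indices, as in Proposition \ref{prop:circuit}. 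The index-product condition of Proposition \ref{prop:non_reduced} on the circuit forces the two indices of the loop to be equal. If both are $1$, the loop is an automorphism of $G_\star$ and faithfulness must come from the leaves. If both are $2$, then $H_{n-1}=\langle t_0,\dots,t_{3N-3}\rangle$ and $\varphi(H_{n-1})=\langle t_1,\dots,t_{3N-2}\rangle$. The valency count is $\sum_e n_{e,v_\star}\le 2(n-1)+2\cdot 2=2n+2$, which gives an action on $X_{2n+2}$.

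For the generating pair we take $g=yt_1$ of order $6$. We then form $h$ from the corresponding star element by splicing in the stable letter: $h=z_1\cdots z_{n-2}xs$ when $j=n$, and for $j<n$ we conjugate by $z_1\cdots z_{n-1-j}$ exactly as in the proof of Proposition \ref{thm:star}. The translation length is computed by Lemma \ref{lem:geom_quant}, part (3), taking $h_0=s$ of translation length $1$. Each leaf detour contributes $2$, and the loop contributes $1$ instead of the $2$ contributed by the removed leaf. This gives $\ell(h)=2j-1$. The distance $d(T_g,A_h)$ is the same sum of leaf-to-center distances as in the star case, namely $2(n-j)-1$ or $0$.

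Recovery of the generators follows the star tables, with two changes. First, $y=g^4$ and $t_1=g^3$. Second, $s$ is recovered as $x^{-1}z_{n-2}^{-1}\cdots z_1^{-1}h$ (suitably conjugated when $j<n$). After that, the shift $\varphi$ together with the $x$- and $y$-conjugations produce every $t_i$, and then the $z_i$ follow from $z_i=t_{3i+1}x_it_{3i}x_i^{-1}$.

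For faithfulness we apply Lemma \ref{lem:faithful} to $H_0$. The leaf intersections give $\ker(t_4^*,\dots,t_{3n-5}^*)$ as before. The transposed actions of $x$, $y$ and the shift $\varphi$ then span $V_{H_0}^*$; the shift alone already moves a single nonzero covector through all coordinates, as in the circuit case.

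The main obstacle is making $y$ and the shift $\varphi$ compatible. The HNN relation $st_is^{-1}=t_{i+1}$ must be consistent with the $y$-action wherever the two are composed, and for $j<n$ the new $h$ must still form a bridge-reduced configuration in the sense of Lemma \ref{lem:geom_quant}. I would first try to place $y$ in the leaf $v_0$ alongside $x$, so that $G_\star$ stays elementary abelian and the only nontrivial checks are the Lemma \ref{lem:geom_quant} distances. If the recovery of $t_1$ then fails, I would instead let $y$ act on the center and verify by hand that the shift respects its $3$-cycles on $H_{n-1}$.
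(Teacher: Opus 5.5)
Your proposal has a genuine gap: it never puts $g=yt_1$ into a legitimate vertex group. You delete the $y$-leaf $v_{n-1}$, keep the $x$-leaf $v_0$, and add a new stable letter $s$. That leaves three special elements $x,y,s$, and $y$ has nowhere consistent to live.

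Neither of your fallbacks respects the index data of Proposition \ref{prop:non_reduced}.
\begin{itemize}
\item If the center becomes $G_\star\rtimes\langle y\rangle$, each leaf edge group $H_j<G_\star$ has index $3n_{e_j,v_\star}\in\{3,6\}$ at $v_\star$. This contradicts $n_{e,v_\star}\in\{1,2\}$ and the valency bound $2n+2$. You also cannot absorb $y$ into the edge groups, because $y$ does not normalize $H_j$: its $3$-cycle on $t_{3j+2},t_{3j+3},t_{3j+4}$ moves a generator of $H_j$ onto the omitted $t_{3j+2}$.
\item If $y$ joins $x$ in $G_0$, the leaf index must be $[G_0:H_0]=3$. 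Since $H_0\rtimes\langle x\rangle$ already has index $3$, $y$ would have to lie in $H_0\langle x\rangle$ and act on $H_0$ as a power of $x$. Its table is different, and it does not even normalize $H_0$: it moves a generator onto the omitted $t_{3N-2}$.
\end{itemize}

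The recovery step fails as well.
\begin{itemize}
\item The star recovery starts from $x_1=y^{-1}h$, which uses that $h$ begins with $y$. Your $h=z_1\cdots z_{n-2}xs$ does not.
\item Your formula $s=x^{-1}z_{n-2}^{-1}\cdots z_1^{-1}h$ presupposes $x$ and all $z_i$. In the star tables those are produced from that very chain $x_1,x_2,\dots$, so the argument is circular.
\item The star identities $t_{3i-1}=x_it_{3i-2}x_i^{-1}$ and $z_i=t_{3i+1}x_it_{3i}x_i^{-1}$ need $x_i$ to act on $t_{3i-2}$ and $t_{3i}$ exactly as $x$ does. Appending a shift destroys this: $xs$ carries $t_{3i}$ to $t_{3i+2}$, which is the generator omitted from $H_i$.
\end{itemize}

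The missing idea is to sacrifice the other leaf. Delete the $x$-leaf $v_0$ and let $x$ itself be the stable letter of the loop, with $H_\star=\langle t_0,\dots,t_{3N-3}\rangle$ and $\varphi(t_i)=xt_ix^{-1}=t_{i+1}$. Keep $G_{n-1}=H_{n-1}\rtimes\langle y\rangle$ and the groups $G_j=\langle z_j\rangle\rtimes H_j$ unchanged. Then $g=yt_1$ and $h=yz_1\cdots z_{n-2}x$ (resp.\ $h=z_1\cdots z_{n-2}xz_{n-1-j}^{-1}\cdots z_1^{-1}$ for $j<n$) are literally the star formulas, and the translation-length count you gave goes through. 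The shift agrees with the old $3$-cycle action of $x$ on exactly the generators the recovery uses, namely $xt_{3i-2}x^{-1}=t_{3i-1}$ and $xt_{3i}x^{-1}=t_{3i+1}$, so the star recovery table applies verbatim. Faithfulness is then immediate from $\bigcap_i x^iH_\star x^{-i}=\{1\}$, with no need for Lemma \ref{lem:faithful}.
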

\textbf{The construction.} Let the vertices be $v_\star$, $v_1$,\dots, $v_{n-1}$, and the edges be $e_\star$, $e_1$,\dots, $e_{n-1}$. As before, $v_\star$ is the center vertex, while $e_\star$ is a loop at $v_{\star}$. We will define the corresponding vertex groups $G_\star$ and $G_i$, and edge groups $H_\star$ and $H_i$:
\begin{center}
    \begin{tikzpicture}[scale=2.5]
        \fill[] (0,0) circle(1pt) node[below]{$G_\star$};
        \draw[thick] (0,0) to [out = 60, in = -240] (0.75,0.433) node[above right]{$H_\star$} to [out = 300, in = 0] (0,0);
        \fill[] (-0.5,0.866) circle(1pt) node[right]{$G_1$};
        \draw[thick] (0,0) -- node[above right]{$H_1$} (-0.5,0.866);
        \fill[] (-1,0) circle(1pt) node[left]{$G_2$};
        \draw[thick] (0,0) -- node[above]{$H_2$} (-1,0);
        \fill[] (0.5,-0.866) circle(1pt) node[right]{$G_{n-1}$};
        \draw[thick] (0,0) -- node[above right]{$H_{n-1}$} (0.5,-0.866);
        \draw[dashed,thick] (-0.866,-0.5) arc (210:270:1);
    \end{tikzpicture}
\end{center}
As before, for sufficiently large $N$, define 
\[
G_\star = \langle t_0,t_1,\dots, t_{3N-2}\rangle\cong (C_2)^{3N-1}.
\]
For $i=1,\dots, n-2$, define the edge subgroups: if $n_{e_i,v_\star} = 1$, set $H_i = G_\star$. Otherwise,
\[
H_{n-1} = \langle t_i\mid i\neq 0\rangle,\text{ and }H_j = \langle t_i\mid i\neq 3j+2\rangle,\ j=1,\dots, n-2.
\]
Define $G_{n-1} = H_{n-1}\rtimes \langle y\rangle$, with $\ord(y) = 3$, and the non-trivial conjugation relations on the generators are given as before:
\begin{center}
    \begin{tabular}{c||c|c|cc}
        $t$ & $t_{3i-1}$ & $t_{3i}$ & $t_{3i+1}$ & ($i=1,\dots,N$) \\
        \hline
        $t^y$ & $t_{3i}$ & $t_{3i+1}$ & $t_{3i-1}$
    \end{tabular}
\end{center}
For $j=1,\dots, n-2$, define $G_j = \langle z_j\rangle\rtimes H_j$, $\ord(z_j) = 3$, with the only non-trivial conjugation relation being
\[
t_{3j+1}z_jt_{3j+1} = z_j^2.
\]
Finally, define
\[
H_\star = \langle t_0,t_1,\dots, t_{3N-3}\rangle,
\]
and the boundary injection
\[
\varphi: H_\star\to G_\star,\ \varphi(t_i) = xt_ix^{-1} = t_{i+1}.
\]
We will show that its fundamental group $\Gamma$ is two-generator with the desired geometric quantities, and its action is faithful.
\begin{proof}
    \textbf{Generating Pairs.} For $j=n$, let $(g,h) = (yt_1,yz_1\dots z_{n-2}x)$. By Lemma \ref{lem:geom_quant}, we have $\ell(h) = 2n-1$, and $d(T_g,A_h) = 0$. 

    For $j<n$, let $(g,h) = (yt_1, z_1\dots z_{n-2}xz_{n-1-j}^{-1}\dots z_1^{-1})$. Lemma \ref{lem:geom_quant} implies that $\ell(h) = 2j-1$, and $d(T_g,A_h) = 2(n-j)-1$.
    
    The conjugation action of the stable letter $x$ on the generators $t_i$ is very similar to the elliptic automorphism $x$ in Proposition \ref{thm:star}. As a result, the recovery follows the same formulas as before. 

    \textbf{Action Faithfulness.} As in the circuit cases, the subgroup $H_\star$ is core-free, since
    \[
    \bigcap_{i\in\mathbb{Z}}\varphi^i(H_\star) = \bigcap_{i\in\mathbb{Z}} x^i H_\star x^{-1} = \{1\}.
    \]
\end{proof}

\subsection{Two-generator Subgroups with General Quotient Graphs}
Let us first introduce a shorthand notation: fix an assignment of numbers $n_{e,v}$ and for $v,v'\in\mathcal{V}(Y)$, denote
\[
m_{v,v'} = \#\{k\mid n_{e_i,v_{i}} > 1\},
\]
where $v_0$,\dots, $v_n$ are consecutive vertices on a path with $v = v_0$ and $v' = v_n$, and $e_i$ is the edge connecting $v_i$ and $v_{i+1}$.
\subsubsection{General Trees}
Having discussed the case where $Y$ is a path, we now assume $Y$ is a tree with at least $3$ leaves, denoted by $v_0$, \dots, $v_{n-1}$. Here, $Y$ is obtained by folding the edges of an $(l,d)$-tadpole graph; we assume that its junction vertex corresponds to $w\in\mathcal{V}(Y)$.

When taking the quotient from the $(l,d)$-tadpole graph to $Y$, both the $l$-circuit and the $d$-path become walks decomposed into maximal paths. Assume the endpoints of these paths are taken among $v_0$,\dots, $v_{n-1}$ and $w$, since any more general case reduces to such a particular case with smaller $l$ and $d$. 

More specifically, we may assume $v_{j}$,\dots, $v_{n-1}$ are leaves from the circuit, with
\[
\sum_{i=j+1}^{n-1}d(v_{i},v_{i-1}) + d(v_{j},w) + d(v_{n-1},w) = l,
\]
while $v_0$,\dots, $v_{j-1}$ are leaves from the path, with
\[
\sum_{i=1}^{j-1}d(v_i,v_{i-1}) + d(v_{j-1},w) = d.
\]
We will begin the construction with the path connecting $v_0$ and $v_{n-1}$; for $i=1$,\dots, $n-2$, let $w_i$ be the projection of $v_i$ to the tree generated by $v_0$,\dots, $v_{i-1}$, and $v_{n-1}$, and we construct the vertex and edge groups connecting $v_i$ and $w_i$. In particular, we set $m_i = m_{v_i,w_i}$, and $m_\star = \max_{1\leq i\leq n-2}(m_{v_0,v_i}, m_{v_{n-1},v_i})$.

\textbf{The construction.} We start with an Abelian group generated by involutions,
\[
K = \langle t_0,t_1,\dots,t_{3n-6}=p_0,p_1,\dots,p_{2N},q_1,\dots,q_M\rangle,
\]
where 
\[
M = \sum_{i=1}^{n-2} (m_i-1),\text{ and }N \geq \frac{M+m_\star}{2}.
\]
Define the vertex and edge groups on the path $[v_0,v_{n-1}]$ from $K$, analogously to the construction in Proposition \ref{thm:path}: for each vertex $v\in\mathcal{V}([v_0,v_{n-1}])$, define
\[
G_v = \langle K, r_{m_{v_{n-1},v_0}-m_{v,v_0}},\dots, r_{m_{v_{n-1},v_0}-1}, s_1,\dots, s_{m_{v,v_{n-1}}}\rangle,
\]
and edge groups $H_e$ similarly, so the indices agree with the numbers $n_{e,v}$. In particular, the edge groups at the leaves $v_0$ and $v_{n-1}$ are
\[
H_0 = \langle K, s_1,\dots, s_{m_{v_0,v_{n-1}}-1}\rangle,
\]
and
\[
H_{n-1} = \langle K, r_1,\dots, r_{m_{v_{n-1},v_0}-1} \rangle.
\]
Define $G_0 = H_0\rtimes\langle y\rangle$ and $G_{n-1} = H_{n-1}\rtimes \langle x\rangle$, with $\ord(x) = \ord(y) = 3$, and the non-trivial conjugation relations are:
\begin{center}
    \begin{tabular}{c||c|c|cc}
        $t$ & $t_{3i}$ & $t_{3i+1}$ & $t_{3i+2}$ & ($i=0,\dots,n-3$) \\
        \hline
        $t^x$ & $t_{3i+1}$ & $t_{3i+2}$ & $t_{3i}$ \\
        \hline
        $t$ & $p_{2i-2}$ & $p_{2i-1}$ & $r_i$ & ($i=1,\dots,m_{v_{n-1},v_0}-1$) \\
        \hline
        $t^x$ & $p_{2i-1}$ & $r_i$ & $p_{2i-2}$ \\
        \hline
        $t$ & $p_{2i}$ & $p_{2i+1}$ & & ($i= m_{v_{n-1},v_0}-1,...,N-1$) \\
        \hline
        $t^x$ & $p_{2i+1}$ & $p_{2i}p_{2i+1}$
    \end{tabular}
\end{center}
\begin{center}
    \begin{tabular}{c||c|c|cc}
        $t$ & $t_{3i+1}$ & $t_{3i+2}$ & $t_{3i+3}$ & ($i=0,\dots,n-3$) \\
        \hline
        $t^y$ & $t_{3i+2}$ & $t_{3i+3}$ & $t_{3i+1}$ \\
        \hline
        $t$ & $p_{2i-1}$ & $p_{2i}$ & $s_i$ & ($i=1,\dots,m_{v_0,v_{n-1}}-1$) \\
        \hline
        $t^y$ & $p_{2i}$ & $s_i$ & $p_{2i-1}$ \\
        \hline
        $t$ & $p_{2i+1}$ & $p_{2i+2}$ & & ($i= m_{v_0,v_{n-1}}-1,...,N-1$) \\
        \hline
        $t^y$ & $p_{2i+2}$ & $p_{2i+1}p_{2i+2}$
    \end{tabular}
\end{center}
Next, we construct the vertex and edge groups on the path $[v_i,w_i]$, inductively from $i=1$ to $i=n-2$. Note that the group $G_{w_i}$ is already constructed in the induction step, and all these groups will turn out to be abelian groups generated by involutions. Let $K_i$ be the subgroup of $G_{w_i}$ generated by letters except $t_{3i-1}$ and $p_1$,\dots, $p_{2N}$, and suppose
\[
G_{w_i} = \langle K_i, p_1,\dots, p_{n_i},t_{3i-1}\rangle.
\]
In addition, let $n_i' = \sum_{j=1}^{i-1}(m_j-1)$. For each vertex $v$ on $[v_i,w_i]$, define
\[
G_v = \langle K_i, p_1,\dots, p_{n_i - m_{w_i,v}},o_{n_i'+1},\dots, o_{n_i'+m_{v,w_i}}\rangle\rtimes \langle t_{3i-1}\rangle,
\]
and for each $e = [v,v']$ where $d(w_i,v') - d(w_i,v) = 1$, define
\[
H_e = \langle K_i, p_1,\dots, p_{n_i - m_{w_i,v'}},o_{n_i'+1},\dots, o_{n_i'+m_{v,w_i}}\rangle\rtimes \langle t_{3i-1}\rangle,
\]
where the new generators $o_j$ are involutions, the first factor of the semidirect product is abelian, and the only non-trivial conjugation relation is
\[
o_j^{t_{3i-1}} = o_jq_j,\ n_i'+1\leq j\leq n_{i+1}'.
\]
Denote by $e_{i}$ the leaf edge, and we define
\[
G_{v_i} = \langle H_{e_i},z_i\rangle = \langle K_i, p_1,\dots, p_{n_i - m_{w_i,v_i}},o_{n_i'+1},\dots, o_{n_{i+1}'}\rangle\rtimes \langle t_{3i-1},z_i\rangle,
\]
where $\ord(z_i) = 3$, $\langle t_{3i-1},z_i\rangle\cong D_6$, and the non-trivial conjugation relations are given as
\begin{center}
    \begin{tabular}{c||c|c|cc}
        $t$ & $p_j$ & $o_j$ & $q_j$ & $(j=n_i'+1,\dots,n_{i+1}')$ \\
        \hline
        $t^{z_i}$ & $o_j$ & $o_jq_j$ & $o_jq_jp_j$ \\
        \hline
        $t^{t_{3i-1}}$ & $p_j$ & $o_jq_j$ & $q_j$
    \end{tabular}
\end{center}
For clarity, we include the conjugation relations for $t_{3i-1}$, and it is clear that these are compatible with the fact that $\langle t_{3i-1},z_i\rangle\cong D_6$. Moreover, the construction satisfies the required vertex-edge index assignments. We will show that its fundamental group $\Gamma$ is two-generator with the desired geometric quantities, and its action is faithful.
\begin{proof}[Proof of Theorem \ref{thm:ellip_gog}, $Y$ is a tree]
    \textbf{Generating Pairs.} If $w$ lies on the tree induced by $v_j,\dots, v_{n-1}$, we may assume it is on the path $[v_j,v_{n-1}]$ by relabeling the leaves in the construction above. The distance sum relation in Lemma \ref{lem:geom_quant} can be further simplified as
    \[
    \sum_{i=j+1}^{n-1}d(v_{i},v_{i-1}) + d(v_{j},v_{n-1}) = l.
    \]
    In this scenario, we let $(g,h) = (yt_0,z_1\dots z_{n-2}xz_{j-1}^{-1}\dots z_1^{-1})$. By Lemma \ref{lem:geom_quant}, the desired geometric quantities hold: $\ell(h) = l$ and $d(T_g,A_h) = d$. Now it holds that $\Gamma = \langle g,h\rangle$: for the original generators $x$, $y$, $t_0$,\dots, $t_{3n-6}=p_0$, and $z_1$,\dots, $z_{n-2}$, an argument similar to that in the proof of Proposition \ref{thm:star} shows that they are obtained from $g$ and $h$. The remainder of the generators, namely $p_i$, $q_i$, $r_i$, $s_i$, and $o_i$, are obtained as follows:
    \begin{center}
        \begin{tabular}{c||c|c|c|c|c|c}
            Generator & $p_{2i-1}$ & $p_{2i}$ & $r_i$ & $s_i$ & $o_j$ & $q_j$ \\
            \hline
            Recovery & $xp_{2i-2}x$ & $yp_{2i-1}y$ & $xp_{2i-1}x$ & $yp_{2i}y$ & $z_ip_jz_i^{-1}$ & $o_jz_io_jz_i^{-1}$
        \end{tabular}
    \end{center}
    For $w$ not on this induced subtree, requirement (5) in Proposition \ref{prop:non_reduced} implies $H_e$ is properly contained in $G_w$, where $e$ is the edge incident with $w$ in $[v_j,w]$ and $[v_{n-1},w]$. From our construction, one sees that the generator in $G_w-H_e$ is either $s_i$ or $o_i$. Let $(g,h) = (yt_0,z_1\dots z_{n-2}s_ixz_{j-1}^{-1}\dots z_1^{-1})$ (or $o_i$ in place of $s_i)$, so the geometric quantities are satisfied. Note that the generator recovery is similar: $s_i$ commutes with $t_0$,\dots, $t_{3n-6}$ and $p_1$,\dots, $p_{2N}$; while $o_i$ does not commute with $t_{3i-1}$, a closer examination of the recovery process in Proposition \ref{thm:star} indicates that it will not affect the recovery of the original generators when $x$ is replaced with $o_ix$. The recovery of $p_i$, $q_i$, $r_i$, $s_i$, and $o_i$ is also similar, where $x$ would be recovered as long as $s_i$ or $o_i$ is recovered.

    \textbf{Action Faithfulness.} To show that $H_{n-1}$ is core-free, we note that the relations
    \[
    t_{3i-1}^{z_i} = t_{3i-1}z_i,\ p_j^{z_i} = o_j,\ q_j^{z_i} = o_jp_jq_j,
    \]
    imply that
    \[
    H_{n-1}\cap z_iH_{n-1}z_i^{-1}\cap z_i^{-1}H_{n-1}z_i = \ker(t_{3i-1}^*,(p_{j_1}^*+q_{j_1}^*),\dots,(p_{j_2}^*+q_{j_2}^*)),
    \]
    where $j_1 = n_i'+1$ and $j_2 = n_{i+1}'$. In particular, each $p_j^*+q_j^*$, $j=1,\dots, M$, serves as a null covector for a certain intersection above with $1\leq i\leq n-2$. In addition, certain compositions of conjugations by $x$ and $y$ take $t_2^*$,\dots, $t_{3n-7}^*$ to each $t_i^*$, $p_i^*$ and $r_i^*$. Since the covectors $t_i^*$, $p_i^*$, $(p_i^*+q_i^*)$ and $r_i^*$ span the entire $V_{H_{n-1}}^*$, $H_{n-1}$ is core-free in $\Gamma$.
\end{proof}
\subsubsection{General Unicyclic Graphs}
Having discussed the case where $Y$ is a circuit, we now assume $Y$ has at least one leaf vertex. As before, we denote these vertices by $v_0$,\dots, $v_{n-2}$, and the junction vertex by $w$. We assume $v_j$,\dots, $v_{n-2}$ are leaves from the circuit, $v_0$,\dots, $v_{j-1}$ from the path, with vertex distance relations similar to the previous case.

We will begin the construction with the tadpole graph containing the circuit $C$ in $Y$ and the path connecting $v_0$ to it. For $i=1$,\dots, $n-2$, let $w_i$ be the projection of $v_i$ to the tree generated by $v_0$,\dots, $v_{i-1}$, and $C$. We construct the vertex and edge groups connecting $v_i$ and $w_i$. In particular, we set $m_i = m_{v_i,w_i}$, and $m_\star = \max_{1\leq i\leq n-2}(m_{v_0,v_i})$.

\textbf{The construction.} We start with an Abelian group generated by involutions. Let $v_\star$ be the vertex on the circuit of $Y$ that is closest to $v_0$. As in the construction in Proposition \ref{prop:circuit} and Corollary \ref{thm:tadpole}, we let $m$ be the number of edge groups on the circuit properly contained in the initial vertex group following either direction. Fix an Abelian group generated by involutions,
\[
K = \langle t_m,\dotsm t_{mN},q_1,\dots, q_M\rangle,
\]
where
\[
M = \sum_{i=1}^{n-2}(m_i-1),\text{ and }N\geq 3n-4+m_{v_\star,v_0}+m_\star+M.
\]
Define the vertex and edge groups on the circuit $C\subset Y$ from $K$, analogously to the construction in Proposition \ref{prop:circuit}: fix a direction from $v_\star$ to each $v\in\mathcal{V}(C)$, and for each vertex $v\in\mathcal{V}(C)$, define
\[
    G_v = \langle K, t_{m_{v_\star,v}},\dots, t_{m-1}, t_{mN+1},\dots, t_{mN+m_{v,v_\star}}\rangle.
\]
In particular:
\[
    G_{v_\star} = \langle t_0,\dotsm t_{mN},q_1,\dots, q_M\rangle.
\]
Define the edge groups $H_e$ similarly; in particular, the last edge subgroup $H_e$ before returning to $G_{v_\star}$ is
\[
    H_e = \langle t_{m},t_{m+1},\dots, t_{m(N+1)-1},q_1,\dots, q_M\ \rangle,
\]
or with an additional $t_{m(N+1)}$, depending on $n_{e,v_\star}$. In both cases, define the boundary monomorphism and HNN stable letter:
\[
\varphi: H_e\to G_{v_\star},\ \varphi(t_{i}) = xt_ix^{-1} = t_{i-m},\ \varphi(q_j) = q_j.
\]
Next, we define the vertex and edge groups on the path $[v_0,v_\star]$. Now we fix
\[
K_0 = \langle t_0,\dots, t_{(N-m_{v_\star,v_0})m}, t_1,\dots, t_{m-1},\dots, t_{(N-1)m+1},\dots, t_{Nm-1}, q_1,\dots, q_M\rangle.
\]
For each vertex $v$ on the path, define
\[
G_v = \langle K_0, t_{(N-m_{v_\star,v_0}+1)m},\dots, t_{(N-m_{v_\star,v})m}, s_1,\dots, s_{m_{v,v_\star}}\rangle,
\]
and edge groups $H_e$ similarly, so the indices agree with the numbers $n_{e,v}$. In particular, the edge group at the leaf $v_0$ is
\[
H_0 = \langle K_0, s_1,\dots, s_{m_{v_0,v_\star}-1}\rangle.
\]
Define $G_0= H_0\rtimes \langle y\rangle$, with $\ord(y) = 3$; the non-trivial conjugation relations follow the pattern of Proposition \ref{thm:star_loop}, Corollary \ref{thm:tadpole}, and Proposition \ref{thm:path}: 
\begin{center}
    \begin{tabular}{c||c|c|cc}
        $t$ & $t_{(3i+1)m}$ & $t_{(3i+2)m}$ & $t_{(3i+3)m}$ & ($i=0,\dots,n-3$) \\
        \hline
        $t^y$ & $t_{(3i+2)m}$ & $t_{(3i+3)m}$ & $t_{(3i+1)m}$ \\
        \hline
        $t$ & $t_{(3n-5)m+2i}$ & $t_{(3n-5)m+2i+1}$ & & ($1\leq i\leq (m-1)/2$) \\
        \hline
        $t^y$ & $t_{(3n-5)m+2i+1}$ & $t_{(3n-5)m+2i}t_{(3n-5)m+2i+1}$ \\
        \hline
        $t$ & $t_{(3n-4)m+2i-1}$ & $t_{(3n-4)m+2i}$ & & ($1\leq i\leq m/2$) \\
        \hline
        $t^y$ & $t_{(3n-4)m+2i}$ & $t_{(3n-4)m+2i-1}t_{(3n-4)m+2i}$ \\
        \hline
        $t$ & $t_{(3n-4+i)m}$ & $s_i$ & & ($i=1,\dots, m_{v_0,v_\star}-1$) \\
        \hline
        $t^y$ & $s_i$ & $t_{(3n-4+i)m}$
        \end{tabular}
\end{center}
Finally, we construct the vertex and edge groups on the path $[v_i,w_i]$. This is very similar to the corresponding construction in the tree case: let $K_i$ be the subgroup of $G_{w_i}$ generated by all letters except $t_{(3i-1)m}$ and $t_{jm}$ for $j\geq 3n-5$, and suppose
\[
G_{w_i} = \langle K_i, t_{(3n-5)m},\dots, t_{n_im},t_{(3i-1)m}\rangle.
\]
Set $n_i'$ in the same way, and define similarly:
\[
G_v = \langle K_i, t_{(3n-5)m},\dots, t_{(n_i-m_{w_i,v})m},o_{n_i'+1},\dots, o_{n_i'+m_{v,w_i}}\rangle\rtimes \langle t_{3i-1}\rangle,
\]
\[
H_e = \langle K_i, t_{(3n-5)m},\dots, t_{(n_i-m_{w_i,v'})m},o_{n_i'+1},\dots, o_{n_i'+m_{v,w_i}}\rangle\rtimes \langle t_{3i-1}\rangle,
\]
and
\[
G_{v_i} = \langle H_{e_i},z_i\rangle = \langle K_i, t_{(3n-5)m},\dots, t_{(n_i-m_{w_i,v_i})m},o_{n_i'+1},\dots, o_{n_{i+1}'}\rangle\rtimes \langle t_{3i-1}\rangle,
\]
with conjugation relations
\begin{center}
    \begin{tabular}{c||c|c|cc}
        $t$ & $p_j$ & $o_j$ & $q_j$ & $(j=n_i'+1,\dots,n_{i+1}')$ \\
        \hline
        $t^{z_i}$ & $o_j$ & $o_jq_j$ & $o_jq_jp_j$ \\
        \hline
        $t^{t_{3i-1}}$ & $p_j$ & $o_jq_j$ & $q_j$
    \end{tabular}
\end{center}
Here, we use the shorthand
\[
p_j\coloneqq t_{(3n-4+m_{v_\star,v_0}+j)m}.
\]
This gives a graph of groups on $Y$ with the desired indices $n_{e,v}$. We will show that its fundamental group $\Gamma$ is two-generator, and has a faithful action.
\begin{proof}[Proof of Theorem \ref{thm:ellip_gog}, $Y$ is unicyclic]
    \textbf{Generating Pairs.} If $w$ lies on the graph induced by $v_j,\dots, v_{n-2}$ and the circuit, we may assume it is on the path $[v_j,v_{n-1}]$ by relabeling the vertices in the construction above. Let $(g,h) = (yt_0,z_1\dots z_{n-2}xz_{j-1}^{-1}\dots z_1^{-1})$. By Lemma \ref{lem:geom_quant}, the desired geometric quantities hold: $\ell(h) = l$ and $d(T_g,A_h) = d$. Similar to the proof for the tree case, the original generators $x$, $y$, $t_0$, $t_{m}$,\dots, $t_{(3n-6)m}$, and $z_1$,\dots, $z_{n-2}$ are obtained from $g$ and $h$. Following the proof for Corollary \ref{thm:tadpole}, we obtain the other generators $t_i$ from $t_{(3n-6)m}$, $x$, and $y$. The recovery of $s_i$, $p_j$, $q_j$, and $o_j$, are again similar to the proof for the tree case.

    If $w$ does not lie on this induced subgraph, requirement (5) in Proposition \ref{prop:non_reduced} again implies there is a generator $s_i$ or $o_i$ in $G_w - H_e$. Let $(g,h) = (yt_0,z_1\dots z_{n-2}s_ixz_{j-1}^{-1}\dots z_1^{-1})$; the geometric quantities are satisfied, and the original generators are recovered following the same argument as in the tree case.

    \textbf{Action Faithfulness.} We aim to show that the group $G_{v_\star}$ is core-free. Note that the HNN stable letter shifts all the generators $t_i$ away, meaning
    \[
    \bigcap_i x^i G_{v_\star} x^{-i} = \ker(t_0^*,\dots, t_{mN}^*).
    \]
    As in the proof for the tree case, we also have
    \[
    G_{v_\star}\cap z_iG_{v_\star} z_i^{-1}\cap z_i^{-1}G_{v_\star} z_i = \ker(t_{(3i-1)m}^*, (p_{j_1}^*+q_{j_1}^*),\dots, (p_{j_2}^*+q_{j_2}^*)),
    \]
    where $j_1 = n_i'+1$ and $j_2 = n_{i+1}'$; each $j=1,\dots, M$ corresponds to a certain $1\leq i\leq n-2$ in the relation above.

    Finally, $y$-conjugations take $t_{(3n-4+j)m}^*$ to $s_j^*$ for $j=1,\dots, m_{v_0,v_\star}-1$.
    
    These covectors together span the entire $V^*_{G_{v_\star}}$, so $G_{v_\star}$ is core-free in $\Gamma$.
\end{proof}

\section{Poincar\'e's Algorithm for Tree Automorphism Subgroups}\label{sec:Poincare}
Riley's algorithm \cite{riley1983applications} decides whether a finite set of elements $g_1,\dots, g_k\in \psl(2,\mathbb{R})$ or $\psl(2,\mathbb{C})$ generates a discrete and geometrically finite subgroup. The algorithm runs on a BSS machine, which works well with real numbers. By contrast, the automorphisms in $\aut(X)$ do not have a natural parametrization by real numbers. For this reason, let us introduce the following \emph{BSS machine over a tree} before describing our generalized algorithm:
\begin{defn}
    Let $X$ be an infinite, locally finite tree. A \emph{BSS machine over $X$} is a computational model with finitely many variables, each of which may store a vertex of $X$, an edge of $X$, or an automorphism in $\aut(X)$.

    The machine may copy the value of one variable into another of the same type. It may update an automorphism variable by composing two stored automorphisms or by taking the inverse of a stored automorphism. It may update a vertex (edge) variable either to the image of a stored vertex (edge) under a stored automorphism or to a specified vertex (edge) in the finite tree derived from the stored vertices and edges. It may also branch according to equality tests among stored vertices, edges, or automorphisms.

    The machine starts from a finite input and halts when a designated halting condition is met.
\end{defn}
\begin{thm}\label{thm:algorithm}
    For any locally finite tree $X$, there is a semi-decidable algorithm for the discreteness of subgroups of $\aut(X)$, performed on the BSS machine over $X$: for any finite set of elements $g_1,\dots, g_k\in \aut(X)$, the algorithm decides whether $\Gamma = \langle g_1,\dots, g_k\rangle$ is a discrete subgroup of $\aut(X)$.
\end{thm}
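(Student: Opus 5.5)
The plan is to mimic Riley's procedure: run two searches in parallel on the BSS machine over $X$, one that halts when it certifies discreteness and one that halts when it certifies non-discreteness. Since Definition of discreteness asks for a finite tree $T$ with $\Gamma_T=1$, the first search builds a candidate fundamental domain $(\mathcal{Y},\mathcal{G})$ in the sense of Definition \ref{def:fundamental_domain} and checks a Poincaré-type completeness condition. The second search enumerates words in $g_1,\dots,g_k$ and looks for a nontrivial element fixing an ever larger ball pointwise, or infinitely growing stabilizers. Because I do not expect non-discreteness to be finitely certifiable in general, I will settle for the semi-decidable version: the algorithm halts with ``discrete'' exactly when $\Gamma$ is discrete, and this matches the ``semi-decidable'' wording of Theorem \ref{thm:algorithm}.

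\textbf{Step 1 (enumeration).} Fix a base vertex $v_0$ and the radius-$R$ balls $B_R(v_0)$; these are finite because $X$ is locally finite, and the machine can produce their vertices from stored data. For $R=1,2,\dots$, and for word length $L=1,2,\dots$ dovetailed with $R$, I will compute the finite set $W_L$ of words of length $\le L$ together with the images $w.v$ for $v\in B_R(v_0)$. This uses only composition, inversion, images, and equality tests, all of which the BSS machine over $X$ supports.

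\textbf{Step 2 (candidate domain).} From these data I will form a finite connected subtree $Y_0\subset B_R(v_0)$ that is transversal for the orbit equivalence generated by $W_L$. At each vertex $v$ of $Y_0$, I will form the finite set $S_v\subset W_L$ of words fixing $v$. I will then test three things:
\begin{enumerate}
\item each $S_v$ closes under multiplication, modulo equality of automorphisms, to a finite group $G(v)$, and similarly for the edges;
\item every edge leaving $Y_0$ is carried by some word $t_e\in W_L$ onto an edge with origin in $Y_0$, giving the pairing data, and the images of each link under the $G(v)$ exhaust the link;
\item every generator $g_i$ is expressible through the $G(v)$ and the $t_e$, by checking $g_i.v_0$ against the developed tree.
\end{enumerate}
If all three tests pass, the graph of groups $(Y,\mathcal{G})$ has fundamental group mapping onto $\Gamma$. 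Its Bass--Serre tree develops locally isomorphically onto the subtree $\Gamma.Y_0$, by the link-exhaustion test, and is therefore isomorphic to it, since a local isomorphism of trees is a covering and hence injective. By the Proposition after Definition \ref{def:fundamental_domain}, $\Gamma\cong\pi_1(Y,\mathcal{G})$ with finite stabilizers on the minimal subtree. Any finite edge subtree whose pointwise stabilizer is trivial then certifies discreteness. Such a subtree can be found by intersecting finitely many conjugates of a $G(v)$, which are computable in finite time, and we halt.

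\textbf{Step 3 (completeness).} Conversely, if $\Gamma$ is discrete and finitely generated, Bass--Serre theory gives a finite graph of finite groups whose domain lies within some radius $R$. Its finitely many stabilizer elements and pairings are words of bounded length, so for large $(R,L)$ the tests of Step 2 succeed. The main obstacle is the case where $X/\Gamma$ is infinite, because the pieces of $X$ outside the minimal subtree never close up. For that case I plan to run Step 2 on $\Gamma.Y_0$, the orbit of the hull of $v_0$ and the axes found, rather than on all of $X$. The link-exhaustion test then only needs to hold relative to that subtree, and discreteness on $X$ follows because a finite subtree with trivial pointwise stabilizer can be chosen inside $\Gamma.Y_0$.
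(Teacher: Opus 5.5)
Your framework is the paper's. It is a Riley-style search over the words $\Gamma_l$ of length $\le l$ that builds a finite candidate graph of groups from elements stabilizing or pairing pieces of a finite domain. It halts once the stabilizer sets are groups, the pairings are compatible, and every $g_i$ is recovered. The gap is your test (2) together with the remedy in Step 3.

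\textbf{Test (2) only passes when $X/\Gamma$ is finite.} Suppose every edge leaving the finite tree $Y_0$ has its outer endpoint carried into $Y_0$ by some element of $\Gamma$. Then $\Gamma.\mathcal{V}(Y_0)$ is closed under adjacency, hence equals $\mathcal{V}(X)$. So for a discrete group with infinite quotient (already $\Gamma=\langle h\rangle$ with $h\in\aut(X_3)$ hyperbolic), your search never halts, and the completeness claim in Step 3 fails.

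Your remedy, imposing (2) only relative to $\Gamma.Y_0$, is not executable on the BSS machine over $X$. That subtree is defined through the unknown group. Knowing which edges at a vertex lie outside it amounts to already having the fundamental domain you are trying to certify.

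The paper avoids this with the Dirichlet domain $D(v,\Gamma_l)$. It contains only vertices near the midpoints of $[v,g.v]$ for $g\in\Gamma_l$, and pairs only the open edges arising there. Its acceptance tests are: the subsets are groups, the boundary maps are monomorphisms, and each $g_i.v$ lies on the $\pi_1(D(v,\Gamma_l))$-minimal subtree with the correct monodromy. These refer only to the candidate graph of groups, never to links in $X$ or in $\Gamma.Y_0$.

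\textbf{Exhaustion is also not what your soundness argument needs.} It gives local \emph{surjectivity} of the developing map $\widetilde X(Y,\mathcal G)\to X$. The fact you invoke, that a local isomorphism of trees is injective, requires local \emph{injectivity}. Concretely, at each domain vertex $u$:
\begin{itemize}
\item the domain edges at $u$ (edges of $Y_0$ and paired boundary edges) lie in pairwise distinct $G(u)$-orbits;
\item each assigned edge group is the full stabilizer $\mathrm{Stab}_{G(u)}(e)$;
\item for a boundary edge $e$ whose outer endpoint is carried by $t_e$ to $w\in Y_0$, one has $t_e\,G(e)\,t_e^{-1}=\mathrm{Stab}_{G(w)}(t_e.\bar e)$.
\end{itemize}
None of (1)--(3) tests this. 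Transversality of $Y_0$ for $W_L$ says nothing about boundary edges, which are not in $Y_0$. Nor does it control elements $s\in\mathrm{Stab}_{G(w)}(t_e.\bar e)$ whose conjugates $t_e^{-1}st_e$ have word length exceeding $L$.

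\textbf{Repair.} Drop exhaustion and test exactly these finitely many conditions plus generator containment. Then the developing map is injective, the natural map $\pi_1(Y,\mathcal G)\to\Gamma$ is an isomorphism, and $\Gamma_u=G(u)$ is finite. Completeness then holds: a genuine fundamental domain for $\Gamma$ acting on $\Gamma.\mathrm{hull}(v_0,g_1.v_0,\dots,g_k.v_0)$ passes these tests once $R$ and $L$ are large enough to contain it and its stabilizer and pairing elements.

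\textbf{Minor slip.} Conclude discreteness from finiteness of a single $\Gamma_u$, by choosing for each nontrivial element a vertex of $X$ it moves. Do not rely on a finite subtree with trivial pointwise stabilizer inside $\Gamma.Y_0$: none exists when $\Gamma$ acts non-faithfully there, as in the paper's non-faithful amalgams.
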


The algorithm is based on the following construction of \emph{Dirichlet domains}.
\begin{defn}
    Let $v\in \mathcal{V}(X)$, and let $\Gamma_0$ be a finite subset of $\aut(X)$. Define the \emph{Dirichlet domain} $D(v,\Gamma_0)$ centered at $v$ for $\Gamma_0$ as the following graph of $\aut(X)$-subsets:
    \begin{enumerate}
        \item The set $\Gamma_0(v)$ associated with $v$ is the stabilizer subset:
        \[
        \Gamma_0(v) = \{g\in\Gamma_0\mid g.v = v\}.
        \]
        \item Consider the subset of vertices
        \[
        \mathcal{V}(D_0') = \left\{w\in\mathcal{V}(X)\,\left|\,\begin{aligned}
            & d(v,w)\leq d(g.v,w),\forall g\in\Gamma_0,\\
            & d(v,w)\leq \frac{d(v,g_0.v)}{2},\ d(g_0.v,w)\leq \frac{d(v,g_0.v)+1}{2},\exists g_0\in \Gamma_0
        \end{aligned}\right.\right\}.
        \]
        Note that $\mathcal{V}(D_0')$ is finite, consisting of vertices at distance $\leq \frac{1}{2}$ from the midpoints of the paths $[v,g.v]$. Let $D'$ be the subtree induced by this subset.
        \item The relation $w\cong w'\iff \exists g\in \Gamma_0(v),\ g.w = w'$ induces an equivalence relation $w\sim w'$ on the vertices. Let $D''$ be any lift of $D'/\sim$ to $D'$, and let $\mathcal{V}(D_0'')$ be the set of vertices of $D''$ that come from $\mathcal{V}(D_0')$.
        \item Define another relation $w\cong' w'\iff \exists g\in \Gamma_0,\ g.w = w'$ and $d(v,w) = d(v,w')$. This induces an equivalence relation $w\sim' w'$ on $\mathcal{V}(D_0'')$. For $d(v,w)$ increasing from small to large, keep only one representative in each equivalence class. If $w$ is not selected as a representative, let $w_1$ be the vertex on $[v,w]$ such that $d(w,w_1) = 1$, and remove from $D''$ the component of $D'' - [w_1,w]$ containing $w$. Let $D$ be the resulting subtree.
        \item Let $\mathcal{V}(D_0)$ be the subset of vertices in $D$ coming from the vertices $w$ in $\mathcal{V}(D_0')$ such that $d(w,v) = \frac{d(v,g_0.v)-1}{2}$ and $d(w,g_0.v) = \frac{d(v,g_0.v)+1}{2}$ for some $g_0\in\Gamma_0$. Let $\mathcal{E}(D_0)$ be the edges $e$ in such $[w,g_0.v]$ adjacent to $w\in \mathcal{V}(D_0)$ such that $g_0^{-1}.e$ is not already in $\mathcal{E}(D_0)$. Let $\iota_e = \mathrm{ad}_{g_0}$ be the boundary map corresponding to $e$, and let $g_0^{-1}.w$ be the target vertex for $e$.
        \item Let $\mathcal{V}(D_1)$ be the set of vertices $w_1$ in Step (4), and let $\mathcal{E}(D_1)$ be the edge $e = [w_1,w]$ in that step. If $w'$ is the selected representative for this $w$, and if $g_1\in\Gamma_0$ satisfies $g_1.w' = w$, let $\iota_e = \mathrm{ad}_{g_1}$ be the boundary map corresponding to $e$, and let $w'$ be the target vertex for $e$.
        \item Let $D(v,\Gamma_0)$ be the subset $D\cup \mathcal{E}(D_0)\cup \mathcal{E}(D_1)$. Each vertex or edge in $D(v,\Gamma_0)$ is associated with its stabilizer subset in $\Gamma_0$, and each open edge $e\in \mathcal{E}(D_0)\cup \mathcal{E}(D_1)$ is associated with a boundary map $\iota_e$.
    \end{enumerate}
\end{defn}
\begin{prop}
    If $\Gamma$ is a finitely generated discrete subgroup of $\aut(X)$, and $v$ is a vertex on the minimal $\Gamma$-subtree, then there exists a subset $\Gamma_0\subset \Gamma$ such that the Dirichlet domain $D(v,\Gamma_0)$ is isomorphic to the quotient graph of groups of $\Gamma$. 
\end{prop}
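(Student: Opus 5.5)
The plan is to take a fundamental domain for the action of $\Gamma$ on the minimal $\Gamma$-subtree $T$ and enlarge a finite set $\Gamma_0$ until the Dirichlet construction reproduces it. Since $\Gamma$ is finitely generated and discrete, $T/\Gamma$ is a finite graph and every stabilizer $\Gamma_u$ is finite with uniformly bounded order. Fix $v\in\mathcal{V}(T)$ and let $D_\Gamma\subset T$ be the genuine Dirichlet region
\[
D_\Gamma=\{w\in\mathcal{V}(T): d(v,w)\le d(\gamma.v,w)\ \forall\gamma\in\Gamma\}.
\]
This region is a finite subtree: $T/\Gamma$ is finite, so the orbit $\Gamma.v$ is coarsely dense in $T$, and every vertex of $T$ lies within a bounded distance $R$ of some $\gamma.v$. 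Every vertex $w\in D_\Gamma$ therefore satisfies $d(v,w)\le R$.

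Next we choose $\Gamma_0$ to consist of the following elements:
\begin{enumerate}
\item all of $\Gamma_v$;
\item all elements of the finite stabilizers $\Gamma_w$ for $w$ in the ball $B(v,2R+1)$;
\item for each pair of vertices $w,w'\in B(v,2R+1)$ lying in the same $\Gamma$-orbit, one element $\gamma$ with $\gamma.w=w'$;
\item all $\gamma\in\Gamma$ with $d(v,\gamma.v)\le 2R+2$.
\end{enumerate}
Local finiteness and discreteness make $\Gamma_0$ finite. With these choices, Step (2) of the definition recovers exactly the bisector constraints that cut out $D_\Gamma$ together with its boundary midpoints: any $\gamma$ whose bisector could meet the ball $B(v,R+1)$ has $d(v,\gamma.v)\le 2R+2$, so it already belongs to $\Gamma_0$. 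Hence $D'$ coincides with the $\Gamma$-Dirichlet region and its frontier. Steps (3) and (4) then quotient by $\Gamma_v$ and identify vertices equidistant from $v$ in a common orbit. Because $\Gamma_0$ contains a representative for every orbit identification within the ball, the resulting subtree $D$ meets each $\Gamma$-orbit of vertices and edges of $T$ exactly once. This transversality is the key point.

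The last step is to verify the data attached to $D(v,\Gamma_0)$. The stabilizer subsets must equal the full stabilizers, which holds by (2). The edges in $\mathcal{E}(D_0)\cup\mathcal{E}(D_1)$ must correspond exactly to the non-tree edges of a fundamental domain in the sense of Definition~\ref{def:fundamental_domain}, with boundary maps $\mathrm{ad}_{g_0}$ and $\mathrm{ad}_{g_1}$ as prescribed there, which holds by (3) and (4). The earlier proposition identifying fundamental domains with quotient graphs of groups then gives the isomorphism with $(T/\Gamma,\Gamma)$.

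I expect the main obstacle to be the transversality claim in Step (4). Pruning components by distance from $v$ must leave a connected subtree that meets every orbit exactly once. We need that whenever $w\sim' w'$ is removed, the entire pruned branch consists of translates of vertices that are kept. I would argue this by induction on $d(v,w)$, using that $\gamma$ with $\gamma.w'=w$ maps the branch at $w'$ isometrically onto the branch at $w$ inside the Dirichlet region. The parity and midpoint bookkeeping for odd versus even $d(v,g_0.v)$ would need care, especially for edges whose endpoints are equidistant from $v$ and $g_0.v$.
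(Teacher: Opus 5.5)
Your strategy is the paper's. You saturate $\Gamma_0$ with the stabilizers of vertices near $v$ and all elements of displacement about $2R$ (the paper's $d$ is your $R$), identify $D'$ with the genuine Dirichlet region, and show $D$ contains one vertex per orbit. Two things are missing, one small and one serious.

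The small one: Step (4) only identifies \emph{equidistant} vertices. So for uniqueness you need the paper's observation that two $\Gamma$-equivalent vertices of the Dirichlet region are equidistant from $v$: if $d(v,\gamma w)>d(v,w)=d(\gamma v,\gamma w)$, then $\gamma w$ violates the Dirichlet inequality for $\gamma$.

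The serious one is your claim that $\mathcal{E}(D_0)\cup\mathcal{E}(D_1)$ corresponds \emph{exactly} to the non-tree edges ``by (3) and (4)''. For a saturated $\Gamma_0$ this is false. Step (3) quotients only by $\Gamma_v$, but Step (4) identifies via arbitrary elements of $\Gamma_0$. These include any $g\in\Gamma_{w_1}\setminus\Gamma_v$ that fixes a vertex $w_1$ of $D$ and swaps two of its neighbours $w,w'$ farther from $v$, both in the Dirichlet region. Whichever of them is pruned, Step (6) creates an open edge $[w_1,w]$ in the same $\Gamma$-orbit as the tree edge $[w_1,w']$ (or as another open edge). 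So $D(v,\Gamma_0)$ acquires a spurious edge.

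Here is a concrete instance. Let $Y$ be the $6$-cycle $[v],[p],[w_1],[w],[a_1],[a_2]$ with $\mathcal{G}([p])=\mathcal{G}([w_1])=\mathcal{G}([p][w_1])=\langle g\rangle\cong C_2$ and all other groups trivial. Then $\Gamma=\langle g\rangle*\langle t\rangle$, with $t$ the stable letter of $[a_2][v]$; the minimal subtree is the whole Bass--Serre tree, and $R=3$.
\begin{itemize}
\item $g$ fixes $p$ and $w_1$ and swaps $w\leftrightarrow gw$.
\item The Dirichlet region is $\{t^{-1}w,t^{-1}a_1,t^{-1}a_2,v,p,w_1,w,gw\}$.
\item $w$, $gw$, $t^{-1}w$ are the midpoints for $t$, $gt$, $t^{-1}$. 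All have displacement $6\le 2R+2$, so they lie in your $\Gamma_0$, and also in the paper's.
\end{itemize}
Whichever representative of $\{w,gw,t^{-1}w\}$ Step (4) keeps, $D(v,\Gamma_0)$ has $6$ vertices and $7$ edges, while $T/\Gamma$ has $6$. The output's fundamental group is $C_2*F_2$, not $\Gamma$. The paper's parity remark only shows each non-tree edge is \emph{hit}, not hit once, so it has the same hole.

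To close it, change the bookkeeping. Build $D''$ level by level, keeping one child of each kept vertex $u$ per $\Gamma_u$-orbit (you have $\Gamma_u\subset\Gamma_0$), not just per $\Gamma_v$-orbit at the root. Then an open edge $[w_1,w]$ equivalent to an edge already present forces one of two situations, both excluded:
\begin{itemize}
\item two children of $w_1$ in one $\Gamma_{w_1}$-orbit, or
\item two equivalent Dirichlet vertices at different distances from $v$.
\end{itemize}
Alternatively, a sparser $\Gamma_0$ can work; in the example $\{1,g,t,t^{-1}\}$ does.

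With this modification your inductive existence argument goes through, provided you argue with the representative whose first pruned ancestor is deepest (or which is never pruned). This works because the kept partner $w'$ of a pruned $w$ has no pruned ancestor.
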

\begin{proof}
    Since $\Gamma$ is a finitely generated discrete subgroup of $\aut(X)$, the quotient of the minimal $\Gamma$-subtree $T$ by $\Gamma$ is a finite graph. Still denote by $v$ its equivalence class representing a vertex on this finite quotient graph, and let $d = \max_{w\in\mathcal{V}(T/\Gamma)}d(v,w)$. Let $\Gamma_0$ contain all vertex stabilizer groups $\Gamma_w$ for $d(v,w)\leq d$, and all elements $g$ such that $d(v,g.v)\leq 2d$. Since $\Gamma$ is discrete, this includes only finitely many elements. We will show that $D(v,\Gamma_0)$, following this construction, is a fundamental graph of groups of $\Gamma$. Indeed, we have the following lemmas:
    \begin{lem}
        Each equivalence class $[w]\in T/\Gamma$ has a representative $w'\in D(v,\Gamma_0)$.
    \end{lem}
    \begin{proof}
        Consider the representatives of $[w]$ with the smallest distance to $v$; there are only finitely many such vertices. We will show that at least one of them is contained in $D(v,\Gamma_0)$.

        First, $[w]\in T/\Gamma$ corresponds to a hyperbolic element $g\in \Gamma$, which is either the product of elliptic elements on two leaves, or the monodromy of a circuit containing $[w]$. This hyperbolic element satisfies $d(v,g.v)\leq 2d$, so $g\in \Gamma_0$. By choosing the representatives closest to $v$, these vertices $w$ lie on paths $[v,g.v]$ for automorphisms $g$ satisfying the aforementioned description, and $d(v,w)\leq d(g.v,w)$.

        In Step (2) of the construction, there are no other elements $g'$ such that the midpoint of $[v,g'.v]$ lies between $v$ and $w$. Otherwise, $d(v,g'^{-1}.w) = d(g.v,w)<d(v,w)$, a contradiction to our choice.

        From the description of Steps (3) and (4), we see that at least one of the vertices $w$ survives and becomes a vertex in $D(v,\Gamma_0)$.
    \end{proof}
    \begin{lem}
        Vertices in $D(v,\Gamma_0)$ are in different equivalence classes.
    \end{lem}
    \begin{proof}
        For each vertex $w$ in $D(v,\Gamma_0)$, the projection of the path $[v,w]$ to $T/\Gamma$ does not have repeated vertices: otherwise, suppose $u$ and $g.u$ are two vertices in $[v,w]$. Then $d(v,g.v)<2d$ and $g\in \Gamma_0$. Moreover, it follows that $d(g.v,w)<d(v,w)$, contradicting Step (2) in the construction of $D(v,\Gamma_0)$.

        Now, suppose $w\sim w' = g.w$ are different vertices in $D(v,\Gamma_0)$. We consider the quotient paths of $[v,w]$ and $[v,g.w]$ in $T/\Gamma$. Step (3) in the Dirichlet domain construction guarantees that the quotient paths are different. If $d(v,w)\neq d(v,g.w)$, suppose $d(v,g.w)>d(v,w)$. Then $d(v,w')>d(g.v,g.w) = d(g.v,w')$, again contradicting Step (2) in the construction. Therefore $d(v,w) = d(v,g.w)$, but according to Step (4), at most one of them is in $D(v,\Gamma_0)$.
    \end{proof}
    It is now clear that the quotient of the subtree $D$ by $\Gamma$ is an isomorphism, and in particular, $D/\Gamma$ is a maximal subtree of $T/\Gamma$. For edges of $T/\Gamma$ not in $D/\Gamma$, if the distance between the two endpoints in $D/\Gamma$ is even, then Step (5) corresponds to these edges in $\mathcal{E}(D_0)$; if the distance is odd, then Step (6) corresponds to these edges in $\mathcal{E}(D_1)$. This shows that $D(v,\Gamma_0)$ and $T/\Gamma$ are isomorphic as graphs.

    For each vertex $w$ in $D$, since the distance $d(v,w)$ is less than the girth of $T/\Gamma$, the vertex stabilizer $\Gamma_w$ is contained in $\Gamma_0$. Therefore, the resulting graph of groups $D(v,\Gamma_0)$ is isomorphic to the quotient graph of groups of $\Gamma$.
\end{proof}
If $D(v,\Gamma_0)$ is a graph of groups, then $\pi_1(D(v,\Gamma_0))$ is a subgroup of $\langle g_1,\dots,g_k\rangle$.If each $g_i$ is contained in this fundamental group, then we will have $\Gamma = \pi_1(D(v,\Gamma_0))$. This leads to the algorithm claimed by Theorem \ref{thm:algorithm}.
\begin{alg}
    \underline{Input}: A finite generating set of tree automorphisms $g_1,\dots,g_k\in \aut(X)$, and a center vertex $v\in \mathcal{V}(X)$.
    
    \underline{Output}: \texttt{true} if the subgroup $\Gamma = \langle g_1,\dots,g_k\rangle$ is discrete, with a finite graph of finite groups $(Y,\mathcal{G})$ such that $\Gamma = \pi(Y,\mathcal{G})$.

    \begin{enumerate}
        \item Set $l=1$ and compute the subset $\Gamma_l$ containing all words in $g_1,\dots,g_k$ of length $\leq l$.
        \item Compute the Dirichlet domain $D(v,\Gamma_l)$ and verify whether it satisfies the following:
        \begin{enumerate}
            \item The subsets assigned to each vertex and edge in $D(v,\Gamma_l)$ form subgroups in $\aut(X)$.
            \item For each open edge $e$, the boundary map $\iota_e$ is a monomorphism from $\Gamma_l(e)$ to $\Gamma_l(w)$, where $w$ is the target vertex of $e$.
        \end{enumerate}
        \item If the two conditions are satisfied, then $D(v,\Gamma_l)$ is a graph of groups. Check the additional conditions, analogous to the last part of Section 1 in \cite{riley1983applications}:
        \begin{enumerate}
            \item For each original generator $g_i$, check whether $g_i.v$ lies on the $\pi_1(D(v,\Gamma_l))$-minimal subtree.
            \item If so, project the path $[v,g_i.v]$ to $D(v,\Gamma_l)$, and check whether it is a closed walk.
            \item If the walk is closed, let $g'$ be the corresponding monodromy taking $v$ to $g_i.v$. Check whether $g_i^{-1}g'\in \Gamma_l(v)$.
        \end{enumerate}
        \item If any condition is not satisfied, increment $l$ by $1$ and repeat.
        \item If all conditions hold, then $\Gamma = \langle g_1,\dots,g_k\rangle$ is a discrete subgroup of $\aut(X)$ and is isomorphic to the fundamental group of the graph of groups $D(v,\Gamma_l)$.
    \end{enumerate}
\end{alg}
\appendix
\section{Computing Elliptic Generating Pairs of Amalgamated Products}\label{sec:program}
As in Subsection \ref{subsec:3_3_amalgam}, we asked whether each of the faithful $(3,2)$- or $(3,3)$-amalgamated products $\Gamma = G_1*_HG_2$ admits a generating pair with particular geometric quantities:
\begin{itemize}
    \item A two-elliptic generating pair $(g,h)$, such that $d(T_g,T_h) = 1$, and $\ord(g) = n$, $\ord(h) = m$, for given integers $n,m$.
    \item An elliptic-hyperbolic generating pair $(g,h)$, such that $\ell(h) = 2$, and $\ell(T_g\cap A_h) = l$, for given integer $l$.
\end{itemize}
We may further assume, up to conjugating $(g,h)$: for the first scenario, we have $g\in G_1$ and $h\in G_2$, or $g\in G_2$ and $h\in G_1$. For the second scenario, we have $h = g_1g_2$, and up to an additional $h$-conjugation, we can assume $g\in G_1$ or $G_2$. In particular, we can assume $g\in H$ if $l\geq 1$.

To test if a chosen pair of elements generates the entire group $\Gamma = G_1*_H G_2$, we apply the following saturation procedure:
\begin{prop}
    Suppose that $G_1*_H G_2$ is an amalgamated product of finite groups.
    
    (1) Suppose $g\in G_1$, $h\in G_2$, and let $K_1^{(0)} = \langle g\rangle$, $K_2^{(0)} = \langle h\rangle$. Iteratively define
    \[
    K_1^{(i+1)} = K_1^{(i)}(K_2^{(i)}\cap H),\ K_2^{(i+1)} = K_2^{(i)}(K_1^{(i)}\cap H).
    \]
    Then $\langle g,h\rangle = G_1*_H G_2$ if and only if there is $N<\infty$ such that $K_1^{(N)} = G_1$ and $K_2^{(N)} = G_2$.

    (2) Suppose $g_1\in G_1 - G_2$, $g_2\in G_2 - G_1$, $h\in H$, and let $K_1^{(0)} = K_2^{(0)} = \langle h\rangle$. Iteratively define
    \[
    K_1^{(i+1)} = K_1^{(i)}(K_2^{(i)}\cap H)\left(g_1(g_2K_2^{(i)}\cap H)\right)\left(g_1(g_2K_2^{(i)}g_2^{-1}\cap H)g_1^{-1}\right),
    \]
    and
    \[
    K_2^{(i+1)} = K_2^{(i)}(K_1^{(i)}\cap H)\left((K_1^{(i)}g_1\cap H)g_2\right)\left(g_2^{-1}(g_1^{-1}K_1^{(i)}g_1\cap H)g_2\right).
    \]
    Then $\langle g_1g_2,h\rangle = G_1*_H G_2$ if and only if there is $N<\infty$ such that $K_1^{(N)} = G_1$ and $K_2^{(N)} = G_2$.
\end{prop}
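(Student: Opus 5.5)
Both parts have the same shape: the sequences $K_1^{(i)},K_2^{(i)}$ are increasing chains of subgroups of the finite groups $G_1,G_2$, so they stabilize. For each part we then prove two inclusions. Soundness: every $K_j^{(i)}$ lies in $\langle\text{generators}\rangle\cap G_j$. Completeness: the stable values are exactly $\langle\text{generators}\rangle\cap G_1$ and $\langle\text{generators}\rangle\cap G_2$, which the normal form theorem for amalgamated products lets us detect. Put together, $\langle g,h\rangle$ (or $\langle g_1g_2,h\rangle$) is all of $G_1*_HG_2$ if and only if the stable values are $G_1$ and $G_2$. The "only if" direction uses that the group is generated by $G_1\cup G_2$, so equality of the intersections with $G_j$ is equivalent to equality of the whole group.

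\textbf{Part (1).} Let $\Lambda=\langle g,h\rangle$. Soundness follows by induction on $i$: the terms $K_2^{(i)}\cap H$ and $K_1^{(i)}\cap H$ lie in $\Lambda$ and in both factors, and each product in the recursion is of subgroups normalizing one another. To justify that, we take $K_j^{(i+1)}$ to mean the subgroup generated by the two pieces; this is what the computer procedure computes anyway. Let the stable values be $K_1,K_2$. By construction $K_1\cap H=K_2\cap H=:L$. Form the subgroup $K_1*_LK_2$. The normal form theorem shows that the natural map $K_1*_LK_2\to G_1*_HG_2$ is injective. The key point is that a reduced word in $K_1*_LK_2$ alternates between letters in $K_1\setminus L$ and $K_2\setminus L$, and these letters lie outside $H$ because $K_j\cap H=L$. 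The image of this map is $\Lambda$. By the structure of amalgams (Bass--Serre: an element of the image lying in $G_1$ has normal form length one), $\Lambda\cap G_1=K_1$ and $\Lambda\cap G_2=K_2$. Hence $\Lambda=G_1*_HG_2$ forces $K_1=G_1$ and $K_2=G_2$. The converse is immediate.

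\textbf{Part (2).} Now the generators are $\gamma=g_1g_2$ and $h\in H$, and we argue on the Bass--Serre tree. Let $e$ be the edge with stabilizer $H$, with endpoints $v_1,v_2$ stabilized by $G_1,G_2$. Then $\gamma$ is hyperbolic of translation length $2$ along an axis through $e$, and $h$ fixes $e$. The extra terms in the recursion encode the following: an element of $\Lambda\cap G_1$ can arise from an element of $K_2$ conjugated or multiplied through $\gamma$ when the result lands in $H$. Concretely, $g_1(g_2K_2\cap H)$ collects the elements $\gamma k$ with $k\in g_2^{-1}H\cap K_2$, and the conjugation terms translate stabilizer information from $\gamma^{\pm1}.e$ back to $e$. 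Soundness is again a direct check that every term lies in $\Lambda\cap G_j$.

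For completeness, we show that the stable pair $(K_1,K_2)$ defines a graph of groups on the quotient $\Lambda\backslash T$. Using the closure conditions, we verify that the subtree $\Lambda.[v_1,v_2]$ has stabilizers exactly $K_1,K_2$ at $v_1,v_2$. We then apply a Stallings-folding argument: the "folded" graph of groups with vertex groups $K_1,K_2$, edge group $L$, and a loop or second edge carrying $\gamma$ is immersed, precisely because none of the four saturation operations enlarges $K_1$ or $K_2$. An immersed graph of groups covering the generators computes the true stabilizers, so $\Lambda\cap G_j=K_j$. This gives the equivalence as in Part (1).

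The main obstacle is the completeness step of Part (2). One must check carefully that the four listed terms are exactly the folding moves available on the length-$2$ circuit; a missing move would make the saturation stop too early. We would model this on Kapovich--Weidmann--Miasnikov foldings for graphs of groups and enumerate the local configurations at $v_1$ and $v_2$.
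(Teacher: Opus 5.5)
Your Part (1) is correct, and it takes a different route from the paper. The paper pushes a word in $g,h$ representing an element of $G_1$ through the reduction to normal form, checking that each merge stays inside some $K_j^{(i)}$. You instead pass to the stable pair, note $K_1\cap H=K_2\cap H=L$, and use injectivity of $K_1*_LK_2\to G_1*_HG_2$ to get $\langle g,h\rangle\cap G_j=K_j$, which is cleaner. Reading $K_j^{(i+1)}$ as a generated subgroup is the right convention. Drop the remark that the factors normalize each other, since they need not.

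The gap is in Part (2). Completeness is announced rather than proved, and the claim you put in its place is false in one of the two cases that actually occur.

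\emph{Local picture.} Set $e'=g_1e$. The axis of $\gamma$ runs through $g_2^{-1}e,\ e,\ e'$, with $\gamma^{-1}e'=g_2^{-1}e$. So the relevant edges are $e,e'$ at $v_1$ and $e,g_2^{-1}e$ at $v_2$. The conjugation terms relate $g_1e$ and $g_2^{-1}e$, not $\gamma^{\pm1}e$ and $e$ as you say.

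\emph{What stability gives.} At a stable pair, the first terms give $K_1\cap H=K_2\cap H$. The fourth terms give $\gamma^{-1}(K_1\cap g_1Hg_1^{-1})\gamma=K_2\cap g_2^{-1}Hg_2$. Together these make the two-edge graph of groups well defined, and its map to $T$ is injective on the cosets of each edge separately.

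\emph{What is still missing.} That map is an immersion exactly when, in addition, $e'\notin K_1e$ and $g_2^{-1}e\notin K_2e$. Stability alone does not give this. What the third-type terms provide is a dichotomy you need to state and prove:
\begin{itemize}
\item If some $k\in K_1$ has $kg_1\in H$, then $k\gamma\in K_2$. Hence $\gamma\in\langle K_1,K_2\rangle$, the edge $e'$ must be folded onto $e$, and $\Lambda=\langle K_1,K_2\rangle$. Your Part (1) argument then applies verbatim.
\item The two disjointness conditions fail together. If $k\in K_2$ with $g_2k\in H$, then $(\gamma k)^{-1}\in K_1$ and $(\gamma k)^{-1}g_1=(g_2k)^{-1}\in H$.
\item In the remaining case, the two-edge graph of groups is an immersion, and your ``immersed graph of groups computes the true stabilizers'' step applies.
\end{itemize}
There is also no ``loop'' option: every edge of $T$ joins a $G_1$-type vertex to a $G_2$-type vertex, so no edge has both ends in one $\Lambda$-orbit.

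Without this case split and the local-injectivity check, nothing shows $\Lambda\cap G_j=K_j$, which is the entire content of the ``only if'' direction. Likewise, ``we verify that $\Lambda.[v_1,v_2]$ has stabilizers exactly $K_1,K_2$'' assumes the conclusion rather than deriving it.
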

\begin{proof}
    \textbf{Claim (1).} The ``if'' part is clear. For the ``only if'' part, assuming $\langle g,h\rangle = G_1*_H G_2$, we need to show that $K_1^{(N)} = G_1$ (with $G_2$ for the same argument) for some $N$. Since $G_1$ is finite, it suffices to show that $\gamma\in K_1^{(N)}$ for certain $N$ for each $\gamma\in G_1$.

    Since $\langle g,h\rangle = G_1*_H G_2$, $\gamma$ can be expressed as a word in $g$ and $h$. By the Fundamental Theorem of Graph of Groups, we can consider $G_1*_H G_2$ as a quotient of the free product $G_1 * G_2$, and the word
    \[
    \gamma = g^{i_1}_{G_1} \cdot h^{i_2}_{G_2}\cdot g^{i_3}_{G_1}\cdot\dots
    \]
    has a reduced form by: (1) replacing consecutive letters $({\gamma_1}_{G_i}{\gamma_2}_{G_i})$ with their product $(\gamma_1\gamma_2)_{G_i}$ for $i=1,2$, or (2) replacing $(\gamma_1)_{G_1}$ with $(\gamma_1)_{G_2}$ if $\gamma_1\in H$. 
    
    In the initial expression of $\gamma$, each letter is in $K_1^{(0)}$ or $K_2^{(0)}$. By replacing a letter $(\gamma_1)_{G_1}\in K_1^{(i)}\cap H$ with $(\gamma_1)_{G_2}$ and combining it with another $(\gamma_2)_{G_2}$, the resulting combination is in $K_2^{(i+1)} = K_2^{(i)}(K_1^{(i)}\cap H)$. The reduction procedure results in a reduced form in finitely many steps, which is exactly the single letter $(\gamma)_{G_1}$, as $\gamma\in G_1$. Consequently, $\gamma$ is contained in the saturation group $K_1^{(N)}$ for certain $N<\infty$.

    \textbf{Claim (2).} The proof of the second claim is similar; the major difference is that the generators $g_1$ and $g_2$ are not in our initial saturation subgroups. Instead, if two letters $\gamma_1$ and $\gamma_2$ (possibly identity elements) inside the saturation subgroups merge when crossing a $(g_1g_2)$ generator, it would follow a rule, equivalent to either
    \[
    {\gamma_1}_{G_1}({g_1}_{G_1}{g_2}_{G_2}){\gamma_2}_{G_2} = {\gamma_1}_{G_1}{g_1}_{G_1}{(g_2\gamma_2)}_{G_2} = {\gamma_1}_{G_1}{g_1}_{G_1}{(g_2\gamma_2)}_{G_1} = {(\gamma_1g_1g_2\gamma_2)}_{G_1},
    \]
    or
    \[
    \begin{split}
        & {\gamma_1}_{G_1}({g_1}_{G_1}{g_2}_{G_2}){\gamma_2}_{G_2}({g_2^{-1}}_{G_2}{g_1^{-1}}_{G_1}) = {\gamma_1}_{G_1}{g_1}_{G_1}({g_2\gamma_2g_2^{-1})}_{G_2}{g_1^{-1}}_{G_1}\\
        & = {\gamma_1}_{G_1}{g_1}_{G_1}({g_2\gamma_2g_2^{-1})}_{G_1}{g_1^{-1}}_{G_1} = {(\gamma_1g_1g_2\gamma_2g_2^{-1}g_1^{-1})}_{G_1}.
    \end{split}
    \]
    These correspond to the left- or right-multiplications and conjugations in the construction of the saturation subgroups. Consequently, if $\langle h,g_1g_2\rangle = G_1*_H G_2$, then for each $\gamma \in G_1$, $\gamma$ is contained in certain $K_1^{(N)}$ under this strengthened definition.
\end{proof}

We implemented this algorithm with \textit{GAP}. Since each step in the algorithm is computed within a finite group, either $G_1$ or $G_2$, we only need to define $G_1$, $G_2$ and $H$ without explicitly considering the amalgamated product in \textit{GAP}. In this setting, it is better to consider $H_1<G_1$ and $H_1<G_2$ as separate groups, with a canonical isomorphism $\varphi: H_1\to H_2$ between them. We refer to \cite{conder2025edge} for the explicit group presentations of the faithful amalgamated products. For example, the amalgamated product $C_{12} *_{V} A_4$ of type $\mathrm{G}_2$ is expressed as follows:
\begin{verbatim}
F1 := FreeGroup("c","d","x");;
c1 := F1.1;; d1 := F1.2;; x1 := F1.3;;
G1 := F1 / [ c1^2, d1^2, (c1*d1)^2, x1^3, (c1*x1)^2, d1*x1*d1*x1^-1 ];;
Ord1 := Order(G1);;

F2 := FreeGroup("c","d","y");;
c2 := F2.1;; d2 := F2.2;; y2 := F2.3;;
G2 := F2 / [ c2^2, d2^2, (c2*d2)^2, y2^3, (c2*y2)^3, d2*y2^-1*c2*y2 ];;
Ord2 := Order(G2);;

H1 := Subgroup( G1, [ G1.1, G1.2 ] );;
H2 := Subgroup( G2, [ G2.1, G2.2 ] );;
phi := GroupHomomorphismByImages( H1, H2, [ G1.1, G1.2 ],[ G2.1, G2.2 ]);;
psi := InverseGeneralMapping(phi);
\end{verbatim}
For the first task, we first obtain the possible orders of elements in $G_1$ and $G_2$, and implement the saturation algorithm to decide if each order pair admits a generating pair.
\begin{verbatim}
orders1 := Filtered( Set( List( AsList(G1), Order ) ), n -> n <> 1 );
orders2 := Filtered( Set( List( AsList(G2), Order ) ), n -> n <> 1 );

for n1 in orders1 do
    elts1 := Filtered( AsList(G1), g -> Order(g) = n1 );;
    SortBy( elts1, g -> Length( UnderlyingElement( g ) ) ); 
    for n2 in orders2 do
        elts2 := Filtered( AsList(G2), g -> Order(g) = n2 );;
        SortBy( elts2, g -> Length( UnderlyingElement( g ) ) ); 
        Print("possible generating pair of order (", n1, ", ", n2, "):\n");
        GeneratorsFound := false;
        for g1 in elts1 do
            for g2 in elts2 do
                if not GeneratorsFound then
                    K1 := Subgroup( G1, [ g1 ] );;
                    K2 := Subgroup( G2, [ g2 ] );;
                    m1 := n1;;
                    m2 := n2;;
                    SearchDone := false;
                    while not SearchDone do
                        K2 := ClosureGroup(K2, Image(phi,
                            Intersection(K1, H1)));
                        K1 := ClosureGroup(K1, Image(psi, 
                            Intersection(K2, H2)));
                        m1new := Order(K1);
                        m2new := Order(K2);
                        if m1new=Ord1 and m2new=Ord2 then
                            m1 := m1new;
                            m2 := m2new;
                            gen1 := g1;
                            gen2 := g2;
                            GeneratorsFound := true;
                            SearchDone := true;
                        elif m1new=m1 and m2new=m2 then
                            SearchDone := true;
                        else
                            m1 := m1new;
                            m2 := m2new;
                        fi;
                    od;
                fi;
            od;
        od;
        if GeneratorsFound then
            Print("(", gen1, ", ", gen2, ")\n");
        else
            Print("No generating pairs found\n");
        fi;
    od;
od;
\end{verbatim}
By running the program for amalgamated products of Djokovi\'c-Miller or Goldschmidt types, we derive a table of generating pairs of every eligible order. Here, cells colored in gray indicate that no pairs of elements have such orders, while crosses imply that none of the pairs in these orders generate the groups.
\begin{sidewaystable}
\centering
\vspace{16.75cm}
\resizebox{\textheight}{!}{%
    \begin{tabular}{|c||c|c|c|c|c|c|c|c|c|}
        \hline
        \diagbox{\tiny{Orders}}{\tiny{Type}} & $\mathrm{DjM}_2^2$ & $\mathrm{DjM}_3$ & $\mathrm{DjM}_4^1$ & $\mathrm{DjM}_4^2$ & $\mathrm{DjM}_5$ & $\mathrm{G}_1^2$ & $\mathrm{G}_1^3$ & $\mathrm{G}_2$ & $\mathrm{G}_2^3$ \\
        \hline\hline
        $(2,4)$ & $(px,y)$ & $(qx,py)$ & {\color{red}\texttimes} & $(rx,y)$ & $(sx,py)$ & \cellcolor{gray!50} & \cellcolor{gray!50} & \cellcolor{gray!50} & \cellcolor{gray!50} \\
        \hline
        $(2,6)$ & \cellcolor{gray!50} & \cellcolor{gray!50} & \cellcolor{gray!50} & \cellcolor{gray!50} & \cellcolor{gray!50} & $(cx,cy)$ & $(cdx,cy)$ & {\color{red}\texttimes} & {\color{red}\texttimes} \\
        \hline
        $(2,8)$ & \cellcolor{gray!50} & \cellcolor{gray!50} & $(rx,qy)$ & $(rx,qy)$ & $(sx,ry)$ & \cellcolor{gray!50} & \cellcolor{gray!50} & \cellcolor{gray!50} & \cellcolor{gray!50} \\
        \hline
        $(3,4)$ & $(x,y)$ & {\color{red}\texttimes} & {\color{red}\texttimes} & $(x,y)$ & {\color{red}\texttimes} & \cellcolor{gray!50} & \cellcolor{gray!50} & \cellcolor{gray!50} & \cellcolor{gray!50} \\
        \hline
        $(3,6)$ & \cellcolor{gray!50} & \cellcolor{gray!50} & \cellcolor{gray!50} & \cellcolor{gray!50} & \cellcolor{gray!50} & $(x,cy)$ & {\color{red}\texttimes} & $(y,dx)$ & $(y,dx)$ \\
        \hline
        $(3,8)$ & \cellcolor{gray!50} & \cellcolor{gray!50} & $(x,qy)$ & $(x,qy)$ & {\color{red}\texttimes} & \cellcolor{gray!50} & \cellcolor{gray!50} & \cellcolor{gray!50} & \cellcolor{gray!50} \\
        \hline
        $(4,2)$ & \cellcolor{gray!50} & \cellcolor{gray!50} & $(prx,y)$ & {\color{red}\texttimes} & $(pqsx,y)$ & \cellcolor{gray!50} & \cellcolor{gray!50} & \cellcolor{gray!50} & \cellcolor{gray!50} \\
        \hline
        $(4,4)$ & \cellcolor{gray!50} & \cellcolor{gray!50} & {\color{red}\texttimes} & $(prx,y)$ & $(pqsx,py)$ & \cellcolor{gray!50} & \cellcolor{gray!50} & \cellcolor{gray!50} & \cellcolor{gray!50} \\
        \hline
        $(4,8)$ & \cellcolor{gray!50} & \cellcolor{gray!50} & $(prx,qy)$ & $(prx,qy)$ & $(pqsx,ry)$ & \cellcolor{gray!50} & \cellcolor{gray!50} & \cellcolor{gray!50} & \cellcolor{gray!50} \\
        \hline
        $(6,2)$ & \cellcolor{gray!50} & $(px,y)$ & \cellcolor{gray!50} & \cellcolor{gray!50} & $(px,y)$ & \cellcolor{gray!50} & $(dx,cdy)$ & \cellcolor{gray!50} & $(cy,cdx)$ \\
        \hline
        $(6,3)$ & \cellcolor{gray!50} & \cellcolor{gray!50} & \cellcolor{gray!50} & \cellcolor{gray!50} & \cellcolor{gray!50} & \cellcolor{gray!50} & {\color{red}\texttimes} & \cellcolor{gray!50} & {\color{red}\texttimes}\\
        \hline
        $(6,4)$ & \cellcolor{gray!50} & $(px,py)$ & \cellcolor{gray!50} & \cellcolor{gray!50} & $(px,py)$ & \cellcolor{gray!50} & \cellcolor{gray!50} & \cellcolor{gray!50} & \cellcolor{gray!50} \\
        \hline
        $(6,6)$ & \cellcolor{gray!50} & \cellcolor{gray!50} & \cellcolor{gray!50} & \cellcolor{gray!50} & \cellcolor{gray!50} & \cellcolor{gray!50} & $(dx,cy)$ & \cellcolor{gray!50} & $(cy,dx)$ \\
        \hline
        $(6,8)$ & \cellcolor{gray!50} & \cellcolor{gray!50} & \cellcolor{gray!50} & \cellcolor{gray!50} & $(px,ry)$ & \cellcolor{gray!50} & \cellcolor{gray!50} & \cellcolor{gray!50} & \cellcolor{gray!50} \\
        \hline
        \diagbox{\tiny{Orders}}{\tiny{Type}} & $\mathrm{G}_2^1$ & $\mathrm{G}_2^2$ & $\mathrm{G}_2^4$ & $\mathrm{G}_3$ & $\mathrm{G}_3^1$ & $\mathrm{G}_4$ & $\mathrm{G}_4^1$ & $\mathrm{G}_5$ & $\mathrm{G}_5^1$ \\
        \hline\hline
        $(2,4)$ & {\color{red}\texttimes} & $(dy^{-1},dx)$ & $(dy^{-1},dex)$ & {\color{red}\texttimes} & {\color{red}\texttimes} & {\color{red}\texttimes} & {\color{red}\texttimes} & {\color{red}\texttimes} & $(xs,yv)$ \\
        \hline
        $(2,6)$ & {\color{red}\texttimes} & {\color{red}\texttimes} & {\color{red}\texttimes} & \cellcolor{gray!50} & $(dx^{-1},ey)$ & $(sx,a^{-1}by)$ & $(xts,a^{-1}by)$ & $(xs,ty)$ & {\color{red}\texttimes} \\
        \hline
        $(2,8)$ & \cellcolor{gray!50} & \cellcolor{gray!50} & \cellcolor{gray!50} & \cellcolor{gray!50} & \cellcolor{gray!50} & $(sx,a^{-1}y)$ & $(xts,a^{-1}y)$ & $(xs,b^{-1}y)$ & {\color{red}\texttimes} \\
        \hline
        $(2,12)$ & $(dy^{-1},dx)$ & \cellcolor{gray!50} & $(dey^{-1},dx)$ & \cellcolor{gray!50} & \cellcolor{gray!50} & $(sx,sy)$ & $(xs,yt)$ & \cellcolor{gray!50} & $(xs,avy)$ \\
        \hline
        $(3,4)$ & {\color{red}\texttimes} & $(y,dx)$ & {\color{red}\texttimes} & $(x,dy)$ & $(x,dey^{-1})$ & $(x,b^{-1}y)$ & {\color{red}\texttimes} & $(x,ay)$ & $(x,yv)$ \\
        \hline
        $(3,6)$ & {\color{red}\texttimes} & $(y,cx)$ & {\color{red}\texttimes} & \cellcolor{gray!50} & $(x,ey)$ & $(x,a^{-1}by)$ & {\color{red}\texttimes} & $(x,ty)$ & {\color{red}\texttimes} \\
        \hline
        $(3,8)$ & \cellcolor{gray!50} & \cellcolor{gray!50} & \cellcolor{gray!50} & \cellcolor{gray!50} & \cellcolor{gray!50} & $(x,a^{-1}y)$ & $(x,tyb)$ & $(x,b^{-1}y)$ & {\color{red}\texttimes} \\
        \hline
        $(3,12)$ & $(y,dx)$ & \cellcolor{gray!50} & {\color{red}\texttimes} & \cellcolor{gray!50} & \cellcolor{gray!50} & $(x,sy)$ & $(x,yt)$ & \cellcolor{gray!50} & $(x,avy)$ \\
        \hline
        $(4,2)$ & $(cy,cx)$ & {\color{red}\texttimes} & $(cey,cx)$ & {\color{red}\texttimes} & {\color{red}\texttimes} & {\color{red}\texttimes} & $(a^2stx,ytb)$ & {\color{red}\texttimes} & $(xsv,a^{-1}y)$ \\
        \hline
        $(4,3)$ & {\color{red}\texttimes} & {\color{red}\texttimes} & {\color{red}\texttimes} & $(cx,cy)$ & $(cex,y)$ & $(asx^{-1},y)$ & $(a^2stx,y)$ & $(b^{-1}sx,y)$ & $(xsv,y)$ \\
        \hline
        $(4,4)$ & {\color{red}\texttimes} & $(cy,dx)$ & $(cy,dex)$ & $(cx,dy)$ & $(cx,dey^{-1})$ & $(asx^{-1},b^{-1}y)$ & $(a^2stx,b^{-1}y)$ & {\color{red}\texttimes} & $(xsv,ay)$ \\
        \hline
        $(4,6)$ & {\color{red}\texttimes} & $(cy,cx)$ & $(cy,cex)$ & \cellcolor{gray!50} & $(cx,cey^{-1})$ & $(asx^{-1},a^{-1}by)$ & $(a^2stx,a^{-1}by)$ & $(b^{-1}sx,sy)$ & $(xsv,ty)$ \\
        \hline
        $(4,8)$ & \cellcolor{gray!50} & \cellcolor{gray!50} & \cellcolor{gray!50} & \cellcolor{gray!50} & \cellcolor{gray!50} & $(asx^{-1},a^{-1}y)$ & $(a^2stx,a^{-1}y)$ & $(b^{-1}sx,b^{-1}y)$ & $(xsv,b^{-1}y)$ \\
        \hline
        $(4,12)$ & $(cy,dx)$ & \cellcolor{gray!50} & $(cey,dx)$ & \cellcolor{gray!50} & \cellcolor{gray!50} & $(asx^{-1},sy)$ & $(b^{-1}sx,yt)$ & \cellcolor{gray!50} & $(xsv,avy)$ \\
        \hline
        $(6,2)$ & \cellcolor{gray!50} & \cellcolor{gray!50} & $(ey,cx)$ & \cellcolor{gray!50} & $(ex,dy^{-1})$ & \cellcolor{gray!50} & $(xt,b^{-1}ty)$ & $(xt,a^{-1}y)$ & $(xt,a^2vy^{-1})$ \\
        \hline
        $(6,3)$ & \cellcolor{gray!50} & \cellcolor{gray!50} & {\color{red}\texttimes} & \cellcolor{gray!50} & $(ex, y)$ & \cellcolor{gray!50} & $(xt,y)$ & $(xt,y)$ & $(vx,y)$ \\
        \hline
        $(6,4)$ & \cellcolor{gray!50} & \cellcolor{gray!50} & $(ey,dex)$ & \cellcolor{gray!50} & $(ex,yd)$ & \cellcolor{gray!50} & $(xt,b^{-1}y)$ & $(xt,ay)$ & $(xt,yv)$ \\
        \hline
        $(6,6)$ & \cellcolor{gray!50} & \cellcolor{gray!50} & $(ey,cex)$ & \cellcolor{gray!50} & $(ex,ey)$ & \cellcolor{gray!50} & $(xt,a^{-1}by)$ & $(xt,ty)$ & $(vx,ty)$ \\
        \hline
        $(6,8)$ & \cellcolor{gray!50} & \cellcolor{gray!50} & \cellcolor{gray!50} & \cellcolor{gray!50} & \cellcolor{gray!50} & \cellcolor{gray!50} & $(xt,a^{-1}y)$ & $(xt,b^{-1}y)$ & $(vx,b^{-1}y)$ \\
        \hline
        $(6,12)$ & \cellcolor{gray!50} & \cellcolor{gray!50} & $(ey,dx)$ & \cellcolor{gray!50} & \cellcolor{gray!50} & \cellcolor{gray!50} & $(xt,yt)$ & \cellcolor{gray!50} & $(xt,avy)$ \\
        \hline
        $(8,2)$ & \cellcolor{gray!50} & \cellcolor{gray!50} & \cellcolor{gray!50} & \cellcolor{gray!50} & \cellcolor{gray!50} & {\color{red}\texttimes} & $(xsb,ytb)$ & {\color{red}\texttimes} & $(xsb,a^2vy^{-1})$ \\
        \hline
        $(8,3)$ & \cellcolor{gray!50} & \cellcolor{gray!50} & \cellcolor{gray!50} & \cellcolor{gray!50} & \cellcolor{gray!50} & $(asx,y)$ & $(xtsb,y)$ & $(xsb,ab^{-1}y)$ & {\color{red}\texttimes} \\
        \hline
        $(8,4)$ & \cellcolor{gray!50} & \cellcolor{gray!50} & \cellcolor{gray!50} & \cellcolor{gray!50} & \cellcolor{gray!50} & $(asx,b^{-1}y)$ & $(xtsb,b^{-1}y)$ & {\color{red}\texttimes} & $(xsb,yv)$ \\
        \hline
        $(8,6)$ & \cellcolor{gray!50} & \cellcolor{gray!50} & \cellcolor{gray!50} & \cellcolor{gray!50} & \cellcolor{gray!50} & $(asx,a^{-1}by)$ & $(xtsb,a^{-1}by)$ & $(xsb,ty)$ & {\color{red}\texttimes} \\
        \hline
        $(8,8)$ & \cellcolor{gray!50} & \cellcolor{gray!50} & \cellcolor{gray!50} & \cellcolor{gray!50} & \cellcolor{gray!50} & $(asx,a^{-1}y)$ & $(xsb,tyb)$ & $(xsb,b^{-1}y)$ & {\color{red}\texttimes} \\
        \hline
        $(8,12)$ & \cellcolor{gray!50} & \cellcolor{gray!50} & \cellcolor{gray!50} & \cellcolor{gray!50} & \cellcolor{gray!50} & $(asx,sy)$ & $(xsb,yt)$ & \cellcolor{gray!50} & $(xsb,avy)$ \\
        \hline
    \end{tabular}
}
\end{sidewaystable}

The second task also requires a utility function for computing $\ell(A_{g_1g_2}\cap T_h)$, which reduces to checking if $h$ is contained in each of the vertex stabilizers on the axis:
\[
...,g_1G_2g_1^{-1}, G_1,\ G_2,\ g_2^{-1}G_1g_2, g_2^{-1}g_1^{-1}G_2g_1g_2,\dots
\]
Or, equivalently, checking if each element is inside $G_1$ or $G_2$:
\[
h\overset{?}{\in} G_1,\ g_1^{-1}hg_1\overset{?}{\in} G_2,\ g_2^{-1}(g_1^{-1}hg_1)g_2\overset{?}{\in} G_1,...
\]
and
\[
h\overset{?}{\in} G_2,\ g_2hg_2^{-1}\overset{?}{\in} G_1,\ g_1(g_2hg_2^{-1})g_1^{-1}\overset{?}{\in} G_2,...
\]
This gives the following utility function:
\begin{verbatim}
AxisTreeIntersLen := function(h, g1, g2, H1, H2, phi, psi, maxSteps)
    local my_len, h_temp, i;

    my_len := 0;
    h_temp := h;

    for i in [1..maxSteps] do
        if not (h_temp in H1) then
            break;
        fi;
        h_temp := Image(phi, h_temp);
        my_len := my_len + 1;
        h_temp := g2 * h_temp * g2^-1;

        if not (h_temp in H2) then
            break;
        fi;
        h_temp := Image(psi, h_temp);
        my_len := my_len + 1;
        h_temp := g1 * h_temp * g1^-1;
    od;

    h_temp := g1^-1 * h * g1;

    for i in [1..maxSteps] do
        if not (h_temp in H1) then
            break;
        fi;
        h_temp := Image(phi, h_temp);
        my_len := my_len + 1;
        h_temp := g2^-1 * h_temp * g2;

        if not (h_temp in H2) then
            break;
        fi;
        h_temp := Image(psi, h_temp);
        my_len := my_len + 1;
        h_temp := g1^-1 * h_temp * g1;
    od;

    return my_len;
end;
\end{verbatim}
Below is the \textit{GAP} implementation, where a few optimizations are applied to reduce the time complexity as in the code comment.

\begin{verbatim}
coset_list := function(K2, H2, psi, g1, g2, h1, h2)
    local my_list;
    my_list := List(List(
                Filtered(List(AsList(K2), x -> g2 * x * h2), x -> x in H2 ),
                x -> Image(psi, x)), x -> g1 * x * h1 );
    return my_list;
end;

# Optimization: consider only the $H$-conjugation equivalence
# representatives in $g_1$
elts1 := Filtered( List( OrbitsDomain( H1, AsList(G1), OnPoints )
  , o -> o[1] ), x -> not x in H1 );
elts2 := Difference( AsList(G2), AsList(H2) );;
SortBy( elts1, g -> Length( UnderlyingElement( g ) ) ); 
SortBy( elts2, g -> Length( UnderlyingElement( g ) ) );

elts_h := List( AsList(H1) );
SortBy( elts_h, g -> Length( UnderlyingElement( g ) ) ); 

largest_length := 0;
genh := Identity(G1);
gen1 := Identity(G1);
gen2 := Identity(G2);

for g1 in elts1 do
    for g2 in elts2 do
        for h in elts_h do
            my_length := AxisTreeIntersLen(h, g1, g2, H1, H2, phi, psi, 100);
            # Optimization: Compute the quantity $l$ in advance, and only
            # proceed to the generating set check if $l$ is greater than 
            # a certain threshold
            # Here only intersection lengths at least $6$ are considered
            if my_length > 5 then
                GeneratorsFound := false;
                K1 := Subgroup( G1, [ h ] );;
                K2 := Image(phi, Intersection(K1, H1));;
                m1 := Order(K1);;
                m2 := Order(K2);;
                SearchDone := false;
                # Optimization: check whether the inclusions $g_1\in K_1$
                # and $g_2\in K_2$ are both satisfied
                # Once both inclusions hold, we only need to perform 
                # the same iteration as in case (1)
                Recovered_g1g2 := false;
                while not SearchDone do
                    if not Recovered_g1g2 then
                        K1 := Subgroup( G1, Concatenation( List(K1),
                          List(Image(psi, Intersection(K2, H2))),
                          coset_list(K2, H2, psi, 
                          g1, g2, Identity(G1), Identity(G2)), 
                          coset_list(K2, H2, psi, g1, g2, g1^-1, g2^-1)));
                        K2 := Subgroup( G2, Concatenation( List(K2), 
                          List(Image(phi, Intersection(K1, H1))),
                          coset_list(K1, H1, phi, 
                          Identity(G2), Identity(G1), g2, g1),
                          coset_list(K1, H1, phi, g2^-1, g1^-1, g2, g1)));
                        Recovered_g1g2 := (g1 in K1) and (g2 in K2);
                    else
                        K2 := ClosureGroup(K2, Image(phi, Intersection(K1, H1)));
                        K1 := ClosureGroup(K1, Image(psi, Intersection(K2, H2)));
                    fi;
                    m1new := Order(K1);
                    m2new := Order(K2);
                    if m1new=Ord1 and m2new=Ord2 then
                        m1 := m1new;
                        m2 := m2new;
                        GeneratorsFound := true;
                        SearchDone := true;
                    elif m1new=m1 and m2new=m2 then
                        SearchDone := true;
                    else
                        m1 := m1new;
                        m2 := m2new;
                    fi;
                od;
                if GeneratorsFound and my_length > largest_length then
                    largest_length := my_length;
                    genh := h;
                    gen1 := g1;
                    gen2 := g2;
                fi;
            fi;
        od;
    od;
od;
Print("Largest length:", largest_length, ", at generators
(", genh, ", ", gen1, " times ", gen2, ")\n");
\end{verbatim}

The largest possible intersection length is summarized in the table below.
\begin{center}
    \begin{tabular}{c|c|c}
        Amalgam Type & Maximal $\ell(T_g,A_h)$ & Generators $(g,h)$ \\
        \hline
        $\mathrm{G}_1^2$ & $2$ & $(c,xy)$ \\
        $\mathrm{G}_2$ & $4$ & $(d,xy)$ \\
        $\mathrm{G}_2^1$ & $2$ & $(d,xy)$ \\
        $\mathrm{G}_2^2$ & $2$ & $(c,xy)$ \\
        $\mathrm{G}_2^3$ & $6$ & $(d,xy^{-1})$ \\
        $\mathrm{G}_2^4$ & $2$ & $(d, xey^{-1})$ \\
        $\mathrm{G}_3$ & $3$ & $(dc, xy)$ \\
        $\mathrm{G}_3^1$ & $3$ & $(c, xey)$ \\
        $\mathrm{G}_4$ & $6$ & $(b^2, xy^{-1})$\\
        $\mathrm{G}_4^1$ & $6$ & $(b^2t, xy^{-1})$\\
        $\mathrm{G}_5$ & $4$ & $(st, xy^{-1})$\\
        $\mathrm{G}_5^1$ & $8$ & $(b^2,xvy)$ \\
    \end{tabular}
\end{center}
\begin{center}
    \begin{tabular}{c|c|c}
        Amalgam Type &  Maximal $\ell(T_g,A_h)$ & Generators $(g,h)$ \\
        \hline
        $\mathrm{DjM}_2^1$ & $2$ & $(p,xy)$ \\
        $\mathrm{DjM}_2^2$ & $2$ & $(p,xy)$ \\
        $\mathrm{DjM}_3$ & $4$ & $(q,xy)$ \\
        $\mathrm{DjM}_4^1$ & $6$ & $(r,xy)$ \\
        $\mathrm{DjM}_4^2$ & $6$ & $(r,xy)$\\
        $\mathrm{DjM}_5$ & $8$ & $(s,xy)$\\
    \end{tabular}
\end{center}
\bibliography{reference}{}
\bibliographystyle{alpha}
\end{document}